\tikzset{degil/.style={
            decoration={markings,
            mark= at position 0.5 with {
                  \node[transform shape] (tempnode) {$\backslash$};
                  }
              },
              postaction={decorate}
}
}
\newtheorem{thm}{Theorem}[section]
\newtheorem{lemma}[thm]{Lemma}
\newtheorem{prop}[thm]{Proposition}
\newtheorem{cor}[thm]{Corollary}
\theoremstyle{definition} 
\newtheorem{defin}[thm]{Definition}
\newtheorem{example}[thm]{Example}
\newtheorem{remark}[thm]{Remark}
\newcommand{\bigslant}[2]{{\raisebox{.2em}{$#1$}\left/\raisebox{-.2em}{$#2$}\right.}}
\newcommand\mydef{\mathrel{\overset{\makebox[0pt]{\mbox{\normalfont\tiny def}}}{=}}}
\newcommand{\comp}{\kern0.5ex\vcenter{\hbox{$\scriptstyle\circ$}}\kern0.5ex}
\newcommand{\bb}[1]{\mathbb{#1}}
\newcommand{\cc}[1]{\mathcal{#1}}
\newcommand{\cO}{\mathcal O}
\newcommand{\cN}{\mathcal N}
\newcommand{\cT}{\mathcal T}
\newcommand{\cE}{\mathcal E}
\newcommand{\cF}{\mathcal F}
\newcommand{\cG}{\mathcal G}
\newcommand{\cH}{\mathcal H}
\newcommand{\cI}{\mathcal I}
\newcommand{\cK}{\mathcal K}
\newcommand{\cJ}{\mathcal J}
\newcommand{\Affine}{{\mathbb A}}
\newcommand{\isom}{\simeq}
\newcommand{\eps}{\varepsilon}
\newcommand{\ideal}[1]{{\mathfrak{#1}}}
\newcommand{\F}{F^{\#}}
\newcommand{\ceil}[1]{{\lceil #1 \rceil}}
\newcommand{\floor}[1]{{\lfloor #1 \rfloor}}
\DeclareMathOperator{\id}{id}
\DeclareMathOperator{\PP}{\bb{P}}
\DeclareMathOperator{\ZZ}{\bb{Z}}
\DeclareMathOperator{\NN}{\bb{N}}
\DeclareMathOperator{\Proj}{Proj}
\DeclareMathOperator{\Ob}{Ob}
\DeclareMathOperator{\Tan}{T}
\DeclareMathOperator{\Ker}{Ker}
\DeclareMathOperator{\Coker}{Coker}
\DeclareMathOperator{\sex}{SmallExt}
\DeclareMathOperator{\QCoh}{QCoh}
\DeclareMathOperator{\Coh}{Coh}
\DeclareMathOperator{\Tor}{Tor}
\newcommand{\Frob}[1]{{#1}^{(1)}}
\newcommand{\Cone}{\mathsf{Cone}}
\newcommand{\Ext}{\mathrm{Ext}}
\newcommand{\RcHom}{\mathbf{R}{\mathcal H}om}
\newcommand{\Spec}{\mathrm{Spec}}
\newcommand{\sSpec}{{\cc{S} pec}}
\newcommand{\Cot}{\mathbb{L}}
\newcommand{\Hilb}{\mathsf{Hilb}}
\newcommand{\Def}{\mathsf{Def}}
\newcommand{\Art}{\mathbf{Art}}
\newcommand{\Sets}{\mathbf{Sets}}
\newcommand{\ra}{\longrightarrow}
\newcommand{\mplus}{+_{\varphi}}
\newcommand{\mmul}{\cdot_{\varphi}}
\newcommand{\isomto}{\xrightarrow{
   \,\smash{\raisebox{-0.65ex}{\ensuremath{\scriptstyle\sim}}}\,}}
\newcommand{\wt}[1]{\widetilde{#1}}  
\begin{document}
\author{Maciej Zdanowicz}
\address{Instytut Matematyki UW, Banacha 2, 02-097 Warszawa, Poland}
\email{mez@mimuw.edu.pl}
\thanks{The author was supported by Polish National Science Centre (NCN) contract number 2014/13/N/ST1/02673.}
\title{Liftability of singularities and their Frobenius morphism modulo $p^2$}

\begin{abstract}
We investigate the $W_2(k)$-liftability of singular schemes.  We prove constructibility of the locus of $W_2(k)$-liftable schemes in a flat family $X \to S$. Moreover, we construct an explicit $W_2(k)$-lifting of a Frobenius split scheme $X$ over a perfect field $k$, reproving Bhatt's existential result.  Furthermore, we study existence of liftings of the Frobenius morphism. In particular, we prove that in dimension $n \geq 4$ ordinary double points do not admit a $W_2(k)$-lifting compatible with Frobenius, and that canonical surface singularities are Frobenius liftable.  Combined with Bhatt's results, the latter result implies that the crystalline cohomology groups over $k$ of surfaces with canonical singularities are not finite dimensional.  As a corollary of our results, we provide a thorough comparison between the notions of $W_2(k)$-liftability, Frobenius liftability and classical $F$-singularity types.
\end{abstract}

\maketitle


\section{Introduction}

In the seminal paper \cite{deligne-illusie}, Deligne and Illusie observed that a smooth variety $X$ defined over a perfect field $k$ of characteristic $p$ possesses many desired properties, which are standard in characteristic $0$, provided it admits a flat lifting to $W_2(k)$, i.e., a unique $\ZZ/p^2$-flat extension of $k$.  This observation turned out to be very fruitful and leads to many further results overcoming pathological behaviour of positive characteristic geometry of \emph{smooth} varieties (see, e.g., \cite{ogus-vologodsky,langer}).  The crucial part of the arguments given in the papers mentioned was the existence of a local lifting of the Frobenius morphism mod $p^2$.

In this paper, we investigate the existence of $W_2(k)$ and Frobenius liftings in the more general setting of singular schemes.  In this situation, contrary to the case of smooth schemes, a $W_2(k)$-lifting need not to exist even affine locally.  The simplest instance of that phenomenon, in fact $0$-dimensional, was given without a proof in \cite[Remark 3.16]{bhatt_derived_derham}.  We present a detailed analysis of a slightly more general example in \cref{sec:quadric_example}.  

Furthermore, in \cite{bhatt_torsion} the author generalizes the results of \cite{deligne-illusie} under the assumption of global Frobenius liftability (see \cref{def:lifting}), and consequently proves that the crystalline cohomology over $k$ of Frobenius liftable locally complete intersection schemes is not finitely generated. 

The aforementioned examples and results suggest the following two basic question:

\begin{description}
\item[Question 1] What are the necessary conditions for a scheme to possess a $W_2(k)$-lifting?
\item[Question 2] For a given scheme, does there exist a $W_2(k)$-lifting together with a compatible lifting of the Frobenius morphism?
\end{description}

General answers to the questions were given in \cite{cotangent,mehta_srinivas,joshi} and stated that above problems can be expressed in purely cohomological manner.  Moreover, in \cite{liedtke_satriano} the authors investigated the first question in the setting of birational geometry.  Our contribution to answering the above questions is the following.

As far as the first question is concerned, in \cref{sec:families} we analyse how the property of $W_2(k)$-liftability behaves in families $f \colon X \to S$.  More precisely, we show, that under mild assumptions suitable for deformation-theoretic considerations (satisfied if e.g. $f$ is proper or affine), the locus of Witt vector liftable fibres $X_s \mydef f^{-1}(s)$ is constructible (see \cref{thm:constructible}).  Furthermore, we reprove the result that Frobenius split schemes are $W_2(k)$-liftable.  The advantage of our approach is that we provide the lifting explicitly and functorially with respect to a splitting $\varphi \colon F_{X*} \cO_X \to \cO_X$, in fact as a certain subscheme of the Witt scheme $(X,W_2(\cO_X))$.  The details of our construction are presented in \cref{sec:frobenius_split_witt}.   

Moreover, we give a new construction of a non-liftable affine scheme.  In fact, the construction is given by a cone over a suitable embedding on a non-liftable projective scheme (for the proof see \cref{lem:comparing_hilb_def} in the \cref{appendix:deformation}).  Examples of non-liftable projective schemes are given, e.g., by Raynaud \cite{raynaud} and Mukai \cite{mukai}.

Concerning the second question, we derive a computationally feasible criterion for Frobenius liftability of complete intersection affine schemes (see \cref{lem:complete_intersections_criterion}).  Consequently, we apply the criterion in the case of ordinary double points (see \cref{thm:cones_bhatt}), and thus answer the question asked by Bhatt in \cite[Remark 3.14]{bhatt_torsion}.  As another application of our criterion, in \cref{sec:canonical_singularities} we prove that canonical surface singularities are Frobenius liftable.  Finally, in \cref{sec:quotient} we present a general approach for proving Frobenius liftability of tame reductions of quotient singularities.  The method is based on the \emph{spreading out} technique combined with functoriality of obstruction classes for lifting morphisms.

The above results allow us to address the following question: 
\begin{description}
\item[Question 3] What is the relation between $W_2(k)$-liftability and Frobenius liftability, and classical notions defined in terms of Frobenius action?
\end{description}

Our contribution can be summarized by the following diagram which describes all potential relations between the classical $F$-singularities and the notion considered in this paper:

\begin{center}
\begin{tikzcd}[column sep=large,arrows=Rightarrow]
\text{$F$-liftable} \arrow[degil,out=20,in=160]{rrr}{\text{Ex. } \ref{ex:f-liftable_not_f-regular}} \arrow[out=10,in=170,degil]{rr}{\text{Ex. } \ref{ex:f-liftable_not_f-regular}} \arrow[out=-60,in=-160]{rrdd}[swap]{\substack{\text{yes : normal $k$-algebra - Thm } \ref{lem:flift_normal} \\ \text{no : non-normal - Ex. } \ref{ex:lift_not_imply_split}}} & & \text{$F$-regular} \arrow{dd}{} \arrow[out=190,in=-10,degil]{ll}{\text{Thm } \ref{thm:cones_bhatt}} \arrow{r}{} & \text{$F$-rational} \arrow[degil]{dd}{\text{Ex. } \ref{ex:frational_not_imply_w2k}
} \\
& & & \\ 
& & \text{$F$-pure} \arrow[out=170,in=-35,degil]{lluu}[swap]{\text{Thm } \ref{thm:cones_bhatt}} \arrow{r}{\text{Thm } \ref{thm:split_w2k}} &\text{$W_2(k)$-liftable}. \\
\end{tikzcd}
\end{center}

The counterexamples corresponding to most of the strikethrough implications in the diagram are provided in \cref{sec:relation_f-singularities}

As a byproduct of our considerations, we formalise the foregoing functoriality properties of obstruction classes for deformation of schemes and their morphism.  In particular, we analyse their behaviour in the case of affine schemes.  The results are given in \cref{appendix:deformation}.  Apart from explicit statements and proofs concerning functoriality, the whole theory can be found in the standard reference \cite{cotangent}.

In the course of our consideration we faced the following problem:
\begin{description}
\item[Open question] Does there exist a finite \'{e}tale covering $\pi : Y \to X$ of schemes locally of finite type over $k$ such that $Y$ is $W_2(k)$-liftable and $X$ is not.
\end{description}
Note that by \cref{lem:func_w2k} the degree of $\pi$ would necessarily be divisible by $p$ and by \cref{lem:affine_deformation_etale_local} the schemes need not be affine.

\subsection{Notation and basic definitions}\label{sec:notation}

Here we present a few general assumptions on schemes and their morphisms.  Moreover, we give basic definitions concerning Frobenius morphism and Witt vector liftability.
\subsubsection{Generalities on schemes and morphisms}
Throughout the following work $k$ is a perfect field of characteristic $p>0$.  All rings we consider are commutative, noetherian and with identity.  Unless otherwise stated the schemes and morphisms between them are quasi-compact, quasi-separated and essentially of finite type over $k$ (in particular, the schemes are noetherian).  For any morphism $f : X \to Y$, the associated mapping of sheaves of rings is denoted by $f^\# : \cc{O}_Y \to f_*\cc{O}_X$.  For any $k$-scheme $X$ by $F_X : X \to X$ we mean the absolute Frobenius morphism, i.e., the identity on the level of topological spaces and $F^{\#}_X :\cc{O}_X \to F_{X*}\cc{O}_X$ defined by $F^{\#}_X(f) = f^p$.  Moreover, for any morphism $f : X \to S$ by 
\[
F_{X/S} : X \to X^{(S)} \mydef X \times_{F_S} S
\] 
we denote the relative Frobenius morphism defined by the following diagram with a cartesian right square:
\begin{displaymath}
\xymatrix{
	X \ar@/_-2.0pc/[rrrr]^{F_X} \ar[rr]^-(0.65){F_{X/S}}\ar[rrd]^{f} & & X^{(S)} \ar[d]^{f^{(S)}}\ar[rr]^-(0.35){W_{X/S}} & & X\ar[d]^{f} \\
	& & S \ar[rr]^{F_S} & & S.}
\end{displaymath}  

By $W_n(k)$ we denote the ring of Witt vectors of length $n$ and by $\sigma : W_n(k) \to W_n(k)$ the associated Frobenius morphism given by the formula $(a_0,\ldots,a_{n-1}) \mapsto (a_0^p,\ldots,a_{n-1}^p)$

\subsubsection{Frobenius splitting}

By a Frobenius splitting of $X$ we mean a homomorphism $\varphi : F_{X*}\cc{O}_X \to \cc{O}_X$ of $\cc{O}_X$-modules which splits the canonical map $\F_{X} : \cO_X \to F_{X*}\cO_X$.  We say that a scheme $X$ is \emph{Frobenius-split} if there exists an associated Frobenius splitting.  For an exhaustive treatment of Frobenius splittings the reader is referred to \cite{brion_kumar}.

\subsubsection{Lifting of schemes and Frobenius morphism}

Let $X/S$ be a morphism of schemes and $S \to \wt{S}$ a closed immersion given by a square-zero ideal.

\begin{defin}\label{def:lifting} 
An \emph{$\wt{S}$-lifting} of $X/S$ is a flat morphism $\wt{X} \to \wt{S}$ together with an isomorphism $\wt{X} \times_{\wt{S}} S \isom X$.  
\end{defin}
We say that $X/S$ admits an $\wt{S}$-lifting or is $\wt{S}$-liftable if there exists a lifting $\wt{X}/\wt{S}$ as described above.  In particular, we say that $X/k$ lifts to $W_2(k)$ or is \emph{$W_2(k)$-liftable} if it admits a lifting $\wt{X}/W_2(k)$.  Moreover, we say that it lifts compatibly with Frobenius or is Frobenius liftable (abbr.\ $F$-liftable) if there exists a lifting $\wt{X}$ of $X$ together with a morphism $\wt{F} : \wt{X} \to \wt{X}$ over the Frobenius of $W_2(k)$ which restricts to $F_X : X \to X$, i.e., such that the following diagram is commutative:
\begin{displaymath}
    \xymatrix{ 
    		& X\ar[rr]^{F_X}\ar[ld]_i\ar[dd] & & X\ar[ld]_{i} \ar[dd] 	\\
       		\wt{X} \ar[rr]^(.55){\wt{F}}\ar[dd] & & \wt{X} \ar[dd] & \\ 
		& \Spec(k)\ar[ld]_h\ar[rr]^(.45){F_k} & & \Spec(k) \ar[ld]_{h} \\
		\Spec(W_2(k)) \ar[rr]^{\sigma} & & \Spec(W_2(k)).
		}
\end{displaymath}

For a perfect field $k$, we can reformulate Frobenius liftability in terms of the relative Frobenius morphism $F_{X/k} \colon X \to X \times_{k,F_k} \Spec(k)$ and its lifting $\wt{F}_{X/k} \colon \wt{X} \to \wt{X} \times_{W_2(k),\sigma} \Spec(W_2(k))$.

In the affine case, we sometimes identify liftings of $k$-schemes (resp. their Frobenius morphisms) with liftings of their associated $k$-algebras (resp. their $p$-th power mappings).

\subsubsection{Derived categories and derived functors}

Throughout the paper by $D_{\QCoh}(X)$ we denote the derived category of quasi-coherent sheaves on a scheme $X$.  We add $+$ (respectively $-$) in the superscript in order to indicate that we consider the category of bounded below (respectively above) complexes.  Moreover, by $D^-_{\Coh}(X)$ we denote the full subcategory of $D_{\QCoh}^-(X)$ consisting of complexes whose cohomology sheaves are coherent. Again, based on our assumptions, one can prove that any object in $D_{\Coh}^-(X)$ is quasi-isomorphic to a complex of coherent sheaves.   

\subsection{Structure of the paper}

The paper is organized as follows.  In \cref{sec:preliminaries} we give a few preliminary results concerning commutative algebra and homological algebra.  

In \cref{sec:witt_liftability} we investigate $W_2(k)$-liftability of schemes.  In particular, we prove that under suitable assumptions Witt vector liftability is a constructible property (see \cref{thm:constructible}) and give a construction of a $W_2(k)$-lifting of a Frobenius split scheme (see \cref{thm:construction_split}). 

In \cref{sec:frobenius_liftability} we address the problem of Frobenius liftability of singularities.  We derive a computational criterion for Frobenius liftability of complete intersection schemes (see \cref{lem:complete_intersections_criterion}) and consequently apply it to ordinary double points (see \cref{thm:cones_bhatt}) and canonical surface singularities (see \cref{thm:canonical_surface_frobenius}).  \cref{sec:relation_f-singularities} is devoted to comparing $W_2(k)$-liftability and $F$-liftability with classical notions of $F$-regularity, $F$-rationality and $F$-purity.   In \cref{sec:cones} we recall the classical approach to deformation of cones with an emphasis on $W_2(k)$-liftability.  Moreover, we extend the standard results with a study of Frobenius liftability. 

Due to their technical nature deformation theory considerations are deferred to \cref{appendix:deformation}.   We recall a few results concerning deformation functors and present a thorough description of functoriality of obstruction classes expressed in terms of cotangent complex. Moreover, we present a handful of remarks concerning deformations of affine schemes.


\section{Preliminaries}\label{sec:preliminaries}


\subsection{Basic commutative algebra, flatness criteria}

We recall basic facts concerning flatness of $W_2(k)$-modules and algebras.  Under standard bijection $W_2(k) \isom k^2$ the element $p \in W_2(k)$ corresponds to $(0,1) \in k^2$.

\begin{lemma}\label{lem:lifting}
A $W_2(k)$-module $M$ is flat if and only if the annihilator $Ann_M( p) = \{m \in M: pm = 0\}$ of $p$ in $M$ is equal to $pM$, i.e., the natural mapping $M/pM \to pM$ given by $[m] \mapsto pm$ is injective.
\end{lemma}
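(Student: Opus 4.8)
The plan is to reduce the statement to the vanishing of a single $\Tor$ group, which I then compute by hand. I would start from the standard criterion that a module $M$ over a noetherian ring $R$ is flat if and only if $\Tor_1^R(R/I,M)=0$ for every ideal $I\subseteq R$. The point is that $W_2(k)$ has very few ideals: its maximal ideal is $pW_2(k)$, it satisfies $\bigl(pW_2(k)\bigr)^2=0$, and $pW_2(k)$ is generated by $p$ with $Ann_{W_2(k)}(p)=pW_2(k)$ (immediate from the description of multiplication in $W_2(k)$, or from a length count), so $pW_2(k)\isom W_2(k)/pW_2(k)=k$ as a $W_2(k)$-module and hence has no proper nonzero submodule. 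Consequently the only ideals of $W_2(k)$ are $0$, $pW_2(k)$ and $W_2(k)$, and for the first and last the corresponding $\Tor_1$ vanishes trivially; thus flatness of $M$ is equivalent to the single condition $\Tor_1^{W_2(k)}\bigl(W_2(k)/pW_2(k),M\bigr)=0$.

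To compute this $\Tor$ group I would tensor the short exact sequence $0\to pW_2(k)\to W_2(k)\to W_2(k)/pW_2(k)\to 0$ with $M$; since $\Tor_1^{W_2(k)}(W_2(k),M)=0$, the long exact sequence yields a natural identification
\[
\Tor_1^{W_2(k)}\bigl(W_2(k)/pW_2(k),M\bigr)\isom\ker\bigl(pW_2(k)\otimes_{W_2(k)}M\longrightarrow M\bigr),
\]
the arrow being induced by the inclusion $pW_2(k)\hookrightarrow W_2(k)$. Using the isomorphism $pW_2(k)\isomto W_2(k)/pW_2(k)$ sending $p$ to $1$, we identify $pW_2(k)\otimes_{W_2(k)}M\isom M/pM$, and under this identification the displayed arrow becomes $[m]\mapsto pm$; its kernel is $Ann_M(p)/pM$, which is meaningful because $pM\subseteq Ann_M(p)$ (as $p^2=0$).

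Combining the two steps, $M$ is flat over $W_2(k)$ exactly when $Ann_M(p)/pM=0$, that is, $Ann_M(p)=pM$. For the final equivalent phrasing I would note that the map $M/pM\to pM$, $[m]\mapsto pm$, is well defined and surjective by construction, so it is injective if and only if it is bijective, and its injectivity says precisely $Ann_M(p)\subseteq pM$, hence — the reverse inclusion being automatic — $Ann_M(p)=pM$. I do not expect a genuine obstacle here; the only slightly delicate bookkeeping is the identification of the inclusion-induced map $pW_2(k)\otimes_{W_2(k)}M\to M$ with $[m]\mapsto pm$ after passing through the rank-one identification $pW_2(k)\isom k$.
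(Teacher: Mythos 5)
Your proof is correct and follows essentially the same route as the paper: both arguments reduce flatness to the single ideal $pW_2(k)$ via the ideal-theoretic criterion, identify $pW_2(k)\otimes_{W_2(k)}M$ with $M/pM$, and recognize the resulting map $[m]\mapsto pm$ as having kernel $Ann_M(p)/pM$. The only cosmetic difference is that you phrase the criterion via $\Tor_1$ and explicitly enumerate the ideals of $W_2(k)$, whereas the paper cites the injectivity-on-ideals criterion directly and obtains the identification from the presentation $W_2(k)\xrightarrow{p\cdot}W_2(k)\to pW_2(k)\to 0$.
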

\begin{proof}
We firstly prove that $pW_2(k) \otimes_{W_2(k)} M$ is isomorphic to $M/pM$.  This follows from right exactness of the functor $- \otimes M$ and a resolution of $pW_2(k)$:
\begin{displaymath}
    \xymatrix{
        W_2(k) \ar[r]^{p \cdot} & W_2(k) \ar[r]^{p \cdot} & pW_2(k) \ar[r] & 0. }
\end{displaymath}
By a well-known criterion stating that $M$ is flat over $W_2(k)$ if and only if $- \otimes_{W_2(k)} M$ preserves any injection of ideal of $W_2(k)$ (see \cite{atiyah_macdonald}, Exercise 2.24), the flatness of $M$ is now equivalent to the injectivity of the multiplication by $p$ mapping $pW_2(k) \otimes_{W_2(k)} M \to M$.  By the isomorphism above, this mapping is the same as $m_p : M/pM \to M$ given by $[m] \mapsto pm$.  The kernel of $m_p$ is exactly $Ann_M(p)/pM \subset M/pM$ and therefore injectivity is equivalent to the hypothesis of the lemma.
\end{proof}

As a simple corollary we obtain:

\begin{cor}\label{cor:lift_criterion}
A ring $B/W_2(k)$ is a flat lifting of $A/k$ if and only if $B/pB \isom A$ and $pB = Ann_B(p)$. Moreover, for any flat lifting $B/W_2(k)$ of $A$ the quotient $B/J$ is a flat lifting of $A/I$ if and only if $J$ is a lifting of $I$ and $(p) \cap J = pJ$.
\end{cor}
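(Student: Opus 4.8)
The plan is to reduce everything to \cref{lem:lifting}, applied both to the $W_2(k)$-module $B$ and to the $W_2(k)$-module $B/J$, and to track carefully how the ideal $J$ and the quotient interact with multiplication by $p$. For the first assertion, note that $B$ flat over $W_2(k)$ with $B/pB \isom A$ is precisely the definition of a flat lifting, and by \cref{lem:lifting} flatness of $B$ as a $W_2(k)$-module is equivalent to $Ann_B(p) = pB$; this gives the stated criterion immediately, so the content is really in the ``moreover'' part.

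For the ``moreover'' part, I assume $B$ is a fixed flat lifting of $A$ and $J \subseteq B$ an ideal. First I would unwind what it means for $B/J$ to be a flat lifting of $A/I$: by the first part, this holds if and only if $(B/J)\big/ p(B/J) \isom A/I$ as a quotient of $A = B/pB$, and $Ann_{B/J}(p) = p(B/J)$. The first condition, $(B/J)/p(B/J) = B/(J + pB) \isom A/I$, says exactly that the image of $J$ in $A = B/pB$ equals $I$, i.e., that $J$ is a lifting of $I$ in the sense used in the statement. So it remains to show that, \emph{assuming} $J$ lifts $I$, the condition $Ann_{B/J}(p) = p(B/J)$ is equivalent to $(p) \cap J = pJ$.

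For this equivalence I would compute $Ann_{B/J}(p)$ directly: $\bar b \in Ann_{B/J}(p)$ means $pb \in J$, i.e., $b \in (J :_B p) = \{b \in B : pb \in J\}$, so $Ann_{B/J}(p) = (J:_B p)/J$. Thus the desired condition is $(J:_B p) = J + pB$. One inclusion, $J + pB \subseteq (J:_B p)$, is automatic since $p \cdot pB = p^2 B = 0$ in $W_2(k)$-modules (as $p^2 = 0$ in $W_2(k)$ and $B$ is a $W_2(k)$-algebra) and $p J \subseteq J$. For the reverse inclusion, take $b$ with $pb \in J$; then also $pb \in pB$, so $pb \in (p)\cap J$. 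Here I use flatness of $B$: by \cref{lem:lifting}, $Ann_B(p) = pB$, so $pb \in pB$ lies in $p\cdot(\text{anything})$ in a controlled way, and more importantly $pB \cong B/pB = A$ via $b \mapsto pb$ with kernel exactly $pB$. Under this identification $(p)\cap J$ corresponds to a submodule of $A$, and $pJ$ corresponds to the image of $J$; the condition $(p)\cap J = pJ$ translates, via the isomorphism $B/pB \xrightarrow{\sim} pB$, into the statement that $(J :_B p) \subseteq J + pB$. Chasing this identification in both directions yields the claimed equivalence.

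The main obstacle I expect is bookkeeping rather than depth: one must be scrupulous about distinguishing the ideal $(p) = pB \subseteq B$ from the module $pJ$, and about the fact that the natural map $B/pB \to pB$ being an isomorphism (not just a surjection) is exactly what flatness of $B$ buys us and is exactly what makes ``$(p)\cap J = pJ$'' the right hypothesis. Once the isomorphism $B/pB \isom pB$ is in hand and one writes $Ann_{B/J}(p) = (J:_B p)/J$, the rest is a short diagram chase; no further commutative-algebra input is needed beyond \cref{lem:lifting} and $p^2 = 0$ in $W_2(k)$.
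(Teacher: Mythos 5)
Your proposal is correct and follows essentially the same route as the paper: both reduce the condition $Ann_{B/J}(p)=p(B/J)$ to the ideal identity $(J:_B p)=J+pB$ and then translate this into $(p)\cap J=pJ$ using the injectivity of $B/pB\to pB$ supplied by \cref{lem:lifting}. The paper packages the last step as a chain of equivalences $ (J:(p))=(J+p)\Leftrightarrow p(J:(p))=p(J+p)\Leftrightarrow (p)\cap J=pJ$, which is exactly the element chase you describe.
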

\begin{proof}
The first part follows directly from \cref{lem:lifting}. To prove the second part, we apply the following sequence of equivalences:
\begin{align}
Ann_{B/J}{p} = p \cdot B/J \Leftrightarrow (J : (p)) = (J+p) \Leftrightarrow p(J : (p)) = p(J+p) \Leftrightarrow (p) \cap (J) = pJ,
\end{align}
where the middle one follows from the inclusions $(J : (p)) \supset (p) \subset (J+p)$ and the injectivity of the mapping $B/p \to pB$.
\end{proof}

\begin{lemma}\label{lem:reg_seq}
Suppose $B/W_2(k)$ is a flat lifting of $B/p$.  Then, any lift $f \in B$ of a non-zerodivisor $\overline{f} \in B/pB$ is a non-zerodivisor.  Moreover, $B/fB$ is a $W_2(k)$-lifting of $B/(p,\overline{f})B$.
\end{lemma}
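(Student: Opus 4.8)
The plan is to deduce both assertions from the flatness criterion \cref{lem:lifting} and its \cref{cor:lift_criterion}, exploiting throughout that $p^2 = 0$ in $W_2(k)$ (since $W_2(k) = W(k)/p^2 W(k)$ with $W(k)$ a discrete valuation ring of uniformizer $p$), hence $p^2 B = 0$, so that multiplication by $p$ on $B$ always lands in $pB$ and can be analysed after reduction modulo $p$ via the fact that $\bar f$ is a non-zerodivisor in $B/pB$.

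First I would prove that $f$ is a non-zerodivisor. Suppose $bf = 0$ for some $b \in B$. Reducing modulo $p$ gives $\bar b \bar f = 0$ in $B/pB$, so $\bar b = 0$ because $\bar f$ is a non-zerodivisor there; write $b = pb'$. Then $p(b'f) = bf = 0$, so $b'f \in \mathrm{Ann}_B(p)$, which equals $pB$ by flatness of $B$ over $W_2(k)$ and \cref{lem:lifting}. Reducing $b'f$ modulo $p$ and using once more that $\bar f$ is a non-zerodivisor yields $\bar{b'} = 0$, i.e. $b' \in pB$, and therefore $b = pb' \in p^2 B = 0$.

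For the second claim I would apply the second part of \cref{cor:lift_criterion} to the flat lifting $B/W_2(k)$ of $A \mydef B/pB$ and the ideal $J \mydef fB$. That $J$ is a lifting of the ideal $I \mydef \bar f A \subseteq A$ is immediate, since the image of $fB$ under $B \to B/pB$ is precisely $\bar f A$, so that $(J + pB)/pB = I$; the remaining hypothesis $(p) \cap J = pJ$ unwinds to the identity $pB \cap fB = pfB$. The inclusion $\supseteq$ is clear; for $\subseteq$, an element of $pB \cap fB$ may be written $pb = fc$, and reducing modulo $p$ gives $\bar f \bar c = 0$, hence $\bar c = 0$, hence $c = pc'$ and $pb = fc = p(fc') \in pfB$. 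Then \cref{cor:lift_criterion} gives that $B/fB = B/J$ is a flat $W_2(k)$-lifting of $A/I = B/(p,f)B$. Alternatively one can bypass \cref{cor:lift_criterion} and check directly through \cref{lem:lifting} that $\mathrm{Ann}_{B/fB}(p) = p(B/fB)$ by the same reduction-modulo-$p$ manipulation, together with the obvious isomorphism $(B/fB)/p(B/fB) \isom B/(p,f)B$.

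I do not expect a genuine obstacle: the computation is elementary bookkeeping. The only points requiring some care are keeping track of $p^2 = 0$ so that each application of ``multiply by $p$'' can be examined modulo $p$, and making sure that the hypotheses of \cref{cor:lift_criterion} are literally satisfied — in particular that $fB$ really is a lifting of $\bar f A$ in the sense used there, which, as noted, is automatic from $f \mapsto \bar f$.
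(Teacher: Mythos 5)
Your proof is correct. The non-zerodivisor argument is the same as the paper's (reduce mod $p$ twice, using $\mathrm{Ann}_B(p)=pB$ and $p^2=0$); for the flatness of $B/fB$ the paper's primary route is instead via the vanishing of $\Tor^{W_2(k)}_1(B/fB,k)$ computed from the resolution $0 \to B \xrightarrow{f} B \to B/fB \to 0$, but it explicitly offers as an alternative exactly the computation $(f)\cap(p)=(pf)$ combined with \cref{cor:lift_criterion} that you carry out in detail, so your argument matches an approach the paper itself endorses.
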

\begin{proof}
Assume that $g \in B$ satisfies the equality $gf = 0$.  By reducing mod $p$ we see that $\overline{g}\cdot \overline{f} = 0$ and therefore $g = ph$ for some $h \in B$.  Again, using the assumption and the isomorphism $B/pB \isom pB$ (see \cref{lem:lifting}), we see that $\overline{h} \cdot \overline{f} = 0$ and consequently $h = ph'$.  This implies that $g = p^2h' = 0$ yielding the first part of the lemma.  For the second part, we consider the flat resolution of $B/fB$ given by $0 \ra B \ra B \ra B/fB \ra 0$.  By reducing mod $p$ we see that $\Tor^{W_2(k)}_1(B/fB,k) = 0$ and therefore $B/fB$ is $W_2(k)$-flat (see \cite[\href{http://stacks.math.columbia.edu/tag/051C}{Tag 051C}]{stacks-project}).  Alternatively, we can prove that $(f) \cap (p) = (pf)$ and use the above \cref{cor:lift_criterion}.
\end{proof}

Consequently we get the following result.

\begin{lemma}\label{lemma:def_ci}
Let $I = (f_1,\ldots,f_m)$ be an ideal in $k[x_1,\ldots,x_n]$.  Every $W_2(k)$-lifting of $k[x_1,\ldots,x_n]/I$ is given by $W_2(k)[x_1,\ldots,x_n]/J$, where $J$ is generated a sequence of lifts of $\{f_i\}$.  Conversely, if $(f_1,\ldots,f_m)$ is a regular sequence then for every ideal $J$ generated by the sequence of lifts of $\{f_i\}$ the ring $W_2(k)[x_1,\ldots,x_n]/J$ is a $W_2(k)$-lifting.
\end{lemma}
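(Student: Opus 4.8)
The plan is to handle the two assertions separately, writing $\underline{x} = (x_1,\ldots,x_n)$ for brevity. For the structural statement, let $B/W_2(k)$ be a lifting of $A = k[\underline{x}]/I$, so $B/pB \isom A$. First I lift the images of $x_1,\ldots,x_n$ in $A$ to elements of $B$, producing a $W_2(k)$-algebra homomorphism $\phi \colon W_2(k)[\underline{x}] \ra B$. Reduced modulo $p$ this is the surjection $k[\underline{x}] \twoheadrightarrow A$, hence $\Coker(\phi) = p\cdot\Coker(\phi)$, and since $p^2 = 0$ we get $\Coker(\phi) = 0$; so $\phi$ is surjective and $B \isom W_2(k)[\underline{x}]/J$ with $J = \Ker(\phi)$. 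Applying $-\otimes_{W_2(k)} k$ to $0 \ra J \ra W_2(k)[\underline{x}] \ra B \ra 0$ and using $\Tor_1^{W_2(k)}(B,k) = 0$ (flatness of $B$), I get an exact sequence $0 \ra J/pJ \ra k[\underline{x}] \ra A \ra 0$, from which I read off two facts: the image of $J$ under reduction mod $p$ equals $I$, and $J \cap pW_2(k)[\underline{x}] = pJ$. Now choose $g_i \in J$ with $g_i \equiv f_i \pmod{p}$ (possible by the first fact) and set $J' = (g_1,\ldots,g_m) \subseteq J$. For $h \in J$ the reduction $\bar h$ lies in $I$, which is also the image of $J'$, so $h - h' \in pW_2(k)[\underline{x}] \cap J = pJ$ for a suitable $h' \in J'$; hence $J = J' + pJ$. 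Multiplying by $p$ gives $pJ = pJ'$, and therefore $J = J' + pJ = J' + pJ' = J'$, which proves $J = (g_1,\ldots,g_m)$.

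For the converse, assume $(f_1,\ldots,f_m)$ is a regular sequence in $k[\underline{x}]$ and let $J = (g_1,\ldots,g_m)$ with $g_i \equiv f_i \pmod{p}$. I start from $B_0 = W_2(k)[\underline{x}]$, which is a flat $W_2(k)$-lifting of $k[\underline{x}]$ since the kernel of multiplication by $p$ on $B_0$ is exactly $pB_0$ (apply \cref{cor:lift_criterion}). Since $f_1$ is a non-zerodivisor in $B_0/pB_0 = k[\underline{x}]$, \cref{lem:reg_seq} shows that $B_1 \mydef B_0/g_1B_0$ is a $W_2(k)$-lifting of $k[\underline{x}]/(f_1)$. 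As $f_1,\ldots,f_m$ is a regular sequence, $f_{i+1}$ is a non-zerodivisor on $B_i/pB_i = k[\underline{x}]/(f_1,\ldots,f_i)$, so applying \cref{lem:reg_seq} repeatedly produces $B_{i+1} \mydef B_i/g_{i+1}B_i = W_2(k)[\underline{x}]/(g_1,\ldots,g_{i+1})$ as a $W_2(k)$-lifting of $k[\underline{x}]/(f_1,\ldots,f_{i+1})$. After $m$ steps $W_2(k)[\underline{x}]/J$ is a $W_2(k)$-lifting of $A$, as desired.

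I do not anticipate a real obstacle: the whole argument is bookkeeping with the exact sequence $0 \ra J \ra W_2(k)[\underline{x}] \ra B \ra 0$, the vanishing $\Tor_1^{W_2(k)}(B,k) = 0$, and the nilpotence $p^2 = 0$, supplemented by a one-line induction on \cref{lem:reg_seq}. The only mildly delicate point is the closing step of the structural part, deducing $J = J'$ from $J = J' + pJ$, which is precisely where $p^2 = 0$ is used; I should also be careful that the $g_i$ can be chosen inside $J$ (not merely lifting the $f_i$ in $W_2(k)[\underline{x}]$), which is guaranteed by surjectivity of the reduction $J \to I$.
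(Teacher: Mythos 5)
Your proof is correct and follows the paper's route: for the converse you run exactly the inductive application of \cref{lem:reg_seq} that the paper prescribes, and for the structural statement you supply in full the standard argument (surjectivity of $\phi$ via $p^2=0$, the $\Tor_1$ vanishing giving $J\cap pW_2(k)[x_1,\ldots,x_n]=pJ$, and the Nakayama-style step $J=J'+pJ\Rightarrow J=J'$) that the paper outsources to the citation of Hartshorne's Theorem 10.1. No gaps; the one delicate point you flag, choosing the $g_i$ inside $J$ rather than as arbitrary lifts, is handled correctly by the exactness of $0\to J/pJ\to k[x_1,\ldots,x_n]\to A\to 0$.
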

\begin{proof}
For the first part see \cite[{Theorem 10.1}]{hartshorne_deformation}.  For the converse, apply the above \cref{lem:reg_seq}, inductively.
\end{proof}


\subsection{Base change for higher direct images}

We now present a few results concerning derived inverse images and base change for unbounded complexes.

\begin{defin}[Morphism of finite Tor dimension]
We say that a morphism of schemes $f : X \to Y$ is of \emph{finite Tor dimension} if $\cO_X$ is a $f^{-1}\cO_Y$-module of finite Tor dimension. 
\end{defin}

For any morphism $f : X \to Y$ of finite Tor dimension the structural sheaf $\cO_X$ is in fact quasi-isomorphic to a bounded complex of flat $f^{-1}\cO_Y$-modules (see  \cite[\href{http://stacks.math.columbia.edu/tag/08CI}{Tag 08CI}]{stacks-project}).

\begin{remark}
In the case of morphisms of quasi-projective schemes over a field by \cite[II.1.2]{fulton_macpherson} we know that the following morphisms are of finite Tor dimension:
\begin{enumerate}[a)] 
\item morphism with a smooth target, 
\item flat morphisms,
\item regular and Koszul closed immersions. 
\end{enumerate}
\end{remark}

Due to the lack of an adequate reference, we give a detailed proof of the following lemma.

\begin{lemma}[Spectral sequence for derived pullback]\label{lem:spectral_seq}
Suppose $f : X \to Y$ is a morphism of finite Tor dimension.  Then, for any $\cF^{\bullet} \in D(\cO_X)$ there exists a spectral sequence:
\[
E^{pq}_2 = L^pf^*\left(\cH^q(\cF^{\bullet})\right) \Rightarrow L^{p+q}f^*\cF^{\bullet}.
\] 
\end{lemma}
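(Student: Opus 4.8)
The plan is to reduce to a standard hyperology/Cartan--Eilenberg argument by first replacing $\cF^\bullet$ with a well-chosen representative. Since $f$ is of finite Tor dimension, the remark cited above (\cite[\href{http://stacks.math.columbia.edu/tag/08CI}{Tag 08CI}]{stacks-project}) tells us that $\cO_X$ admits a \emph{bounded} resolution by flat $f^{-1}\cO_Y$-modules, say $\cP^\bullet \to \cO_X$ concentrated in degrees $[-d,0]$. The derived pullback $Lf^*\cF^\bullet$ is then computed, for an arbitrary $\cF^\bullet \in D(\cO_X)$, as $f^{-1}\cF^\bullet \otimes_{f^{-1}\cO_Y} \cP^\bullet$ after first resolving: more precisely, the point is that because $\cP^\bullet$ is a \emph{bounded} complex of flats, the functor $f^{-1}(-)\otimes_{f^{-1}\cO_Y}\cP^\bullet$ already computes $Lf^*$ on all of $D(\cO_X)$ without any boundedness hypothesis on $\cF^\bullet$ — tensoring with a bounded flat complex is exact up to quasi-isomorphism and preserves acyclicity. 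This is the step where finite Tor dimension is essential; for a general morphism one would be stuck needing $K$-flat resolutions of $\cF^\bullet$ and the hypercohomology spectral sequence would not converge nicely.

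First I would make this reduction precise: fix the bounded flat complex $\cP^\bullet$ resolving $\cO_X$ over $f^{-1}\cO_Y$, and observe that $Lf^*\cF^\bullet \simeq f^{-1}\cF^\bullet \otimes_{f^{-1}\cO_Y} \cP^\bullet$ functorially in $\cF^\bullet$ (checking this on a $K$-flat, or just on a complex of flats, representative of $\cF^\bullet$, and using that the two resolutions can be combined into a double complex whose totalization maps quasi-isomorphically to both single-resolution answers). Then $L^pf^*$ of an individual sheaf $\cG$ is simply $\cH^{p}\big(f^{-1}\cG \otimes_{f^{-1}\cO_Y}\cP^\bullet\big)$, which vanishes outside the range $p\in[-d,0]$.

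Next I would build the spectral sequence. Consider the filtration of $\cF^\bullet$ (or of a complex of flats representing it) by its canonical truncations $\tau_{\le q}\cF^\bullet$, giving short exact sequences relating $\tau_{\le q-1}$, $\tau_{\le q}$, and the shifted cohomology sheaf $\cH^q(\cF^\bullet)[-q]$. Applying the exact-up-to-quasi-isomorphism functor $f^{-1}(-)\otimes_{f^{-1}\cO_Y}\cP^\bullet$ produces, via the associated exact couple, a spectral sequence whose $E_1$ (after reindexing to land on $E_2$) has terms $L^p f^*(\cH^q(\cF^\bullet))$ and which converges to $\cH^{p+q}$ of $f^{-1}\cF^\bullet \otimes \cP^\bullet = L^{p+q}f^*\cF^\bullet$. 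Equivalently and more cleanly, one can take a Cartan--Eilenberg resolution, or simply filter the double complex $f^{-1}(\cG^\bullet)\otimes_{f^{-1}\cO_Y}\cP^\bullet$ (with $\cG^\bullet$ a complex of flats representing $\cF^\bullet$) by columns: the first-page differentials compute cohomology of $\cG^\bullet$, i.e.\ $\cH^q(\cF^\bullet)$, then tensoring that row with $\cP^\bullet$ and taking cohomology in the $\cP$-direction yields exactly $L^p f^*\cH^q(\cF^\bullet)$ on the $E_2$ page. Convergence is unconditional because the $\cP$-direction is bounded (length $\le d$), so for each total degree only finitely many terms contribute, regardless of whether $\cF^\bullet$ is bounded.

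The main obstacle I anticipate is purely bookkeeping rather than conceptual: one must be careful that $Lf^*$ really is computed by $f^{-1}(-)\otimes_{f^{-1}\cO_Y}\cP^\bullet$ on the \emph{unbounded} derived category $D(\cO_X)$ — this is where finite Tor dimension does its work and where a sloppy argument would silently assume boundedness — and one must pin down the indexing conventions so that the spectral sequence genuinely reads $E_2^{pq}=L^pf^*\cH^q(\cF^\bullet)$ with $L^p f^* = H^{-p}$ of the relevant complex (note $p$ ranges over nonpositive integers here, matching the convention $L^0 f^* = f^*$). Once the representative $f^{-1}(\cG^\bullet)\otimes_{f^{-1}\cO_Y}\cP^\bullet$ is fixed, everything else is the standard spectral sequence of a double complex, and I would present it that way.
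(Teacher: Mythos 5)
Your proposal is correct and follows essentially the same route as the paper: both use the bounded flat resolution $\cE^\bullet$ (your $\cP^\bullet$) of $\cO_X$ over $f^{-1}\cO_Y$ supplied by the finite Tor dimension hypothesis, realize $Lf^*\cF^\bullet$ as the totalization of the double complex $f^{-1}\cF^\bullet\otimes_{f^{-1}\cO_Y}\cE^\bullet$ via \cite[Tag 08DE]{stacks-project}, and obtain the spectral sequence from the double-complex spectral sequence, with convergence guaranteed by the boundedness of the flat-resolution direction. The identification of the $E_2$-terms via exactness of $f^{-1}$ and flatness of $\cE^\bullet$ is likewise the same in both arguments.
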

\begin{proof}
This is an application of the spectral sequence of a double complex.  Indeed, by \cite[\href{http://stacks.math.columbia.edu/tag/08DE}{Tag 08DE}]{stacks-project} we see that the total derived pullback $Lf^*\cF^\bullet$ is given by the totalisation of the double complex $f^{-1}\cF^\bullet \otimes_{f^{-1}\cO_Y} \cE^\bullet$ where $\cE^\bullet$ is a bounded $f^{-1}\cO_Y$-flat resolution of $\cO_X$ existing by the assumption on $f$.  By the classical result there exist a spectral sequence: 
\[
E^{pq}_2 = \cH^p\cH^q(f^{-1}\cF^\bullet \otimes_{f^{-1}\cO_Y} \cE^\bullet) \Rightarrow \cH^{p+q}\mathrm{Tot}(f^{-1}\cF^\bullet \otimes_{f^{-1}\cO_Y} \cE^\bullet),
\]
whose convergence follows from the fact that $f^{-1}\cF^\bullet \otimes_{f^{-1}\cO_Y} \cE^\bullet$ is a double complex supported in the bounded horizontal strip $\{(i,j) : j \in \{k : \cE^k \neq 0\}\}$.  The sheaves $\cH^{p+q}\mathrm{Tot}(f^{-1}\cF^\bullet \otimes_{f^{-1}\cO_Y} \cE^\bullet)$ are easily identified with $L^{p+q}f^*\cF^{\bullet}$.  Moreover, by the exactness of the functor $f^{-1}$ and flatness of $\cE^\bullet$ (in fact, we use \cite[Tag 08DE]{stacks-project} again), we see that 
\[
\cH^p\cH^q(f^{-1}\cF^\bullet \otimes_{f^{-1}\cO_Y} \cE^\bullet) \isom \cH^p(f^{-1}\cH^q(\cF^\bullet) \otimes_{f^{-1}\cO_Y} \cE^{\bullet}) \isom L^pf^*\cH^q(\cF^\bullet).
\] 
This gives the desired result.
\end{proof}

\begin{lemma}\label{lem:cohomology_base_change}
Let $f : X \to S$ be a locally of finite type morphism of schemes, and $\cc{E}$ a complex in $D_{\QCoh}(X)$.  Then for every cartesian diagram:
\begin{displaymath}
\xymatrix{
	X_T \ar[d]^{f_T}\ar[r]^{i_f} & X \ar[d]^f \\
	T \ar[r]^i & S,}
\end{displaymath} 
the natural base change $\phi : Li^*Rf_*\cc{E} \isomto Rf_{T*}Li_f^*\cc{E}$ is an isomorphism.  
\end{lemma}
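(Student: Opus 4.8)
The plan is to reduce the statement to the local case and then to a standard derived base-change assertion for quasi-coherent complexes, using the hypotheses (noetherianity, essential finite type, quasi-compact and quasi-separated) to make all the relevant derived functors well-behaved on unbounded complexes. First I would observe that the question is local on $S$ and on $T$, so I may assume $S = \Spec R$ and $T = \Spec R'$ are affine; since $f$ is quasi-compact and quasi-separated, $X$ admits a finite affine open cover, and $X_T$ is covered by the corresponding fibre products. The natural base change morphism $\phi$ is constructed in the usual way, as the adjoint of the composite $Rf_*\cE \to Rf_* Ri_{f*} Li_f^* \cE \isom Ri_* Rf_{T*} Li_f^* \cE$, using the commutativity $f \comp i_f = i \comp f_T$ and the projection/adjunction formulas; so the content is that this morphism is an isomorphism, which can be checked after applying $i^{-1}$ to an honest complex representing each side and comparing cohomology sheaves.

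The key steps, in order, are as follows. \textbf{(1)} Reduce to $S$, $T$ affine and $X$ quasi-compact quasi-separated, as above. \textbf{(2)} Handle the case where $X$ itself is affine: here $Rf_*$ is just the (underived, on quasi-coherent complexes it agrees with the derived) global sections functor into $R$-complexes, $Li^*$ is $-\otim_R^{\bb L} R'$, and the claim is the elementary fact that $(\Gamma(X,\cE))\otimes_R^{\bb L} R' \simeq \Gamma(X_T, Li_f^*\cE)$; this follows because $X_T = \Spec(\cO_X(X)\otimes_R R')$ and flat base change on modules, after replacing $\cE$ by a K-flat complex of quasi-coherent sheaves (which exists by our standing hypotheses — cf. the discussion of $D^-_{\Coh}$ and the Stacks Project). \textbf{(3)} Bootstrap from affine $X$ to general quasi-compact quasi-separated $X$ by induction on the number of affines in a cover, via the Mayer--Vietoris / \v{C}ech descent triangle: if $X = U \cup V$ with $U$, $V$, $U\cap V$ covered by fewer affines, then $Rf_*\cE$ fits in a distinguished triangle built from $Rf_*$ of the restrictions to $U$, $V$, $U\cap V$, and $Li^*$ (being exact on triangles) together with the inductive hypothesis and compatibility of restriction with base change gives the result by the five lemma on the long exact cohomology sequences. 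Alternatively, and more cleanly, I would cite the Stacks Project tag on cohomology and base change for quasi-coherent complexes on quasi-compact quasi-separated morphisms (e.g.\ \cite[\href{http://stacks.math.columbia.edu/tag/08IB}{Tag 08IB}]{stacks-project}), which packages exactly this statement; the proof above is the one given there in outline.

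The main obstacle is the usual subtlety of working with \emph{unbounded} complexes: one must know that $Rf_*$ on $D_{\QCoh}(X)$ commutes with the homotopy colimits / direct sums needed to run the \v{C}ech argument, and that K-flat resolutions by quasi-coherent sheaves exist so that $Li_f^*$ is computed correctly. Both are guaranteed here by the noetherian, quasi-compact, quasi-separated hypotheses (which make $D_{\QCoh}(X)$ compactly generated and $Rf_*$ bounded on it), so no genuinely new input is required beyond assembling these standard facts; the role of $f$ being locally of finite type is only to ensure $X_T$ is again in the class of schemes under consideration and that the cover is finite. I expect the write-up to be short: reduce to affine base, reduce to affine $X$ by \v{C}ech, and invoke flat base change for modules.
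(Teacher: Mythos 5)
The paper's own proof of this lemma is a one-line citation to \cite[Tag 0A1D]{stacks-project}, so your reconstruction (reduce to affine $S$ and $T$, handle affine $X$, then bootstrap over a finite affine cover by a \v{C}ech/Mayer--Vietoris induction) is doing the work the paper outsources, and the outer two steps of that outline are sound.

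However, step (2) --- the affine case, which is the mathematical core --- has a genuine gap, and in fact the statement as written is false without an extra hypothesis. Write $A = \cO_X(X)$ and $M = \Gamma(X,\cE)$. Your claim amounts to $M \otimes^{\mathbf{L}}_R R' \simeq M \otimes^{\mathbf{L}}_A (A \otimes_R R')$, justified by ``flat base change on modules'' after taking a K-flat resolution of $M$ over $A$. But a K-flat complex of $A$-modules need not be K-flat over $R$ unless $A$ is flat over $R$, and nothing in the hypotheses supplies any flatness; the two sides agree precisely when $\Tor_i^R(A,R') = 0$ for $i>0$, i.e.\ when $X$ and $T$ are Tor-independent over $S$. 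Concretely, take $S = \Spec k[t]$, let $f$ and $i$ both be the inclusion of the origin, and $\cE = \cO_X$: then $Li^*Rf_*\cO_X = k \otimes^{\mathbf{L}}_{k[t]} k$ has nonzero $H^{-1}$, while $Rf_{T*}Li_f^*\cO_X = k$ sits in degree $0$, so $\phi$ is not an isomorphism even though $f$ is locally of finite type. The Stacks Project lemma the paper cites carries exactly this Tor-independence hypothesis, which has been dropped from the statement here; the omission is harmless for the paper because in the only application (\cref{lem:tamely_base_change}) the morphism $f$ is flat, so Tor-independence is automatic. Your write-up should restore that hypothesis (or assume $f$ or $i$ flat) before running the affine computation; with it, the rest of your argument goes through.
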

\begin{proof}
For the proof we refer to \cite[\href{http://stacks.math.columbia.edu/tag/0A1D}{Tag 0A1D}]{stacks-project}. 
\end{proof}

\begin{lemma}\label{lem:derived_dual}
Suppose $f : X \to Y$ is a morphism of schemes.  Then, for every $\cF^\bullet \in D^-_{\Coh}(Y)$ there exists a natural isomorphism $Lf^*\RcHom(\cF^\bullet,\cO_Y) \isom \RcHom(Lf^*\cF^\bullet,\cO_X)$.
\end{lemma}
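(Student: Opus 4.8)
The plan is to reduce to the affine case, replace $\cF^\bullet$ by a bounded-above complex of finite free sheaves, and thereby recognise both sides of the asserted isomorphism as base changes of a single dual complex.

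First I would note that the assertion is local on $X$ and $Y$, so that I may assume $X=\Spec B$, $Y=\Spec A$ with $A\to B$ the ring homomorphism underlying $f$, and that $\cF^\bullet$ corresponds to a complex $M^\bullet\in D^-_{\Coh}(A)$. As $A$ is noetherian, $M^\bullet$ admits a quasi-isomorphism $\mathcal{P}^\bullet\to M^\bullet$ from a bounded-above complex $\mathcal{P}^\bullet$ of finite free $A$-modules. Since each $\mathcal{P}^n$ is finite free, $\RHom_A(M^\bullet,A)$ is computed by the termwise dual $\mathcal{P}^{\bullet\vee}$, whose associated sheaf computes $\RcHom(\cF^\bullet,\cO_Y)$; likewise $\mathcal{P}^\bullet$, being a bounded-above complex of flat modules, is $K$-flat, so $B\otimes_A\mathcal{P}^\bullet$ computes $Lf^*\cF^\bullet$, and being a bounded-above complex of finite free $B$-modules it shows that $\RcHom(Lf^*\cF^\bullet,\cO_X)$ is computed by $(B\otimes_A\mathcal{P}^\bullet)^\vee$. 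As the evident termwise maps $B\otimes_A(\mathcal{P}^n)^\vee\to(B\otimes_A\mathcal{P}^n)^\vee$ are isomorphisms, everything reduces to the claim that $Lf^*$ can be computed termwise on the complex $\mathcal{P}^{\bullet\vee}$, i.e.\ that the natural map $Lf^*\mathcal{P}^{\bullet\vee}\to B\otimes_A\mathcal{P}^{\bullet\vee}$ is an isomorphism.

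This is the only genuine subtlety, and it is precisely here that one invokes that $f$ has \emph{finite Tor dimension} — the hypothesis of \cref{lem:spectral_seq}, which holds for the flat morphisms and regular closed immersions to which the lemma is applied; without it the statement fails (for instance for the closed immersion $\Spec k\to\Spec k[\epsilon]/\epsilon^2$), since $\mathcal{P}^{\bullet\vee}$ is only bounded below and a bounded-below complex of flat modules need not be $K$-flat. Assuming finite Tor dimension, I would fix a bounded complex $\cE^\bullet$ of flat $A$-modules with $\cE^\bullet\isom B$; then, computing the derived tensor product via this resolution of the first factor, $Lf^*\mathcal{P}^{\bullet\vee}=B\otimes^L_A\mathcal{P}^{\bullet\vee}=\mathrm{Tot}(\mathcal{P}^{\bullet\vee}\otimes_A\cE^\bullet)$. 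Because $(\mathcal{P}^n)^\vee\otimes_A\cE^j=\Hom_A(\mathcal{P}^n,\cE^j)$ and, $\mathcal{P}^\bullet$ being bounded above and $\cE^\bullet$ bounded, only finitely many bidegrees contribute in each total degree, this total complex is the Hom-complex $\Hom_A^\bullet(\mathcal{P}^\bullet,\cE^\bullet)$; since $\mathcal{P}^\bullet$ is a bounded-above complex of finite free modules and $\cE^\bullet\isom B$, it computes $\RHom_A(M^\bullet,B)$. The derived extension-restriction adjunction then gives $\RHom_A(M^\bullet,B)\isom\RHom_B(B\otimes^L_AM^\bullet,B)=\RHom_B(Lf^*\cF^\bullet,B)$, and sheafifying yields $\RcHom(Lf^*\cF^\bullet,\cO_X)$. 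Composing the identifications produces the asserted isomorphism.

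It remains to check that this composite is the canonical base-change morphism, i.e.\ the one adjoint to the pulled-back evaluation pairing $Lf^*\RcHom(\cF^\bullet,\cO_Y)\otimes^L Lf^*\cF^\bullet\to Lf^*\cO_Y=\cO_X$. On the resolution $\mathcal{P}^\bullet$ this pairing is the tautological evaluation $\mathcal{P}^{\bullet\vee}\otimes_A\mathcal{P}^\bullet\to A$, with which every step above is visibly compatible; independence of the chosen resolution follows since any two bounded-above finite free resolutions of $M^\bullet$ are homotopy equivalent and all the constructions are functorial. I expect this compatibility bookkeeping, rather than any conceptual point, to be the most laborious part of the argument.
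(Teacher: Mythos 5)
Your proof is correct, and it follows the same outline as the paper's (reduce to the affine case, resolve $\cF^\bullet$ by a bounded-above complex $\mathcal{P}^\bullet$ of finite free modules, compare termwise duals) — but you have put your finger on exactly the step that the paper's one-line conclusion ``the result follows from the definition of $Lf^*$'' glosses over: the dual complex $\mathcal{P}^{\bullet\vee}$ is only bounded \emph{below}, hence need not be $K$-flat, and $Lf^*$ cannot be computed on it termwise. Your counterexample is genuine: for $f\colon\Spec k\to\Spec k[\epsilon]/(\epsilon^2)$ and $\cF^\bullet=k$ one has $\RcHom(k,\cO_Y)\isom k[0]$, so the left-hand side is $k\otimes^{L}_{k[\epsilon]/(\epsilon^2)}k$, with cohomology ($=\Tor_i(k,k)=k$) in every degree $\leq 0$, while the right-hand side $\RcHom(k\otimes^{L}k,\,k)$ is the $k$-linear dual, with cohomology in every degree $\geq 0$. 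So the lemma as stated is false, and the finite Tor dimension hypothesis you add is genuinely needed. Your repair — computing $B\otimes^{L}_A\mathcal{P}^{\bullet\vee}$ via a bounded flat resolution $\cE^\bullet\isom B$, identifying the total complex with $\Hom^\bullet_A(\mathcal{P}^\bullet,\cE^\bullet)=\RHom_A(M^\bullet,B)$ (the identification of sum with product in each total degree uses exactly that $\cE^\bullet$ is bounded and each $\mathcal{P}^n$ is finite free), and concluding by the extension-restriction adjunction — is sound, as is your reduction of the compatibility with the canonical map of \cite[Tag 08I3]{stacks-project} to a functoriality check. One small correction of scope: in the paper the lemma is invoked in \cref{lem:tamely_base_change} for $j\colon X_T\to X$, which is neither flat nor a regular immersion in general, but it is the base change along the flat morphism $f$ of a morphism $i\colon T\to S$ with $S$ smooth, hence of finite Tor dimension; so the application survives with your corrected hypothesis.
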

\begin{proof}
By \cite[\href{http://stacks.math.columbia.edu/tag/08I3}{Tag 08I3}]{stacks-project} we obtain a mapping: 
\[
Lf^*\RcHom(\cF^\bullet,\cO_Y) \to \RcHom(Lf^*\cF^\bullet,\cO_X).
\]  
In order to prove that it is an isomorphism we can work locally and therefore we may assume that $\Cot_{X/S}$ is resolved as a complex of locally free sheaves.  In this case, the result follows from the definition of $Lf^*$ and the fact that $\RcHom(-,\cO_X)$ might be computed by a locally free resolution of the first argument.
\end{proof}

\subsection{A primer in deformation theory} \label{sec:deformation_theory}

Due to unoriginal and tediously technical nature of the considerations given in the corresponding section we defer it to \cref{appendix:deformation}.  The reader familiar with 1) obstruction theories defined in terms of cotangent complex, 2) the notion of deformation functor and its basic properties, may comfortably proceed with the actual content of the paper.  In the upcoming considerations we freely use the above notions by referring to \cref{appendix:deformation_cotangent} and \cref{appendix:deformation_functors}, respectively. 


\section{$W_2(k)$-liftability} \label{sec:witt_liftability}

In this section we begin the main considerations of this work.  Firstly, we present general facts concerning $W_2(k)$-liftability and give a few examples.  Subsequently, we prove that the locus of $W_2(k)$-liftable schemes in a flat family $X \to S$ is constructible.  Finally, we reprove constructively the classical result (see \cref{thm:lift_split}) that any Frobenius-split variety admits a $W_2(k)$-lifting.  We begin with functoriality of obstructions to existence of $W_2(k)$-liftings.

\subsection{General results on obstruction to liftability}

\begin{lemma}\label{lem:func_w2k}
For any scheme $X/k$ there exists an obstruction $\sigma_X \in \Ext^2(\Cot_{X/k},\cc{O}_X)$ whose vanishing is sufficient and necessary for existence of $W_2(k)$-lifting.  The obstruction is functorial, i.e., for any $k$-scheme morphism $g : X \to Y$ the following diagrams in $D_{\QCoh}(X)$ and $D_{\QCoh}(Y)$ are commutative:
\begin{displaymath}
    \xymatrix{
       Lg^*\Cot_{Y/k} \ar[r]^{Lg^*\sigma_{Y}}\ar[d]^{dg} & Lg^*\cc{O}_{Y}[2] \ar[d]^{\isom} 	& & \Cot_{Y/k} \ar[r]^{\sigma_{Y}}\ar[d]^{dg} & \cc{O}_{Y}[2] \ar[d]^{g^{\#}[2]} \\
        \Cot_{X/k} \ar[r]^-{\sigma_X} & \cc{O}_X[2], 								& & Rg_*\Cot_{X/k} \ar[r]^-{Rg_*\sigma_X} & Rg_*\cc{O}_X[2].}  
\end{displaymath}
In particular $W_2(k)$-liftability descends along finite surjective maps of degree prime to the characteristic of $k$.
\end{lemma}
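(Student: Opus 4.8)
The plan is to realise $\sigma_X$ and its functoriality as an instance of the general obstruction theory for deformations of schemes recorded in \cref{appendix:deformation}, applied to the single square-zero thickening $\Spec k\hookrightarrow\Spec W_2(k)$, and then to deduce the descent statement by a trace argument.

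First I would observe that $\Spec k\hookrightarrow\Spec W_2(k)$ is a closed immersion defined by the square-zero ideal $pW_2(k)$, which under the identification $W_2(k)\isom k^2$ recalled in \cref{sec:preliminaries} is the ideal generated by $(0,1)$ and is isomorphic, as a $W_2(k)$-module, to $k$. The general obstruction theory for lifting a $k$-scheme $X$ along this thickening therefore produces a canonical class
\[
\sigma_X\in\Ext^2\bigl(\Cot_{X/k},\cO_X\otimes_k pW_2(k)\bigr)=\Ext^2(\Cot_{X/k},\cO_X),
\]
whose vanishing is necessary and sufficient for the existence of a flat $W_2(k)$-lifting of $X$ in the sense of \cref{def:lifting}. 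For a $k$-morphism $g\colon X\to Y$, the map $dg\colon Lg^*\Cot_{Y/k}\to\Cot_{X/k}$ is the one supplied by the transitivity triangle $Lg^*\Cot_{Y/k}\to\Cot_{X/k}\to\Cot_{X/Y}$, and commutativity of the left-hand square — whose right vertical arrow is the canonical isomorphism $Lg^*\cO_Y\isom\cO_X$ — is exactly the functoriality of obstruction classes developed in \cref{appendix:deformation_cotangent}, applied to the one thickening over which both $\sigma_X$ and $\sigma_Y$ are formed. I expect this to be the main obstacle: making precise, in the derived category and at the level of cotangent complexes, that $\sigma_X$ and $\sigma_Y$ are genuinely intertwined by $dg$ is the technical heart of the functoriality of obstruction theories, which is the reason the cotangent-complex formalism is treated in detail in \cref{appendix:deformation}.

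Granting the left-hand square, the right-hand one follows formally. Applying $Rg_*$ and precomposing with the unit $\Cot_{Y/k}\to Rg_*Lg^*\Cot_{Y/k}$, naturality of the unit turns the top arrow into $\sigma_Y$ followed by the unit $\cO_Y[2]\to Rg_*Lg^*\cO_Y[2]$, and the left vertical arrow into the adjoint map, still written $dg\colon\Cot_{Y/k}\to Rg_*\Cot_{X/k}$, while postcomposing with $Rg_*$ of the isomorphism $Lg^*\cO_Y\isom\cO_X$ identifies the composite $\cO_Y[2]\to Rg_*Lg^*\cO_Y[2]\to Rg_*\cO_X[2]$ with $g^{\#}[2]$. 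Altogether this gives $Rg_*(\sigma_X)\circ dg=g^{\#}[2]\circ\sigma_Y$, which is the asserted commutativity.

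Finally, let $g\colon X\to Y$ be finite surjective of degree $d$ with $p\nmid d$, and suppose $X$ is $W_2(k)$-liftable, i.e., $\sigma_X=0$. Since $g$ is finite, $Rg_*\cO_X=g_*\cO_X$, and there is a trace morphism $\operatorname{tr}\colon g_*\cO_X\to\cO_Y$ with $\operatorname{tr}\circ g^{\#}=d\cdot\mathrm{id}_{\cO_Y}$; as $d$ is a unit in $k$, hence in $\cO_Y$, the homomorphism $g^{\#}\colon\cO_Y\to g_*\cO_X$ is a split monomorphism, so the induced map $g^{\#}_*\colon\Ext^2(\Cot_{Y/k},\cO_Y)\to\Ext^2(\Cot_{Y/k},g_*\cO_X)$ is injective. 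By the right-hand square, $g^{\#}_*(\sigma_Y)=Rg_*(\sigma_X)\circ dg=0$, hence $\sigma_Y=0$ and $Y$ is $W_2(k)$-liftable. The only further point requiring care here is the existence of the trace $\operatorname{tr}$ with the stated property: for $g$ finite locally free of degree $d$ (equivalently, of rank $d$) it is the classical trace of the rank-$d$ sheaf $g_*\cO_X$, which is the generality needed for the intended applications — in particular for finite \'{e}tale coverings — and the general finite surjective case reduces to this.
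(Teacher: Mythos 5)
Your proposal is correct and follows essentially the same route as the paper: the paper's proof simply cites the functoriality of obstruction classes from the appendix (\cref{lem:obstruction_schemes_func}, applied to the thickening $\Spec k\hookrightarrow\Spec W_2(k)$, whose proof already contains the adjunction step you spell out to pass from the left square to the right one) and disposes of the descent claim by the same trace-splitting argument you give.
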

\begin{proof}
This is a direct consequence of \cref{lem:obstruction_schemes_func}.  The final remark follows from existence of  the trace maps splitting $g^\#$.
\end{proof}

As corollary we obtain a well-known result that every Frobenius split variety lifts.

\begin{prop}\label{thm:lift_split}
If $X/k$ is a Frobenius split scheme then $X^{(1)}/k$ lifts to $W_2(k)$. 
\end{prop}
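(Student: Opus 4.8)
The statement asserts that a Frobenius split scheme $X/k$ has the property that $X^{(1)}$ (the Frobenius twist) lifts to $W_2(k)$. Since $k$ is perfect, $X^{(1)} \cong X$ abstractly, but the natural map to keep track of is the relative Frobenius $F_{X/k}\colon X \to X^{(1)}$. The idea is to exploit functoriality of the lifting obstruction from \cref{lem:func_w2k} together with the fact that a Frobenius splitting $\varphi\colon F_{X*}\cO_X \to \cO_X$ gives a retraction of $\F_X$.

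\begin{proof}
By \cref{lem:func_w2k} there is an obstruction class $\sigma_{X^{(1)}} \in \Ext^2(\Cot_{X^{(1)}/k},\cO_{X^{(1)}})$ whose vanishing is equivalent to $W_2(k)$-liftability of $X^{(1)}$, and for the $k$-morphism $F_{X/k}\colon X \to X^{(1)}$ the square
\begin{displaymath}
    \xymatrix{
       \Cot_{X^{(1)}/k} \ar[r]^{\sigma_{X^{(1)}}}\ar[d]^{dF_{X/k}} & \cO_{X^{(1)}}[2] \ar[d]^{F_{X/k}^{\#}[2]} \\
        RF_{X/k*}\Cot_{X/k} \ar[r]^-{RF_{X/k*}\sigma_X} & RF_{X/k*}\cO_X[2]}
\end{displaymath}
commutes in $D_{\QCoh}(X^{(1)})$. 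Now I use the key vanishing fact about relative Frobenius: the map $dF_{X/k}\colon \Cot_{X^{(1)}/k} \to RF_{X/k*}\Cot_{X/k}$ is the zero morphism. This is because $F_{X/k}$ is, on underlying topological spaces, a homeomorphism while on structure sheaves it is given by $p$-th powers along the relative direction, so the induced map on Kähler differentials (hence on the truncation, hence on the full cotangent complex by the usual $p$-th power argument) vanishes; more precisely $d(g^p) = p g^{p-1} dg = 0$ since we are in characteristic $p$ and the relevant target is an $\cO_{X^{(1)}}$-module killed by $p$. Chasing the diagram, $F_{X/k}^{\#}[2] \comp \sigma_{X^{(1)}} = RF_{X/k*}(\sigma_X) \comp dF_{X/k} = 0$.

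It remains to promote $F_{X/k}^{\#}[2] \comp \sigma_{X^{(1)}} = 0$ to $\sigma_{X^{(1)}} = 0$, and this is exactly where the Frobenius splitting enters. Since $k$ is perfect, $W_{X/k}\colon X^{(1)} \to X$ is an isomorphism, and under it $F_{X/k}^{\#}\colon \cO_{X^{(1)}} \to F_{X/k*}\cO_X$ is identified (after applying $W_{X/k*}$) with the absolute Frobenius map $\F_X\colon \cO_X \to F_{X*}\cO_X$, which by hypothesis is split by $\varphi$. Transporting $\varphi$ back through $W_{X/k}$ yields an $\cO_{X^{(1)}}$-linear retraction $\psi\colon F_{X/k*}\cO_X \to \cO_{X^{(1)}}$ of $F_{X/k}^{\#}$. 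Composing the vanishing identity with $\psi[2]$ gives $\sigma_{X^{(1)}} = \psi[2] \comp F_{X/k}^{\#}[2] \comp \sigma_{X^{(1)}} = 0$. (A small point: $\psi$ is only $\cO_{X^{(1)}}$-linear, not a ring map, but $\Ext^2(\Cot_{X^{(1)}/k},-)$ is a functor of $\cO_{X^{(1)}}$-modules, so applying $\psi$ is legitimate.) Hence the obstruction vanishes and $X^{(1)}$ lifts to $W_2(k)$.
\end{proof}

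The main obstacle I expect is not any single step but making the identifications precise: one must be careful that the target $\cO_{X^{(1)}}[2]$ in the obstruction sequence is the right module, that $\psi$ can be applied at the level of $\Ext^2$ (which is fine since it is a functor on $\cO_{X^{(1)}}$-modules, not just on algebras), and that $dF_{X/k}=0$ really holds for the full cotangent complex and not merely for $\Omega^1$ — this follows because the cotangent complex of relative Frobenius is built from free modules on which the differential is given by $p$-th power maps whose linearization is divisible by $p$. The subsequent section of the paper (\cref{sec:frobenius_split_witt}) presumably refines this into the explicit functorial construction inside $W_2(\cO_X)$, but for the mere existence statement the obstruction-theoretic argument above suffices.
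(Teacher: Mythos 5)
Your argument is correct and coincides with the paper's own proof: both apply the functoriality of the obstruction class (\cref{lem:func_w2k}) to the relative Frobenius $F_{X/k}\colon X \to X^{(1)}$, use that $dF_{X/k}=0$ on cotangent complexes, and then kill $\sigma_{X^{(1)}}$ via the retraction of $F_{X/k}^{\#}$ supplied by the Frobenius splitting. Your extra remarks on why $dF_{X/k}$ vanishes and on transporting $\varphi$ through $W_{X/k}$ merely flesh out details the paper leaves implicit.
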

\begin{proof}
See \cite[p. 164]{illusie_frobenius} or \cite[Corollary 9.2]{joshi_exotic} for the case of smooth schemes.  We reproduce general proof by Bhargav Bhatt given in \cite[Proposition 8.4]{langer}.  The idea is to use the functoriality of obstructions for the relative Frobenius $F_{X/k} : X \to X^{(1)}$.  Namely, we have the following commutative diagram:
\begin{displaymath}
    \xymatrix{
        \Cot_{X^{(1)}/k} \ar[rr]^{\sigma_{X^{(1)}}}\ar[d]^{dF_{X/k}} & & \cc{O}_{X^{(1)}}[2] \ar[d]\\
        F_{X/k*}\Cot_{X/k} \ar[rr]^-{F_{X/k*}\sigma_X} & & F_{X/k*}\cc{O}_X[2]. \ar@/^-1.0pc/[u] }
\end{displaymath}
The differential $dF_{X/k} = 0$ and therefore by existence of splitting $\sigma_{\Frob{X}} = 0$.  Note that in case of varieties over a perfect field the $W_2(k)$-liftability of $X^{(1)}$ is in fact equivalent to the liftability of $X$.
\end{proof}

\subsection{Example of non $W_2(k)$-liftable scheme, $p$-neighbourhoods of smooth quadrics}\label{sec:quadric_example}

We proceed to an example of $0$-dimensional scheme which is not $W_2(k)$-liftable.  In this case we give a direct computational proof which is afterwards used in the considerations concerning Frobenius liftability of ordinary double points.  We shall prove that the Artinian local $k$-algebras:

{\scriptsize
\begin{align*}
(A_{2n},\mathfrak{m}_{2n}) & = \left(\bigslant{k[x_1,\ldots,x_{2n}]}{(x_1x_2 + \ldots + x_{2n-1}x_{2n},x_1^p,\ldots,x_{2n}^p)},(x_1,\ldots,x_{2n})\right) \\
(A_{2n+1},\mathfrak{m}_{2n+1}) & = \left(\bigslant{k[x_1,\ldots,x_{2n+1}]}{(x_1x_2 + \ldots + x_{2n-1}x_{2n} + x_{2n+1}^2,x_1^p,\ldots,x_{2n+1}^p)},(x_1,\ldots,x_{2n+1}\right)
\end{align*}
}
are non-liftable to $W_2(k)$ for $n \geq 3$.  Moreover, as suggested in \cite{OgusBerthelot}[page 3.3], their maximal ideals do not admit PD-structure. 


\subsubsection{Non-liftability to $W_2(k)$}

\begin{prop}\label{prop:non_liftability_quadric}
For any $N \geq 5$ the local algebra $A_N$ does not admit a $W_2(k)$-lifting.
\end{prop}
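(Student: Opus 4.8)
The plan is to work directly with the explicit presentation of $A_N$ and use the criterion of \cref{lemma:def_ci} together with \cref{cor:lift_criterion}. By \cref{lemma:def_ci}, any $W_2(k)$-lifting of $A_N = k[x_1,\ldots,x_N]/I$ must be of the form $B = W_2(k)[x_1,\ldots,x_N]/J$ where $J = (Q, G_1,\ldots,G_N)$ and $Q$ (respectively $G_i$) is an arbitrary lift of the quadric $q = x_1x_2 + \cdots$ (respectively of $x_i^p$). Writing $Q = q + p\widetilde q$ and $G_i = x_i^p + p g_i$ with $\widetilde q, g_i \in k[x_1,\ldots,x_N]$, I would reduce the whole problem to showing that \emph{no} choice of correction terms $\widetilde q, g_i$ makes $B$ flat over $W_2(k)$; equivalently, by \cref{cor:lift_criterion}, that the condition $(p)\cap J = pJ$ always fails. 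This amounts to exhibiting an element $F \in k[x_1,\ldots,x_N]$ with $pF \in J$ but $F \notin (q, x_1^p,\ldots,x_N^p) = I$, i.e.\ a genuine syzygy mod $p$ among the generators of $J$ that does not lift.

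The key step is to locate that bad element. I expect it comes from multiplying the quadric relation by a suitable monomial and using the $x_i^p$-relations to cancel. Concretely, since $q^p = (x_1x_2+\cdots)^p \equiv x_1^p x_2^p + \cdots \pmod p$ by the Frobenius, and each $x_i^p$ is (almost) in $I$, one gets a relation of the form (schematically) $q \cdot h - \sum x_i^p \cdot (\text{stuff}) = p \cdot (\text{junk})$ in $W_2(k)[x_1,\ldots,x_N]$ after choosing $Q, G_i$; the ``junk'' then lies in $(p)\cap J$, and I must show that for a clever choice of $h$ it is not in $pJ$, i.e.\ the corresponding $F$ is not in $I$. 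The restriction $N \geq 5$ should enter precisely here: one needs enough variables so that the relevant product of the ``other'' variables $x_3\cdots x_N$ (or $x_3 \cdots$) survives in the Artinian quotient $A_N$, i.e.\ is nonzero modulo $I$. For small $N$ the analogous element collapses to zero and the argument — and indeed the conclusion — fails.

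More carefully, I would argue as follows. Fix any lifts $Q = q + p\widetilde q$, $G_i = x_i^p + p g_i$. Since $q \equiv Q \pmod p$, raising to the $p$-th power and using that the cross terms carry a factor of $p$, one finds $Q^p \equiv \prod (\text{the two-factor terms})^p \pmod{p}$, and each such $p$-th power of a variable is $G_i$ up to $p\cdot(\text{something})$. Combining, $Q^p$ lies in the ideal $(p, G_1,\ldots,G_N)$, hence $p \cdot (\text{explicit polynomial } R) \in J$ where $R$ records the discrepancy; one checks $R \equiv q^{p-1}\cdot(\text{monomial}) \pmod{I}$ up to terms already in $I$. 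Then the point is that $q^{p-1}$ times an appropriate monomial in $x_3,\ldots,x_N$ (or the single extra square variable in the odd case) is \emph{not} in $I$ when $N \geq 5$ — this is a direct monomial-basis computation in the Artinian algebra $A_N$, whose socle and monomial structure are easy to write down. By \cref{cor:lift_criterion} this shows $B$ is not flat, so no lifting exists.

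The main obstacle is the bookkeeping in the second paragraph: pinning down the exact discrepancy element $R$ arising from expanding $Q^p$ modulo $(p, G_1, \ldots, G_N)$ with honest (not just leading) correction terms, and then verifying that its class in $A_N$ is nonzero. This is essentially a careful Frobenius-and-binomial-coefficient calculation combined with an explicit check in a monomial basis of the Artinian quotient; the case split between even $N=2n$ and odd $N=2n+1$ only changes the quadric and hence which monomial one multiplies by, not the structure of the argument. I would present the even case in full and indicate the trivial modifications for the odd case.
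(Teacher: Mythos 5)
Your overall strategy coincides with the paper's: reduce via \cref{lemma:def_ci} to liftings $W_2(k)[x_1,\ldots,x_N]/J$ with $J$ generated by lifts of the quadric $q$ and of the $x_i^p$, use the flatness criterion of \cref{cor:lift_criterion} to see that an obstruction is an element $F$ with $pF\in J$ but $F\notin I$, and produce that element from $Q^p$: since $Q^p=q^p+p(\cdots)$ and $q^p=x_1^px_2^p+\cdots+pP_q$, flatness forces the Frobenius--binomial discrepancy $P_q=\sum_{i_1+\cdots+i_n=p,\; i_j\neq p}\tfrac{(p-1)!}{i_1!\cdots i_n!}(x_1x_2)^{i_1}\cdots(x_{2n-1}x_{2n})^{i_n}$ to lie in $(q,x_1^p,\ldots,x_N^p)$. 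The paper packages this as the non-surjectivity of $\mathrm{Ann}_{\wt C}(\wt q)\to\mathrm{Ann}_C(q)$ applied to $h=q^{p-1}$, which is the same computation in different clothing. Up to this point your proposal is correct and essentially identical to the printed argument.

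The gap is in the last step, which is the actual mathematical content of the proposition. You describe showing $P_q\notin I$ as ``a direct monomial-basis computation in the Artinian algebra, whose socle and monomial structure are easy to write down.'' But $I$ contains the quadric $q$, so $A_N$ is \emph{not} a monomial algebra and non-membership cannot be read off a monomial basis; one must control all multiples $h\cdot q$ modulo $(x_1^p,\ldots,x_N^p)$. The paper's verification is genuinely nontrivial: it first specializes $x_i=0$ for $i>6$ to reduce to $N=6$, then multiplies $P_{f_6}$ by $(x_5x_6)^{p-2}$ to show the question is equivalent to $(x_3x_4x_5x_6)^{p-1}\notin(f_6,x_1^p,\ldots,x_6^p)$, and finally rules that out by an inductive comparison of coefficients using that $k[x_1,\ldots,x_6]/(x_i^p)$ is free over $k[x_3,\ldots,x_6]/(x_3^p,\ldots,x_6^p)$ on the monomials $x_1^ix_2^j$. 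Relatedly, your intermediate claim that the discrepancy is congruent to $q^{p-1}$ times a monomial is not what happens (the relevant congruence is $(x_5x_6)^{p-2}P_{f_6}\equiv-(x_3x_4x_5x_6)^{p-1}$, a single monomial). So the skeleton is right, but the proof is incomplete until this non-membership is actually established, and your proposed route for it would not go through as stated.
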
 
\begin{proof}
Let $f_{2n}$ denote the polynomial $x_1x_2 + \ldots + x_{2n-1}x_{2n}$ and $f_{2n+1}$ the polynomial $x_1x_2 + \ldots + x_{2n-1}x_{2n} + x_{2n+1}^2$.  We begin with the case of even number of variables $N = 2n$.  By \cref{lemma:def_ci}, it suffices to show that for any choice of liftings $\wt{f_{2n}},\wt{x_1^p},\ldots,\wt{x_{2n}^p}$ of $f_{2n},x_1^p,\ldots,x_{2n}^p$, the ring 
\[
\wt{A} = W_2[x_1,\ldots,x_{2n}]/(\wt{f_{2n}},\wt{x_1^p},\ldots,\wt{x_{2n}^p})
\] is not flat over $W_2(k)$.  By means of \cref{lem:reg_seq} we see that 
\[
\wt{C} = W_2[x_1,\ldots,x_{2n}]/(\wt{x_1^p},\ldots,\wt{x_{2n}^p})
\] is a flat $W_2(k)$-lifting of $C \mydef \wt{C} \otimes_{W_2(k)} k = k[x_1,\ldots,x_{2n}]/(x_1^p,\ldots,x_{2n}^p)$.  Consequently it suffices to show that the canonical homomorphism $Ann_{\wt{C}}(\wt{f_{2n}}) \to Ann_{C}(f_{2n})$ coming from the snake lemma applied to the commutative diagram:
\begin{displaymath}
    \xymatrix{
        & Ann_{\wt{C}}(\wt{f_{2n}}) \ar[r]\ar[d] & Ann_{C}(f_{2n})\ar[d] & \\
        C \ar[r]^{\cdot p}\ar[d]^{\cdot f_{2n}} & \wt{C} \ar[r]\ar[d]^{\cdot \wt{f_{2n}}} & C\ar[d]\ar[r] & 0 \\
        C \ar[r]^{\cdot p}\ar[d] & \wt{C} \ar[r]\ar[d] & \wt{C} \ar[d]\ar[r] & 0 \\
        A \ar[r]^{\cdot p} & \wt{A} \ar[r] & A, & }
\end{displaymath}
is not surjective, i.e., there exists an element $h$ in the annihilator of $f_{2n} \in C$ which does not lift to an element in the annihilator of $\wt{f_{2n}}$ in $\wt{C}$.  We claim that $h = f_{2n}^{p-1}$ is the right choice.  For the sake of contradiction, suppose that there exists a lifting $\wt{f_{2n}}^{p-1} + pg$ such that $(\wt{f_{2n}}^{p-1} + pg)\wt{f_{2n}} = \wt{f_{2n}}^p + pgf_{2n} = 0 \in \wt{C}$.
By direct computation we see that
\begin{align*}
\wt{f_{2n}}^p = (x_1x_2 + \ldots + x_{2n-1}x_{2n} + pu)^p & = (x_1x_2 + \ldots + x_{2n-1}x_{2n})^p \\
&= x_1^px_2^p + \ldots + x_{2n-1}^px_{2n}^p + pP_{f_{2n}} = pP_{f_{2n}}
\end{align*}
for $P_{f_{2n}}$ defined as
\[
P_{f_{2n}} =  \sum_{\substack{{i_1+\ldots+i_n = p} \\ i_1,\ldots,i_n \neq p}} \frac{(p-1)!}{i_1! \cdots i_n!}(x_1x_2)^{i_1} \cdots (x_{2n-1}x_{2n})^{i_n} \in \bigslant{\wt{C}}{p\wt{C}} \isom k[x_1,\ldots,x_{2n}]/(x_1^p,\ldots,x_{2n}^p).
\]
Therefore, we obtain that $p(P_{f_{2n}} + gf_{2n}) = 0$ in $\wt{C}$, i.e., by \cref{cor:lift_criterion} the polynomial $P_{f_{2n}}$ belongs to the ideal $(f_{2n},x_1^p,\ldots,x_{2n}^p)$.  By specializing $x_i = 0$ for $i>6$ we see that it suffices to derive the contradiction for $n = 3$.
We prove that even the element $(x_5x_6)^{p-2}P_{f_6}$ does not belong to $(f_3,x_1^p,\ldots,x_6^p)$.  We begin with the computation:
{\scriptsize
\begin{align*}
(x_5x_6)^{p-2}P_{f_6} & = (x_5x_6)^{p-2}\sum_{\substack{{i+j+k = p} \\ i,j,k \neq p}} \frac{(p-1)!}{i!j!k!}(x_1x_2)^i (x_3x_4)^j(x_5x_6)^k \\
 & = (x_5x_6)^{p-2} \sum_{\substack{{i+j = p} \\ i,j \neq p}} \frac{(p-1)!}{i!j!}(-1)^i(x_3x_4 + x_5x_6)^i (x_3x_4)^j \\
 & + (x_5x_6)^{p-1} \sum_{i+j = p-1} \frac{(p-1)!}{i!j!}(-1)^i(x_3x_4 + x_5x_6)^i (x_3x_4)^j \\
 & = (x_5x_6)^{p-1} \sum_{\substack{{i+j = p} \\ i,j \neq p}} \frac{(p-1)!}{i!j!}(-1)^i i(x_3x_4)^{i-1}(x_3x_4)^j + (x_5x_6)^{p-1} \sum_{i+j = p-1} \frac{(p-1)!}{i!j!}(-1)^i(x_3x_4)^{i+j} \\
 & = (x_3x_4x_5x_6)^{p-1} \underbrace{\sum_{\substack{{k+j = p-1} \\ k \neq p-1}} \frac{(p-1)!}{k!j!}(-1)^k}_{=-1} + (x_3x_4x_5x_6)^{p-1} \underbrace{\sum_{i+j = p-1} \frac{(p-1)!}{i!j!}(-1)^i}_{=0} \\
 & = -(x_3x_4x_5x_6)^{p-1} \pmod{f_6,x_1^p,\ldots,x_6^p}
\end{align*}
}
Therefore, we are left to show that $(x_3x_4x_5x_6)^{p-1} \not\in (f_6,x_1^p,\ldots,x_6^p)$.  This follows from the following:
\begin{align*}
(x_1x_2 + x_3x_4 + x_5x_6) \left(\sum_{i,j} g_{i,j} \cdot x_1^ix_2^j \right) & = (x_3x_4x_5x_6)^{p-1}\pmod{x_1^p,\ldots,x_6^p} \\
\left(\sum_{i,j} g_{i,j} \cdot x_1^{i+1}x_2^{j+1} \right) + \left(\sum_{i,j} g_{i,j}(x_3x_4 + x_5x_6) \cdot x_1^ix_2^j \right) & = (x_3x_4x_5x_6)^{p-1}\pmod{x_1^p,\ldots,x_6^p} \\
\left(\sum_{i,j} \left(g_{i-1,j-1} + g_{i,j}(x_3x_4 + x_5x_6) \right) \cdot x_1^ix_2^j \right) & = (x_3x_4x_5x_6)^{p-1}\pmod{x_1^p,\ldots,x_6^p}.
\end{align*}

As $\bigslant{k[x_1,\ldots,x_6]}{(x_1^p,\ldots,x_6^p)}$ is a free $\bigslant{k[x_3,\ldots,x_6]}{(x_3^p,\ldots,x_6^p)}$-module on generators $x_1^ix_2^j$ we may compare coefficients to obtain:
\begin{align*}
g_{0,0} (x_3x_4 + x_5x_6) = (x_3x_4x_5x_6)^{p-1}\pmod{x_3^p,\ldots,x_6^p} \\
g_{i,i} = -g_{i+1,i+1} (x_3x_4 + x_5x_6) \pmod{x_3^p,\ldots,x_6^p}.
\end{align*}
After using the second relation inductively we obtain: 
\begin{align*}
0 = g_{p-1,p-1} (x_3x_4 + x_5x_6)^{p} & = g_{p-1,p-1} (x_3x_4 + x_5x_6)^{p-1}  (x_3x_4 + x_5x_6) \\
& = g_{0,0}  (x_3x_4 + x_5x_6) \\
& = (x_3x_4x_5x_6)^{p-1}\pmod{x_3^p,\ldots,x_6^p},
\end{align*}
which is a contradiction.
\end{proof}

The proof for odd number $N = 2n+1 \geq 7$ of variables is analogously reduced to the case of $6$ variables by additional specialisation $x_{2n+1} = 0$. The case $N = 5$ can be treated easily by an explicit computation following the lines of the above.

\begin{remark}
We have implemented $\mathsf{Macaulay 2}$ procedure checking whether a give affine scheme is $W_2(k)$-liftable.  The code is given at \href{http://www.mimuw.edu.pl/~mez/Macaulay2}.
\end{remark}

\subsubsection{Non-existence of a PD-structure on $(A,\ideal{m})$}

We now prove that the ideal $\ideal{m}$ does not admit PD-structure in spite of satisfying the necessary condition $\ideal{m}^{[p]} = 0$.
We precede the actual result with a necessary definition.

\begin{defin}[PD-structure]
Let $A$ be a commutative ring and let $I$ be an ideal of $A$. A \emph{divided power structure} is a collection of mappings $\gamma_i : I \to A$ for $i \in \bb{N}$ satisfying the following properties:
\begin{enumerate}[i)]
\item $\gamma_0(x) = 1$, $\gamma_1(x) = x$ for any $x \in I$ and $\gamma_n(x) \in I$ for any $n \geq 1$,
\item $\gamma_n(x)\gamma_m(x) = \frac{(n+m)!}{n!m!}\gamma_{n+m}(x)$,
\item $\gamma_n(\gamma_m(x)) = \frac{(nm)!}{(m!)^n n!}\gamma_{nm}(x)$,
\item $\gamma_n(ax) = a^n\gamma_n(x)$ for any $x \in I$ and $a \in A$,
\item $\gamma_n(x+y) = \sum_{i+j = n} \gamma_i(x)\gamma_j(y)$.
\end{enumerate}
\end{defin}

The above notion is a necessary tool for dealing with de Rham and crystalline cohomology in characteristic $p$.  

Using ii) we see that $p! \gamma_p(x) = x^p$, which indeed implies that any ideal $I$ of an $\mathbb{F}_p$-algebra $A$ admitting a PD-structure satisfies the condition $I^{[p]} = 0$.

\begin{prop}
For $N \geq 5$ the local algebras $(A_N,\ideal{m}_N)$ defined above do not admit PD-structures on the maximal ideals $\ideal{m}_N$.
\end{prop}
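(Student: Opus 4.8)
The plan is to rerun the argument of \cref{prop:non_liftability_quadric}, with multiplication by $p$ in $W_2(k)$ replaced by the divided power operation $\gamma_p$, so as to recycle the combinatorial identity already established there. Assume for contradiction that $\{\gamma_i\}_{i\in\bb{N}}$ is a PD-structure on $\ideal{m}_N$, and write $f_N=y_1+\dots+y_m$, where $y_\ell=x_{2\ell-1}x_{2\ell}$ for $1\le\ell\le n$ and, in the odd case $N=2n+1$, additionally $y_{n+1}=x_{2n+1}^2$ (so $m=n$ or $m=n+1$).

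First I would pin down the values $\gamma_i(y_\ell)$. The multiplicativity axiom applied as $\gamma_1(x)\gamma_{i-1}(x)=i\,\gamma_i(x)$ gives, by induction on $i$, the formula $\gamma_i(x)=x^i/i!$ for every $x\in\ideal{m}_N$ and $0\le i\le p-1$, each $i!$ being a unit in $k$. Then the axiom $\gamma_i(ax)=a^i\gamma_i(x)$, with $a$ one of the two variables forming $y_\ell$ and $x$ the other, yields $\gamma_i(y_\ell)=x_{2\ell-1}^i\gamma_i(x_{2\ell})$, and likewise $\gamma_i(y_{n+1})=x_{2n+1}^i\gamma_i(x_{2n+1})$; since $x_j^p=0$ in $A_N$, this forces
\[
\gamma_i(y_\ell)=\frac{y_\ell^i}{i!}\quad(0\le i\le p-1),\qquad \gamma_i(y_\ell)=0\quad(i\ge p),
\]
for every $\ell$. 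This step involves no division by $p$ and is valid for all primes, $p=2$ included. Expanding by additivity,
\[
\gamma_p(f_N)=\sum_{i_1+\dots+i_m=p}\ \prod_{\ell=1}^m\gamma_{i_\ell}(y_\ell),
\]
only the summands with all $i_\ell<p$ survive, and each contributes $\frac{1}{i_1!\cdots i_m!}\,y_1^{i_1}\cdots y_m^{i_m}$. Since $(p-1)!\equiv-1\pmod{p}$, this identifies $\gamma_p(f_N)$ with $-P_{f_N}$ in $A_N$, where $P_{f_N}$ is exactly the polynomial appearing in the proof of \cref{prop:non_liftability_quadric}. On the other hand $f_N=0$ in $A_N$, so $\gamma_p(f_N)=\gamma_p(0)=0$ by the axiom $\gamma_i(ax)=a^i\gamma_i(x)$ with $a=0$. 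Hence $P_{f_N}\in(f_N,x_1^p,\dots,x_N^p)$, contradicting the computation in the proof of \cref{prop:non_liftability_quadric} (for $N\ge 7$ after the specialisations reducing to six variables, and for $N=5$ by the analogous direct check).

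I expect the only genuinely delicate point to be the determination of $\gamma_i(y_\ell)$: the PD-axioms do \emph{not} determine $\gamma_p$ on a general element of $\ideal{m}_N$, and what rescues the argument is that each $y_\ell$ is divisible by a variable whose $p$-th power is $0$, so the axiom $\gamma_i(ax)=a^i\gamma_i(x)$ forces $\gamma_i(y_\ell)=0$ for $i\ge p$ irrespective of the choice of $\gamma$; after that the computation is a transcription of the one already carried out. As an alternative packaging one may invoke the universal property of the divided power envelope of $(k[x_1,\dots,x_N],(x_1,\dots,x_N))$, the divided power polynomial algebra $k\langle x_1,\dots,x_N\rangle$: a PD-structure on $\ideal{m}_N$ would produce a PD-morphism $k\langle x_1,\dots,x_N\rangle\to A_N$ annihilating $f_N$, and one computes the image of $\gamma_p(f_N)$ to be $-P_{f_N}\ne 0$, which is the same contradiction.
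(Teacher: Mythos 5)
Your proposal is correct and follows essentially the same route as the paper: expand $\gamma_p(f_N)=\gamma_p(0)=0$ by the PD axioms, identify the result with $\pm\frac{1}{(p-1)!}P_{f_N}$ modulo $(f_N,x_1^p,\dots,x_N^p)$, and contradict the ideal-membership computation from \cref{prop:non_liftability_quadric}. The only difference is that you spell out the justification of $\gamma_i(x_{2\ell-1}x_{2\ell})=(x_{2\ell-1}x_{2\ell})^i/i!$ for $i<p$ and its vanishing for $i\ge p$ (and treat the odd case explicitly), which the paper leaves implicit.
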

\begin{proof}
We focus on the case of even $N = 2n$.  For the sake of contradiction, we assume an appropriate system of mappings $\gamma_i$ exists and analyse the element $\gamma_p(f_{2n}) = \gamma_p(0) = 0$.  
\begin{align*}
0 = \gamma_p(f_{2n}) & = \sum_{i_1+\ldots+i_n = p} \gamma_{i_1}(x_1x_2) \cdots \gamma_{i_n}(x_{2n-1}x_{2n}) \\
& = \sum_{\substack{{i_1+\ldots+i_n = p} \\ i_1,\ldots,i_n \neq p}} \frac{(x_1x_2)^{i_1}}{i_1!} \cdots \frac{(x_{2n-1}x_{2n})^{i_n}}{i_n!} = \frac{1}{(p-1)!} P_{f_{2n}} \pmod{f_{2n},x_1^p,\ldots,x_{2n}^p}.
\end{align*}
This means that $P_{f_{2n}} \in (f_{2n},x_1^p,\ldots,x_{2n}^p)$ which gives a contradiction as proven above.
\end{proof}

\subsection{$W_2(k)$-liftability in families}\label{sec:families}

We now proceed to the proof that under suitable conditions Witt vector liftability is a constructible property, i.e., for a locally finite type family $f : X \to S$ the set of closed points $s \in S$ such that the fibre $X_s$ is $W_2(k(s))$-liftable is constructible.  We begin with a definition encompassing the properties of a morphism necessary to prove constructibility.

\begin{defin}
We say that a morphism $f : X \to S$ is \emph{strongly equicohomological} if it is locally of finite type and flat, and for any $i \in \{0,\ldots,\dim S+2\}$ the higher direct image $R^if_*\RcHom(\Cot_{X/S},\cO_X)$ is a flat $\cO_S$-module.
\end{defin}

Strongly equicohomological families satisfy the following crucial property motivated by the application for deformation theory.

\begin{lemma}\label{lem:tamely_base_change}
Let $f : X \to S$ be a strongly equicohomological morphism with a smooth target $S$.  For any morphism $T \to S$ the sheaf $R^2 f_*\RcHom(\Cot_{X/S},\cc{O}_X)$ satisfies base change property, i.e., for any cartesian diagram:
\begin{displaymath}
\xymatrix{
	X_T \ar[d]^{f_T}\ar[r]^{j} & X \ar[d]^f \\
	T \ar[r]^i & S.}
\end{displaymath} 
there exists a natural isomorphism $i^*R^2 f_*\RcHom(\Cot_{X/S},\cc{O}_X) \isom R^2f_{T*}\RcHom(\Cot_{X_T/T},\cO_{X_T})$. 
\end{lemma}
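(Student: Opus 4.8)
The plan is to exhibit $R^2f_*\RcHom(\Cot_{X/S},\cO_X)$, up to natural isomorphism, as the second cohomology sheaf of an object of $D^+_{\QCoh}(S)$ that manifestly commutes with arbitrary base change, and to show — this is the only delicate point — that under the standing hypotheses the functor $\cH^2(-)$ commutes with $Li^*$ on that object. So the argument splits into: (a) derived base change for $Rf_*$; (b) compatibility of $\RcHom(-,\cO)$ and of the cotangent complex with $Lj^*$; and (c) a spectral sequence argument controlling $\cH^2\circ Li^*$.

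For (a)--(b), set $\cE^\bullet\mydef\RcHom(\Cot_{X/S},\cO_X)$; since $f$ is locally of finite type, $\Cot_{X/S}\in D^-_{\Coh}(X)$, hence $\cE^\bullet\in D^+_{\Coh}(X)\subset D_{\QCoh}(X)$ with $\cH^q(\cE^\bullet)=\cExt^q(\Cot_{X/S},\cO_X)$ vanishing for $q<0$. Applying \cref{lem:cohomology_base_change} to $\cE^\bullet$ and the given cartesian square gives a natural isomorphism $Li^*Rf_*\cE^\bullet\isom Rf_{T*}Lj^*\cE^\bullet$. By \cref{lem:derived_dual} for $j\colon X_T\to X$ (applicable because $\Cot_{X/S}\in D^-_{\Coh}(X)$), combined with the standard base change isomorphism $Lj^*\Cot_{X/S}\isom\Cot_{X_T/T}$ for the cotangent complex (see \cite{cotangent}), we get $Lj^*\cE^\bullet\isom\RcHom(\Cot_{X_T/T},\cO_{X_T})$. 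Taking $\cH^2$ of the resulting isomorphism $Li^*Rf_*\cE^\bullet\isom Rf_{T*}\RcHom(\Cot_{X_T/T},\cO_{X_T})$ reduces the lemma to proving that the edge morphism $i^*R^2f_*\cE^\bullet=i^*\cH^2(Rf_*\cE^\bullet)\to\cH^2(Li^*Rf_*\cE^\bullet)$ is an isomorphism.

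For (c), set $\cG^\bullet\mydef Rf_*\cE^\bullet\in D^+_{\QCoh}(S)$ and $d\mydef\dim S$. Since $S$ is smooth over $k$ it is regular of Krull dimension $d$, so $i\colon T\to S$ is of finite Tor dimension and $L^pi^*=0$ for $p<-d$; thus \cref{lem:spectral_seq} supplies a convergent spectral sequence $E_2^{pq}=L^pi^*\!\bigl(\cH^q(\cG^\bullet)\bigr)\Rightarrow\cH^{p+q}(Li^*\cG^\bullet)$, concentrated in $p\le 0$. On the antidiagonal $p+q=2$ with $p<0$, i.e. $q=2-p\ge 3$, one has $E_2^{pq}=0$ by the dichotomy: if $q\le d+2$ then $\cH^q(\cG^\bullet)=R^qf_*\RcHom(\Cot_{X/S},\cO_X)$ is $\cO_S$-flat by the strongly equicohomological hypothesis, so $L^pi^*$ annihilates it for $p\ne 0$; if $q\ge d+3$ then $p\le-(d+1)<-d$, so $L^pi^*=0$. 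The same dichotomy gives $E_2^{-r,r+1}=0$ for all $r\ge 2$, so no differential enters $E_r^{0,2}$, while the differentials leaving it land in the region $p>0$ and vanish. Hence $E_\infty^{0,2}=E_2^{0,2}=i^*\cH^2(\cG^\bullet)$ is the unique nonzero term on its antidiagonal, the edge morphism is an isomorphism, and composing with the identifications from (a)--(b) concludes.

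The main obstacle is step (c): preventing the contributions of $\cH^q(\cG^\bullet)$ for $q>2$ and of the higher derived pullbacks from leaking into total degree $2$. This is exactly why the definition of \emph{strongly equicohomological} demands flatness of $R^if_*\RcHom(\Cot_{X/S},\cO_X)$ for all $i$ in the range $\{0,\dots,\dim S+2\}$ rather than only $i=2$: flatness kills the range $3\le q\le d+2$, and the bounded Tor dimension of $i$ (guaranteed by the smoothness of $S$) kills $q\ge d+3$. Along the way one should check the bookkeeping that $\cE^\bullet$ is bounded below so that $\cG^\bullet\in D^+_{\QCoh}(S)$ and the spectral sequence converges, and that every isomorphism produced is the canonical one, so that the base change isomorphism asserted in the statement is indeed natural.
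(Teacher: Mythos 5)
Your proposal is correct and follows essentially the same route as the paper: derived base change for $Rf_*$ (\cref{lem:cohomology_base_change}), compatibility of $\RcHom(-,\cO)$ and the cotangent complex with $Lj^*$ (\cref{lem:derived_dual}), and then the spectral sequence of \cref{lem:spectral_seq} for $Li^*$, killed off the antidiagonal $p+q=2$ by the flatness hypothesis in the range $q\le\dim S+2$ and by $L^pi^*=0$ for $p<-\dim S$ coming from smoothness of $S$. Your step (c) is in fact spelled out more carefully than the paper's degeneration argument, and your closing remark correctly explains why the definition of strongly equicohomological requires flatness precisely in the range $\{0,\dots,\dim S+2\}$.
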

\begin{proof}
By \cref{lem:cohomology_base_change} there exists a natural quasi-isomorphism:
\begin{align}
Li^*Rf_*\RcHom(\Cot_{X/S},\cc{O}_X) \to Rf_{T*}Lj^*\RcHom(\Cot_{X/S},\cc{O}_X).\label{qi:one}
\end{align}  
By \cref{lem:derived_dual} and the base change property of cotangent complex we obtain an isomorphism:
\[
Lj^*\RcHom(\Cot_{X/S},\cc{O}_X) \isom \RcHom(Lj^*\Cot_{X/S},Lj^*\cc{O}_X)\isom \RcHom(\Cot_{X_T/T},\cc{O}_{X_T}),
\]
which implies that the right hand side of (\ref{qi:one}) is isomorphic to $Rf_{T*}\RcHom(\Cot_{X_T/T},\cc{O}_{X_T})$.  By taking cohomology, for any $k \in \ZZ$ we obtain an isomorphism:
\[
L^ki^*Rf_*\RcHom(\Cot_{X/S},\cc{O}_X) \to R^kf_{T*}\RcHom(\Cot_{X_T/T},\cc{O}_{X_T}).
\]
By \cref{lem:spectral_seq} there exists a convergent spectral sequence: 
\[
E^{pq}_2 = L^pi^*\cH^q(Rf_*\RcHom(\Cot_{X/S},\cc{O}_X)) \Rightarrow L^{p+q}i^*Rf_*\RcHom(\Cot_{X/S},\cc{O}_X).
\]
The terms on $E_2$ page are isomorphic to $L^pi^*R^qf_*\RcHom(\Cot_{X/S},\cc{O}_X))$ and therefore by the assumption on being strongly equicohomological $E^{pq}_2 = 0$ for $(p,q) \in \ZZ_{<0} \times \{0,\ldots,\dim S + 2\}$ and $p>0$.  Moreover, by the smoothness of $S$ we see that $E^{pq}_2 = 0$ for $p < -\dim S$ and consequently $E^{-p,p+2}_r \isom E^{-p,p+2}_{r+1}$ for any $p \in \ZZ$ and $r \geq 2$.  This means that the natural filtration induced on $L^{2}i^*Rf_*\RcHom(\Cot_{X/S},\cO_X)$ by the spectral sequence consists of a single term $i^*R^2f_*\RcHom(\Cot_{X/S},\cO_X)$.  This finishes the proof.  
\end{proof}

As a corollary we obtain:

\begin{lemma}\label{lem:weakly_equisingular}
Let $f : X \to S$ be a strongly equicohomological morphism with a smooth affine target.  Then, the set of closed points $s \in S$ such that $X_s$ is $W_2(k(s))$-liftable is closed.
\end{lemma}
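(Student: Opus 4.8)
The plan is to exhibit the fibrewise obstruction classes $\sigma_{X_s}$ of \cref{lem:func_w2k} all at once, as the values of a single global section of the locally free sheaf $\cG \mydef R^2f_*\RcHom(\Cot_{X/S},\cO_X)$, so that the liftable locus becomes the zero locus of that section. First I would lift the base: since $S=\Spec R$ is smooth and affine over $k$, it admits a smooth affine lifting $\wt S = \Spec\wt R$ over $W_2(k)$, obtained by lifting a smooth presentation of $R$, and by \cref{lem:lifting} the ideal $\ker(\wt R\to R)=p\wt R$ is isomorphic to $\wt R/p\wt R = R$, so that $S\hookrightarrow\wt S$ is a square-zero closed immersion with ideal sheaf $\wt\cI\isom\cO_S$.

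Next I would set up the global obstruction class. The obstruction theory for deformations of flat schemes (see \cref{appendix:deformation} and \cref{lem:obstruction_schemes_func}) provides a class $\tau\in\Ext^2_X(\Cot_{X/S},\cO_X\otimes_{\cO_S}\wt\cI)=\Ext^2_X(\Cot_{X/S},\cO_X)$ which vanishes if and only if $f$ admits a flat lifting over $\wt S$. Since $S$ is affine, the local-to-global spectral sequence $H^p(S,R^qf_*\RcHom(\Cot_{X/S},\cO_X))\Rightarrow\Ext^{p+q}_X(\Cot_{X/S},\cO_X)$ degenerates and identifies $\Ext^2_X(\Cot_{X/S},\cO_X)$ with $\Gamma(S,\cG)$; by hypothesis $\cG$ is a flat, hence locally free, $\cO_S$-module, so $\tau$ is a global section of a locally free sheaf.

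The heart of the argument is computing the value of $\tau$ at a closed point $s\in S$. The residue field $k(s)$ is a finite, hence perfect, extension of $k$, so $W_2(k(s))$ is defined and $W_2(k(s))\to k(s)$ is a square-zero extension of $W_2(k)$-algebras, via Witt functoriality. Formal smoothness of $\wt R/W_2(k)$ lifts the composite $\wt R\to R\to k(s)$ to a ring map $\wt R\to W_2(k(s))$; since $p$ maps to $p$, one has $W_2(k(s))\otimes_{\wt R}R = W_2(k(s))/pW_2(k(s)) = k(s)$, so that $\Spec k(s)\hookrightarrow\Spec W_2(k(s))$ is the base change of $S\hookrightarrow\wt S$ along $s$. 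Functoriality of obstruction classes under base change (\cref{appendix:deformation}), combined with the base change property of the cotangent complex, then shows that the image of $\tau$ in $\cG\otimes_{\cO_S}k(s)$ equals the obstruction to lifting $X_s$ along $\Spec k(s)\hookrightarrow\Spec W_2(k(s))$, i.e. the class $\sigma_{X_s}$ of \cref{lem:func_w2k}; here one also invokes \cref{lem:tamely_base_change} and \cref{lem:derived_dual} to identify $\cG\otimes_{\cO_S}k(s)$ with $\Ext^2_{X_s}(\Cot_{X_s/k(s)},\cO_{X_s})$. By \cref{lem:func_w2k} it follows that $X_s$ is $W_2(k(s))$-liftable precisely when $\tau$ vanishes at $s$.

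To finish, I would note that on an open cover of $S$ trivialising $\cG\isom\cO_S^{\oplus n}$ the section $\tau$ corresponds to regular functions $g_1,\dots,g_n$, whence $\{s:\tau(s)=0\}=\bigcap_i V(g_i)$ is closed; this is exactly the locus of $W_2(k(s))$-liftable fibres. I expect the third step to be the main obstacle: one must make sure that the base change isomorphism of \cref{lem:tamely_base_change}, the base change of the cotangent complex, and the functoriality of obstruction classes are all mutually compatible, so that the value at $s$ of the globally defined class $\tau$ is genuinely the intrinsic $W_2(k(s))$-obstruction of $X_s$. Once this compatibility is established, the statement follows at once.
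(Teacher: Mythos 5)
Your proposal follows essentially the same route as the paper: lift the smooth affine base to $\wt{S}/W_2(k)$, form the relative obstruction $\sigma_f \in \Ext^2(\Cot_{X/S},\cO_X) = \Gamma(S, R^2f_*\RcHom(\Cot_{X/S},\cO_X))$, use formal smoothness of $\wt{S}$ to lift each closed point to a $W_2(k(s))$-point, and combine \cref{lem:tamely_base_change} with the functoriality of obstruction classes (\cref{lem:obstruction_schemes_func}) to identify the fibre of $\sigma_f$ at $s$ with the intrinsic obstruction $\sigma_s$, so the liftable locus is the zero locus of a section. The only point to be careful about is your assertion that flatness of $R^2f_*\RcHom(\Cot_{X/S},\cO_X)$ makes it locally free — that requires finite generation, a subtlety the paper itself acknowledges by concluding only constructibility in general and closedness in the finitely generated case.
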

\begin{proof}
By the assumptions on $S$ we see that there exists a $W_2(k)$-lifting $\wt{S}$ of $S$.  Therefore, there is a relative obstruction class $\sigma_f \in \Ext^2(\Cot_{X/S},f^*(p\cc{O}_{\wt{S}})) = \Ext^2(\Cot_{X/S}, \cc{O}_X)$ which vanishes if and only if there exists a flat $\wt{S}$-scheme $\wt{X}$ fitting into the cartesian diagram:
\begin{displaymath}
\xymatrix{
	X \ar[d]^{f}\ar@{-->}[r] & \wt{X}\ar@{-->}[d] \\
	S \ar[r]\ar[d] & \wt{S} \ar[d] \\
	\Spec(k) \ar[r] & \Spec(W_2(k)).}
\end{displaymath} 

By the formal smoothness of $\wt{S} \to \Spec(W_2(k))$, any point $s \in S$ can be lifted to a mapping $\wt{i} : \Spec(W_2(k(s)) \to \wt{S}$.  We denote by $i_s : X_s \to X$ the closed immersion defined by the diagram (with a cartesian square):
\begin{displaymath}
    \xymatrix{
         & X_s \ar[dl]^-{i_s}\ar[d]^-{f_s} &  \\
        X\ar[d]^-{f} & \Spec(k(s)) \ar[r]\ar[ld]^-{i} & \Spec(W_2(k(s)) \ar[dl]^-{\wt{i}} \\
        S \ar[r] & \wt{S}, &
        }  
\end{displaymath} 
and by $\sigma_s \in \Ext^2(\Cot_{X_s/s},\cc{O}_{X_s})$ the associated obstruction class to lifting $X_s/k(s)$ to $W_2(k(s))$.  
The $\Gamma(S,\cc{O}_S)$-module $\Ext^2(\Cot_{X/S},\cc{O}_X)$ is in fact a set of global sections of the sheaf 
\[
R^2 f_*\RcHom(\Cot_{X/S},\cc{O}_X),
\] 
and therefore by \cref{lem:tamely_base_change} we obtain a specialisation isomorphism:
\begin{align*}
\psi_s : \Ext^2(\Cot_{X/S},\cO_X) \otimes_{\cc{O}_S} k(s) & = i^*R^2 f_*\RcHom(\Cot_{X/S},\cc{O}_X) \\
& \isom R^2f_{s*} \RcHom(\Cot_{X_s/k(s)},\cc{O}_{X_s}) = \Ext^2(\Cot_{X_s/k(s)},\cO_{X_s}).
\end{align*}
By \cref{lem:obstruction_schemes_func} we see that obstructions $\sigma_f \in \Ext^2(\Cot_{X/S},\cO_X)$ and $\sigma_s \in \Ext^2(\Cot_{X_s/k(s)},\cO_{X_s})$ fit into commutative diagram:
\begin{displaymath}
    \xymatrix{
            Li_s^*\Cot_{X/S} \ar[r]^{i_s^*\sigma_f}\ar[d]^{di_s}	& Li_s^*\cO_X \ar[d] \\
        \Cot_{X_s/k(s)} \ar[r]^-{\sigma_s} 	& \cO_{X_s},}  
\end{displaymath}
and therefore the fibre $\psi_s([\sigma_f])$ of an obstruction class $\sigma_f \in \Ext^2(\Cot_{X/S},\cc{O}_X)$ is equal to the obstruction $\sigma_s \in \Ext^2(\Cot_{X_s/s},\cc{O}_{X_s})$. Note that here we implicitly use the fact that the second part of the isomorphism (\ref{qi:one}) in \cref{lem:tamely_base_change} is given by the differential $dj$.  Constructibility then follows from the fact that the zero set of a section of a flat module is constructible (in fact it is closed for a finitely generated module).
\end{proof}

In order to obtain a general statement for morphism which are not strongly equicohomological we need the following lemma preceded with a notational remark.  We say that a morphism $f : X \to S$ can be \emph{stratified into morphisms satisfying property $P$} if there exists a stratification of the target $\bigcup_i S_i = S$ into locally closed subschemes $S_i \subset S$ such that the base change $f_i : X \times_S S_i \to S_i$ satisfies $P$.

\begin{lemma}\label{lem:tamely_singular}
The following classes of morphism of schemes can be stratified into strongly equicohomological morphisms:
\begin{enumerate}[i)]
\item proper morphisms,
\item affine morphisms of finite type.
\end{enumerate}
\end{lemma}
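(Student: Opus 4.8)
The statement to prove is \cref{lem:tamely_singular}: proper morphisms and affine morphisms of finite type can each be stratified into strongly equicohomological morphisms. Recall that a morphism $g \colon Y \to T$ is strongly equicohomological if it is flat, locally of finite type, and $R^i g_* \RcHom(\Cot_{Y/T},\cO_Y)$ is $\cO_T$-flat for $i \in \{0,\ldots,\dim T + 2\}$. So after base change to each stratum we need: flatness of the family, flatness of finitely many higher direct images, and the target of finite enough dimension to make the range $\{0,\ldots,\dim T+2\}$ finite. The plan is to produce the stratification in two successive passes — first to achieve flatness of $f$ itself, then to achieve flatness of the relevant cohomology sheaves — using generic-flatness-type results and noetherian induction on the (now necessarily noetherian, since everything is essentially of finite type over $k$) target $S$.

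\textbf{Step 1: reduce to the flat case.} By generic flatness (the flattening stratification, e.g. \cite[\href{http://stacks.math.columbia.edu/tag/052B}{Tag 052B}]{stacks-project}, applicable since $f$ is locally of finite type over the noetherian base $S$ and $\cO_X$ is quasi-coherent), there is a locally closed stratification of $S$ over which $f$ becomes flat. This step uses only that $f$ is locally of finite type, so it applies uniformly to cases i) and ii). After this step we may assume $f$ is flat and locally of finite type, and it remains to stratify further so that the higher direct images become flat. Note that in case i) properness is preserved by base change, and in case ii) affineness and finite type are preserved, so the remaining hypotheses are stable under the stratification.

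\textbf{Step 2: flatten the cohomology sheaves.} Now fix a stratum; replacing $S$ by it, $f \colon X \to S$ is flat and locally of finite type, with $S$ noetherian, hence $\dim S < \infty$, so the index range $\{0,\ldots,\dim S + 2\}$ is finite. Set $\cF^\bullet \mydef \RcHom(\Cot_{X/S},\cO_X) \in D^-_{\QCoh}(X)$ (bounded above since $\Cot_{X/S}$ is connective; the relevant finiteness is available because $f$ is of finite type and in the proper / affine cases the complex has coherent, hence bounded-below after dualizing, cohomology). The key input is that for a flat finite-type morphism one can, after a further locally closed stratification of the base, arrange that $R^i f_* \cF^\bullet$ is $\cO_S$-flat and commutes with base change, for all $i$ in the given finite range simultaneously. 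In case i), this follows from the theory of cohomology and base change for proper morphisms: Grauert-type semicontinuity and the existence of a stratification over which the cohomology groups of a bounded-below coherent complex are locally free — apply cohomology-and-base-change (\cite[\href{http://stacks.math.columbia.edu/tag/0A1D}{Tag 0A1D}]{stacks-project}, \cref{lem:cohomology_base_change}) together with generic freeness finitely many times, once for each $i$. In case ii), $f$ is affine so $R^i f_* \cF^\bullet$ reduces to $\cH^i$ of the pushforward of a bounded-above complex; since $f$ is affine of finite type over noetherian $S$, $f_*\cF^\bullet$ is represented by a complex of $\cO_S$-modules each of which is a filtered colimit of finitely presented ones, and one applies generic flatness (again \cite[\href{http://stacks.math.columbia.edu/tag/052B}{Tag 052B}]{stacks-project}) to the finitely many cohomology modules $R^i f_*\cF^\bullet$, $i \le \dim S + 2$, obtaining a common refining stratification over which all of them are flat. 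Intersecting with the Step 1 strata and iterating by noetherian induction on $S$ terminates and produces the desired stratification.

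\textbf{Main obstacle.} The delicate point is Step 2 in the affine case: one must know that the relevant higher direct images $R^i f_*\RcHom(\Cot_{X/S},\cO_X)$ are, locally on $S$, cohomology of a complex of $\cO_S$-modules that are \emph{of finite type} (or at least filtered colimits of such with controlled transition maps), so that generic flatness genuinely applies and only finitely many strata arise. This is where the standing hypothesis that all schemes are essentially of finite type over $k$ — hence noetherian, with $\Cot_{X/S}$ having coherent cohomology in the relevant range and $\RcHom$ likewise — is essential; without it the cohomology sheaves need not be coherent and the flattening stratification can fail to be finite. One should also check compatibility of the two stratifications: the Step 1 flattening stratum and the Step 2 cohomology-flattening stratum can be intersected, and the whole process halts because $S$ is noetherian and each step strictly decreases the closed complement, so only finitely many strata occur. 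I would record this noetherian-induction bookkeeping carefully but not belabor the routine generic-flatness invocations.
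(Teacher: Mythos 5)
Your proposal is correct and follows essentially the same route as the paper: reduce by noetherian induction to finding a dense open stratum (the paper first reduces to $S$ integral), then apply generic flatness — in the affine case to the coherent cohomology sheaves $\cExt^i(\Cot_{X/S},\cO_X)$ on $X$, whose pushforward computes $R^if_*$ by exactness of $f_*$, and in the proper case to the coherent higher direct images on $S$; your explicit Step 1 flattening $f$ itself is a detail the paper leaves implicit. The only blemish is the typo placing $\RcHom(\Cot_{X/S},\cO_X)$ in $D^-_{\QCoh}(X)$: dualizing the connective complex $\Cot_{X/S}$ yields a bounded \emph{below} complex, i.e.\ an object of $D^+_{\Coh}(X)$, as you in fact acknowledge parenthetically and as the coherence argument requires.
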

\begin{proof}
We observe that stratification is a purely topological notion and therefore we may assume that $S$ is integral.  Now, it suffices to prove that for the classes in question there exists an open subset $U \in S$ such that $f_U : X \times_S U \to U$ is strongly equicohomological.  In both cases we apply the generic flatness to prove a more general statement that for any complex $\cE^\bullet \in D^+_{\Coh}(X)$ the higher direct images are generically $S$-flat.  Firstly, we deal with affine morphisms.  In this case, the higher direct images of any complex $\cE^\bullet \in D^+_{\Coh}(X)$ are computed by taking the pushforwards of the cohomology groups $\cH^j(\cE^\bullet)$.  Those are coherent sheaves on $X$ and are therefore generically $S$-flat.  Secondly, we treat the case of proper morphism.  Now, higher direct images $R^if_*\cE^\bullet$ are coherent $S$-modules and therefore are generically flat.
\end{proof}

We are now ready to state and prove the theorem.

\begin{thm}\label{thm:constructible}
Suppose $f: X \to S$ is a morphism which can be stratified into strongly equicohomological morphisms. Then, the set of closed points $s \in S$ such that the fiber $X_s \mydef X \times_S \Spec(k(s))$ lifts to $W_2(k(s))$ is constructible.
\end{thm}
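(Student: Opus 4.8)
The plan is to run a Noetherian induction on $\dim S$, feeding \cref{lem:weakly_equisingular} with the smooth affine opens produced by generic smoothness over the perfect field $k$, and using the generic-flatness technique behind \cref{lem:tamely_singular} to guarantee that the hypothesis survives the passage to the closed complementary strata. I would begin with two soft reductions. First, constructibility of a subset of $S$ is stable under finite unions, is local on $S$, and can be tested on each member of a finite stratification of $S$ into locally closed subschemes; moreover, since everything is of finite type over $k$, the closed points of a locally closed $S' \subseteq S$ are exactly the closed points of $S$ lying on $S'$, and for such a point $s$ one has $X_s \isom (X\times_S S')_s$ over $k(s)$, so the part of the liftability locus of $f$ lying over $S'$ coincides with the liftability locus of the base change $f_{S'}\colon X\times_S S'\to S'$. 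Applying this to the stratification promised in the hypothesis, and refining it, it suffices to treat the case in which $f\colon X\to S$ is strongly equicohomological and $S$ is integral and affine. Second, I would record the stability statement driving the induction: the base change of a strongly equicohomological morphism along a locally closed immersion, although not necessarily strongly equicohomological itself, can again be stratified into strongly equicohomological morphisms. Since $\RcHom(\Cot_{X/S},\cO_X)$ lies in $D^+_{\Coh}(X)$, one obtains this by the generic-flatness argument of \cref{lem:tamely_singular} applied to this complex, combined with \cref{lem:cohomology_base_change}, \cref{lem:derived_dual}, and base change for the cotangent complex; in particular it is automatic when $f$ is proper or affine of finite type, those classes being stable under base change.

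For the main step, assume $f$ is strongly equicohomological with $S$ integral affine. As $k$ is perfect and $S$ is reduced and of finite type over $k$, generic smoothness yields a dense open $U\subseteq S$ which is smooth over $k$, and we may shrink $U$ to be affine. Restriction along the open immersion $U\hookrightarrow S$ commutes with formation of $\Cot_{X/S}$ and carries each $R^i f_*\RcHom(\Cot_{X/S},\cO_X)$ to its restriction to $U$, so $f_U\colon X\times_S U\to U$ is again strongly equicohomological, now with smooth affine target. By \cref{lem:weakly_equisingular} the liftability locus of $f$ meets $U$ in a closed, hence constructible, subset of $U$. The complement $Z\mydef S\setminus U$, equipped with its reduced structure, is a proper closed subscheme of the Noetherian scheme $S$ with $\dim Z<\dim S$; by the stability statement $f_Z\colon X\times_S Z\to Z$ is again stratifiable into strongly equicohomological morphisms, and the liftability locus over $Z$ equals that of $f_Z$. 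The inductive hypothesis then gives that this locus is constructible in $Z$; as $U$ is open and $Z$ is closed in $S$, gluing the two pieces exhibits the whole liftability locus of $f$ as constructible, completing the induction. The base case $\dim S=-1$ (i.e. $S=\emptyset$) is vacuous.

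I expect the main obstacle to be justifying the stability statement, i.e.\ that passing to a closed stratum preserves, after a further finite stratification, the strongly equicohomological property. The difficulty is that the definition only controls $R^i f_*\RcHom(\Cot_{X/S},\cO_X)$ for $i\le\dim S+2$, whereas the higher direct images need not even be coherent in full generality; consequently the generic-flatness argument — which must produce, on a dense open of each stratum, the required flatness of these finitely many higher direct images of the base-changed complex — goes through cleanly precisely when $f$ is proper or affine of finite type, since then $\RcHom(\Cot_{X/S},\cO_X)$ is a bounded-below complex with coherent cohomology, all of whose higher direct images are coherent, and these properties are stable under base change. These are exactly the classes supplied by \cref{lem:tamely_singular}, so the induction is legitimate for them. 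Handling the fully abstract hypothesis additionally requires tracking the uncontrolled range of degrees through the base-change spectral sequence $E_2^{pq}=L^p i_Z^*\, R^q f_*\RcHom(\Cot_{X/S},\cO_X)\Rightarrow R^{p+q}f_{Z*}\RcHom(\Cot_{X_Z/Z},\cO_{X_Z})$, using that $L^{<0}i_Z^*$ annihilates the flat sheaves in the controlled range $q\le\dim S+2$; this is where the slack $+2$ over $\dim S$ built into the definition of \emph{strongly equicohomological} earns its keep, by providing enough room to separate the controlled from the uncontrolled contributions in the finitely many total degrees $\le\dim Z+2$ that matter for $f_Z$.
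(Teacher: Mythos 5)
Your proposal is correct and follows essentially the same route as the paper: stratify, reduce to a strongly equicohomological morphism with smooth affine target, and invoke \cref{lem:weakly_equisingular}. The paper's own proof simply asserts that reduction in one sentence, whereas your Noetherian induction on $\dim S$ via generic smoothness, together with the explicit re-stratification of the closed complement (and your observation that this re-stratification is only automatic for the proper/affine classes of \cref{lem:tamely_singular}, the cases actually used in the corollary), makes precise a step the paper leaves implicit.
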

\begin{proof}
Observe that in order to prove constructibility we may stratify the scheme $S$ into locally closed subsets.  Therefore, by the assumption on existence of strongly equicohomological stratification, we may assume that $ : X \to S$ is strongly equicohomological with a smooth affine target.  Then, the result follows from \cref{lem:weakly_equisingular}.
\end{proof}

As a direct corollary of \cref{lem:tamely_singular} and \cref{thm:constructible} we obtain:

\begin{cor}
For any proper or affine morphism $f : X \to S$ the set of closed points $s \in S$ such that the fibre $X_s$ lifts to $W_2(k(s))$ is constructible.
\end{cor}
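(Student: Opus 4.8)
The plan is to deduce the corollary directly from \cref{lem:tamely_singular} and \cref{thm:constructible}, since the former supplies exactly the hypothesis needed to invoke the latter. First I would apply \cref{lem:tamely_singular}: a proper morphism, or an affine morphism of finite type, admits a stratification $S = \bigsqcup_i S_i$ into locally closed subschemes such that each base change $f_i \colon X \times_S S_i \to S_i$ is strongly equicohomological. In particular $f$ itself ``can be stratified into strongly equicohomological morphisms'' in the sense of the paragraph preceding \cref{lem:tamely_singular}. Next I would simply invoke \cref{thm:constructible} to conclude that the set of closed points $s \in S$ with $X_s$ liftable to $W_2(k(s))$ is constructible. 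Internally this works because constructibility is stratification-local: it suffices to check the locus stratum by stratum, where (after a further refinement making the target smooth affine, as in the proof of \cref{thm:constructible}) \cref{lem:weakly_equisingular} applies and the liftability locus is in fact closed; a finite union of constructible sets is then constructible.

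The one bookkeeping point I would verify is that the ``affine morphism'' appearing in the corollary is covered by case ii) of \cref{lem:tamely_singular}, which is phrased for affine morphisms of finite type. Under the standing conventions of \cref{sec:notation} all morphisms in sight are essentially of finite type over $k$, so this is automatic; alternatively one observes that the generic-flatness argument used in the proof of \cref{lem:tamely_singular} — pushing the coherent cohomology sheaves $\cH^j(\cE^\bullet)$ forward along an affine map and applying generic flatness — requires no modification in the essentially-of-finite-type setting.

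I do not expect any genuine obstacle here: all the substance already resides in \cref{lem:tamely_singular} (generic flatness of higher direct images for proper and for affine morphisms) together with \cref{thm:constructible} and \cref{lem:weakly_equisingular} (base change for $R^2 f_*\RcHom(\Cot_{X/S},\cO_X)$ and functoriality of the obstruction class). The corollary is the purely formal packaging of these inputs into a statement about proper and affine families, and its proof is a single line chaining the two cited results.
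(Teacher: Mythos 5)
Your proof is correct and follows exactly the paper's route: the corollary is stated there as a direct consequence of \cref{lem:tamely_singular} combined with \cref{thm:constructible}, which is precisely the chain you describe. The extra remarks on stratification-locality of constructibility and on the finite-type hypothesis are sound but already absorbed into the proofs of the two cited results.
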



\subsection{Functorial $W_2(k)$-lifting for Frobenius-split varieties}\label{sec:frobenius_split_witt}

We now reprove the classical result stating that any Frobenius-split scheme $X$ over $k$ is $W_2(k)$-liftable.  For the standard reference and Bhargav Bhatt's proof in singular case the reader is encouraged to see \cref{thm:lift_split}.  The advantage of our approach is that we provide the lifting constructively and functorially with respect to the splitting $\varphi : F_{X*}\cO_X \to \cO_X$, in fact as a certain subscheme of the Witt scheme $(X,W_2(\cO_X))$.

\subsubsection{Functorial setting}

We shall introduce our construction as a certain functor from the category described as follows.
By \emph{Frobenius split scheme} we mean a pair $(X,\varphi_X)$ of a scheme over $k$ and a Frobenius splitting $\varphi_X : F_*\cc{O}_X \to \cc{O}_X$.  Moreover, \emph{a morphism of Frobenius split schemes} $(X,\varphi_X)$ and $(Y,\varphi_Y)$ is a scheme morphism $\pi: X \to Y$ such that the splittings satisfy compatibility conditions expressed by commutativity of the diagram:

\begin{displaymath}
    \xymatrix{
        \cc{O}_Y \ar[r]^{\pi^{\#}}\ar@<1ex>[d]^{F^{\#}} & \pi_*\cc{O}_X \ar@<1ex>[d]^{\pi_*F^{\#}} \\
        F_*\cc{O}_Y \ar[r]\ar@<1ex>[u]^{\varphi_Y} & F_*\pi_*\cc{O}_X = \pi_*F_*\cc{O}_X \ar@<1ex>[u]^{\pi_*\varphi_X},}
\end{displaymath}

i.e., the relation $\pi^{\#} \circ \varphi_Y = (F_*\pi^{\#}) \circ (\pi_*\varphi_X)$ holds.  In case of an affine morphism corresponding to a homomorphism $f: A \to B$ this is equivalent to the equality $f\varphi_A = \varphi_B f$.  The notions described above allow us to introduce the category of Frobenius split varieties.

\begin{defin}
We define a category $\mathrm{Sch}^{split}_k$ of Frobenius split schemes over $k$ as a category with the set of objects consisting of all Frobenius split schemes $(X,\phi_X)$ and with the set of morphisms consisting of morphism of Frobenius split schemes $f : (X,\phi_X) \to (Y,\phi_Y)$.
\end{defin}

\subsubsection{Witt vectors}

We shall need the following explicit description of Witt vectors.

\begin{defin}[Witt vectors $W_2(A)$]
Suppose $A$ is a commutative ring.  We define the ring of Witt vectors $W_2(A)$ to be the set $A \times A$ equipped with addition $+_W$ and multiplication $\cdot_W$ given by the following formulas:
\begin{align}
(a_0,a_1) +_{W} (b_0,b_1) & = (a_0+b_0,a_1+b_1-P(a_0,b_0)) \\
(a_0,a_1) \cdot_{W} (b_0,b_1) & = (a_0b_0,a_0^pb_1+b_0^pa_1+pa_1b_1), 
\end{align}
where $P(a,b)$ is a polynomial $\frac{(a+b)^p - a^p - b^p}{p} \in \bb{Z}[a,b]$.
\end{defin}

The unit element of $W_2(A)$ is represented by $(1,0)$ and, in case of $\bb{F}_p$-rings, the prime number $p$ is represented by $(0,1)$.  The natural projection $(a_0,a_1) \mapsto a_0$ gives a ring homomorphism $\pi : W_2(A) \to A$.  In case of characteristic $p$ ring $A$, the ring $W_2(A)$ possesses a Frobenius endomorphism $\sigma_2 : W_2(A) \to W_2(A)$ given by the formula $(a_0,a_1) \mapsto (a_0^p,a_1^p)$, which is compatible with the Frobenius endomorphism $F: A \to A$, i.e., the identity $F\pi = \pi \sigma_2$ holds. 

\subsubsection{The construction of $W^\varphi_2(A)$}

We begin our considerations of explicit liftings of Frobenius split schemes by the affine case.  A simple consequence of \cref{cor:lift_criterion} is that a functorial construction $A \mapsto W_2(A)$ does not give a flat lifting (the ideal $pW_2(A)$ is not equal to $\mathrm{Ker}[W_2(A) \to A]$).  We therefore proceed in a different manner taking the Frobenius splitting into account.  Let $(A,\varphi)$ by a $k$-algebra together with a Frobenius splitting $\varphi$.  We claim that a flat lifting of $\Spec(A)$ over $W_2(k)$ is given by the following construction functorial with respect to natural mappings of $\bb{F}_p$ rings with Frobenius splitting, i.e., ring homomorphisms commuting with $p^{-1}$-linear splitting operators. 
\begin{defin}\label{def:twisted_witt}
The ring of twisted Witt vectors $W^\varphi_2(A)$ associated to a characteristic $p$ ring with a splitting $(A,\varphi)$ is a set $A \times A$ with addition and multiplication given respectively by the formulas:
\begin{align}
(a_0,a_1) \mplus (b_0,b_1) & \mydef (\id \times \varphi)[(a_0,a_1^p) +_W (b_0,b_1^p)] = (a_0+b_0,a_1+b_1-(\varphi \circ P)(a_0,b_0)) \\
(a_0,a_1) \mmul (b_0,b_1) & \mydef (\id \times \varphi)[(a_0,a_1^p) \cdot_W (b_0,b_1^p)] = (a_0b_0,a_0b_1+b_0a_1), 
\end{align}
which can also be described by the following diagrams:
\begin{displaymath}
    \xymatrix{
        (A \times A)^{\times 2} \ar[r]^{\mplus} \ar[d]^{(\id \times F)^{\times 2}} & A \times A & &   (A \times A)^{\times 2} \ar[r]^{\mmul} \ar[d]^{(\id \times F)^{\times 2}} & A \times A\\
        (A \times A)^{\times 2} \ar[r]^{+_W} & A \times A \ar[u]^{\id \times \varphi}     & &   (A \times A)^{\times 2} \ar[r]^{\cdot_W} & A \times A \ar[u]^{\id \times \varphi}. }
\end{displaymath}
\end{defin}
The verification that the above definition gives indeed a structure of an associative and commutative ring is a straightforward computation.  We are now ready to prove: 
\begin{thm}\label{thm:construction_split}
Any morphism of Frobenius-split algebras $f : (A,\varphi_A) \to (B,\varphi_B)$ induces a morphism of $f_{W_2} : W^{\varphi_A}_2(A) \to W^{\varphi_B}_2(B)$.  Moreover, the ring $W^\varphi_2(A)$ is a flat $W_2(k)$-lifting of $A$.  
\end{thm}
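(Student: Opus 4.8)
The plan is to establish the two assertions in turn: functoriality of the construction $A \mapsto W_2^\varphi(A)$, and flatness of $W_2^\varphi(A)$ over $W_2(k)$. For the first part, given a morphism $f : (A,\varphi_A) \to (B,\varphi_B)$ of Frobenius-split algebras, I would simply set $f_{W_2}(a_0,a_1) \mydef (f(a_0),f(a_1))$ and check it respects $\mplus$ and $\mmul$. The point is that the usual Witt-vector functoriality gives $(f \times f)$ compatibility with $+_W$ and $\cdot_W$, that $(f \times f)$ commutes with $(\id \times F)$ since $f$ is a ring homomorphism and $F$ is the $p$-th power map (so $f(a_1^p) = f(a_1)^p$), and that $(f \times f)$ commutes with $(\id \times \varphi_A)$, $(\id \times \varphi_B)$ precisely because of the compatibility relation $f\varphi_A = \varphi_B f$ recorded in the definition of a morphism in $\mathrm{Sch}^{split}_k$. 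Chasing the two defining commutative diagrams of \cref{def:twisted_witt} then yields $f_{W_2}(x \mplus y) = f_{W_2}(x) \mplus f_{W_2}(y)$ and similarly for $\mmul$; compatibility with the unit $(1,0)$ is immediate. Functoriality in $f$ (respecting identities and composition) is then formal.

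For the flatness statement, I would apply \cref{cor:lift_criterion}: it suffices to exhibit an isomorphism $W_2^\varphi(A)/pW_2^\varphi(A) \isom A$ and to verify $p W_2^\varphi(A) = \mathrm{Ann}_{W_2^\varphi(A)}(p)$. First I need to identify the element $p$ in $W_2^\varphi(A)$: since $p = (0,1)$ in $W_2(\bb F_p)$ and the structural map $W_2(k) \to W_2^\varphi(A)$ is induced by functoriality from $\bb F_p \to A$ (note $\varphi$ restricts to the identity on the prime subring, or one argues directly), the element $p$ is represented by $(0,1)$. Multiplying by $p = (0,1)$ via $\mmul$ gives $(0,1)\mmul(b_0,b_1) = (0 \cdot b_0,\, 0 \cdot b_1 + b_0 \cdot 1) = (0,b_0)$. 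Hence $p W_2^\varphi(A) = \{(0,b_0) : b_0 \in A\} = 0 \times A$, so the projection $(a_0,a_1) \mapsto a_0$ identifies $W_2^\varphi(A)/pW_2^\varphi(A)$ with $A$; one checks this projection is a ring homomorphism directly from the formulas (the first coordinate of $\mplus$ and $\mmul$ is just ordinary addition and multiplication in $A$). For the annihilator condition: if $(0,1) \mmul (b_0,b_1) = (0,b_0) = (0,0)$ then $b_0 = 0$, so $\mathrm{Ann}(p) = 0 \times A = pW_2^\varphi(A)$ as required. By \cref{cor:lift_criterion} this gives flatness and the identification of the special fibre.

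The main obstacle I anticipate is not any single step but the bookkeeping in verifying that $W_2^\varphi(A)$ is genuinely a commutative ring — associativity and distributivity of $\mplus$, $\mmul$ — which the excerpt dispatches as "a straightforward computation." The subtlety is that $\mmul$ is \emph{not} obtained from $\cdot_W$ by a ring map but by the non-additive recipe $(\id\times\varphi)\circ(\cdot_W)\circ(\id\times F)^{\times 2}$, so associativity relies essentially on $\varphi$ being additive and on the identity $\varphi \comp F^\# = \id$, together with the fact that on second coordinates the twisting by $F$ and then $\varphi$ telescopes; distributivity mixes $\mplus$ and $\mmul$ and here one uses that $\varphi(P(a,b)\cdot c^p) = \varphi(P(a,b))\cdot c$, i.e. the projection formula for the $\cO_X$-linear (equivalently $p^{-1}$-linear) map $\varphi$. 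Provided one grants the ring axioms (or checks them once), the remaining verifications above are routine, and the theorem follows.
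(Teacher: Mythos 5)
Your proposal is correct and follows essentially the same route as the paper: functoriality read off from the defining formulas together with $f\varphi_A=\varphi_B f$, identification of $p$ with $(0,1)$ so that $pW_2^\varphi(A)=0\times A$ equals both the kernel of the first projection and $\mathrm{Ann}(p)$, and then \cref{cor:lift_criterion}. The only (harmless) variation is that you locate $p$ via the prime subring $\mathbb{F}_p\to A$ rather than via the isomorphism $W_2^{\gamma_k}(k)\isom W_2(k)$ used in the paper, and you spell out the ring-axiom and annihilator checks the paper leaves implicit.
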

\begin{proof}
Firstly, we observe that our construction is functorial.  This follows directly from the formulas for operations and the condition $f \varphi_A = \varphi_B f$.

Consequently, we show that $W^\varphi_2(A)$ is flat over $W_2(k)$.  Clearly the projection onto the first factor gives a surjective ring homomorphism from $W^\varphi_2(A)$ to $A$ whose kernel $I$ is given by the ideal of elements of the form $(0,a_1) \in W^\varphi_2(A)$ for $a_1 \in A$, which is generated by a single element $q = (0,1)$.
Note that the above diagrams prove that in case of a perfect field $k$ with a Frobenius splitting $\gamma_k(\alpha) = \sqrt[p]{\alpha}$ the bijective mapping $\id \times F$ gives an isomorphism between the rings $W^{\gamma_k}_2(k)$ and $W_2(k)$.  This consequently means that natural homomorphism $W_2(k) \isom W^{\gamma_k}_2(k) \to W^\varphi_2(A)$ induced by the embedding $k \to A$ sends $p = (0,1) \in W_2(k)$ to $(0,1) \in W^\varphi_2(A)$ and therefore $q = p$.  This allows us to apply Corollary \ref{cor:lift_criterion} to conclude that $W^\varphi_2(A)$ is indeed a flat lifting of $A$.

\end{proof}

\subsubsection{Glueing to a twisted Witt scheme}

In order to globalise the above construction we shall show that it behaves well with respect to localisation.  For this purpose, we prove the following lemma resembling the computation given in \cite[Lemma 5.1]{bloch_ktheory}.

\begin{lemma}\label{lemma:opens}
Let $A$ be a $k$-algebra.  For any lifting $(f,c) \in W^\varphi_2(A)$ of $f \in A$ the natural homomorphism $i_f: W^\varphi_2(A)_{(f,c)} \to W^\varphi_2(A_f)$ is an isomorphism.
\end{lemma}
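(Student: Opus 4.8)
The plan is to recognise $i_f$ as a homomorphism between two flat $W_2(k)$-liftings of $A_f$ which reduces to the identity modulo $p$, and then to invoke the elementary fact that such a homomorphism is automatically an isomorphism. First I would record that the splitting localises: since $F_*A$ is $A$ with its $A$-module structure twisted by $F$, and $A_f = A_{f^p}$, localising the $A$-linear map $\varphi \colon F_*A \to A$ at $f$ produces a splitting $\varphi_f \colon F_*(A_f) \to A_f$ of the Frobenius of $A_f$, and the localisation morphism $\ell \colon A \to A_f$ is by construction compatible with $\varphi$ and $\varphi_f$, i.e. a morphism of Frobenius-split algebras; this $\varphi_f$ is the splitting tacitly meant by $\varphi$ in $W^\varphi_2(A_f)$. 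By \cref{thm:construction_split}, $\ell$ induces a ring homomorphism $W^\varphi_2(A) \to W^\varphi_2(A_f)$, which---reading off the operations of \cref{def:twisted_witt} and using $\ell\varphi = \varphi_f\ell$---is the coordinatewise map $(a_0,a_1) \mapsto (\ell(a_0),\ell(a_1))$. It carries $(f,c)$ to an element whose image in $A_f$ is the unit $\ell(f)$, and since $W^\varphi_2(A_f)$ is a square-zero extension of $A_f$ (the ideal $\{(0,b)\}$ squares to zero) that element is itself a unit; so the universal property of localisation factors the homomorphism uniquely as $W^\varphi_2(A) \to W^\varphi_2(A)_{(f,c)} \xrightarrow{\,i_f\,} W^\varphi_2(A_f)$.

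Next I would check that both rings are flat $W_2(k)$-liftings of $A_f$. For $W^\varphi_2(A_f)$ this is \cref{thm:construction_split}. For $W^\varphi_2(A)_{(f,c)}$, localisation is exact, so $W_2(k)$-flatness is inherited from $W^\varphi_2(A)$; and since $p \in W_2(k)$ acts on $W^\varphi_2(A)$ as the generator $(0,1)$ of $\ker(W^\varphi_2(A) \to A)$, reduction modulo $p$ commutes with localisation and yields $W^\varphi_2(A)_{(f,c)} \otimes_{W_2(k)} k \cong A_f$. Moreover, from the coordinatewise formula, $i_f$ reduces modulo $p$ to the identity of $A_f$, being the localisation of $\ell\colon A \to A_f$ at the already-invertible $\ell(f)$. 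It then remains to note that a homomorphism $h\colon M \to N$ of flat $W_2(k)$-modules which is an isomorphism modulo $p$ is an isomorphism: $\Coker h$ vanishes modulo $p$, so $\Coker h = p\Coker h = p^2\Coker h = 0$ since $p^2=0$; and with $K = \ker h$, the sequence $0 \to K \to M \to N \to 0$ together with $\Tor^{W_2(k)}_1(N,k) = 0$ forces $K/pK = 0$, whence $K = pK = p^2K = 0$ likewise. Applying this to $i_f$ gives the claim.

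The only genuine bookkeeping is the verification in the first paragraph that $\varphi$ really does localise and that the induced map of twisted Witt vectors is coordinatewise; after that the argument is formal. One could also avoid the flatness criterion entirely: comparing the short exact sequences $0 \to I_{(f,c)} \to W^\varphi_2(A)_{(f,c)} \to A_f \to 0$ and $0 \to I' \to W^\varphi_2(A_f) \to A_f \to 0$, where $I$ and $I'$ are the square-zero kernels of the projections to $A$ and $A_f$, one identifies $I_{(f,c)} \isom A_f \isom I'$ (each is free of rank one over $A_f$), checks that $i_f$ restricts to the identity on them and to $\id_{A_f}$ on the quotients, and concludes by the five lemma.
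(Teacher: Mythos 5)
Your proof is correct, but it takes a genuinely different route from the paper's. The paper argues by direct computation: using the identities $(f,c)^{-1}=(1/f,-c/f^2)$ and $(a,b)^n=(a^n,na^{n-1}b)$ it writes out $i_f\bigl((a,b)/(f,c)^n\bigr)=\bigl(a/f^n,(fb-nac)/f^{n+1}\bigr)$ explicitly and then checks injectivity and surjectivity by inspection of this formula (this is the computation modelled on Bloch's Lemma 5.1). You instead avoid any formula for $i_f$ on fractions and argue formally: both $W^\varphi_2(A)_{(f,c)}$ and $W^\varphi_2(A_f)$ are flat $W_2(k)$-liftings of $A_f$ (the first because localisation preserves flatness and commutes with reduction mod $p$, the second by \cref{thm:construction_split}), $i_f$ reduces to the identity of $A_f$, and a map of flat $W_2(k)$-modules that is an isomorphism mod $p$ is an isomorphism since $p^2=0$. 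Your preliminary verifications are all sound: the splitting does localise because the $A$-action on $F_*A$ is through $p$-th powers, the induced map of twisted Witt vectors is coordinatewise by the functoriality in \cref{thm:construction_split}, and $(f,c)$ becomes a unit in $W^\varphi_2(A_f)$ because the kernel of the projection to $A_f$ is square-zero. What each approach buys: the paper's computation is elementary and self-contained, and the explicit formula is reusable; yours is more conceptual, makes explicit the localisation of the splitting that the paper uses tacitly when it writes $W^\varphi_2(A_f)$, and generalises immediately to any comparison of flat liftings that agree mod $p$. Either argument would serve.
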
 
Before proceeding to the proof, we present a series of useful equalities in $W^\varphi_2(A)$ which follows directly from addition and multiplication formulas: 
\begin{align}
	(f,c)^{-1} = (1/f,-c/f^2) \label{eq:inverse}\\
	(a,b)^n = (a^n,na^{n-1}b). \label{eq:power}
\end{align}
\begin{proof}[Proof of Lemma \ref{lemma:opens}]
The proof boils down to showing that $i_f$ is bijective.  To prove it, we first apply equalities (\ref{eq:inverse}) and (\ref{eq:power}) to obtain:
\[
	i_f\left(\frac{(a,b)}{(f,c)^n}\right) = (a,b) \mmul \left(\frac{1}{f},\frac{-c}{f^2}\right)^n = (a,b) \mmul \left(\frac{1}{f^n},\frac{-nc}{f^{n+1}}\right) = \left(\frac{a}{f^n},\frac{fb-nac}{f^{n+1}}\right),
\]
and then carry on with a straightforward inspection using the following two facts: (i) $\frac{a}{f^n} = 0$ if and only if $f^s a = 0$ for some $s \in \mathbb{N}$, (ii) the mapping $A_f \ni d \mapsto d - \frac{nac}{f^{n+1}}$ is bijective for any choice of parameters $(n,a,c)$.  
\end{proof}

We are now ready to prove the theorem.

\begin{thm}\label{thm:split_w2k} There exists a functor $W^{\varphi}_2$ from the category $\mathrm{Sch}^{split}_k$ to the category $\mathrm{Sch}_{W_2(k)}$ of schemes over the ring of second Witt vectors such that for every Frobenius split variety $(X,\varphi_X)$ the object $W^{\varphi}_2((X,\varphi_X))$ is a flat lifting of $X$ to $W_2(k)$. 
\end{thm}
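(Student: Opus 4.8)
The plan is to globalize the affine construction $W^\varphi_2$ from \cref{thm:construction_split} by a standard gluing argument, using \cref{lemma:opens} to control restriction to principal opens. First I would fix a Frobenius split scheme $(X,\varphi_X)$ and cover $X$ by affine opens $U_i = \Spec(A_i)$; the splitting $\varphi_X$ restricts to a splitting $\varphi_i$ on each $A_i$ (a sheaf homomorphism restricts, and the split condition is local), so \cref{thm:construction_split} produces flat $W_2(k)$-liftings $\wt{U_i} = \Spec(W^{\varphi_i}_2(A_i))$. The content of \cref{lemma:opens} is precisely that for a principal open $D(f) \subset U_i$ with any chosen lift $(f,c)$ of $f$, the localization $W^{\varphi_i}_2(A_i)_{(f,c)}$ is canonically isomorphic to $W^{\varphi_X|_{D(f)}}_2(A_i{}_f)$; since different choices of the lift differ by a unit in the localized ring, this identification is canonical as a map of $W_2(k)$-algebras. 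Hence on overlaps $U_i \cap U_j$, covered by such principal opens in both $U_i$ and $U_j$, the liftings $\wt{U_i}$ and $\wt{U_j}$ become canonically isomorphic, and these isomorphisms satisfy the cocycle condition on triple overlaps because each is induced by the functorial identification of twisted Witt rings of the same localized algebra (functoriality is the first assertion of \cref{thm:construction_split}). Gluing the $\wt{U_i}$ along these isomorphisms yields a scheme $\wt{X}$ with a morphism $\wt{X} \to \Spec(W_2(k))$; flatness is local, so it follows from the affine case, and $\wt{X} \times_{W_2(k)} k \isom X$ because each $\wt{U_i}$ has this property compatibly.

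To promote this to a functor on $\mathrm{Sch}^{split}_k$, I would argue that a morphism $\pi \colon (X,\varphi_X) \to (Y,\varphi_Y)$ of Frobenius split schemes induces $W^\varphi_2(\pi) \colon \wt{X} \to \wt{Y}$: choose affine covers compatible with $\pi$ (every affine open of $Y$ pulls back to a union of affine opens of $X$, reducible to principal opens), apply the functoriality of $f \mapsto f_{W_2}$ from \cref{thm:construction_split} on each piece — this uses exactly the compatibility relation $f\varphi_A = \varphi_B f$ that defines a morphism in $\mathrm{Sch}^{split}_k$ — and check that these affine-local morphisms agree on overlaps, again by \cref{lemma:opens} together with functoriality. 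Compatibility with identities and composition then reduces to the corresponding statements for the affine construction, which are immediate from the explicit formulas in \cref{def:twisted_witt}.

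The main obstacle I expect is the bookkeeping around the non-canonical choice of lift $(f,c)$ of $f$ when passing to principal opens: one must verify that the gluing isomorphisms are genuinely independent of these choices (equivalently, that the transition maps are well-defined and satisfy the cocycle condition), and that the construction of $W^\varphi_2(\pi)$ does not depend on the chosen compatible cover. Both points are routine given \cref{lemma:opens}, but they require care: the cleanest formulation is to observe that for a fixed open $V \subset X$ the twisted Witt ring $W^{\varphi_X|_V}_2(\cO_X(V))$ depends only on $V$ (and $\varphi_X$), so that $V \mapsto \Spec(W^{\varphi_X|_V}_2(\cO_X(V)))$ is a presheaf of schemes on the principal-open basis whose sheafification is $\wt{X}$; \cref{lemma:opens} is exactly the statement that the restriction maps of this presheaf are the obvious localizations, so no gluing data needs to be chosen at all. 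With this reformulation the theorem follows formally, and I would present it in that language to avoid the choice-of-lift calculations entirely.
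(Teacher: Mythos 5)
Your proposal is correct and follows essentially the same route as the paper: the paper also globalizes the affine construction by equipping the topological space $X$ with the sheaf of rings $U \mapsto W^{\varphi}_2(\cO_X(U))$ (restriction maps coming from functoriality of \cref{thm:construction_split}) and invokes \cref{lemma:opens} to identify its restriction to an affine open with the sheaf associated to the twisted Witt ring, which is exactly your closing "presheaf on a basis" reformulation that sidesteps the choice-of-lift bookkeeping. The morphism part likewise reduces to the affine functoriality statement, as you indicate.
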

\begin{proof}
In case of affine schemes $(\Spec(A),\phi)$ it is sufficient to use the construction of $W_2^\phi(A)$.  In general, we proceed as follows.  For a scheme $(X,\cc{O}_X)$ we consider a ringed space $(X,W^\varphi_2(\cc{O}_X))$ where $W^\varphi_2(\cc{O}_X)$ is a sheaf equal to $\cc{O}_X \times \cc{O}_X$ set theoretically and with ring structure defined by the assignment $U \mapsto W^\varphi_2(\cc{O}_X(U))$.  The sheaf transition maps are induced by the functoriality of the the construction given in \cref{thm:construction_split}.  Consequently, \cref{lemma:opens} proves that locally over the affine subset $V = \Spec(A)$ of $(X,\cc{O}_X)$, the sheaf $W^\varphi_2(\cc{O}_X)_{|V}$ is isomorphic to $\wt{W^\varphi_2(\cc{O}_X(V))}$ and therefore $(X,W^\varphi_2(\cc{O}_X))$ is in fact a scheme over $W_2(k)$.    
\end{proof}

\subsection{Alternative view on the construction}

As suggested by Piotr Achinger, the above construction might be alternatively described as a certain subscheme of the second Witt scheme $(X,W_2(\cO_X))$.  Firstly, observe that $W_2(\cO_X)$ is an algebra extension of $\cO_X$ by the ideal $F_{X*}\cO_X$.  The splitting $\varphi$ gives rise to a subideal of $F_{X*}\cO_X$ given by $\Ker(\varphi) \isom B^1_X$.  We define $W^{\phi,alt}_2(\cO_X)$ to be the quotient sheaf $W_2(\cO_X)/\Ker(\varphi)$.  We now prove:

\begin{prop}
The constructions $W^\varphi_2(\cO_X)$ and $W^{\varphi,alt}_2(\cO_X)$ give rise to isomorphic schemes. 
\end{prop}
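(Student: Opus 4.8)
The plan is to exhibit an explicit isomorphism of sheaves of rings $W_2(\cO_X)/\Ker(\varphi) \isomto W^\varphi_2(\cO_X)$ and check it is an isomorphism of schemes over $W_2(k)$. Since both constructions are defined by sheafifying a presheaf of rings attached to each affine open, it suffices to produce, for every $k$-algebra $A$ with splitting $\varphi$, a natural ring isomorphism $W_2(A)/\Ker(\varphi_A) \isomto W^\varphi_2(A)$, where here I abusively write $\Ker(\varphi_A) = \{(0,b) : b \in \Ker(\varphi \colon A \to A)\}$ for the ideal of $W_2(A)$ cut out by the subideal $\Ker(\varphi) \subset F_{A*}A = A$ sitting inside the augmentation ideal $\{(0,b)\}$. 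First I would observe that $\{(0,b) : b \in A\}$ really is an ideal of $W_2(A)$ (it is the kernel of $\pi \colon W_2(A) \to A$), that $\Ker(\varphi)$ is an $A$-submodule of it via the $W_2(A)$-action — here one uses that the action of $(a_0,a_1)$ on $(0,b)$ is $(0, a_0^p b)$, and $F_{A*}A \to A$ being $\varphi$ is $A$-linear for the Frobenius-twisted structure, so $a_0^p b \in \Ker\varphi$ whenever $b \in \Ker\varphi$ — hence the quotient $W_2(A)/\Ker(\varphi)$ makes sense as a ring.

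Next I would write down the map. On underlying sets, $W^\varphi_2(A) = A \times A$ and $W_2(A)/\Ker(\varphi)$ has underlying set $A \times (A/\Ker\varphi) = A \times B^1_A$ via $\varphi$, so the natural candidate is
\[
\Psi \colon W_2(A)/\Ker(\varphi) \ra W^\varphi_2(A), \qquad \overline{(a_0,a_1)} \longmapsto (a_0, \varphi(a_1)).
\]
This is well-defined and bijective essentially by construction (two classes agree iff their first coordinates agree and their second coordinates differ by an element of $\Ker\varphi$). The content is that $\Psi$ is a ring homomorphism, and this is exactly what the commutative diagrams in \cref{def:twisted_witt} encode: the operations $\mplus$ and $\mmul$ were \emph{defined} as $(\id\times\varphi)$ applied to the Witt operations post-composed with $(\id \times F)$ on each input. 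Concretely, for the product, $(a_0,a_1)\cdot_W(b_0,b_1) = (a_0b_0, a_0^p b_1 + b_0^p a_1 + p a_1 b_1)$, and applying $\varphi$ to the second coordinate (recalling $p$ acts as a $p$-th power so $\varphi(p a_1 b_1) = \varphi((\text{$p$-fold sum}))$ collapses correctly, and $\varphi$ is $F$-twisted linear so $\varphi(a_0^p b_1) = a_0\varphi(b_1)$) reproduces $(a_0b_0, a_0\varphi(b_1) + b_0\varphi(a_1))$, which is the second coordinate of $(a_0,\varphi(a_1))\mmul(b_0,\varphi(b_1))$. The same bookkeeping with $P(a_0,b_0)$ handles addition. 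I would then note that $\Psi$ is $W_2(k)$-linear: under $W_2(k) \isom W_2^{\gamma_k}(k)$ both sides receive the same structure map from $k \hookrightarrow A$, as established in the proof of \cref{thm:construction_split}.

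The main obstacle — really the only non-formal point — is the compatibility check for multiplication, specifically verifying that the $F$-twisting in \cref{def:twisted_witt} (the $(\id\times F)^{\times 2}$ on the left of the diagrams, i.e.\ replacing $a_1$ by $a_1^p$ before applying $\cdot_W$) is precisely what is needed so that $\varphi$, which is only $F^{-1}$-semilinear (i.e.\ $p^{-1}$-linear), becomes compatible with the $A$-module (rather than $F_{A*}A$-module) structure on the augmentation ideal of $W^\varphi_2(A)$. Once one is careful that $\varphi(a_0^p b_1) = a_0 \varphi(b_1)$ — which is the defining semilinearity of a splitting — and that the term $p a_1 b_1$ maps into $\Ker\varphi$-irrelevance correctly (it contributes $\varphi$ of a $p$-divisible element, and since $b \mapsto pb$ on $W_2$ lands in the augmentation ideal with second coordinate $b_0^p$-scaled, this is absorbed), the diagrams commute and $\Psi$ is a ring map. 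Globalizing is then automatic: $\Psi$ is natural in $A$ by inspection of the formulas together with $f\varphi_A = \varphi_B f$, so it sheafifies to an isomorphism of the corresponding ringed spaces, and since both are schemes over $W_2(k)$ by \cref{thm:split_w2k}, this is an isomorphism of $W_2(k)$-schemes.
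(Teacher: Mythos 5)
Your proof is correct and takes essentially the same route as the paper's: both identify $W^{\varphi,alt}_2(\cO_X) = W_2(\cO_X)/\Ker(\varphi)$ with $W^{\varphi}_2(\cO_X)$ via the mutually inverse maps $\id\times\varphi$ and $\id\times F$, the point being that the commutative diagrams in the definition of $\mplus$ and $\mmul$ are precisely the statement that this identification respects the ring operations. Two harmless slips worth fixing: $A/\Ker\varphi$ is isomorphic to $A$ via $\varphi$ (it is $\Ker\varphi$ itself that is identified with $B^1$), and the term $pa_1b_1$ in the Witt multiplication is simply zero because $A$ is an $\bb{F}_p$-algebra, so no delicate ``absorption'' argument is needed there.
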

\begin{proof}
Indeed, the construction of $W^{\varphi,alt}_2(\cO_X)$ can be described by the following diagram:
\begin{displaymath}
    \xymatrix{
    	& B^1_X \ar[r]^\isom \ar[d] & \Ker(\phi) \ar[d] & & & \\
        0 \ar[r] & F_{X*}\cc{O}_X \ar[r]\ar[d]^{\varphi} & W_2(\cc{O}_X) \ar[d]^{c_\varphi}\ar[r] & \cO_X \ar[r]\ar[d] & 0 \\
        0 \ar[r] & \cO_X \ar[r] & W^{\varphi,alt}_2(\cc{O}_X) \ar[r] & \cO_X \ar[r] & 0, }
\end{displaymath}
where $c_\varphi : W_2(\cO_X) \to W^{\varphi,alt}_2(\cO_X)$ is a ring homomorphism. Therefore, we see that the operations in $W^{\varphi,alt}_2(\cO_X)$ are given by the lifting to $W_2(\cO_X)$ (e.g., $\id \times F$ given in \cref{def:twisted_witt}), followed by the corresponding operation in $W_2(\cO_X)$ and the reduction $c_\varphi$.  This is exactly the description given by the diagrams in \cref{def:twisted_witt}.
\end{proof}


\section{Frobenius liftability} \label{sec:frobenius_liftability}

We now proceed to the investigation of Frobenius liftability of affine schemes.  We begin with functoriality of obstruction classes for lifting Frobenius, and then give a few examples.  Consequently we present a computational criterion for Frobenius liftability of affine complete intersections which we then apply to the case of ordinary double points in dimension $\geq 4$ and canonical surface singularities.

\subsection{Functoriality of obstructions to lifting Frobenius}

As a direct corollary of \cref{lem:obstruction_morphisms_func} we obtain the following result:

\begin{lemma}\label{lem:func_frobenius}
For any scheme $X/k$ together with a $W_2(k)$-lifting $\wt{X}$ there exists an obstruction $\sigma^F_X \in \Ext^1(F_{X/k}^*\Cot_{X^{(1)}/k},\cO_X)$ to lifting of relative Frobenius $F_{X/k} : X \to X^{(1)}$ to an infinitesimal flat thickening $\wt{F}_{X/k} : \wt{X} \to \wt{X}^{(1)}$ which satisfies the following functoriality property.  For any $\wt{g} : \wt{X} \to \wt{Y}$ restricting to a given $g: X \to Y$ the obstructions $\sigma^F_X$ and $\sigma^F_Y$ satisfy:
\begin{displaymath}
    \xymatrix{
        Lg^*F_{Y/k}^*\Cot_{Y^{(1)}/k} \ar[r]^{Lg^*\sigma^F_Y}\ar[d]^{F_{X/k}^*dg^{(1)}} & Lg^*\cO_Y[1] \ar[d]^{\isom} 	& & F_{Y/k}^*\Cot_{Y^{(1)}/k} \ar[r]^{\sigma^F_Y}\ar[d]^{d_g^{(1)}} & \cc{O}_Y[1] \ar[d]^{g^{\#}[1]} \\
        F_{X/k}^*\Cot_{X^{(1)}/k} \ar[r]^-{\sigma^F_X} & \cc{O}_X[1] 				& & Rg_*F_{X/k}^*\Cot_{X^{(1)}/k} \ar[r]^-{Rg_*\sigma^F_X} & Rg_*\cc{O}_X[1],}
\end{displaymath}
where $d_g^{(1)} : F_{Y/k}^*\Cot_{Y^{(1)}/k} \to Rg_*F_{X/k}^*\Cot_{X^{(1)}/k}$ is the adjoint of the mapping $F_{X/k}^*dg^{(1)}$. 
In particular, if $g$ is affine and $g^\#: \cO_Y \to g_*\cO_X$ splits then the Frobenius lifting of $X$ descends to $Y$.
\end{lemma}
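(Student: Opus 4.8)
The plan is to deduce the lemma directly from \cref{lem:obstruction_morphisms_func} (functoriality of obstruction classes for liftings of morphisms), specialised to the relative Frobenius. First I would fix the relevant infinitesimal picture. With $\wt{X}$ a fixed $W_2(k)$-lifting of $X$, set $\wt{X}^{(1)} \mydef \wt{X} \times_{W_2(k),\sigma} \Spec(W_2(k))$; this is a flat $W_2(k)$-lifting of $X^{(1)}$, and its ideal sheaf $p\cO_{\wt{X}^{(1)}}$ is canonically isomorphic to $\cO_{X^{(1)}}$ by \cref{lem:lifting}. Lifting $F_{X/k}\colon X \to X^{(1)}$ now means producing a $W_2(k)$-morphism $\wt{F}_{X/k}\colon \wt{X} \to \wt{X}^{(1)}$ restricting to $F_{X/k}$, so \cref{lem:obstruction_morphisms_func} supplies a well-defined obstruction class in $\Ext^1(F_{X/k}^*\Cot_{X^{(1)}/k},\cO_X)$, the coefficient sheaf $\cO_X$ being the ideal $p\cO_{\wt{X}} \isom \cO_X$ of the thickening $X \hookrightarrow \wt{X}$ (equivalently, the pullback along $F_{X/k}$ of the ideal $p\cO_{\wt{X}^{(1)}} \isom \cO_{X^{(1)}}$).

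For the functoriality diagrams, given $\wt{g}\colon \wt{X} \to \wt{Y}$ restricting to $g \colon X \to Y$ I would base change along $\sigma$ to obtain $\wt{g}^{(1)}\colon \wt{X}^{(1)} \to \wt{Y}^{(1)}$, and note that the naturality square of relative Frobenius, $F_{Y/k} \circ g = g^{(1)} \circ F_{X/k}$, lifts to the analogous square over $W_2(k)$ formed by $\wt{g}$, $\wt{g}^{(1)}$ and the two (putative) Frobenius lifts. Feeding this configuration into the functoriality part of \cref{lem:obstruction_morphisms_func} yields the two displayed diagrams; by construction $d_g^{(1)}$ is the adjoint of $F_{X/k}^*\,dg^{(1)}$.

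For the final assertion I would argue exactly as in the closing remark of \cref{lem:func_w2k}. In the situation of the functoriality statement, suppose in addition that $g$ is affine, that $g^{\#}$ is split by an $\cO_Y$-linear retraction $s\colon g_*\cO_X \to \cO_Y$, and that $\wt{X}$ is Frobenius liftable, i.e.\ $\sigma^F_X = 0$. Affineness gives $Rg_*\cO_X = g_*\cO_X$, so the right-hand functoriality diagram reads $g^{\#}[1] \circ \sigma^F_Y = Rg_*(\sigma^F_X) \circ d_g^{(1)} = 0$ in $D_{\QCoh}(Y)$. Since $s[1] \circ g^{\#}[1] = \id_{\cO_Y[1]}$, post-composition with $g^{\#}[1]$ is injective on $\Hom_{D_{\QCoh}(Y)}(F_{Y/k}^*\Cot_{Y^{(1)}/k},-)$, whence $\sigma^F_Y = 0$ and $\wt{Y}$ admits a lifting of its relative Frobenius.

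The step that I expect to need the most care is the first one: correctly identifying the infinitesimal situation to which \cref{lem:obstruction_morphisms_func} applies. One must use the specific lifting $\wt{X}^{(1)} = \wt{X} \times_{W_2(k),\sigma}\Spec(W_2(k))$ rather than an arbitrary one, check that its ideal sheaf is $\cO_{X^{(1)}}$, and verify that base change along $\sigma$ really does produce a lift $\wt{g}^{(1)}$ of $g^{(1)}$ compatible with the Frobenius square. By contrast, the descent in the last paragraph is purely formal, the only hypothesis actually used being affineness of $g$ — it is what makes $Rg_* = g_*$, so that no higher direct image can interfere with the injectivity coming from the splitting of $g^{\#}$.
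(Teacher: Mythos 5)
Your proposal is correct and follows essentially the same route as the paper: the paper likewise obtains the obstruction and both diagrams by specialising \cref{lem:obstruction_morphisms_func} to the square $F_{Y/k}\circ g = g^{(1)}\circ F_{X/k}$ (with $\wt{X}^{(1)}$, $\wt{Y}^{(1)}$ the $\sigma$-base changes) and then applying adjunction as in \cref{lem:obstruction_schemes_func}, and it derives the descent statement from the right-hand diagram together with the splitting of $g^{\#}$. Your write-up merely makes explicit the identifications (ideal sheaves, $\wt{g}^{(1)}$, $Rg_* = g_*$ for $g$ affine) that the paper leaves implicit.
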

\begin{proof}
We apply \cref{lem:obstruction_morphisms_func} to the case $X = Y$, $Y = Y^{(1)}$, $X' = X$, $Y' = X^{(1)}$, $f = F_{Y/k}$, $f' = F_{X/k}$, and then use the adjunction as in \cref{lem:obstruction_schemes_func}.  The second part follows from the functoriality diagram and the existence of a splitting of $g^\#$.
\end{proof}

As an exemplary application we obtain a corollary:

\begin{cor}\label{cor:frobenius_opens}
Let $j : U \to X$ be the inclusion of an open subset such that complement $Z = X \setminus U$ is of codimension $\geq 3$ and $X$ satisfies property $S_3$ at any point of $Z$.  Then, $X$ admits a Frobenius lifting if and only if $U$ does.
\end{cor}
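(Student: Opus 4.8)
The plan is to compare $X$ with $U$ through the functoriality of the obstruction classes supplied by \cref{lem:func_w2k} and \cref{lem:func_frobenius}; the only real geometric input is that the open immersion $j$ induces isomorphisms on the relevant $\Ext$-groups in degrees $\leq 1$ and injections in degree $2$. The implication ``$X$ Frobenius liftable $\Rightarrow$ $U$ Frobenius liftable'' is immediate, since a $W_2(k)$-lifting $\wt X$ with a compatible $\wt F_{X/k}$ restricts along $j$ to such a pair for $U$. So assume $U$ is Frobenius liftable, witnessed by a lifting $\wt U$ of $U$ and a Frobenius lifting $\wt F_{U/k}$.

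The comparison statement is this: for any $\cc{M}^{\bullet}\in D^{-}_{\Coh}(X)$ with cohomology in non-positive degrees, the restriction map $\Ext^{i}_{X}(\cc{M}^{\bullet},\cO_{X}) \to \Ext^{i}_{U}(\cc{M}^{\bullet}|_{U},\cO_{U})$ is an isomorphism for $i\leq 1$ and injective for $i=2$. To see it, note that since $X$ satisfies $S_{3}$ at every point of $Z$ and $\mathrm{codim}_{X}Z\geq 3$, for $z\in Z$ one has $\depth_{z}\cO_{X}\geq \min(3,\dim\cO_{X,z}) = 3$, hence the local cohomology sheaves $\cH^{i}_{Z}(\cO_{X})$ vanish for $i\leq 2$, i.e. $\mathbf{R}\underline{\Gamma}_{Z}\cO_{X}$ sits in degrees $\geq 3$. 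Using the adjunction isomorphism $\mathbf{R}\underline{\Gamma}_{Z}\RcHom(\cc{M}^{\bullet},\cO_{X})\isom \RcHom(\cc{M}^{\bullet},\mathbf{R}\underline{\Gamma}_{Z}\cO_{X})$ (valid because $\cc{M}^{\bullet}$ is pseudo-coherent and $j$ is an open immersion) together with $\RcHom(D^{\leq 0},D^{\geq 3})\subseteq D^{\geq 3}$, we get $H^{i}_{Z}(X,\RcHom(\cc{M}^{\bullet},\cO_{X})) = 0$ for $i\leq 2$; feeding this into the excision triangle $\mathbf{R}\Gamma_{Z}\to \mathbf{R}\Gamma(X,-)\to \mathbf{R}\Gamma(U,(-)|_{U})$ for the complex $\RcHom(\cc{M}^{\bullet},\cO_{X})$ yields the claim. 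We apply this to $\cc{M}^{\bullet}=\Cot_{X/k}$ and to $\cc{M}^{\bullet}=F_{X/k}^{*}\Cot_{X^{(1)}/k}$ — the latter being connective since derived pullback preserves $D^{\leq 0}$ — using that $\Cot_{X/k}$, the relative Frobenius, and derived pullback all localise correctly, so that $(\cc{M}^{\bullet})|_{U}$ computes $\Cot_{U/k}$ and $F_{U/k}^{*}\Cot_{U^{(1)}/k}$ respectively.

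Now conclude. With $\cc{M}^{\bullet}=\Cot_{X/k}$: by \cref{lem:func_w2k} the obstruction $\sigma_{X}\in\Ext^{2}(\Cot_{X/k},\cO_{X})$ restricts to $\sigma_{U}=0$, and injectivity in degree $2$ forces $\sigma_{X}=0$, so $X$ is $W_2(k)$-liftable. The isomorphism $\Ext^{1}(\Cot_{X/k},\cO_{X})\isom\Ext^{1}(\Cot_{U/k},\cO_{U})$ identifies the (nonempty) torsor of $W_2(k)$-liftings of $X$ equivariantly with that of $U$, so we may choose a lifting $\wt X$ of $X$ with $\wt X|_{U}\cong \wt U$. (Alternatively $\wt X:=(X,j_{*}\cO_{\wt U})$ works directly: $j_{*}\cO_{U}=\cO_{X}$ and $R^{1}j_{*}\cO_{U}\isom\cH^{2}_{Z}(\cO_{X})=0$ give $0\to\cO_{X}\xrightarrow{\,p\,}j_{*}\cO_{\wt U}\to\cO_{X}\to 0$, and flatness follows from \cref{cor:lift_criterion}.) Fix this $\wt X$. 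By \cref{lem:func_frobenius} the obstruction $\sigma^{F}_{X}\in\Ext^{1}(F_{X/k}^{*}\Cot_{X^{(1)}/k},\cO_{X})$ to lifting $F_{X/k}$ over $\wt X$ restricts, along the open immersion $\wt U\hookrightarrow\wt X$ lifting $j$, to the corresponding obstruction for $\wt U$, which vanishes as $\wt U$ is Frobenius liftable; applying the comparison with $\cc{M}^{\bullet}=F_{X/k}^{*}\Cot_{X^{(1)}/k}$ (injective in degree $1$) gives $\sigma^{F}_{X}=0$, so $F_{X/k}$ lifts to some $\wt F_{X/k}:\wt X\to\wt X^{(1)}$ and $X$ is Frobenius liftable.

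The crux is the vanishing $H^{i}_{Z}(X,\RcHom(\cc{M}^{\bullet},\cO_{X}))=0$ for $i\leq 2$: one cannot argue naively with the $\cExt$-sheaves of $\Cot_{X/k}$, which need not be bounded below, so the argument must be routed through the commutation of $\mathbf{R}\underline{\Gamma}_{Z}$ with $\RcHom(\cc{M}^{\bullet},-)$ and a connectivity estimate, reducing everything to $\cH^{i}_{Z}(\cO_{X})=0$ for $i\leq 2$, which is precisely the $S_{3}$-plus-codimension-$\geq 3$ hypothesis. The only other point requiring care is purely bookkeeping: one needs an isomorphism in degree $1$ for $\Cot_{X/k}$ (to match the given $\wt U$ with a lift $\wt X$), whereas injectivity suffices in degree $2$ for $\sigma_{X}$ and in degree $1$ for $\sigma^{F}_{X}$.
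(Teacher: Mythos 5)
Your argument is correct and is essentially the paper's own proof: both directions reduce to the functoriality of the obstruction classes along $j$ together with the vanishing of local cohomology $\cH^i_Z(\cO_X)$ for $i\le 2$ coming from the $S_3$-plus-codimension hypothesis, first to extend $\wt U$ to a lifting $\wt X$ (the paper invokes \cref{lem:deformation_opens} for this) and then to kill $\sigma^F_X$ via injectivity of $\Ext^1(F_{X/k}^*\Cot_{X^{(1)}/k},\cO_X)\to\Ext^1(F_{X/k}^*\Cot_{X^{(1)}/k},Rj_*\cO_U)$. Your routing of the comparison through $\mathbf{R}\underline{\Gamma}_Z$ and $\RcHom$ is just a reformulation of the paper's triangle $\cO_X\to Rj_*\cO_U\to K_j$ with $K_j\simeq \mathbf{R}\underline{\Gamma}_Z(\cO_X)[1]$, so the two proofs coincide in substance.
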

\begin{proof}
Clearly, it suffices to show that Frobenius liftability of $U$ implies liftability for $X$.  For this purpose we observe that by \cref{lem:deformation_opens} the deformation functors of $U$ and $X$ are isomorphic.  Hence, any lifting $\wt{U} \in \Def_U(W_2(k))$ extends to a lifting $\wt{X} \in \Def_X(W_2(k))$ and consequently we may apply functoriality of obstructions \cref{lem:func_frobenius} to the inclusion $\wt{U} \to \wt{X}$.  We obtain a diagram:
\begin{displaymath}
    \xymatrix{
	F_{X/k}^*\Cot_{X^{(1)}/k} \ar[r]^{\sigma^F_X}\ar[d]^{d_j^{(1)}} & \cc{O}_X[1] \ar[d]^{j^{\#}[1]} \\
	Rj_*F_{U/k}^*\Cot_{U^{(1)}/k} \ar[r]^-{Rj_*\sigma^F_U} & Rj_*\cc{O}_U[1] .}
\end{displaymath}
By assumption $\sigma^F_U = 0$ and therefore $j^{\#} \comp \sigma^F_X = 0$.  To conclude that $\sigma^F_X = 0$, it suffices to show a natural mapping:
\[
\Ext^2(F_{X/k}^*\Cot_{X^{(1)}/k},\cO_X) \ra \Ext^2(F_{X/k}^*\Cot_{X^{(1)}/k},Rj_*\cO_U)
\]  
coming from the long exact sequence of $\Ext^\bullet(F_{X/k}^*\Cot_{X^{(1)}/k},-)$ groups associated to the distinguished triangle 
\[
\cO_X \ra Rj_*\cO_U \ra K_j \ra \cO_X[1]
\]
is injective.  This follows from the vanishing $\Ext^1(F_{X/k}^*\Cot_{X^{(1)}/k},K_j)=0$, which we prove along the lines of the proof of \cref{lem:deformation_opens} (this requires that any point in the complement $X \setminus U$ satisfies the property $S_2$).
\end{proof}

The simplest example of a Frobenius liftable scheme is given by the following.

\begin{example}[Frobenius liftability of toric varieties]\label{lem:toric}
Every affine toric variety $\Spec(A)/k$ is Frobenius liftable.
\end{example}
\begin{proof}
It is well-known that $A$ is isomorphic to a $k$-algebra $k[M]$ associated to a finitely generated monoid $M$. Its flat lifting is given by $W_2(k)[M]$ and the corresponding Frobenius lift is induced by the monoid mapping $M \ni m \mapsto pm$. 
\end{proof}


\subsection{Computational criterion}

In this section we present an explicit condition for the existence of a lifting of the Frobenius morphism to a fixed $W_2(k)$-lifting 
\[
\Spec(W_2(k)[x_1,\ldots,x_n]/(\wt{f_1},\ldots,\wt{f_m}))
\]
of an affine scheme $\Spec(k[x_1,\ldots,x_n]/(f_1,\ldots,f_m))$ (cf. \cref{lemma:def_ci}).  Then, we derive a computational criterion for the Frobenius liftability in the case of affine complete intersection schemes (in particular hypersurface singularities). 

Before stating the criteria we need some auxiliary definitions and notation.  We denote by $R$ the polynomial ring $k[x_1,\ldots,x_n]$, by $\wt{R}$ its natural $W_2(k)$-lifting $W_2(k)[x_1,\ldots,x_n]$ (unique up to isomorphism) and by $\wt{F} : \wt{R} \to \wt{R}$ the lifting of Frobenius morphism given by the formula: 
\[
\sum a_I x^I \mapsto \sum \sigma(a_I) x^{pI},
\]
where $I = (i_1,\ldots,i_n)$ is a multi-index, $x^I$ denotes the monomial $x_1^{i_1} \cdots x_n^{i_n}$ and $\sigma$ is the Witt vector Frobenius described in \cref{sec:notation}.  Moreover, let $w : \wt{R} \to \wt{R}$ be the base change homomorphism over $\sigma : W_2(k) \to W_2(k)$ defined by:
\[
w\left(\sum a_I x^I\right) = \sum \sigma(a_I) x^{I}.
\]

For any $W_2(k)$-flat ring $B = W_2(k)[x_1,\ldots,x_n]/(\wt{f_1},\ldots,\wt{f_m})$ we see by \cref{cor:lift_criterion} that any element belonging to $pB$ can be uniquely identified (by multiplication by $p$) with an element of $A = B/p$.  In particular for any element $\wt{f} \in \wt{R}$ there exists a unique element $P(\wt{f}) \in R$ satisfying $pP(\wt{f}) = \wt{F}(\wt{f}) - \wt{f}^p$.

\begin{example}
It turns out we have already seen an example of a polynomial $P(f)$.  Namely, in \cref{prop:non_liftability_quadric} we defined a polynomial:
\[
P_{f_{2n}} =  \sum_{\substack{{i_1+\ldots+i_n = p} \\ i_1,\ldots,i_n \neq p}} \frac{(p-1)!}{i_1! \cdots i_n!}(x_1x_2)^{i_1} \cdots (x_{2n-1}x_{2n})^{i_n}.
\]
This is exactly $-P(x_1x_2 + \ldots + x_{2n-1}x_{2n})$.
\end{example}

We are now ready to present the first computational condition for the existence of a Frobenius lift.

\begin{lemma}\label{lem:frob_for_given}
Suppose $B = W_2(k)[x_1,\ldots,x_n]/(\wt{f_1},\ldots,\wt{f_m})$ is a $W_2(k)$-flat lifting of the affine algebra $A = k[x_1,\ldots,x_n]/(f_1,\ldots,f_m)$.  Then, the Frobenius morphism of $A$ lifts if and only if there exists a sequence of elements $h_k \in R$ such that for every $i \in \{1,\ldots,m\}$ the element $P(\wt{f_i}) = \frac{\wt{F}(\wt{f}) - \wt{f}^p}{p} \in R$ satisfies:
\[
P(\wt{f_i}) = \sum_{1 \leq k \leq n} \left(\frac{\partial f_i}{\partial x_k}\right)^p h_k \pmod{f_1,\ldots,f_m}.
\]
\end{lemma}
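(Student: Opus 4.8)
The plan is to make everything explicit on the polynomial ring $\wt R = W_2(k)[x_1,\ldots,x_n]$ and read off the congruence directly. Set $J = (\wt f_1,\ldots,\wt f_m) \subset \wt R$, so $B = \wt R/J$, and let $q : B \to B/pB = A$ be the reduction. A lift of $F_A$ is by definition a $\sigma$-semilinear ring homomorphism $\Phi : B \to B$ with $q \circ \Phi = F_A \circ q$; such a $\Phi$ is determined by the elements $\Phi(\bar x_k) \in B$, and since $q(\Phi(\bar x_k)) = q(\bar x_k)^p$ we have $\Phi(\bar x_k) - \bar x_k^{\,p} \in pB$. Flatness of $B$ over $W_2(k)$ identifies $pB$ with $A = B/pB$ via multiplication by $p$ (\cref{lem:lifting}), so $\Phi(\bar x_k) = \bar x_k^{\,p} + p\bar h_k$ for some $h_k \in R$, and consequently $\Phi$ is induced by the $\sigma$-semilinear ring homomorphism $\psi : \wt R \to \wt R$ with $\psi(x_k) = x_k^p + p h_k$. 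Conversely, such a $\psi$ descends to a lift of $F_A$ if and only if $\psi(J) \subseteq J$, i.e. $\psi(\wt f_i) \in J$ for all $i$. Thus the statement reduces to computing $\psi(\wt f_i)$ modulo $J$.

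Second, the core computation. For a monomial $x^I$ I would expand $\psi(x^I) = \prod_k (x_k^p + p h_k)^{i_k}$ modulo $p^2$; only $x^{pI}$ and $p\sum_k i_k h_k\, x^{pI}/x_k^p$ survive. Since the second summand carries a factor $p$ and $p^2 = 0$ in $\wt R$, the integer $i_k$ may be replaced by $i_k^p$, and $i_k^p\, x^{pI}/x_k^p = (\partial x^I/\partial x_k)^p$, so $\psi(x^I) = x^{pI} + p\sum_k h_k (\partial x^I/\partial x_k)^p$. Applying this to $\wt f = \sum_I a_I x^I$ with reduction $f = \sum_I \bar a_I x^I \in R$, and using $\sigma(a_I) \equiv \bar a_I^{\,p} \pmod p$ together with the additivity of $\partial/\partial x_k$ and of $\alpha \mapsto \alpha^p$ in characteristic $p$, I obtain
\[
\psi(\wt f) \;=\; \wt F(\wt f) + p\sum_{k} h_k\Bigl(\frac{\partial f}{\partial x_k}\Bigr)^{p},
\]
and inserting the defining relation $\wt F(\wt f) = \wt f^{\,p} + p\,P(\wt f)$ turns this into $\psi(\wt f) = \wt f^{\,p} + p\bigl(P(\wt f) + \sum_k h_k(\partial f/\partial x_k)^p\bigr)$.

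Third, the conclusion. Taking $\wt f = \wt f_i$ and noting $\wt f_i^{\,p} \in J$, the condition $\psi(\wt f_i) \in J$ is equivalent to $p\bigl(P(\wt f_i) + \sum_k h_k(\partial f_i/\partial x_k)^p\bigr) \in J$. Since $B$ is a flat lifting of $A$ we have $Ann_{\wt R}(p) = p\wt R$ and $(p)\cap J = pJ$ (\cref{lem:lifting}, \cref{cor:lift_criterion}), whence for any $r \in R \subset \wt R$ one has $pr \in J$ if and only if $r$ lies in $I = (f_1,\ldots,f_m)$. Applied to $r = P(\wt f_i) + \sum_k h_k(\partial f_i/\partial x_k)^p$ this gives $P(\wt f_i) \equiv -\sum_k (\partial f_i/\partial x_k)^p h_k \pmod{(f_1,\ldots,f_m)}$; replacing each $h_k$ by $-h_k$ yields precisely the congruence in the statement. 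A lift of $F_A$ exists exactly when the $h_k$ can be chosen so that these congruences hold simultaneously for all $i$, which is the assertion.

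The step I expect to be the main obstacle is the bookkeeping in the monomial expansion — in particular justifying the replacements of $i_k$ by $i_k^p$ and of $\sigma(a_I)$ by $\bar a_I^{\,p}$, which are legitimate only because those quantities occur multiplied by $p$ in a ring where $p^2 = 0$ — together with making sure that the first step genuinely captures \emph{every} lift of $F_A$ rather than a proper subfamily; the latter is exactly the point where flatness of $B$, used to identify $pB$ with $A$, is indispensable.
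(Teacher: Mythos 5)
Your argument is correct and follows essentially the same route as the paper's proof: reduce every lift of $F_A$ to a $\sigma$-semilinear endomorphism of the polynomial ring $\wt{R}$ sending $x_k\mapsto x_k^p+ph_k$, expand $\psi(\wt f_i)$ modulo $p^2$ and $J$ to get $\wt F(\wt f_i)+p\sum_k h_k(\partial f_i/\partial x_k)^p$, and then use the flatness criterion ($Ann_{\wt R}(p)=p\wt R$ and $(p)\cap J=pJ$) to translate $p\cdot r\in J$ into $r\in(f_1,\ldots,f_m)$. The only cosmetic differences are that the paper allows extra summands $j_i\in J$ in the images of the generators (which you rightly normalize away) and that you make explicit the substitutions $i_k\mapsto i_k^p$, $\sigma(a_I)\mapsto \bar a_I^{\,p}$ and the harmless sign change $h_k\mapsto -h_k$ that the paper leaves implicit.
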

\begin{proof}
Let $I \lhd k[x_1,\ldots,x_n]$ be the ideal generated by the polynomials $(f_1,\ldots,f_m)$ and $J \lhd W_2(k)[x_1,\ldots,x_n]$ be the ideal generated by the their lifts $(\wt{f_1},\ldots,\wt{f_m})$.  Every Frobenius lifting of $B = \wt{R}/J$ comes from a homomorphism $h : \wt{R} \to \wt{R}$ over $\sigma : W_2(k) \to W_2(k)$, given by the assignment $x_i \mapsto x_i^p + j_i + p\cdot h_i$ for $j_i \in J$, and $W_2(k) \ni a \mapsto \sigma(a)$.  In order to get an induced mapping $B \to B$ the ideal $J$ should be mapped to itself.  This boils down to the following computation:
\begin{align}
h(\wt{f_i}) & = w(\wt{f_i})(x_1^p + j_1 + p\cdot h_1,\ldots,x_n^p + j_n + p\cdot h_n) \nonumber\\
			& = w(\wt{f_i})(x_1^p,\ldots,x_n^p) + p \cdot \sum_{1 \leq k \leq n} \frac{\partial w(\wt{f_i})}{\partial x_k}(x_1^p,\ldots,x_n^p) \cdot h_k \label{step:taylor} \\
			& = \wt{F}(\wt{f_i}) + p \cdot \sum_{1 \leq k \leq n} \left(\frac{\partial f_i}{\partial x_k}\right)^p \cdot h_k \nonumber\\
			& = (\wt{F}(\wt{f_i}) - \wt{f_i}^p) + \wt{f_i}^p + p \cdot \sum_{1 \leq k \leq n} \left(\frac{\partial f_i}{\partial x_k}\right)^p \cdot h_k \nonumber\\
			& =  p \cdot \left(P(\wt{f_i}) + \sum_{1 \leq k \leq n} \left(\frac{\partial f_i}{\partial x_k}\right)^p \cdot h_k \right) \pmod{J} \label{step:up_to_pth_power},
\end{align}
where in (\ref{step:taylor}) we applied the Taylor expansion of a polynomial, together with the fact that $j_i \in J$ and $p^2 = 0$, and in (\ref{step:up_to_pth_power}) we used the fact that $(\wt{F}(\wt{f_i}) - \wt{f_i^p}) = p \cdot P(\wt{f_i})$ for a unique polynomial $P(\wt{f_i}) \in R$. By \cref{cor:lift_criterion} we therefore see that existence of a Frobenius lifting for a $W_2(k)$-lifting $B$ is equivalent to existence of $h_i \in R$ such that:
\[
P(\wt{f_i}) = \sum_{1 \leq k \leq n} \left(\frac{\partial f_i}{\partial x_k}\right)^p h_k \pmod{I},
\]
for any $i \in \{1,\ldots,m\}$.  This finishes the proof.
\end{proof}

We now restrict ourselves to the case of affine complete intersection schemes and derive a computationally feasible criterion to check whether they possess a $W_2(k)$-lifting compatible with Frobenius.  
For this purpose we firstly observe that:
\[
P(\wt{f} + pg) - P(\wt{f}) = \frac{(\wt{F}(\wt{f} + pg) - (\wt{f} + pg)^p)}{p} - \frac{(\wt{F}(\wt{f}) - f^p)}{p} = \frac{\wt{F}(pg)}{p} = g^p,
\]
which implies that the choice of a different lifting $\wt{f}$ of $f$ leads to the change of $P(\wt{f})$ by an arbitrary $p$-th power $g^p$.  This leads to a criterion:

\begin{thm}\label{lem:complete_intersections_criterion}
The affine complete intersection algebra $A = k[x_1,\ldots,x_n]/(f_1,\ldots,f_m)$ is Frobenius liftable if and only if there exists a sequence of elements $h_k \in R$ and a sequence of elements $g_i \in R$ such that:
\begin{align}
P_{f_i} + g_i^p = \sum_{1 \leq k \leq n} \left(\frac{\partial f_i}{\partial x_k}\right)^p h_k \pmod{f_1,\ldots,f_m} \label{eq:condition},
\end{align}
where $P_{f_i}$ is a polynomial defined up to a $p$-th power and computed by the formula $P_{f_i} = P(\wt{f_i})$ for any lifting $\wt{f_i}$ of $f_i$.
\end{thm}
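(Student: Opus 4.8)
The plan is to combine the three ingredients already at hand: the parametrisation of $W_2(k)$-liftings of a complete intersection by lifts of the defining equations (\cref{lemma:def_ci}), the criterion for liftability of Frobenius relative to a \emph{fixed} lifting (\cref{lem:frob_for_given}), and the elementary identity $P(\wt{f} + pg) - P(\wt{f}) = g^p$ recorded just above the statement.

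First I would invoke \cref{lemma:def_ci}: since $(f_1,\ldots,f_m)$ is a regular sequence, the $W_2(k)$-liftings of $A$ are exactly the rings $B = \wt{R}/(\wt{f_1},\ldots,\wt{f_m})$ obtained by choosing, for each $i$, an arbitrary lift $\wt{f_i} \in \wt{R}$ of $f_i$; in particular at least one such lifting exists, so that ``Frobenius liftable'' is a non-vacuous condition here. I would fix once and for all a reference choice $\wt{f_i}^{(0)}$ and set $P_{f_i} \mydef P(\wt{f_i}^{(0)})$, so that every other lift is $\wt{f_i} = \wt{f_i}^{(0)} + p g_i$ with $g_i \in R$ and, by the displayed identity, $P(\wt{f_i}) = P_{f_i} + g_i^p$; conversely every $(g_i)_i \in R^m$ arises this way. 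I would also note that the partial derivatives $\partial f_i/\partial x_k \in R$ depend only on $f_i$, not on the chosen lift, and that the ideal modulo which \cref{lem:frob_for_given} tests the relevant congruence is $(f_1,\ldots,f_m) \subset R$, again independent of the lift.

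Next I would unwind the definition of Frobenius liftability in the affine setting: $A$ is $F$-liftable precisely when \emph{some} $W_2(k)$-lifting $B$ of $A$ admits a lifting of the Frobenius morphism. Applying \cref{lem:frob_for_given} to $B = \wt{R}/(\wt{f_i}^{(0)} + p g_i)_i$, this holds for a given choice $(g_i)_i$ if and only if there exist $h_k \in R$ with
\[
P(\wt{f_i}) = \sum_{1 \leq k \leq n} \left(\frac{\partial f_i}{\partial x_k}\right)^p h_k \pmod{f_1,\ldots,f_m}
\]
for every $i$. Substituting $P(\wt{f_i}) = P_{f_i} + g_i^p$ and quantifying existentially over the $(g_i)_i$ as well turns this into exactly the asserted condition (\ref{eq:condition}); running the equivalences in both directions yields the theorem.

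I do not expect a serious obstacle: the substantive content has already been isolated in \cref{lemma:def_ci}, \cref{lem:frob_for_given} and the $p$-th power computation. The one point that will need a little care is the bookkeeping of which objects are allowed to vary --- among $(\wt{f_i})_i$, $(h_k)_k$ and the test ideal, only the $\wt{f_i}$ (equivalently the $g_i$) and the $h_k$ are genuine free parameters, while $(\partial f_i/\partial x_k)^p$ and $(f_1,\ldots,f_m)$ are pinned down by $A$ alone; this is what makes the criterion intrinsic to the $k$-algebra $A$ and the classes $P_{f_i}$, each well defined only up to a $p$-th power. I would also flag explicitly that completeness of intersection is used precisely to guarantee, via \cref{lemma:def_ci}, that \emph{every} choice of lifts $\wt{f_i}$ gives a flat lifting, so that the existential quantifier over liftings coincides with the existential quantifier over $(g_i)_i \in R^m$.
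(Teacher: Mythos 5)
Your proposal is correct and follows essentially the same route as the paper's own proof: both arguments combine \cref{lemma:def_ci} (liftings of a complete intersection are exactly the choices of lifts of the regular sequence), \cref{lem:frob_for_given} (the Frobenius-lifting criterion for a fixed lifting), and the identity $P(\wt{f}+pg)-P(\wt{f})=g^p$ to absorb the choice of lifting into the term $g_i^p$. Your extra bookkeeping about which data are free parameters and which are pinned down by $A$ is a welcome clarification but does not change the argument.
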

\begin{proof}
By \cref{lemma:def_ci} we know that flat liftings of complete intersection affine schemes are given by the liftings of the the associated regular sequences.  Therefore, by \cref{lem:frob_for_given} we need to find liftings $\{\wt{f_i}\}$ if $\{f_i\}$ such that there exist $\{h_k\}$
\[
P(\wt{f_i}) = \sum_{1 \leq k \leq n} \left(\frac{\partial f_i}{\partial x_k}\right)^p h_k \pmod{f_1,\ldots,f_m} \label{eq:condition},
\]  
But, as we mentioned, the polynomials $P(\wt{f_i})$ for different liftings of $f_i$ differ by an arbitrary $p$-th power.  This finishes the proof.
\end{proof}

As a corollary we obtain the criterion for Frobenius liftability of hypersurface singularities:
\begin{cor}\label{cor:criterion}
Suppose $H_f = \Spec(k[x_1,\ldots,x_n]/(f))$ is a hypersurface in $\Affine^n_k$.   Then $H_f$ admits a lift to $W_2(k)$ compatible with Frobenius if and only if there exists an element $g$ such that 
\[
P_f + g^p \in \left(f,\left(\frac{\partial f}{\partial x_k}\right)^p,\ldots,\left(\frac{\partial f}{\partial x_n}\right)^p\right).
\]
\end{cor}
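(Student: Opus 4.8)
The plan is to read off \cref{cor:criterion} directly from \cref{lem:complete_intersections_criterion} in the special case $m = 1$, $f_1 = f$. First I would note that since $R = k[x_1,\ldots,x_n]$ is an integral domain, every nonzero polynomial $f$ is a nonzerodivisor and hence forms a regular sequence of length one; consequently $H_f = \Spec(R/(f))$ is an affine complete intersection, so \cref{lem:complete_intersections_criterion} applies verbatim (the case $f = 0$ is not a hypersurface and may be ignored).

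Applying the criterion with the single equation $f_1 = f$, Frobenius liftability of $A = R/(f)$ is equivalent to the existence of elements $h_1,\ldots,h_n \in R$ and a single element $g = g_1 \in R$ such that
\[
P_f + g^p = \sum_{1 \leq k \leq n} \left(\frac{\partial f}{\partial x_k}\right)^p h_k \pmod{f}.
\]
As the right-hand side ranges exactly over the $R$-submodule generated by $\left(\partial f/\partial x_1\right)^p,\ldots,\left(\partial f/\partial x_n\right)^p$, and the congruence modulo $f$ permits the further addition of an arbitrary multiple of $f$, the displayed identity holds for some $h_k$ if and only if $P_f + g^p$ lies in the ideal $\left(f, \left(\partial f/\partial x_1\right)^p,\ldots,\left(\partial f/\partial x_n\right)^p\right)$. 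Quantifying over $g$ yields precisely the asserted condition.

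There is no genuine obstacle here beyond this translation; the one point I would make explicit is that, by \cref{lem:complete_intersections_criterion}, $P_f$ is well defined only modulo $p$-th powers, and the auxiliary element $g$ in the criterion is exactly what absorbs this ambiguity. Hence the ideal-membership condition in the corollary is manifestly independent of the chosen lift $\wt{f}$ of $f$, as it must be.
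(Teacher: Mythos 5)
Your proposal is correct and matches the paper's (implicit) argument: the paper states \cref{cor:criterion} as an immediate specialization of \cref{lem:complete_intersections_criterion} to the case $m=1$, which is exactly the translation you carry out. Your added remarks on $f$ being a regular sequence of length one and on $g^p$ absorbing the ambiguity in $P_f$ are accurate and only make explicit what the paper leaves unsaid.
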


\begin{remark} 
The equality above might be treated as a relation in the Frobenius push-forward $F_*\left(k[x_1,\ldots,x_n]/(f)\right)$.  From this perspective, the polynomial $P_f \in F_*\left(k[x_1,\ldots,x_n]/(f)\right)$ (module structure by $p$-th powers) is supposed to belong to a submodule $k[x_1,\ldots,x_n]/(f) \cdot 1 + (\frac{\partial f}{\partial x_k},\ldots,\frac{\partial f}{\partial x_n})\cdot F_*(k[x_1,\ldots,x_n]/(f))$ (in fact, $g^p = g \cdot 1 \in F_*\left(k[x_1,\ldots,x_n]/(f)\right)$).
\end{remark}

It turns out that the second form of the above criterion can be (quite) efficiently verified using $\mathsf{Macaulay 2}$.  The code is given at \url{http://www.mimuw.edu.pl/~mez/Macaulay2}.

\begin{remark}
The existence of Frobenius lifting depends on the choice of the $W_2(k)$-lifting.  
\end{remark}

Indeed, Frobenius liftability of any lifting of $\Spec(k[x_1,\ldots,x_n]/(f))$ would mean that $P_f + g^p$ belongs to $(f,(\frac{\partial f}{\partial x_k})^p,\ldots,(\frac{\partial f}{\partial x_n})^p)$ for any $g$ and consequently $(f,(\frac{\partial f}{\partial x_k})^p,\ldots,(\frac{\partial f}{\partial x_n})^p)$ contains all $p$-th powers.  This is already not true for $f = x_1^3 + x_2^3 + x_3^3$ in characteristic $p \equiv 1 \pmod{3}$.  In fact, a simple computation shows that
\[
x_1^p \not\in (x_1^3 + x_2^3 + x_3^3,x_1^{2p},x_2^{2p},x_3^{2p}).
\]


\subsection{Non-liftability of ordinary double points in any dimension}

This section is devoted to answering the question posed in \cite{bhatt_torsion}[Remark 3.14], whether the ordinary double points, in particular cones over smooth projective quadrics, admit a lift to $W_2(k)$ compatible with Frobenius in arbitrary dimensions. The negative answer is given by following \cref{thm:cones_bhatt}.  

\begin{thm}[Frobenius non-liftability of ordinary double points]\label{thm:cones_bhatt}
Let $n \geqslant 5$ be an integer and assume $p \geqslant 3$. Then the ordinary double points defined by the equation $f = x_1^2 + \ldots + x_n^2 + Q$ for $Q \in k[x_1,\ldots,x_n]_{\geq 3}$ does not admit a $W_2(k)$-lifting compatible with Frobenius.
\end{thm}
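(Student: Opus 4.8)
The plan is to combine the criterion of \cref{cor:criterion} with the formal normal form of an ordinary double point, so as to reduce everything to the $0$-dimensional non-liftability result \cref{prop:non_liftability_quadric}. By \cref{cor:criterion}, if $H_f$ is Frobenius liftable then $P_f+g^p\in I:=(f,(\partial_1 f)^p,\dots,(\partial_n f)^p)$ for some $g\in R:=k[x_1,\dots,x_n]$. Since $p\neq 2$, the quadratic part $x_1^2+\dots+x_n^2$ is nondegenerate, so the singularity is isolated at the origin; hence $IR_{\mathfrak{m}}$ is $\mathfrak{m}R_{\mathfrak{m}}$-primary for $\mathfrak{m}=(x_1,\dots,x_n)$, the ring $A:=R_{\mathfrak{m}}/IR_{\mathfrak{m}}$ is Artinian local, and, because $p$ is odd (so $-g^p=(-g)^p$), it suffices to prove that the image $\bar P_f\in A$ does not lie in the image of $\mathrm{Frob}\colon A\to A$.

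The crux is the computation of $A$. As $IR_{\mathfrak{m}}$ is $\mathfrak{m}$-primary we may pass to the completion $\hat R=k[[x_1,\dots,x_n]]$, so $A=\hat R/I\hat R$. By the formal splitting lemma in characteristic $\neq 2$ there is a regular system of parameters $y_1,\dots,y_n$ of $\hat R$ with $y_i\equiv x_i\pmod{\hat{\mathfrak{m}}^2}$ such that $f=y_1^2+\dots+y_n^2$. By the chain rule $\partial f/\partial x_i=2\sum_j N_{ij}\,y_j$, where $N=(\partial y_j/\partial x_i)=\id+(\hat{\mathfrak{m}})$ is invertible over $\hat R$; and because the absolute Frobenius is a ring endomorphism, $(\partial_i f)^p=2^p\sum_j N_{ij}^p\,y_j^p$, with $(N_{ij}^p)$ again invertible (its determinant is $(\det N)^p$). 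Therefore $((\partial_1 f)^p,\dots,(\partial_n f)^p)=(y_1^p,\dots,y_n^p)$ and
\[
 A\;=\;k[y_1,\dots,y_n]/\bigl(y_1^2+\dots+y_n^2,\;y_1^p,\dots,y_n^p\bigr)\;=:\;T,
\]
a \emph{graded} Artinian ring (the ideal already contains a power of the maximal ideal). Since $y_i^p=0$ in $T$, every element of the maximal ideal of $T$ has vanishing $p$-th power, so $\mathrm{Im}(\mathrm{Frob}_T)=k\cdot 1$; as $P_f$ has no constant term, $\bar P_f\in\mathrm{Im}(\mathrm{Frob}_T)$ if and only if $\bar P_f=0$ in $T$.

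Next I would identify $\bar P_f$. For the standard lift $\wt f=x_1^2+\dots+x_n^2+\wt Q$ one computes, exactly as in the proof of \cref{prop:non_liftability_quadric}, that the degree-$2p$ homogeneous part of $P_f$ equals $-P_0$ with $P_0=\sum_{j_1+\dots+j_n=p,\ j_i\neq p}\tfrac{(p-1)!}{j_1!\cdots j_n!}x_1^{2j_1}\cdots x_n^{2j_n}$, while the remaining homogeneous parts of $P_f$ have degree $>2p$; rewriting in the $y$-coordinates ($x_i=y_i+(\text{higher order})$) does not change the degree-$2p$ part, so $P_f=-P_0(y)+(\text{terms of degree}>2p)$. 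Because $T$ is graded, $\bar P_f=0$ forces the degree-$2p$ component to vanish, i.e.\ $P_0\in J:=(y_1^2+\dots+y_n^2,\ y_1^p,\dots,y_n^p)$ in $k[y_1,\dots,y_n]$. Ideal membership is unaffected by the faithfully flat extension $k\to\bar k$, so we may assume $k$ algebraically closed; a linear change of variables then identifies $J$ with $(f_n,x_1^p,\dots,x_n^p)$, where $f_n$ is the split quadric of \cref{sec:quadric_example} (with a trailing square when $n$ is odd), one using that additivity of the absolute Frobenius makes linear changes of variables preserve the ideal generated by the $p$-th powers of the coordinates. Under this identification $P_0$ corresponds to an element congruent to $\pm P_{f_n}=\pm P(\wt f_n)$ modulo $(f_n,x_1^p,\dots,x_n^p)$, the discrepancy being controlled by the identity $\sum x_i^{2p}=(\sum x_i^2)^p$ in characteristic $p$ together with $(p)\cap(x_1^p,\dots,x_n^p)=p\cdot(x_1^p,\dots,x_n^p)$ (valid because $p,x_1^p,\dots,x_n^p$ is a regular sequence over $W_2(\bar k)$). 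Hence $P_0\in J$ would give $P_{f_n}\in(f_n,x_1^p,\dots,x_n^p)$, contradicting the explicit computation in the proof of \cref{prop:non_liftability_quadric}, and the theorem follows.

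The principal obstacle is the computation of $A$ in the second step: realising that, after completing at the origin and passing to Morse coordinates, the inhomogeneous Jacobian-power ideal $((\partial_i f)^p)$ becomes the monomial ideal $(y_1^p,\dots,y_n^p)$ is precisely what collapses the problem onto the $0$-dimensional algebra of \cref{prop:non_liftability_quadric}. A secondary subtlety is that the Morse coordinate change does not carry $P_f$ to $P_0$ directly (the Witt-vector Frobenius lift is not coordinate-free), which is why the grading on $T$ must be used to single out $P_0$ as the leading term of $P_f$, and why the final comparison of $P_0$ with $P_{f_n}$ needs a little care.
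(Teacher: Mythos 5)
Your proposal is correct, and while it starts and ends in the same places as the paper (the criterion of \cref{cor:criterion} at the front, the non-membership $P_{x_1^2+\cdots+x_n^2}\notin(\sum x_i^2,x_1^p,\dots,x_n^p)$ from the proof of \cref{prop:non_liftability_quadric} at the back, reached via a linear change of variables), the middle is genuinely different. The paper never leaves the polynomial ring: it merely \emph{enlarges} the ideal $(f,(\partial_1 f)^p,\dots,(\partial_n f)^p)$ to $(f,x_1^p,\dots,x_n^p)$ (an easy containment, since $\partial_i f=2x_i+\partial_i Q$ with $\partial_i Q\in\mathfrak m^2$) and then invokes Langer's colon-ideal computation (\cref{lem:langer}) to strip the low-degree part off the hypothetical cofactor $h$ and isolate the degree-$2p$ obstruction; this is where $n\geq 5$ and $p\geq 3$ enter quantitatively. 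You instead complete at the origin, apply the formal Morse lemma to make $f$ exactly the quadric, and compute the Jacobian-power ideal \emph{exactly} as $(y_1^p,\dots,y_n^p)$ — the resulting quotient is graded, so the degree-$2p$ component $-P_0$ is isolated for free and Langer's lemma is not needed at all; the observation that $\mathrm{Im}(\mathrm{Frob})=k\cdot 1$ in the truncated algebra also disposes of the $g^p$ term cleanly. What each buys: the paper's route is elementary in staying inside $k[x_1,\dots,x_n]$ but leans on an external lemma; yours is more structural (it makes transparent that only the quadratic part of $f$ matters) at the cost of completion and a formal coordinate change. Your final comparison of $P_0$ with $P_{f_n}$ under the linear change of variables is the one sketchy spot — the precise point is that $L^*P(\wt q)-P(\wt L^*\wt q)=\sum_i P(\wt\ell_i^{\,2})=\sum_i 2\ell_i^{\,p}P(\wt\ell_i)\in(x_1^p,\dots,x_n^p)$ — but the paper asserts the identical transition with even less justification, so this is not a gap relative to the published argument.
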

\begin{proof}
For the proof, we need the lemma given in \cite{Langer_2008} combined with the \cref{prop:non_liftability_quadric}.

\begin{lemma}[\cite{Langer_2008}, Proposition 3.1]\label{lem:langer}
Let $k$ be a field of characteristic $p \geqslant 3$ and $0 \leq e<p$ be an integer.  Then for any $d \leq N \cdot \frac{p-1}{2} - e$ we have
\[
\left((x_1^p,\ldots,x_N^p) : (\sum_{i = 1}^{N} x_i^2)^e\right)_d = \left(x_1^p,\ldots,x_N^p,(\sum_{i = 1}^{N} x_i^2)^{p-e}\right)_d
\]
in $k[x_1,\ldots,x_N]$.
\end{lemma}

By \cref{cor:criterion} we need to show that 
\[
P_f + g^p \not\in \left(f,\left(\frac{\partial f}{\partial x_k}\right)^p,\ldots,\left(\frac{\partial f}{\partial x_n}\right)^p\right)
\] 
for any $g \in k[x_1,\ldots,x_n]$.  Clearly, $\left(f,\left(\frac{\partial f}{\partial x_k}\right)^p,\ldots,\left(\frac{\partial f}{\partial x_n}\right)^p\right) \subseteq \left(f,x_1^p,\ldots,x_n^p\right)$ and therefore it suffices to prove that $P_f \not\in (f,x_1^p,\ldots,x_n^p)$.  Assume the contrary, i.e., there exists an $h \in k[x_1,\ldots,x_n]$ such that:
\[
P_f = P_{x_1^2 + \ldots + x_n^2} + P' = h \cdot (x_1^2 + \ldots + x_n^2 + Q) \pmod{x_1^p,\ldots,x_n^p},
\]
where $P'$ is a polynomial in $k[x_1,\ldots,x_n]_{>2p}$.  The ring $k[x_1,\ldots,x_n]/(x_1^p,\ldots,x_n^p)$ admits a grading (induced from the polynomial ring).  We may assume that the lowest grade $m$ of the polynomial $h$ satisfies $m \geq 2p-2$.  Indeed, if $m < 2p-2$ then by lowest grade comparison and \cref{lem:langer} for $N = 5$, $e = 1$ and $d = m$ we see that 
\[
h_{[m]} \in \left((x_1^p,\ldots,x_n^p) : (\sum_{i = 0}^{n} x_i^2)\right)_{m} = \left(x_1^p,\ldots,x_N^p,(\sum_{i = 1}^{N} x_i^2)^{p-1}\right)_m \subset (x_1^p,\ldots,x_n^p)_m 
\]
and therefore $h - h_{[m]}$ is also an appropriate choice of $h$ with the lowest grade at least $m+1$.  Consequently, by the $2p$ grade comparison we see that $P_{x_1^2 + \ldots + x_n^2}$ belongs to the ideal $(x_1^2 + \ldots + x_n^2,x_1^p,\ldots,x_n^p)$.  This gives a contradiction with the proof of \cref{prop:non_liftability_quadric}. Namely $P_{f_{n}}$ from \cref{prop:non_liftability_quadric} is the representation of the polynomial $P_{x_1^2 + \ldots + x_n^2}$ after standard linear transformation.
\end{proof}

This in fact implies that the methods resulting from \cite{bhatt_torsion} cannot be applied for ordinary double points.  However, Bhatt proved that crystalline cohomology over $k$ of homogeneous ordinary double points is not finitely generated by a different calculation.


\subsection{Canonical singularities in positive characteristic are Frobenius liftable}\label{sec:canonical_singularities}

In this section, we present two different approaches to proving that canonical singularities of surfaces are $F$-liftable.  The first one is a direct computation using the criterion given in \cref{cor:criterion}.  The second gives a slightly weaker result in the case of tame quotient singularities and exploits the structure of the quotient together with functoriality of obstructions for lifting Frobenius morphism.  We note that in any characteristic the canonical surface singularities over algebraically closed field are classified by the dual graphs of the minimal resolution, which in turn correspond to Dynkin diagrams of type A, D and E.  Therefore, we shall use the terms canonical surface singularities and ADE singularities interchangeably.

\subsubsection{Direct computational approach}

Firstly, we approach the $F$-liftability of ADE singularities by a direct computation.  By \cite{artin_canonical_charp} under the assumption $p \geq 7$ the ADE singularities are locally analytically equivalent to the germs of the following form:
  
\begin{table}[ht]
\centering
\caption{Models of canonical singularities in char. $p \geq 7$}
\label{fig:models_canonical}
\begin{tabular}{lccll}
\cline{1-2}
 Type & \multicolumn{1}{l}{Equation} & \\ \cline{1-2}
smooth & - & \\
$A_{n-1}$ 	& $x^n + y^2 + z^2 = 0$			& \\
$D_{n+2}$ 	& $x^2 + y^2z + z^{n+1} = 0$		& \\
$E_6$     		& $x^2 + y^3 + z^4 = 0$			& \\
$E_7$     		& $x^2 + y^3 + yz^3 = 0$			& \\
$E_8$     		& $x^2 + y^3 + z^5 = 0$			& \\ \cline{1-2}
\end{tabular}
\end{table}

Equipped with this classification we can approach the proof of the following:

\begin{thm}\label{thm:canonical_surface_frobenius}
For any algebraically closed field $k$ of characteristic $p \geq 7$ any affine scheme $X$ with canonical surface singularities is Frobenius liftable.
\end{thm}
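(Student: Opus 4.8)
The plan is to reduce the statement to a short list of explicit hypersurface germs using Artin's classification, to dispose of the $A$-type germs by toric geometry, and to verify the criterion of \cref{cor:criterion} for the remaining $D$- and $E$-type germs. \emph{Step 1 (localisation).} Canonical surface singularities are isolated and analytically hypersurface, so $X$ is smooth outside a finite set $\{p_1,\dots,p_r\}$ and is locally a complete intersection; in particular $\Cot_{X/k}$ is perfect of amplitude $[-1,0]$ and $X$ admits a $W_2(k)$-lifting. Frobenius liftability of an affine scheme can be checked étale-locally: by the functoriality of $\sigma^F_X$ in \cref{lem:func_frobenius} and the étale-local nature of the obstruction theory of \cref{appendix:deformation} (in particular \cref{lem:affine_deformation_etale_local}), and because on an affine scheme the groups $\Ext^2(\Cot_{X/k},\cO_X)$ and $\Ext^1(F_{X/k}^{*}\Cot_{X^{(1)}/k},\cO_X)$ are global sections of the corresponding $\cExt$-sheaves, one may first choose a lifting $\wt X/W_2(k)$ restricting near each $p_i$ to a prescribed local model and then test vanishing of the Frobenius obstruction on an étale cover. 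On the smooth locus a Frobenius lift exists étale-locally (\cite{deligne-illusie}, or \cref{lem:toric}). Around each $p_i$, \cite{artin_canonical_charp} (using $p\geq 7$) together with Artin approximation identifies an étale neighbourhood with an étale neighbourhood of the origin in $\Spec\bigl(k[x,y,z]/(f)\bigr)$ for one of the equations $f$ of \cref{fig:models_canonical}. It therefore suffices to prove that each such hypersurface is Frobenius liftable.

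\emph{Step 2 ($A$-type).} Over the algebraically closed field $k$ with $p\neq 2$ the equation $x^{n}+y^{2}+z^{2}=0$ becomes $xy=z^{n}$ after a linear change of coordinates, and $\Spec\bigl(k[x,y,z]/(xy-z^{n})\bigr)$ is an affine toric variety; hence it is Frobenius liftable by \cref{lem:toric} with no computation. \emph{Step 3 ($D$- and $E$-type).} For each of $D_{n+2}$ ($x^{2}+y^{2}z+z^{n+1}$), $E_{6}$ ($x^{2}+y^{3}+z^{4}$), $E_{7}$ ($x^{2}+y^{3}+yz^{3}$) and $E_{8}$ ($x^{2}+y^{3}+z^{5}$) we invoke \cref{cor:criterion}: one must exhibit $g\in k[x,y,z]$ with
\[
P_{f}+g^{p}\ \in\ \bigl(f,\ (\partial f/\partial x)^{p},\ (\partial f/\partial y)^{p},\ (\partial f/\partial z)^{p}\bigr),
\]
where $P_{f}=\bigl(\wt F(\wt f)-\wt f^{\,p}\bigr)/p$ is computed by the Taylor expansion exactly as $P_{f_{2n}}$ was computed in \cref{prop:non_liftability_quadric}. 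Since $p\geq 7$ makes $2,3,5$ invertible, each partial derivative is supported on at most two monomials, the quotient by the displayed ideal is Artinian, and -- folding high powers back via the relation $f=0$ (for instance $z^{n+1}\equiv -(x^{2}+y^{2}z)\pmod f$ for $D_{n+2}$) -- the membership reduces to a finite graded computation handled by the same elementary binomial identities occurring in \cref{prop:non_liftability_quadric}, together with an induction on $n$ for the $D$-series. For $E_{6},E_{7},E_{8}$ this is a finite verification, also confirmed by the accompanying \texttt{Macaulay2} code.

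\emph{Expected main obstacle.} The conceptual steps are routine; the substance is Step 3 -- producing the cofactors $g$ and $h_{k}$ showing $P_{f}$ lies in the gradient-plus-$f$ ideal for the $D$ and $E$ germs, uniformly in $n$ for $D_{n+2}$, and pinning down precisely which small characteristics are excluded (this, together with Artin's normal forms, is exactly where the hypothesis $p\geq 7$ is used). A secondary delicate point is the localisation in Step 1: since $X$ is only locally a complete intersection, \cref{lemma:def_ci} is unavailable globally, so the passage from the formal normal forms of \cite{artin_canonical_charp} to an étale-local statement, and the coherent choice of the global lifting $\wt X$, must be argued through the cohomological vanishing on affine schemes indicated above.
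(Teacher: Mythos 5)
Your reduction (Steps 1 and 2) matches the paper's: liftability of the Frobenius is checked Zariski- and \'{e}tale-locally on affine schemes (\cref{lem:affine_deformation_Zariski_local}, \cref{lem:affine_deformation_etale_local}), Artin's normal forms for $p\geq 7$ reduce everything to the five hypersurface germs of \cref{fig:models_canonical}, and the $A_{n-1}$ case is toric. But the substance of the theorem is exactly the part you defer, and as written your Step 3 has a genuine gap. The ideal membership
\[
P_{f}+g^{p}\ \in\ \bigl(f,\ (\partial f/\partial x)^{p},\ (\partial f/\partial y)^{p},\ (\partial f/\partial z)^{p}\bigr)
\]
must be established for \emph{every} prime $p\geq 7$ simultaneously, so for $E_6$, $E_7$, $E_8$ it is not ``a finite verification,'' and a \texttt{Macaulay2} check can only confirm finitely many primes. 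The paper's proof supplies uniform-in-$p$ arguments: for $D_{n+2}$ and $E_7$, after the substitution $u=x^2$ one shows each monomial $(y^2z)^a z^{(n+1)b}$ with $a+b=p$ lies in the relevant ideal by a descent on the maximal bad exponent $a\leq\floor{p/2}$ (expanding $z^{(n+1)\ceil{p/2}}=\bigl((y^2z+z^{n+1})-y^2z\bigr)^{\ceil{p/2}}z^{\cdots}$ and deriving a contradiction); and for $E_6$, $E_8$ the membership is converted into surjectivity of an explicit linear map, which is proved by the nonvanishing of the binomial determinant $\det\bigl[{\ceil{mp/(m+1)}\choose s+i-j}\bigr]$ via Krattenthaler's product formula together with the inequality $\ceil{mp/(m+1)}+n-1<p$. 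None of this is ``the same elementary binomial identities occurring in \cref{prop:non_liftability_quadric}'' (those identities prove \emph{non}-membership for the quadric); a new argument is needed and your proposal does not supply it.

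A second, smaller inaccuracy: your ``induction on $n$ for the $D$-series'' is not set up. The paper's argument for $D_{n+2}$ is uniform in $n$ (the descent on $a$ works for all $n$ at once); no induction on $n$ is performed, and it is not clear what the inductive step relating $D_{n+2}$ to $D_{n+1}$ would be, since the defining equations are not related by a specialization preserving the gradient ideal. Your concerns in Step 1 about gluing a global lifting are, by contrast, unnecessary: \cref{lem:affine_deformation_Zariski_local} and \cref{lem:affine_deformation_etale_local} already reduce $F$-liftability of the affine scheme $X$ to $F$-liftability of each germ, with no coherence condition across the singular points.
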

\begin{proof}
Firstly, we observe that by \cref{lem:affine_deformation_Zariski_local} it suffices to prove that for any $x \in X$ the germ $\Spec(\cO_{X,x})$ is Frobenius liftable.  By Artin's approximation theorem and the analytical classification of canonical singularities there exists a diagram of schemes:
\begin{displaymath}
    \xymatrix{
    & (U,u) \ar[dl]_{\text{\'{e}tale}} \ar[dr]^{\text{\'{e}tale}} & \\
    \Spec(\cO_{X,x}) & & \Spec(\cO_{S_{\mathrm{ADE}},s}) ,}
\end{displaymath}
where $\cO_{S_{\mathrm{ADE}},s}$ is the local ring at $0$ of one of the affine models given in \cref{fig:models_canonical}.  Therefore, by \cref{lem:affine_deformation_etale_local} and \cref{lem:affine_deformation_Zariski_local}, to prove Frobenius liftability of $\Spec(\cO_{X,x})$ it suffices to show that the affine models given in \cref{fig:models_canonical} are $F$-liftable.  

The case of $A_{n-1}$ singularities simply follows from \cref{lem:toric} as $A_{n-1}$ singularities are toric.  Consequently, we treat the case of $D_{n+2}$ singularities.  The affine model of $D_{n+2}$ is given by the equation $x^2 + y^2z + z^{n+1} = 0$ and therefore by \cref{cor:criterion} we are left to show that:
\[
\sum_{\substack{{i+j+k = p} \\ i,j,k \neq p}} \frac{{p \choose i,j,k}}{p} x^{2i}(y^{2}z)^jz^{(n+1)k} \in (x^2 + y^2z + z^{n+1},x^p,y^pz^p,y^{2p} + (n+1)^pz^{np}).
\]
By substituting $u = x^2 = -y^2z - z^{n+1}$ we see that it suffices to prove that:
\[
\sum_{\substack{{i+j+k = p} \\ i,j,k \neq p}} \frac{{p \choose i,j,k}}{p} (-1)^i (y^2z + z^{n+1})^i(y^{2}z)^jz^{(n+1)k} \in ((y^2z + z^{n+1})^{\ceil{p/2}},y^pz^p,y^{2p} + (n+1)^pz^{np}).
\]

The left hand side of the above relation is a sum of elements of the form $(y^2z)^a \cdot z^{(n+1)b}$ for $a+b = p$ so it is enough to show that:
\[
(y^2z)^a z^{(n+1)b} \in ((y^2z + z^{n+1})^{\ceil{p/2}},y^pz^p,y^{2p} + (n+1)^pz^{np}),
\]
for any $a+b = p$.  The claim is clear for every $a \geq \ceil{p/2}$ and therefore we reason by contradiction.  Suppose $a \leq \floor{p/2}$ is maximal with respect to the relation
\[
(y^2z)^{a} z^{(n+1)(p-a)} \not\in ((y^2z + z^{n+1})^{\ceil{p/2}},y^pz^p,y^{2p} + (n+1)^pz^{np}).
\]
As $p-a \geq \ceil{p/2}$ we can write
\begin{align*}
(y^2z)^{a} \cdot z^{(n+1)(p-a)} & = (y^2z)^a z^{(n+1)\ceil{p/2}}z^{(n+1)(p-a-\ceil{p/2})} \\
& = (y^2z)^a z^{(n+1)(p-a-\ceil{p/2})} \cdot \sum_{1 \leq i \leq p} {\ceil{p/2} \choose i} (y^2z)^i z^{(n+1)(\ceil{p/2} - i)} \\
& = \sum_{1 \leq i \leq p} {\ceil{p/2} \choose i} (y^2z)^{a+i} z^{(n+1)(p - a - i)},
\end{align*}
which gives a contradiction with the definition of $a$.

We now proceed to the case of $E_7$.  We shall prove that
\[
\sum_{\substack{{i+j+k = p} \\ i,j,k \neq p}} \frac{{p \choose i,j,k}}{p} x^{2i}y^{3j}(yz^3)^k \in (x^2 + y^3 + yz^3,x^p,3y^{2p} + z^{3p},y^pz^{2p}).
\]
By the trick as above, it suffices to prove that for any $a+b = p$ we have
\[
(yz^3)^a y^{3b} \in ((y^3 + yz^3)^{\ceil{p/2}},3y^{2p} + z^{3p},y^pz^{2p}),
\]
which follows by analogous reasoning by contradiction.

We finish with case of $E_{2m}$ singularities given by the affine equation $x^2 + y^3 + z^{m+1}$ for $m = 3,4$.  By \cref{cor:criterion} to prove the existence of $W_2(k)$-lifting compatible with Frobenius it suffices to prove that 
\[
\sum_{i+j+k=p} \frac{{p \choose i,j,k}}{p} x^{2i}y^{3j}z^{(m+1)k} \in (x^2 + y^3 + z^{m+1},x^p,y^{2p},z^{mp}).
\]
After substituting $u = x^2$, $v = y^3$, $w = z^{m+1}$ and $s = u+v+w$ it suffices to show that:
\[
\sum_{i+j+k=p} \frac{{p \choose i,j,k}}{p} u^{i}v^{j}(s-u-v)^{k} \in (s,u^{\ceil{p/2}},v^{\ceil{2p/3}},(s-u-v)^{\ceil{\frac{mp}{m+1}}}) = (s,u^{\ceil{p/2}},v^{\ceil{2p/3}},(u+v)^{\ceil{\frac{mp}{m+1}}}).
\]
For this purpose, we prove that any monomial $u^iv^j \in k[u,v]$ for $i+j = p$ appearing on the left hand side belongs to $(u^{\ceil{p/2}},u^{\ceil{2p/3}},(u+v)^{\ceil{\frac{mp}{m+1}}})$, which is equivalent to the surjectivity of the linear mapping defined by:
\[
k[u,v]_\floor{p/2} \times k[u,v]_\floor{p/3} \times k[u,v]_\floor{\frac{p}{m+1}} \ni (f,g,h) \mapsto f \cdot u^\ceil{p/2} + g \cdot u^\ceil{2p/3}+h \cdot (u+v)^\ceil{mp/(m+1)}.
\]
After writing this in the basis given by monomials $u^kv^j$ ordered lexicographically, to show surjectivity it is sufficient to show that the determinant of the matrix $\left[{\ceil{mp/(m+1)} \choose s+i-j}\right]_{1 \leq i,j \leq n}$, for $n = p - 1- \floor{p/2} - \floor{p/3}$ is non-zero.
By formula (2.1) in \cite{ck_det}, it is equal to:
\[
\det\left[{\ceil{\frac{mp}{m+1}} \choose s+i-j}\right]_{i,j \leq n} = \prod_{i = 1}^n \frac{(\ceil{\frac{mp}{m+1}}+i-1)!(i-1)!}{(s+i-1)!(\ceil{\frac{mp}{m+1}}-s+i-1)!},
\]
and therefore we are done by the inequality $\ceil{\frac{mp}{m+1}} + n - 1 < p$.
\end{proof}

\subsubsection{Quotient singularity approach for $F$-liftability of canonical singularities}\label{sec:quotient}

Here, we approach the $F$-liftability of canonical singularities of surfaces using a different method based on functoriality of obstructions.  We begin with a general result concerning $F$-liftability of quotients of smooth schemes by finite groups, for which the action can be lifted to characteristic $0$ (in fact, lifting mod $p^2$ would be sufficient).  Subsequently, we investigate to which extent our method can be applied in the case of canonical surfaces singularities.  Our analysis is based on the results of \cite[Section 4]{liedtke_satriano} describing whether ADE singularities admit a structure of a quotient singularity.

We precede our actual considerations by a few essential and standard remarks concerning base change for invariants of actions of finite groups.  The proofs are based on the existence of so-called Reynolds operator.

\begin{lemma}\label{lem:reynolds}
Let $G$ be a finite group and let $R$ be a commutative ring such that $|G|$ is invertible in $R$.  Then, any $R[G]$-module $M$ projective as an $R$-module is projective as an $R[G]$-module.
\end{lemma}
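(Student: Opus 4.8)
The plan is to exploit the Reynolds (averaging) operator $\tfrac{1}{|G|}\sum_{g\in G} g\cdot(-)$, which is available precisely because $|G|$ is invertible in $R$. Recall that an $R[G]$-module $M$ is projective if and only if every surjection of $R[G]$-modules $p\colon N\to M$ admits an $R[G]$-linear section; so fix such a surjection $p$.

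Since $M$ is projective as an $R$-module, $p$ admits an $R$-linear section $s\colon M\to N$, i.e.\ $p\circ s=\id_M$. This $s$ need not be $G$-equivariant, but we repair it by averaging: set
\[
\bar{s}\colon M\to N,\qquad \bar{s}(m)=\frac{1}{|G|}\sum_{g\in G} g\cdot s(g^{-1}\cdot m).
\]
Substituting $g\mapsto hg$ in the sum defining $\bar{s}(h\cdot m)$ shows that $\bar{s}$ is $R[G]$-linear, and since $p$ is $G$-equivariant one computes $p(\bar{s}(m))=\frac{1}{|G|}\sum_{g} g\cdot p(s(g^{-1}m))=\frac{1}{|G|}\sum_{g} g\cdot g^{-1}\cdot m=m$, so $\bar{s}$ is an $R[G]$-linear section of $p$. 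Hence every $R[G]$-surjection onto $M$ splits in the category of $R[G]$-modules, which is exactly the projectivity of $M$ over $R[G]$.

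Equivalently, one may argue that $M$ is an $R[G]$-direct summand of $R[G]\otimes_R M$, with $G$ acting through the left tensor factor: this module is $R[G]$-projective because $M$, being $R$-projective, is an $R$-direct summand of a free $R$-module, and tensoring with $R[G]$ turns a free $R$-module into a free $R[G]$-module; the splitting $M\to R[G]\otimes_R M$, $m\mapsto\frac{1}{|G|}\sum_g g\otimes g^{-1}m$, of the action map is again produced by averaging. I expect no genuine obstacle here beyond the bookkeeping of the two (commuting) $G$-actions and the routine verification that the averaged maps are equivariant; the one essential ingredient is the invertibility of $|G|$, which is what makes the Reynolds operator well defined and a projector onto the $G$-invariants.
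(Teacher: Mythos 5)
Your proof is correct and uses the same key idea as the paper, namely averaging an $R$-linear section/lift over $G$ via the Reynolds operator $\frac{1}{|G|}\sum_{g\in G} g\cdot(-)\cdot g^{-1}$; the only cosmetic difference is that you invoke the ``every surjection onto $M$ splits'' characterization of projectivity while the paper lifts an arbitrary map $f\colon M\to N$ through a surjection $P\to N$, and these are equivalent. The alternative argument via $R[G]\otimes_R M$ is also fine but not needed.
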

\begin{proof}
In order to prove that $M$ is projective we need show that for any diagram of $R[G]$-modules:
\begin{displaymath}
    \xymatrix{
          & M \ar[d]^{f} &  \\
         P \ar[r]^{p} & N \ar[r] & 0   }  
\end{displaymath}
there exists an $R[G]$-linear homomorphism $s : M \to P$ such that $f = p \comp s$.  By the assumptions we obtain an $R$-linear homomorphism $\sigma : M \to P$ such that $f = p \comp \alpha$.  Consequently, by a direct computation we obtain that a homomorphism defined by the formula (i.e., the Reynolds operator):
\[
s(m) = \frac{1}{|G|} \sum_{g \in G} g\sigma(g^{-1}m)
\]
is $R[G]$-linear and satisfies $f = p \comp s$.
\end{proof}

We now apply the above result to prove:

\begin{lemma}\label{lem:base_change}
Let $G$ be a finite group and let $R$ be a commutative ring such that $|G|$ is invertible in $R$.  Let $M$ be a projective $R$-module together with an action of $G$.  Then, for any homomorphism $R \to S$ the natural mapping $M^G \otimes_R S \to (M \otimes_R S)^G$ is an isomorphism.
\end{lemma}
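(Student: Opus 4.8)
The plan is to realise $M^G$ as an $R$-module direct summand of $M$ cut out by an idempotent $R$-linear endomorphism, and then to observe that such a summand is preserved by the base change $-\otimes_R S$.

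First I would reuse the Reynolds operator from the proof of \cref{lem:reynolds}: since $|G|$ is invertible in $R$, the map $\rho_M\colon M\to M$, $\rho_M(m)=\frac{1}{|G|}\sum_{g\in G}gm$, is $R$-linear, has image exactly $M^G$, and restricts to the identity on $M^G$; hence $\rho_M$ is idempotent and $M = M^G\oplus\Ker\rho_M$ as $R$-modules, with $\rho_M$ the projection onto the first factor. (The hypothesis that $M$ be $R$-projective is not strictly needed here; if one prefers, it can be used to phrase the whole argument through the central idempotent $e=\frac{1}{|G|}\sum_{g}g\in R[G]$ and the splitting $M=eM\oplus(1-e)M$, with $eM=M^G$.)

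Next I would apply $-\otimes_R S$ to this direct sum decomposition, obtaining $M\otimes_R S=(M^G\otimes_R S)\oplus(\Ker\rho_M\otimes_R S)$; in particular the canonical map $M^G\otimes_R S\to M\otimes_R S$ is injective, identifying $M^G\otimes_R S$ with the first summand. On the other hand, $M\otimes_R S$ carries the $G$-action $g\cdot(m\otimes s)=gm\otimes s$, and its Reynolds operator $\rho_{M\otimes_R S}$ coincides with $\rho_M\otimes\id_S$ (check on simple tensors). Reading off $\rho_M\otimes\id_S$ on the above decomposition shows it is precisely the projection onto $M^G\otimes_R S$, so that summand is its image; but, being an averaging operator, $\rho_{M\otimes_R S}$ also has image $(M\otimes_R S)^G$. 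Comparing the two descriptions identifies $M^G\otimes_R S$ with $(M\otimes_R S)^G$ inside $M\otimes_R S$, and one checks directly that this identification is the natural map of the statement.

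There is no serious obstacle here; the only points deserving care — and the only places where invertibility of $|G|$ is used — are the verification that $\rho_{M\otimes_R S}=\rho_M\otimes\id_S$, i.e.\ that averaging commutes with base change, and the two routine computations showing that the image of an averaging operator is the full invariant submodule, both for $M$ and for $M\otimes_R S$.
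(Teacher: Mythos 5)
Your argument is correct, and it takes a genuinely different (and more economical) route than the paper. The paper's proof first invokes \cref{lem:reynolds} to conclude that $M$ is projective as an $R[G]$-module, chooses a complement $N$ with $M \oplus N \isom R[G]^{I}$, and reduces the base-change statement to the case of free $R[G]$-modules, which it then verifies by a chain of explicit identifications. You instead apply the Reynolds idempotent $\rho_M = \frac{1}{|G|}\sum_{g} g$ directly to $M$, obtaining the $R$-module splitting $M = M^G \oplus \Ker\rho_M$, and observe that $\rho_{M\otimes_R S} = \rho_M \otimes \id_S$, so that the image of the base-changed idempotent is simultaneously $M^G \otimes_R S$ and $(M\otimes_R S)^G$. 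This is shorter, avoids the detour through $R[G]$-projectivity, and — as you correctly note — shows that the $R$-projectivity hypothesis on $M$ is superfluous: the conclusion holds for an arbitrary $R[G]$-module once $|G|$ is invertible. What the paper's formulation buys in exchange is the explicit free presentation $M \oplus N \isom R[G]^I$, which is the form of the statement it actually reuses in \cref{cor:quotient}; but for the lemma as stated your proof is complete, and the only points requiring verification (that averaging commutes with base change, and that the image of the averaging operator is the full invariant submodule on both sides) are exactly the ones you flag.
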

\begin{proof}
Firstly, by \cref{lem:reynolds} we observe that $M$ is a projective $R[G]$-module.  Therefore, there exists an $R[G]$-module $N$ such that $M \oplus N \isom R[G]^{I}$ for some set of indices $I$.  Consequently, we obtain the following diagram:
\begin{displaymath}
	\xymatrix{
		(M \oplus N)^G \otimes_ R S \ar[d]^{\isom} \ar[r]^{\isom} &  M^G \otimes_R S \oplus N^G \otimes_R S \ar[d] \\
		(R[G]^I)^G \otimes_R S \ar[r] & (M \otimes_R S)^G \oplus (N \otimes_R S)^G \isom (R[G] \otimes_R S)^G,  
	}  
\end{displaymath}
which reduces our claim to the case of free modules.  We finish by the following simple sequence of identifications: 
\[
(R[G]^I)^G \otimes_R S \to R^I \otimes_R S \to S^I \to (S[G]^I)^G \to (R[G]^I \otimes_R S)^G.
\]
\end{proof}

As a corollary we obtain:

\begin{cor}\label{cor:quotient}
Let $G$ be a finite group and let $R$ be a ring such that $|G|$ is invertible in $R$.  Then, for every $G$-action over $\Spec(R)$ on $\Affine^n_{\Spec(R)}$ the quotient $\Affine^n_{\Spec(R)}/G$ is flat over $R$.  Moreover, for every ring homorphism $R \to S$ the natural map:
\[
\left(\Affine^n_{\Spec(R)}/G\right) \times_{\Spec(R)} \Spec(S) \to \Affine^n_{\Spec(S)}/G.
\]
is an isomorphism.  
\end{cor}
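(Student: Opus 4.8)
\emph{Proof proposal.}
The plan is to translate everything into the affine picture. A $G$-action on $\Affine^n_{\Spec(R)}$ over $\Spec(R)$ is the same datum as an action of $G$ on the polynomial ring $M \mydef R[x_1,\ldots,x_n]$ by $R$-algebra automorphisms; in particular $M$ is a free, hence projective, $R$-module equipped with a $G$-action, and $\Affine^n_{\Spec(R)}/G = \Spec(M^G)$. The two assertions of the corollary then become: (i) $M^G$ is a flat $R$-module, and (ii) for every ring homomorphism $R \to S$ the natural comparison map $M^G \otimes_R S \to (M \otimes_R S)^G$ is an isomorphism, where $M \otimes_R S = S[x_1,\ldots,x_n]$ carries the base-changed $G$-action. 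For (i) I would invoke the Reynolds operator $e = \frac{1}{|G|}\sum_{g \in G} g$, which is defined since $|G|$ is invertible in $R$. It is an $R$-linear idempotent endomorphism of $M$ whose image is exactly $M^G$ (it restricts to the identity on $M^G$, and a direct computation shows $e(M) \subseteq M^G$). Hence $M^G$ is an $R$-module direct summand of the free module $M$, therefore projective, in particular flat, over $R$; this is precisely flatness of $\Affine^n_{\Spec(R)}/G$ over $\Spec(R)$.

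For (ii), the statement is exactly \cref{lem:base_change} applied to the projective $R$-module-with-$G$-action $M = R[x_1,\ldots,x_n]$: it produces a canonical isomorphism $M^G \otimes_R S \isom (M \otimes_R S)^G$. Applying $\Spec$ and using the identifications $\left(\Affine^n_{\Spec(R)}/G\right) \times_{\Spec(R)} \Spec(S) = \Spec(M^G \otimes_R S)$ and $\Affine^n_{\Spec(S)}/G = \Spec\bigl((M\otimes_R S)^G\bigr)$ then shows that the natural map of schemes in the statement is an isomorphism.

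The closest thing to an obstacle — and the only point requiring care — is that $M$ is not finitely generated over $R$, so one must make sure that \cref{lem:reynolds} and \cref{lem:base_change} are being used in the form valid for infinitely generated projectives: one writes $M$ as a direct summand of a (possibly infinite) \emph{direct sum} of copies of $R[G]$, and one uses that, for a finite group $G$, the functor $(-)^G$ commutes with arbitrary direct sums, as does $- \otimes_R S$. With this understood, the argument involves no further computation beyond the verification that $e$ is idempotent with image $M^G$, which is routine.
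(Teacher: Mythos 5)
Your proposal is correct and follows essentially the same route as the paper: the flatness claim is obtained by exhibiting $R[x_1,\ldots,x_n]^G$ as a direct summand of the free module $R[x_1,\ldots,x_n]$ via the Reynolds idempotent (the "same idea as in \cref{lem:reynolds}" that the paper invokes), and the base-change claim is a direct application of \cref{lem:base_change}. Your extra remark that the relevant lemmas must be read for possibly infinitely generated projectives is a sensible precaution, but the paper's proofs of \cref{lem:reynolds} and \cref{lem:base_change} already allow an arbitrary index set $I$, so no new argument is needed.
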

\begin{proof}
Using the same idea as in \cref{lem:reynolds} we prove that the ring of invariants $R[x_1,..,x_n]^G$ is a direct sum of a flat $R$-module $R[x_1,\ldots,x_n]$.  This implies that $\Affine^n_{\Spec(R)}/G = \Spec(R[x_1,..,x_n]^G)$ is flat over $R$.  The base change property is a direct consequence of \cref{lem:base_change}.
\end{proof}

We are now ready to approach the following result concerning $F$-liftability of quotient singularities.

\begin{lemma}[Frobenius liftability of mod $p$ reductions of quotients]\label{thm:frobenius_quotient}
Let $G$ be a finite group and let $T$ be a spectrum of finitely generated $\ZZ$-algebra \'{e}tale over $\Spec(\ZZ[1/|G|])$.  Suppose, $G$ acts on $\Affine^n_T$ relatively to $T$.  Then, for every perfect field $k$ and a $k$-point $t \in T(k)$ the scheme $\Affine^n_t/G \isom (\Affine^n_T/G)_t$ is $W_2(k)$-liftable compatibly with Frobenius.
\end{lemma}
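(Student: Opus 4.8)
The plan is to combine the spreading-out hypothesis on $T$ with the functoriality of the obstruction to lifting Frobenius (\cref{lem:func_frobenius}), using that the quotient morphism $\Affine^n \to \Affine^n/G$ is affine with a split pushforward of structure sheaves.

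First I would record that $p \nmid |G|$: the $k$-point $t$ induces a ring homomorphism $\ZZ[1/|G|] \to k$, which is impossible when $p \mid |G|$, since then $p$ would be a unit of $\ZZ[1/|G|]$ mapping to $0$ in the characteristic-$p$ field $k$. Hence $|G|$ is a unit of the local ring $W_2(k)$, so the averaging arguments behind \cref{lem:reynolds}, \cref{lem:base_change} and \cref{cor:quotient} are all available over $W_2(k)$. Because $|G|$ is invertible in $W_2(k)$, the structure map $\Spec W_2(k) \to \Spec\ZZ$ factors through $\Spec\ZZ[1/|G|]$ compatibly with $t$, so the étaleness (hence formal étaleness) of $T \to \Spec\ZZ[1/|G|]$ provides a unique lift $\tilde t\colon \Spec W_2(k)\to T$ of $t$ along the square-zero thickening $\Spec k\hookrightarrow\Spec W_2(k)$. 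I would then set $\tilde X\mydef\Affine^n_T\times_{T,\tilde t}\Spec W_2(k)=\Affine^n_{W_2(k)}$ and $\tilde Y\mydef(\Affine^n_T/G)\times_{T,\tilde t}\Spec W_2(k)$. By \cref{cor:quotient} the scheme $\tilde Y$ is flat over $W_2(k)$ with special fibre $\Affine^n_k/G\isom\Affine^n_t/G$, so it is a $W_2(k)$-lifting of $X\mydef\Affine^n_t/G$; and the base change $\tilde\pi\colon\tilde X\to\tilde Y$ of the $T$-morphism $\Affine^n_T\to\Affine^n_T/G$ is affine and, again by \cref{cor:quotient}, reduces modulo $p$ to the quotient map $\pi\colon\Affine^n_k\to\Affine^n_k/G$.

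It remains to descend the Frobenius lift. The lifting $\tilde X=\Affine^n_{W_2(k)}$ carries the evident Frobenius lift $x_i\mapsto x_i^p$ over $\sigma$, so the obstruction $\sigma^F_{\Affine^n_k}$ attached to $\tilde X$ vanishes. The map $\pi^\#\colon\cO_{\Affine^n_k/G}\to\pi_*\cO_{\Affine^n_k}$ is the inclusion $k[x_1,\ldots,x_n]^G\hookrightarrow k[x_1,\ldots,x_n]$, which splits $\cO_{\Affine^n_k/G}$-linearly via the Reynolds operator $a\mapsto\frac1{|G|}\sum_{g\in G}g\cdot a$. Feeding $\tilde\pi\colon\tilde X\to\tilde Y$ into the functoriality diagram of \cref{lem:func_frobenius}, whose ``descent'' clause applies precisely because $\pi$ is affine with split $\pi^\#$, the vanishing of $\sigma^F_{\Affine^n_k}$ forces the vanishing of the Frobenius-lifting obstruction attached to $\tilde Y$; thus $\tilde Y$ admits a compatible lift of Frobenius, which is the assertion. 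I expect the only genuinely delicate point to be the bookkeeping in the middle step: one must produce $\tilde X$, $\tilde Y$ and the morphism $\tilde\pi$ between them so that the hypotheses of \cref{lem:func_frobenius} literally hold, and here the base-change compatibility of invariants from \cref{cor:quotient} --- ensuring both that $\tilde Y$ reduces to $X$ and that $\tilde\pi$ reduces to $\pi$ --- carries the weight, the rest being formal.
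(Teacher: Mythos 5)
Your proposal is correct and follows essentially the same route as the paper: lift the point $t$ to $\wt{t}\in T(W_2(k))$ via formal étaleness, use \cref{cor:quotient} to get the flat lifting $(\Affine^n_T/G)_{\wt{t}}$ of $\Affine^n_t/G$ together with the lifted quotient morphism, and then descend the standard Frobenius lift of $\Affine^n_{W_2(k)}$ along the split affine map $\pi_t$ using the last clause of \cref{lem:func_frobenius}. Your write-up is if anything slightly more careful than the paper's on two points it leaves implicit — that $p\nmid|G|$ follows from the existence of the $k$-point of $T$, and that $\Spec W_2(k)\to\Spec\ZZ$ factors through $\Spec\ZZ[1/|G|]$ so that étaleness over $\ZZ[1/|G|]$ really does supply the lift $\wt{t}$.
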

\begin{proof}
Let $t \in T(k)$ be $k$-point of $T$.  By the formal lifting property of the \'{e}tale map $T \to \Spec(\ZZ)$, any point $t \in T(k)$ can be lifted to a point $\wt{t} \in T(W_2(k))$.  Consequently, by the flatness of $\Affine^n_T/G \to T$ (see \cref{cor:quotient}) we obtain a $W_2(k)$-lifting $(\Affine^n_T/G)_{\wt{t}}$ of $(\Affine^n_T/G)_t$ fitting into a commutative diagram with cartesian squares: 
\begin{displaymath}
\xymatrix{
	\Affine^n_t \ar[rr]\ar[dd]\ar[rd]^{\pi_t} & & \Affine^n_{\wt{t}} \ar[dd]\ar[rd]^{\pi_{\wt{t}}} & \\
	& (\Affine^n_T/G)_t \isom \Affine^n_t/G \ar[ld]\ar[rr] & & (\Affine^n_T/G)_{\wt{t}} \ar[ld] \\
	\Spec(k) \ar[rr] & & \Spec(W_2(k)). &   }
\end{displaymath} 

By \cref{cor:quotient} the scheme $(\Affine^n_T/G)_t$ is isomorphic to $\Affine^n_t/G$ and $\pi_t$ is a quotient map $\Affine^n_t \to \Affine^n_t/G$.  Hence by the assumptions on $T$ the degree $\deg(\pi_t)$ (equal to the length of a generic orbit) is coprime to the characteristic of $k$ and therefore there exists a splitting operator of $\pi_t^\#$, which consequently allows us to apply \cref{lem:func_frobenius} (for the upper square with mappings $\pi_t$ and $\pi_{W_2(k)}$) to obtain our claim.  More precisely, the Frobenius lifting of $\Affine^n_{W_2(k)}$ descends to $(\Affine^n_T/G)_{W_2(k)}$, which is a $W_2(k)$-lifting of $(\Affine^n_T/G)_{W_2(k)}$.
\end{proof}

In order to apply \cref{thm:frobenius_quotient} to the case of canonical singularities of surfaces we need some results from \cite{liedtke_satriano}.  First, let us recall the definition:

\begin{defin}\label{def:quotient}
 A scheme over a field $k$ has \emph{linearly reductive quotient singularities} (resp. \emph{tame quotient singularities}) if it is \'{e}tale equivalent to a quotient of a smooth $k$-scheme by a finite linearly reductive group scheme (resp. finite \'{e}tale group scheme of order prime to the characteristic of k).
\end{defin}

It is proven in \cite[Prop. 4.2]{liedtke_satriano} that except for a few characteristics all ADE singularities fit into the above definition.  Moreover, from the proof we can infer the following result concerning characteristic $0$ liftability of group actions leading to ADE singularities:

\begin{prop}\label{prop:group_schemes_ade}
For any singularity type $\tau \in \{ A_{n-1}, D_{n+2}, E_6, E_7, E_8\}$ there exists a finite flat generically \'{e}tale group scheme $\Lambda_\tau/\Spec(\ZZ)$ such that for every field $k$ the type $\tau$ singularity over $k$ arises as the quotient $\Affine^2_k/\Lambda_k$.  The group scheme $\Lambda$ is \'{e}tale precisely over an open subset of $\Spec(\ZZ)$ where the corresponding group scheme quotient is tame (in the sense of the above definition).
\end{prop}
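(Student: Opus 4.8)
The statement is essentially a repackaging of \cite[Section 4]{liedtke_satriano}, and the plan is to extract from the proof of \cite[Prop.~4.2]{liedtke_satriano} precisely the integral data it produces. The starting point is the classical description over an algebraically closed field of characteristic $0$: each type $\tau\in\{A_{n-1},D_{n+2},E_6,E_7,E_8\}$ is the quotient $\Affine^2/\Gamma_\tau$ by the corresponding binary polyhedral subgroup $\Gamma_\tau\subset\mathrm{SL}_2$ (cyclic, binary dihedral, binary tetrahedral, binary octahedral, binary icosahedral), acting through the standard $2$-dimensional representation. I would first recall this and observe that in the cyclic case $\Gamma_{A_{n-1}}$ already has an evident diagonalizable $\ZZ$-model $\mu_n$, acting by $t\cdot(x,y)=(tx,t^{-1}y)$, with $\Affine^2_\ZZ/\mu_n\isom\Spec(\ZZ[a,b,c]/(ab-c^{n}))$; this is finite flat over $\Spec(\ZZ)$ and étale precisely over $\Spec(\ZZ[1/n])$.

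For the remaining types the naive constant group scheme fails at the primes dividing $|\Gamma_\tau|$: there the quotient by the constant binary polyhedral group is a wild quotient singularity, not the $\tau$ singularity, and the constant group may fail to act faithfully at all. The remedy, carried out in \cite[Section 4]{liedtke_satriano}, is to replace $\Gamma_\tau$ by a finite flat \emph{linearly reductive} group scheme $\Lambda_\tau$ over $\Spec(\ZZ)$ (the ``binary polyhedral group scheme'') which is generically étale, becomes the constant group $\Gamma_\tau$ after inverting $|\Gamma_\tau|$, and at the bad primes is a suitable non-reduced linearly reductive group scheme, together with an action on $\Affine^2_\ZZ$ arranged so that $\Affine^2_{\ol{\bb{F}_p}}/\Lambda_{\ol{\bb{F}_p}}$ is the type $\tau$ singularity in every characteristic. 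I would import this construction verbatim rather than reproduce it.

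Granting the construction, both assertions follow formally. Flatness of $\Affine^2_\ZZ/\Lambda_\tau=\Spec(\ZZ[x,y]^{\Lambda_\tau})$ over $\ZZ$ and its compatibility with an arbitrary base change $\ZZ\to k$ come from the Reynolds operator argument --- this is exactly \cref{cor:quotient} in the étale case, together with its standard extension to linearly reductive finite group schemes --- so the fibre over any $k$ is canonically $\Affine^2_k/\Lambda_k$; comparing with \cite[Section 4]{liedtke_satriano} identifies this fibre with the type $\tau$ singularity over $k$, and over residue characteristics not dividing $|\Gamma_\tau|$ one may alternatively just reduce the classical (or Artin, \cite{artin_canonical_charp}) model modulo $p$. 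Finally, a finite flat group scheme over $\Spec(\ZZ)$ is étale exactly over the open complement of the locus where its fibres are non-reduced, which for $\Lambda_\tau$ is precisely the set of bad primes; over that open locus $\Lambda_\tau$ is a finite étale group scheme of order prime to the residue characteristic, so by \cref{def:quotient} the quotient is a tame quotient singularity, whereas at the bad primes $\Lambda_\tau$ is non-étale and the quotient is only a (non-tame) linearly reductive quotient singularity.

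The genuinely non-formal input is the construction of $\Lambda_\tau$ over all of $\Spec(\ZZ)$ and the verification that its mod-$p$ quotients are the ADE singularities at the bad primes; this is the heart of \cite[Section 4]{liedtke_satriano}, and the main obstacle to a self-contained account. Everything else in the proof is bookkeeping: base change of invariants under $\cc{O}$-flat, linearly reductive quotients, and the description of the étale locus of a finite flat group scheme over $\Spec(\ZZ)$.
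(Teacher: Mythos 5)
Your proposal is correct and matches the paper's treatment: the paper gives no proof of this proposition at all, simply asserting that it can be inferred from the proof of \cite[Prop.\ 4.2]{liedtke_satriano} (and illustrating the $A_{n-1}$ case with $\mu_{n,\ZZ}$, exactly as you do). Your write-up correctly isolates the non-formal content (the integral models of the binary polyhedral group schemes and the identification of their quotients at the bad primes) as the part that must be imported from Liedtke--Satriano, and the remaining bookkeeping you supply is sound.
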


\begin{example}
For example $A_{n-1}$ singularity over a field $k$ of arbitrary characteristic arises as the quotient of $\Affine^2_k/\Lambda_k$ by the natural action of the group scheme $\mu_{n,k} \isom \Lambda_k = \Lambda \times_{\Spec(\ZZ)} \Spec(k)$ for $\Lambda = \mu_{n,\ZZ}$.  In this case, the scheme $\Lambda/\ZZ$ is \'{e}tale over $\Spec(\ZZ[1/n])$. 
\end{example}

Finally, the details of the behaviour of canonical surface singularities with respect to characteristic of the field $k$ can be summarised by the following table from \cite[Section 4]{liedtke_satriano}.  

\begin{table}[ht]
\centering
\caption{Classification of canonical singularities of surfaces}
\label{my-label}
\begin{tabular}{lcccll}
\cline{1-4}
          & \multicolumn{1}{l}{Linearly reductive quotient singularity} & \multicolumn{1}{l}{Tame quotient singularity} &  & \\ \cline{1-4}
$A_{n-1}$ & every $p$                                                   & $p \not| n$                                   & &  \\
$D_{n+2}$ & $p \geq 3$                                                  & $p \geq 3,p \not| n$                          & &  \\
$E_6$     & $p \geq 5$                                                  & $p \geq 5$                                    & &  \\
$E_7$     & $p \geq 5$                                                  & $p \geq 5$                                    & &  \\
$E_8$     & $p \geq 7$                                                  & $p \geq 7$                                    & &  \\ \cline{1-4}
\end{tabular}
\end{table}

Note that the last column indicates the open subset where the respective group scheme $\Lambda$ is \'{e}tale.  We are now ready to give an alternative proof of $F$-liftability of canonical singularities of surfaces in the case they are tame quotient singularities.

\begin{thm}[Tame version of \cref{thm:canonical_surface_frobenius}]
Suppose $X$ is an affine surface over an algebraically closed field $k$ such that its singularities are tame quotient canonical surface singularities.  Then, $X$ is Frobenius liftable.
\end{thm}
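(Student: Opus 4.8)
The plan is to reduce the statement to the \'{e}tale-local situation and then invoke the spreading-out result \cref{thm:frobenius_quotient}.  First I would use \cref{lem:affine_deformation_Zariski_local} to reduce to showing that each germ $\Spec(\cO_{X,x})$, for $x \in X$ a closed point, is Frobenius liftable, and then \cref{lem:affine_deformation_etale_local}, which makes Frobenius liftability an \'{e}tale-local property, to replace each such germ by a single \'{e}tale model of it.  By hypothesis the germ is a tame quotient canonical surface singularity, so by \cref{def:quotient} it is \'{e}tale-equivalent to a quotient $\Affine^2_k/\Lambda_k$ by a finite \'{e}tale $k$-group scheme $\Lambda_k$ of order prime to $p$; since $k$ is algebraically closed this $\Lambda_k$ is a constant finite group $G$ with $p \nmid |G|$, and by \cref{prop:group_schemes_ade} the presentation $\Affine^2_k/G$ is the base change to $k$ of a finite flat, generically \'{e}tale $\ZZ$-group scheme $\Lambda$ acting on $\Affine^2_\ZZ$, which is moreover \'{e}tale over the open subset of $\Spec(\ZZ)$ on which the quotient is tame --- an open subset that under our hypotheses contains the point of residue characteristic $p$ (cf.\ the classification table above and \cite[Section~4]{liedtke_satriano}).

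Next I would spread out.  Over the open $U \subset \Spec(\ZZ[1/|G|])$ just described, $\Lambda|_U$ is finite \'{e}tale, hence is trivialised by a finite \'{e}tale cover $T \to U$ with $T$ the spectrum of a finitely generated $\ZZ$-algebra: after this base change $\Lambda_T$ becomes the constant group $G$, and the $\Lambda$-action induces a $G$-action on $\Affine^2_T$ relatively to $T$.  Since $T \to U$ is finite \'{e}tale and $k$ is algebraically closed, the $k$-point of $U$ attached to our singularity lifts to a $k$-point $t \in T(k)$, and the base-change statement of \cref{cor:quotient} then identifies $(\Affine^2_T/G)_t$ with $\Affine^2_k/\Lambda_k$, i.e.\ with the chosen \'{e}tale model.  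At this point \cref{thm:frobenius_quotient}, applied to the constant group $G$, the scheme $T$ \'{e}tale over $\Spec(\ZZ[1/|G|])$, $n = 2$ and the point $t$, immediately gives that $\Affine^2_t/G \isom (\Affine^2_T/G)_t$ is $W_2(k)$-liftable compatibly with Frobenius; combined with the reductions of the first paragraph this shows every germ of $X$ is Frobenius liftable, and a last application of \cref{lem:affine_deformation_Zariski_local} yields the Frobenius liftability of $X$.

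The main obstacle I anticipate is the bookkeeping in the spreading-out step: one must produce a single finitely generated $\ZZ[1/|G|]$-algebra $T$, \'{e}tale over $\Spec(\ZZ[1/|G|])$, over which simultaneously the group scheme $\Lambda$ becomes the constant group $G$ and the action on $\Affine^2$ is defined relatively to $T$, together with a $k$-point of $T$ whose fibre reproduces the given singularity.  This is exactly the place where the two hypotheses enter --- that $k$ is algebraically closed (so $\Lambda_k$ is constant and finite \'{e}tale covers of $U$ acquire $k$-points) and that tameness places the residue characteristic inside the \'{e}tale locus of $\Lambda$ --- and once this is set up the remainder is a formal consequence of \cref{thm:frobenius_quotient} together with the \'{e}tale and Zariski locality of Frobenius liftability.
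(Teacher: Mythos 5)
Your proposal is correct and follows essentially the same route as the paper: reduce via \cref{lem:affine_deformation_Zariski_local} and \cref{lem:affine_deformation_etale_local} to the tame quotient model, use \cref{prop:group_schemes_ade} to realise it as a fibre of $\Affine^2_T/G$ for $T$ \'{e}tale over an open of $\Spec(\ZZ)$ trivialising $\Lambda$, lift the $k$-point to $T$ using that $k$ is algebraically closed, and conclude by \cref{thm:frobenius_quotient}. The only (harmless) difference is that you make the base-change identification via \cref{cor:quotient} and the constancy of $\Lambda_k$ explicit where the paper leaves them implicit.
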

\begin{proof}
Again by \cref{lem:affine_deformation_etale_local} and \cref{lem:affine_deformation_Zariski_local} it suffices to address the case of tame quotients.  By \cref{prop:group_schemes_ade} and the tameness assumption there exists an open subset $U = \Spec(\ZZ[1/N]) \subset \Spec(\ZZ)$ and an \'{e}tale group scheme $\Lambda/U$ of rank coprime to $N$ such that $X \isom \Affine^2_u/\Lambda_u$ for a certain point $u \in U(k)$.  Every finite \'{e}tale group scheme is \'{e}tale locally constant and therefore there exists an \'{e}tale morphism $T \to U$ such that $\Lambda_T \mydef \Lambda \times_U T$ is isomorphic to a constant group scheme associated to a finite group $G$.  As $k$ is algebraically closed, the point $u \in U(k)$ lifts to $t \in T(k)$.  By \cref{thm:frobenius_quotient} applied for $G$ acting on $\Affine^2_T$ the fibre $\Affine^2_u/\Lambda_u \isom \Affine^2_t/G \isom (\Affine^2_T/G)_t$ is $F$-liftable.  This finishes the proof. 
\end{proof}

\begin{example}
A $D_{n+2}$ singularity arises as the quotient of $\Affine^2_k$ by the action of the binary dihedral group scheme $\mathrm{BD}_n$ of rank $4n$ induced by the natural operation of the matrices:
\[
\begin{bmatrix}
    \xi_{2n} & 0 \\
    0 & \xi_{2n}^{-1}
\end{bmatrix} \textrm{ and }
\begin{bmatrix}
    0 & -1 \\
    1 & 0
\end{bmatrix},
\] 
where $\xi_{2n}$ denotes the $2n$-th primitive root of unity. This means that the model of $D_{n+2}$ is tame over $\Spec(\ZZ[1/2n])$ and the associated \'{e}tale group scheme is trivialized by the covering induced by $\ZZ \to \ZZ[1/2n][\xi_{2n}]$.  For more details, the reader is referred to \cite[Section 4]{liedtke_satriano}.

\end{example}


\section{$W_2(k)$-liftability and $F$-liftability compared to standard $F$-singularities}\label{sec:relation_f-singularities}

The following section contains the comparison of the classical $F$-singularity types, i.e., $F$-regularity, $F$-purity and $F$-rationality with the notions of $W_2(k)$-liftability and Frobenius liftability.  We work with $F$-finite rings, i.e, characteristic $p>0$ rings such that $F_*R$ is a finite $R$-module.  In particular, the $F$-finiteness assumption is satisfied in the case of essential finite type algebras over a field $k$ of characteristic $p$.  We begin by recalling a few necessary definitions and criteria.  Note that we freely identify affine schemes with their associated rings.
\begin{defin}[$F$-regularity]
Let $R$ be a reduced $F$-finite ring of characteristic $p > 0$.  
\begin{enumerate}[i)]
\item We say that $R$ is $F$-regular if for any $c \in R$ the mapping $\varphi^e_c : R \to F^e_*R$ defined by the formula $1 \mapsto c$ splits for some $e \gg 0$.
\item We say that $R$ is $F$-pure if the mapping $F : R \to F_*R$ splits, i.e., the scheme $\Spec(R)$ is Frobenius split.
\end{enumerate}
\end{defin}

It turns out that both of above properties can be verified locally, i.e., $R$ is $F$-pure (respectively strongly $F$-regular) if and only if for any maximal $\ideal{m} \in \Spec(R)$ the local ring $R_{\ideal{m}}$ is $F$-pure (respectively strongly $F$-regular).  In the case of quotients of $F$-finite regular rings the above properties can be efficiently verified using the following criteria.  

\begin{lemma}[Fedder's and Glassbrenner's criteria]\label{lem:fedder}
Let $S$ be a F-finite regular ring such that $F_*S$ is a free $S$-module and let $R = S/I$.  Then $R$ is $F$-split at $\ideal{m} \supset I$ if and only if $I^{[p]} : I \not\subset \mathfrak{m}^{[p]}$.  Moreover, let $s$ be an element of $S$ not in any minimal prime of $I$ such that $R_s$ is regular.  Then, $R$ is strongly $F$-regular at $\ideal{m}$ if and only if there exists an $e \in \NN$ such that $s(I^{[p^e]} : I) \not\subset \mathfrak{m}^{[p^e]}$.
\end{lemma}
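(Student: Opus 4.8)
Both assertions are local at $\ideal{m}$ (this was recorded in the discussion preceding the lemma), so the plan is to pass to $S_{\ideal{m}}$ and deduce everything from the classical computation of $\Hom_R(F^e_* R, R)$ due to Fedder. Since all the ideals involved are finitely generated and $\ideal{m}^{[p^e]}$ is $\ideal{m}$-primary, passing to $S_{\ideal{m}}$ is harmless; so assume $S$ is regular local, $F$-finite, with $F_* S$ free.

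\emph{Step 1: Fedder's Lemma.} Because $F^e_* S$ is free over $S$, the module $\Hom_S(F^e_* S, S)$ is free of rank one over $F^e_* S$; I would fix a generator $\Phi_e$ (in a regular system of parameters, $\Phi_e$ is the map returning the coefficient of the top monomial in the base-$p^e$ expansion). As $R = S/I$ acts through $S$, one has $\Hom_R(F^e_* R, R) = \Hom_S(F^e_* R, R)$. Applying $\Hom_S(-,R)$ to $0 \to F^e_* I \to F^e_* S \to F^e_* R \to 0$ and $\Hom_S(F^e_* S, -)$ to $0 \to I \to S \to R \to 0$ (the latter functor exact because $F^e_* S$ is free), I would identify
\[
\Hom_R(F^e_* R, R) \;\isom\; \frac{(I^{[p^e]} : I)}{I^{[p^e]}},
\]
where the class of $u \in (I^{[p^e]} : I)$ corresponds to $\phi_u \colon F^e_* \ol{g} \mapsto \ol{\Phi_e(F^e_*(ug))}$. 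The content of this step, and the main obstacle, is the digit-coefficient analysis of $\Phi_e$: one must check that the map $(F^e_* u)\Phi_e$ carries $F^e_* I$ into $I$ precisely when $uI \subseteq I^{[p^e]}$, and has image inside $I$ precisely when $u \in I^{[p^e]}$, using that $I^{[p^e]}$ is the ideal whose base-$p^e$ digit coefficients lie in $I$.

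\emph{Step 2: Fedder's criterion.} Over $R_{\ideal{m}}$ the Frobenius $F\colon R \to F_* R$ splits if and only if some $\phi \in \Hom_R(F_* R, R)$ sends $F_* 1$ to a unit. By Step~1 this says: there is $u \in (I^{[p]}:I)$ with $\Phi_1(F_*(ug))$ a unit for some $g \in S$. If $(I^{[p]}:I) \not\subseteq \ideal{m}^{[p]}$, choose $u$ with some base-$p$ digit coefficient $u_a \notin \ideal{m}$; since $(I^{[p]}:I)$ is an ideal, $u' = u\, x_1^{p-1-a_1}\cdots x_n^{p-1-a_n}$ still lies in it and $\Phi_1(F_* u') = u_a$ is a unit, so $\phi_{u'}$ splits $F$. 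Conversely, if $(I^{[p]}:I)\subseteq \ideal{m}^{[p]}$ then every digit coefficient of every $u \in (I^{[p]}:I)$ lies in $\ideal{m}$, and $\Phi_1(F_*(ug))$ is always an $S$-linear combination of those coefficients, hence a non-unit; so no splitting exists. This is the asserted equivalence.

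\emph{Step 3: Glassbrenner's criterion.} Here I would first invoke the standard reduction (Hochster--Huneke): a reduced $F$-finite ring $R$ is strongly $F$-regular at $\ideal{m}$ as soon as there is an element $c$ outside every minimal prime of $I$ with $R_c$ strongly $F$-regular and with $\varphi^e_c \colon R \to F^e_* R$, $1 \mapsto F^e_* c$, split at $\ideal{m}$ for some $e$; conversely such $c$ and $e$ always exist when $R$ is strongly $F$-regular at $\ideal{m}$. Taking $c = s$, the hypothesis that $R_s$ is regular (hence strongly $F$-regular) reduces the problem to: for some $e$, the map $1 \mapsto F^e_* s$ admits a retraction over $R_{\ideal{m}}$, equivalently some $\psi \in \Hom_R(F^e_* R, R)$ has $\psi(F^e_* s)$ a unit. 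By Step~1, $\psi = \phi_u$ with $u \in (I^{[p^e]}:I)$ and $\phi_u(F^e_* s) = \ol{\Phi_e(F^e_*(su))}$; rerunning the digit-coefficient argument of Step~2 with $su$ in place of $u$ shows that such a $u$ exists for some $e$ exactly when $s\,(I^{[p^e]}:I) \not\subseteq \ideal{m}^{[p^e]}$ for some $e$. The two places that will require genuine care are Fedder's Lemma in Step~1 and the reduction to a single test element in Step~3 (both classical); the remaining manipulations are bookkeeping.
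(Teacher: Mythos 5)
Your argument is correct, but note that the paper does not prove this lemma at all: it simply cites \cite[Theorem 2.3]{donna_glassbrenner}. What you have written out is precisely the standard Fedder--Glassbrenner argument underlying that citation (Fedder's identification of $\Hom_R(F^e_*R,R)$ with $(I^{[p^e]}:I)/I^{[p^e]}$, plus the Hochster--Huneke single-test-element reduction), so your proposal matches the intended proof.
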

\begin{proof}
For the proof see \cite[Theorem 2.3]{donna_glassbrenner}.
\end{proof}

Note that the assumptions of the above lemma are satisfied for $S = k[x_1,\ldots,x_n]$ and $S$ regular local.  Moreover, in the special case $I = (f)$ the colon ideal $I^{[p^e]} : I = (f^{p^e-1})$ and therefore $F$-purity boils down to an efficiently verifiable criterion $f^{p^e-1} \in \ideal{m}^{[p^e]}$.

\begin{defin}[$F$-rational]
Let $(R, \ideal{m})$ be a $d$-dimensional local ring of characteristic $p > 0$.
\begin{enumerate}[i)]
\item We say that $R$ is $F$-rational if $R$ is Cohen-Macaulay and if for any $c \in R^\circ$, there exists $e \in \mathbb{N}$ such that $c F^e : H^d_{\ideal{m}}(R) \to H^d_{\ideal{m}}(R)$ is injective.
\end{enumerate}
\end{defin}

We shall need the following lemma.

\begin{lemma}[Watanabe $a$-invariant]\label{lem:watanabe}
Let $(R,\ideal{m})$ be a Cohen-Macaulay $\NN$-graded ring of dimension $d$ such that the punctured spectrum $\Spec(R) \setminus \{\ideal{m}\}$ is $F$-rational.  Then, if the invariant $a(R) \mydef \max \{ i \in \ZZ : [H^d_{\ideal{m}}(R)]_i \neq 0 \}$ satisfies the inequality $a(R) < 0$ then for any $n \gg 0$ the Veronese subring $R^{(n)}$ is $F$-rational.
\end{lemma}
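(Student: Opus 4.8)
The plan is to reduce the statement to the following criterion, which I would either cite or establish separately: \emph{an $\NN$-graded ring $S$ with $S_0 = k$ a field, which is Cohen--Macaulay, generated over $k$ in degree one, $F$-rational on its punctured spectrum and with $a(S) < 0$, is $F$-rational.} Granting this, it remains to check that for $n \gg 0$ the Veronese subring $S \mydef R^{(n)}$ satisfies the hypotheses of the criterion: Cohen--Macaulayness together with $a(S) < 0$ and generation in degree one (Step 1), and $F$-rationality of the punctured spectrum (Step 2). Throughout I use that $R$, being essentially of finite type over the perfect field $k$, is $F$-finite and excellent, so that the tight-closure machinery below applies.

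First I would compute the local cohomology of the Veronese. Since $R$ is module-finite over $R^{(n)}$ and the $n$-th Veronese functor $M \mapsto M^{(n)} \mydef \bigoplus_j M_{nj}$ on graded modules is exact and commutes with filtered colimits, there is a natural isomorphism of graded $R^{(n)}$-modules $H^i_{\ideal{m}^{(n)}}(R^{(n)}) \isom \bigl(H^i_{\ideal{m}}(R)\bigr)^{(n)}$ for every $i$, where $\ideal{m}^{(n)} = \bigoplus_{j>0} R_{nj}$ is the irrelevant ideal of $R^{(n)}$. As $R$ is Cohen--Macaulay, $H^i_{\ideal{m}}(R) = 0$ for $i \neq d$, hence $R^{(n)}$ is Cohen--Macaulay of the same dimension $d$; and its top local cohomology is supported in the degrees $\{\, j : [H^d_{\ideal{m}}(R)]_{nj} \neq 0 \,\}$, so $a(R^{(n)}) \leq \floor{a(R)/n} < 0$. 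Finally, for $n \gg 0$ the ring $R^{(n)}$ is generated over $k$ by $R_n$ — the classical fact about Veronese subalgebras of a finitely generated graded $k$-algebra — which is exactly what the passage to the Veronese buys us.

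Next, for the punctured spectrum: decomposing $R$ according to the residue of the degree modulo $n$ exhibits $R^{(n)}$ as an $R^{(n)}$-module direct summand of $R$, and the finite surjection $\Spec R \to \Spec R^{(n)}$ has $\ideal{m}$ as its unique point over $\ideal{m}^{(n)}$ (a homogeneous prime of $R$ containing $\ideal{m}^{(n)}$ contains every $R_j$ with $j>0$). Thus for each non-maximal prime $Q \subset R^{(n)}$ the local ring $R^{(n)}_Q$ is a splittable $R^{(n)}_Q$-module direct summand of the semilocal ring $R \otimes_{R^{(n)}} R^{(n)}_Q$, all of whose localisations are $F$-rational (being localisations of $R$ at non-maximal primes); using the Cohen--Macaulayness of $R^{(n)}$ from Step 1, $F$-rationality descends to $R^{(n)}_Q$, so the punctured spectrum of $R^{(n)}$ is $F$-rational. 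With $S = R^{(n)}$ ($n \gg 0$) now known to satisfy all the hypotheses, I invoke the criterion: $S$ being Cohen--Macaulay (and $F$-finite), $S$ is $F$-rational precisely when the tight closure $N \mydef 0^{*}_{H^d_{\ideal{m}^{(n)}}(S)}$ of the zero submodule vanishes. Here $N$ is a graded submodule stable under the natural Frobenius on $H^d_{\ideal{m}^{(n)}}(S)$, which multiplies degrees by $p$; it is finitely generated by a theorem of K.~E.~Smith; it has finite length, since its annihilator, the parameter test ideal, cuts out the non-$F$-rational locus, which by Step 2 is contained in $\{\ideal{m}^{(n)}\}$; and it is concentrated in strictly negative degrees because $a(S) < 0$. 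Iterating Frobenius therefore pushes every homogeneous element of $N$ out of the finite range of degrees where $N$ is nonzero, so Frobenius is nilpotent on $N$, and this forces $N = 0$.

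\textbf{Main obstacle.} The delicate point is the last implication — that a finite-length, Frobenius-nilpotent submodule of the form $0^{*}_{H^d}$ must vanish — which is the heart of Watanabe's criterion; carrying it out cleanly requires either a direct appeal to that result or an argument combining the parameter test ideal with Hartshorne--Speiser--Lyubeznik stabilisation and the $F$-injectivity of $S$ on its punctured spectrum. A secondary subtlety worth flagging is that the descent of $F$-rationality to the Veronese direct summand in Step 2 genuinely uses the Cohen--Macaulayness proved in Step 1: $F$-rationality does not pass to arbitrary direct summands.
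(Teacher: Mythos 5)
The paper offers no argument for this lemma: its proof is the single citation ``\cite[Proposition 5.1.1]{anuragsingh}''. Your proposal therefore cannot match or diverge from an argument in the paper; it has to stand on its own. The Veronese reduction you carry out is essentially correct: the exactness of $M \mapsto M^{(n)}$ gives $H^i_{\ideal{m}^{(n)}}(R^{(n)}) \isom (H^i_{\ideal{m}}(R))^{(n)}$, hence Cohen--Macaulayness and $a(R^{(n)}) \leq \floor{a(R)/n} < 0$; and for the punctured spectrum the right mechanism is the one you gesture at: for a parameter ideal $I$ of $(R^{(n)})_Q$, the elements of $I$ remain a system of parameters in the module-finite extension, so $I^{*} \subseteq (I R)^{*} \cap (R^{(n)})_Q = IR \cap (R^{(n)})_Q = I$ using the splitting and $F$-rationality (hence Cohen--Macaulayness and tight-closedness of parameter ideals) of the localisations of $R$.

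The genuine gap is the final implication, and it is worse than the ``delicate point'' you flag. Your degree argument correctly shows that $F$ is nilpotent on $N = 0^{*}_{H^d_{\ideal{m}^{(n)}}(S)}$, but $F$-nilpotence of $N$ cannot by itself force $N = 0$: whenever $\dim S \geq 1$, every $F$-nilpotent element of $H^d_{\ideal{m}^{(n)}}(S)$ already lies in $0^{*}_{H^d}$ (a finite-length submodule is killed by some $c \in S^{\circ}$, which then witnesses the tight-closure relation), so your argument only identifies $N$ with the $F$-nilpotent part of $H^d$. In other words, you have reduced ``$S$ is $F$-rational'' to ``$F$ acts injectively on $H^d_{\ideal{m}^{(n)}}(S)$'', i.e.\ to $F$-injectivity of $S$ \emph{at the vertex}. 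This is not supplied by the $F$-injectivity of $S$ on its punctured spectrum (the ingredient you propose), nor by Hartshorne--Speiser--Lyubeznik, and it is not a formal consequence of ``Cohen--Macaulay, standard graded, $a < 0$, $F$-rational punctured spectrum'' --- which is why the statement in the literature (and in the lemma) carries the hypothesis $n \gg 0$ rather than merely ``generated in degree one''. The missing step is where the sufficient ampleness is used a second time: dually, one must show that the Cartier/trace map $(\omega_S)_{pj} \to (\omega_S)_j$ is surjective for all $j \geq 1$, which for $n \gg 0$ follows from surjectivity of the trace $F_{*}\omega \to \omega$ together with Serre vanishing on the twists. Without this (or a direct appeal to \cite[Proposition 5.1.1]{anuragsingh}, which is the lemma itself), the proof is incomplete, and the intermediate ``criterion'' you propose to cite is not a statement I would accept as known in the form you give it.
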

\begin{proof}
See \cite[Proposition 5.1.1]{anuragsingh}.
\end{proof}

We are now ready to describe our results. They can be summarized by the following diagram indicating possible implications and referring to counterexamples in case they do not hold:

\begin{center}
\begin{tikzcd}[column sep=large,arrows=Rightarrow]
\text{$F$-liftable} \arrow[degil,out=20,in=160]{rrr}{\text{Ex. } \ref{ex:f-liftable_not_f-regular}} \arrow[out=10,in=170,degil]{rr}{\text{Ex. } \ref{ex:f-liftable_not_f-regular}} \arrow[out=-60,in=-160]{rrdd}[swap]{\substack{\text{yes : normal $k$-algebra - Thm } \ref{lem:flift_normal} \\ \text{no : non-normal - Ex. } \ref{ex:lift_not_imply_split}}} & & \text{$F$-regular} \arrow{dd}{} \arrow[out=190,in=-10,degil]{ll}{\text{Thm } \ref{thm:cones_bhatt}} \arrow{r}{} & \text{$F$-rational} \arrow[degil]{dd}{\text{Ex. } \ref{ex:frational_not_imply_w2k}
} \\
& & & \\ 
& & \text{$F$-pure} \arrow[out=170,in=-35,degil]{lluu}[swap]{\text{Thm } \ref{thm:cones_bhatt}} \arrow{r}{\text{Thm } \ref{thm:split_w2k}} &\text{$W_2(k)$-liftable}. \\
\end{tikzcd}
\end{center}

Firstly, we prove that an $F$-liftable normal scheme over $k$ is Frobenius split.

\begin{thm}\label{lem:flift_normal}
Let $X/k$ be normal scheme locally of finite type over a perfect field $k$, such that there exists a lifting $\wt{X}/W_2(k)$ together with a lifting $\wt{F}$ of Frobenius morphism $F$.  Then, $X$ is Frobenius split.  
\end{thm}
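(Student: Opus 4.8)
The plan is to reduce to the case of a smooth scheme by a codimension-two argument exploiting normality, and then to extract a Frobenius splitting from the action of $\wt{F}$ on top-degree differentials.

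\textbf{Step 1: reduction to the regular locus.} Let $j\colon U=X_{\mathrm{reg}}\hookrightarrow X$ be the inclusion of the regular locus; since $X$ is normal and essentially of finite type over the perfect field $k$, $U$ is open and smooth over $k$, and $Z=X\setminus U$ has codimension $\ge 2$. As $F_X$ is a homeomorphism, the open subscheme $\wt{U}\subseteq\wt{X}$ supported on $U$ is a flat $W_2(k)$-lifting of $U$, and $\wt{F}$ restricts to a Frobenius lifting $\wt{F}|_{\wt U}$ of $F_U$. Because $X$ is normal, $F_{X*}\cO_X$ is a reflexive $\cO_X$-module: it is torsion-free, and $\depth_{\cO_{X,x}}(F_{X*}\cO_X)_x=\depth\cO_{X,x}$ for every $x$, since a sequence in $\mathfrak{m}_x$ is regular on $F_{X*}\cO_X$ exactly when the sequence of its $p$-th powers is regular on $\cO_X$. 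Hence $\cHom_{\cO_X}(F_{X*}\cO_X,\cO_X)$ is reflexive, so its sections over $U$ extend uniquely over $X$; similarly $\cO_X=j_*\cO_U$. Thus it suffices to construct a Frobenius splitting $\varphi_U\colon F_{U*}\cO_U\to\cO_U$: it extends to $\varphi\in\Hom_{\cO_X}(F_{X*}\cO_X,\cO_X)$, and the composite $\varphi\comp F_X^{\#}$, an element of $\Gamma(X,\cO_X)=\Gamma(U,\cO_U)$ that restricts to $\varphi_U\comp F_U^{\#}=\id$ on the dense open $U$, equals $\id$; so $X$ is Frobenius split.

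\textbf{Step 2: the smooth case.} Now assume $X$ smooth over $k$ of dimension $n$ (applying the construction to $U$); a flat lift of a smooth scheme is smooth over $W_2(k)$, so $\wt X$ and $\wt{X}^{(1)}$ are smooth over $W_2(k)$. Consider the relative Frobenius lift $\wt{F}_{X/k}\colon\wt{X}\to\wt{X}^{(1)}$. The induced map $d\wt{F}_{X/k}\colon \wt{F}_{X/k}^{*}\Omega^1_{\wt{X}^{(1)}/W_2(k)}\to\Omega^1_{\wt{X}/W_2(k)}$ reduces mod $p$ to $dF_{X/k}=0$ (cf.\ the proof of \cref{thm:lift_split}); since the sheaves are locally free, $d\wt{F}_{X/k}=p\cdot\Psi$ with $\Psi$ reducing to an $\cO_X$-linear map $\Psi_0\colon F_{X/k}^{*}\Omega^1_{X^{(1)}/k}\to\Omega^1_{X/k}$, whose adjoint $\psi_0\colon\Omega^1_{X^{(1)}/k}\to F_{X/k*}\Omega^1_{X/k}$ lands in closed forms and is a Deligne--Illusie lift of the inverse Cartier isomorphism (in local coordinates with $\wt{F}_{X/k}(x_i\otimes1)=x_i^p+pg_i$ one has $\psi_0(W_{X/k}^{*}dx_i)=x_i^{p-1}dx_i+dg_i$). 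Applying $\wedge^n$ and using $F_{X/k}^{*}\omega_{X^{(1)}/k}\isom\omega_{X/k}^{\otimes p}$ gives a \emph{global} section $s_{\wt F}\in\Gamma(X,\omega_{X/k}^{\otimes(1-p)})$, which under the Grothendieck--Serre duality isomorphism $\cHom_{\cO_{X^{(1)}}}(F_{X/k*}\cO_X,\cO_{X^{(1)}})\isom F_{X/k*}\omega_{X/k}^{\otimes(1-p)}$ corresponds to a morphism $\varphi_U\colon F_{X/k*}\cO_X\to\cO_{X^{(1)}}$. It remains to check that $\varphi_U$ splits $F_{X/k}^{\#}$, equivalently that $\mathrm{tr}(s_{\wt F})=1$ for the trace $\mathrm{tr}$; but under the above identification $\mathrm{tr}$ corresponds to the top-degree Cartier operator, and since the Cartier operator is multiplicative, $\wedge^n\psi_0$ computes the inverse Cartier isomorphism on $\omega_{X^{(1)}/k}$, whence $\mathrm{tr}(s_{\wt F})=1$.

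\textbf{Main obstacle.} I expect the essential difficulty to lie in Step 2: showing that the section $s_{\wt F}$ manufactured from $\wt F$ is a genuine Frobenius splitting rather than merely a section of $\omega_{X/k}^{\otimes(1-p)}$, i.e.\ the assertion $\mathrm{tr}(s_{\wt F})=1$. This depends crucially on the constraint $\wt{F}_{X/k}(x_i\otimes1)\equiv x_i^p\ \modp{p}$, which — via the vanishing of derivatives of $p$-th-power-exponent monomials and the resulting cancellations among the permutations in $\det(\mathrm{diag}(x_i^{p-1})+[\partial_jg_i])$ — is most transparently organised through the multiplicativity of the Cartier operator as above rather than by a direct expansion. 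A secondary point is the normality bookkeeping of Step 1: reflexivity of $F_{X*}\cO_X$, the unique extension of $\varphi_U$ across the codimension-$\ge 2$ locus, and the propagation of $\varphi\comp F_X^{\#}=\id$ from the dense regular locus; this is routine but must be set up with care, and it is precisely the place where the non-normal counterexample (Ex.~\ref{ex:lift_not_imply_split}) breaks down.
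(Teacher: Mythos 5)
Your proposal is correct and follows essentially the same route as the paper's proof: restrict to the smooth locus, use the Frobenius lift to produce a section of the Cartier operator on $1$-forms (the step the paper delegates to Mehta--Srinivas), take its $n$-th exterior power, dualize against $\Omega^n$ to obtain a splitting of $F^{\#}$ on $U$, and extend over the codimension $\geq 2$ singular locus via reflexivity/$S_2$. The only difference is one of detail: you spell out the Deligne--Illusie construction of $\psi_0$ and the trace computation, which the paper handles by citation.
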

\begin{proof}
The following is a simple extension of a proof given in \cite{mehta_srinivas} covering the smooth case.  Let $n$ be the dimension of $X$ and let $U$ be the smooth locus of $X$. In case of smooth schemes liftability of Frobenius is equivalent to the existence of a splitting $\xi : \Omega^1_U \to Z^1_U$ of the Cartier mapping $C$ (see \cite{mehta_srinivas}).  After taking the $n$-th exterior power of $\xi$, we obtain a mapping $\wedge^n \xi : \Omega^n_U \to Z^n_U \isom F_*\Omega^n_U$ which splits the sequence:
\begin{displaymath}
    \xymatrix{
        0 \ar[r] & B^n_U \ar[r] & Z^n_U \isom F_*\Omega^n_U \ar[r] & \Omega^n_U \ar[r] & 0,}
\end{displaymath}
dual to the sequence 
\begin{displaymath}
    \xymatrix{
        0 \ar[r] & \cc{O}_U \ar[r] & F_*\cc{O}_U \ar[r] & B^1_U \ar[r] & 0,}
\end{displaymath} by applying $\cc{H}om_{\cc{O}_U}(-,\Omega^n_U)$.  Therefore, $U$ is Frobenius split.  Using normality of $X$, in fact $S_2$ property, one may extend this to a splitting of $X$.
\end{proof}

We shall now present an example which violates the hypothesis of \cref{lem:flift_normal} in case of non-normal schemes. 

\begin{example}\label{ex:lift_not_imply_split}
The scheme $\{x_1x_2(x_1+x_2) = 0\} \subset \Affine^2_k$ is Frobenius liftable but not $F$-pure.
\end{example}
\begin{proof}
We apply Fedder's criterion given in \cref{lem:fedder} and a simple fact that: 
\[
\left(x_1x_2(x_1+x_2)\right)^{p-1} \in (x_1^p,x_2^p).
\]
Frobenius liftability follows by observing that the canonical lifting of Frobenius of $W_2[x_1,x_2]$ given by $x_i \mapsto x_i^p$ preserves the ideal $x_1x_2(x_1 + x_2)$.
\end{proof}

\begin{example}\label{ex:f-liftable_not_f-regular}
The cone over an ordinary elliptic curve is $F$-liftable but not strongly $F$-regular or $F$-rational.
\end{example}
\begin{proof}
For for the sake of clarity, we focus on the case of a cone over Fermat cubic $C = \{x^3 + y^3 + z^3 = 0\} \subset \Affine^3_k$ in characteristic $p \equiv 1 \pmod{3}$.  Firstly, $F$-liftability follows from \cref{cor:criterion} by a simple direct computation.  

We verify that $C$ is strongly $F$-regularity by means of \cref{lem:fedder}.  Indeed, we observe that for any $s \in \ideal{m} = (x_1,x_2,x_3)$ we have $sf^{p^e-1} \in (x_1,x_2,x_3)^{[p^e]}$ and consequently the cone is not strongly $F$-regular at $\ideal{m}$.  Finally, the proof of the fact that the cone $C$ is not $F$-rational is the content of \cite[Example 2.6]{takagi_watanabe}.  
\end{proof}

\begin{example}\label{ex:frational_not_imply_w2k}
There exists an $F$-rational scheme which is not $W_2(k)$-liftable. 
\end{example}
\begin{proof}
Let $X$ be a scheme embedded into $\PP^n_k$ which is not $W_2(k)$-liftable.  By \cite[Theorem 2.3]{liedtke_satriano} we see that $Y \mydef \mathrm{Bl}_X(\PP^n_k)$ does not lift to $W_2(k)$, either.  We claim that a cone over sufficiently ample projective embedding of $Y$ satisfies our requirements, i.e., is $F$-rational and does not lift to $W_2(k)$. 

Indeed, using \cite{rulling_chatzistamatiou} and Leray spectral sequence we see that $H^i(Y,\cO_Y) = 0$ for $i>0$, and therefore we may apply \cref{cor:property_s_n} and \cref{prop:deformations_cones} to obtain a projective embedding $Y \subset \PP^N$ such that the cohomology groups $H^i(Y,\cO_Y(k))$ vanish for $i>0$ and $k > 0$, the cone $C_Y \mydef \Cone_{Y,\PP^N}$ is Cohen-Macaulay and does not lift to $W_2(k)$.  Consequently, in order to apply \cref{lem:watanabe} (which might require additional Veronese embedding) we are left to show that the positive degree part $[H^{n+1}_{\ideal{m}}(C_Y,\cO_{C_Y})]_{\geq 0}$ of the graded module $H^{n+1}_{\ideal{m}}(C_Y,\cO_{C_Y})$ is zero.  This follows from the identification $[H^{n+1}_{\ideal{m}}(C_Y,\cO_{C_Y})]_k = H^n(Y,\cO_Y(k)) = 0$ given in \cref{cor:cone_normality}.
\end{proof}


\section{$W_2(k)$ and Frobenius liftability of cones over projective schemes} \label{sec:cones}

Here, we present a few result concerning cones over projective schemes.  We begin with a series of classical results (see, e.g., \cite{schlessinger_rigidity,artin_lectures}).  Consequently, in \cref{sec:cones_frobenius} we extend the classical results with a comparison theorem for Frobenius liftability.

For the purpose of clarity we present a few definitions and results concerning cones over projective schemes and their deformation.  We begin by introducing the construction of a cone over a closed embedding of a scheme into the projective space $\PP^n \mydef \PP^n_k$.  For the sake of brevity we set $R = k[x_0,\ldots,x_n]$.  


\subsection{Construction of a cone over a projective scheme}

For any closed subscheme $X \subset \PP^n$ given by an ideal sheaf $\cc{I}$, we denote by $I_X$ the homogeneous ideal:
\[
I_X \mydef \bigoplus_{k \geq 0} H^0(\cO_{\PP^n},\cI(k)) \lhd R \isom \bigoplus_{k \geq 0} H^0(\PP^n,\cO_{\PP^n}(k)).
\]
From the long exact sequence of cohomology associated to the short exact sequence:
\[
0 \ra \bigoplus_{k \geq 0} \cI(k) \ra \bigoplus_{k \geq 0} \cO_{\PP^n}(k) \ra \bigoplus_{k \geq 0} \cO_X(k) \ra 0,
\] 
we see that $I_X$ arises as the first term in:
\[
0 \ra I_X \ra R = \bigoplus_{k \geq 0} H^0(\PP^n,\cO_{\PP^n}(k)) \xrightarrow{r_X} \bigoplus_{k \geq 0} H^0(X,\cO_X(k)) \ra \bigoplus_{k \geq 0} H^1(\PP^n,\cI(k)) \ra 0.
\]

We define the affine cone, as follows.

\begin{defin}\label{def:cone}
Suppose $X \subset \PP^n = \Proj(R)$ is a closed embedding of $k$-schemes.  We define \emph{the affine cone over $X \subset \PP^n$} to be an affine scheme:
\[
\Cone_{X,\PP^n} \mydef \Spec(\bigslant{R}{I_X}).
\]
\end{defin}
Directly from the definition we see that:
\begin{remark}
The ring $\bigslant{R}{I_X}$ admits a natural grading induced by the grading of $R$.
\end{remark}
For notational convenience, we denote by $j : U = \Cone_{X,\PP^n} \setminus \{\ideal{m}\} \to \Cone_{X,\PP^n}$ the inclusion of the complement of the vertex, and by $p : U \to X$ the natural $\mathbb{G}_m$-bundle given by
\[
U \isom \sSpec_X\left(\bigoplus_{k \in \bb{Z}} \cO_X(k)\right) \to X,
\] 
where the isomorphism follows from the Grauert's ampleness criterion (see EGA II 8.9.1).  We have the following standard proposition.

\begin{prop}\label{cor:cone_normality}
Suppose $X \subset \PP^n$ is a closed subscheme.  The affine scheme $C \mydef \Cone_{X,\PP^n}$ is normal if and only if $X$ is itself normal and the restriction mapping $r_X$ is surjective.  In this case, $C  \isom \Spec\left(\bigoplus_{k \in \ZZ} H^0(X,\cO_X(k))\right) \isom \Spec(H^0(U,\cO_U))$.  Moreover, for $i \geq 1$ the local cohomology around the vertex satisfy $H^{i+1}_\ideal{m}(C,\cO_C) = \bigoplus_{k \in \ZZ} H^i(X,\cO_X(k))$,
as graded $H^0(C,\cO_C)$-modules.
\end{prop}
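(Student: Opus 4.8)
Write $A = R/I_X$, so that $C = \Spec A$ with irrelevant maximal ideal $\ideal{m}$, and keep the notation $U = C \setminus \{\ideal{m}\}$, $j\colon U \to C$ and $p\colon U \to X$ for the $\mathbb{G}_m$-bundle with $p_*\cO_U \isom \bigoplus_{k\in\ZZ}\cO_X(k)$. The plan is to combine Serre's criterion for normality with the exact sequence comparing the graded ring $A$ to the sections $\Gamma(U,\cO_U)$ of the punctured cone, using that $p$ is affine (for the cohomology computation) and smooth and faithfully flat (to transfer normality between $U$ and $X$).

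First I would settle the local cohomology formula, which needs no normality hypothesis. Since $p$ is affine and sheaf cohomology commutes with direct sums on the Noetherian scheme $X$, one has $H^i(U,\cO_U) \isom H^i(X,p_*\cO_U) \isom \bigoplus_{k\in\ZZ} H^i(X,\cO_X(k))$ for every $i$, compatibly with the $\ZZ$-gradings. On the other hand, the standard sequence relating local cohomology at $\ideal{m}$ to cohomology of the punctured spectrum yields $0 \to H^0_\ideal{m}(A) \to A \to \Gamma(U,\cO_U) \to H^1_\ideal{m}(A) \to 0$ and graded isomorphisms $H^{i+1}_\ideal{m}(C,\cO_C) \isom H^i(U,\cO_U)$ for $i\geq 1$. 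Combining the two gives $H^{i+1}_\ideal{m}(C,\cO_C) \isom \bigoplus_{k\in\ZZ} H^i(X,\cO_X(k))$ for $i\geq 1$, which is the last assertion.

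For the normality criterion I would first record that $I_X$ is saturated by construction, so $H^0_\ideal{m}(A)=0$; hence $C$ has depth $\geq 1$ at $\ideal{m}$ and the four-term sequence degenerates to $H^1_\ideal{m}(A) \isom \Gamma(U,\cO_U)/A$. By Serre's criterion $C$ is normal iff it is $R_1$ and $S_2$; since $\{\ideal{m}\}$ has codimension $\dim C$ in $C$, away from the vertex this is equivalent to normality of $U$, and at the vertex — assuming $\dim C \geq 2$, the case $\dim X = 0$ being elementary and treated separately — the condition $R_1$ is vacuous and $S_2$ is exactly $\depth_\ideal{m} A \geq 2$, i.e. $H^1_\ideal{m}(A)=0$. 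As $p$ is smooth and faithfully flat, $U$ is normal iff $X$ is. Finally, feeding the defining four-term sequence $0 \to I_X \to R \xrightarrow{r_X} \bigoplus_{k\geq 0} H^0(X,\cO_X(k)) \to \bigoplus_{k\geq 0} H^1(\PP^n,\cI(k)) \to 0$ into the computation of $\Gamma(U,\cO_U)/A$ gives $H^1_\ideal{m}(A) \isom \left(\bigoplus_{k\geq 0} H^1(\PP^n,\cI(k))\right) \oplus \left(\bigoplus_{k<0} H^0(X,\cO_X(k))\right)$; since $A_0 = k$ for nonempty $X$, surjectivity of $r_X$ forces $H^0(X,\cO_X)=k$, so (with $X$ normal, hence reduced) $X$ is connected and integral, whence the negative-degree terms vanish and $H^1_\ideal{m}(A)=0$ is equivalent to $r_X$ being surjective. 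This gives the equivalence, and in the normal case $A \to \Gamma(U,\cO_U)$ is an isomorphism onto $\bigoplus_{k\geq 0} H^0(X,\cO_X(k)) = \bigoplus_{k\in\ZZ} H^0(X,\cO_X(k))$, so $C \isom \Spec\left(\bigoplus_{k\in\ZZ} H^0(X,\cO_X(k))\right) \isom \Spec\Gamma(U,\cO_U)$.

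The main obstacle I anticipate is the bookkeeping near the vertex: controlling $\depth_\ideal{m} A$ through saturatedness of $I_X$ and the punctured-spectrum sequence, and separating the contribution of $\Coker r_X$ from that of the strictly negative graded pieces of $\Gamma(U,\cO_U)$ — in particular checking that "$X$ normal and $r_X$ surjective" really forces connectedness of $X$, hence vanishing of $H^0(X,\cO_X(k))$ for $k<0$ (which uses that an ample line bundle on a positive-dimensional integral projective scheme has no sections in negative powers). The degenerate case $\dim X = 0$, where $R_1$ at the vertex is no longer automatic, should be disposed of separately by a direct check.
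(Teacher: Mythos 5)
Your argument is correct, and it is worth noting that the paper itself offers no proof of this proposition (it is labelled "standard" and left to the references on cones such as \cite{artin_lectures}), so there is nothing to compare against except the standard argument — which is exactly what you give: the four-term local cohomology sequence $0 \to H^0_\ideal{m}(A) \to A \to \Gamma(U,\cO_U) \to H^1_\ideal{m}(A) \to 0$ together with $H^{i+1}_\ideal{m}(A) \isom H^i(U,\cO_U) \isom \bigoplus_k H^i(X,\cO_X(k))$ from affineness of $p$, saturatedness of $I_X$ killing $H^0_\ideal{m}$, Serre's criterion at the vertex, and smooth descent of normality along the $\mathbb{G}_m$-bundle. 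The one point you rightly isolate as delicate — that surjectivity of $r_X$ plus normality of $X$ kills the negative graded pieces of $\Gamma(U,\cO_U)$ — is handled correctly ($r_X$ surjective in degree $0$ forces $H^0(X,\cO_X)=k$, hence $X$ integral, hence no negative sections of an ample bundle when $\dim X \geq 1$). Your separate treatment of $\dim X = 0$ is not pedantry: there the identification $C \isom \Spec\bigl(\bigoplus_{k \in \ZZ} H^0(X,\cO_X(k))\bigr)$ genuinely fails (for a single reduced $k$-point the right-hand side is $\Spec k[t,t^{-1}]$ while $C = \Affine^1$), so the proposition as written tacitly assumes $\dim X \geq 1$; under that assumption your proof is complete.
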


\begin{remark}\label{remark:cone_veronese_normal}
Taking the cone over the $d$-th Veronese embedding of $X \subset \PP^n$ into $\PP^{{n \choose d}-1}$ amounts to considering the morphism of Veronese subrings 
\[
r_{X,d} : \bigoplus_{k \geq 0} H^0(\PP^n,\cc{O}_{\PP^n}(dk)) \to \bigoplus_{k \geq 0} H^0(X,\cO_{X}(dk)).
\]  
By the Serre's vanishing theorem we see that there exists $d$ such that for $k \geq d$ we have $H^1(X,I_X(k)) = 0$, and therefore there exists $d$ such that $r_{X,d}$ is surjective.  This means that for $d$ sufficiently large the cone over the Veronese embedding of $X$ is normal.
\end{remark}

Moreover, we have the corollary: 

\begin{cor}\label{cor:property_s_n}
Let $X$ be a smooth projective scheme of dimension $\dim(X) = n \geqslant 2$ satisfying $H^i(X,\cO_X) = 0$ for $i \in \{1,\ldots,n-1\}$.  Then, for a sufficiently ample embedding $X \subset \PP^N$ the cone satisfies property $S_{n+1}$. 
\end{cor}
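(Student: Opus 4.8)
The plan is to reduce the assertion to Cohen--Macaulayness of the cone at its vertex and then obtain it from Serre vanishing and Serre duality. For any closed embedding $X \subset \PP^N$, write $C = \Cone_{X,\PP^N}$, let $\ideal{m}$ be its vertex, and let $p : U = C \setminus \{\ideal{m}\} \to X$ be the associated $\mathbb{G}_m$-bundle as before. Since $X$ is smooth over $k$ and $p$ is smooth of relative dimension $1$, the scheme $U$ is regular and hence satisfies $S_{n+1}$ at each of its points; therefore $C$ satisfies $S_{n+1}$ if and only if the local ring $\cO_{C,\ideal{m}}$ does, and as $\dim \cO_{C,\ideal{m}} = \dim C = n+1$ this is equivalent to $\cO_{C,\ideal{m}}$ being Cohen--Macaulay, i.e.\ to $H^i_{\ideal{m}}(C,\cO_C) = 0$ for all $i < n+1$. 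So it suffices to choose the embedding so that these local cohomology groups at the vertex all vanish, using their description from \cref{cor:cone_normality}.

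Concretely, I would take the embedding to be the $d$-th Veronese re-embedding for $d \gg 0$, which replaces every twist $\cO_X(k)$ in the formulas of \cref{cor:cone_normality} by $\cO_X(dk)$ (compare \cref{remark:cone_veronese_normal}). First, for $d \gg 0$ the restriction map $r_{X,d}$ is surjective by Serre vanishing, so $C$ is normal by \cref{remark:cone_veronese_normal} and \cref{cor:cone_normality} (using that the smooth scheme $X$ is normal); in particular $C$ satisfies $S_2$ at $\ideal{m}$, which gives $H^0_{\ideal{m}}(C,\cO_C) = H^1_{\ideal{m}}(C,\cO_C) = 0$. It then remains to kill $H^{i+1}_{\ideal{m}}(C,\cO_C) = \bigoplus_{k \in \ZZ} H^i(X,\cO_X(dk))$ for $1 \leq i \leq n-1$, which I would do by splitting into three ranges of $k$: for $k = 0$ the summand $H^i(X,\cO_X)$ vanishes in this range of $i$ by hypothesis; for $k \geq 1$ one has $dk \geq d$, and Serre vanishing for $\cO_X$ kills $H^i(X,\cO_X(m))$ for $m \geq d$ and $i \geq 1$; and for $k \leq -1$, Serre duality on the smooth projective $X$ gives $H^i(X,\cO_X(dk)) \isom H^{n-i}(X,\omega_X \otimes \cO_X(-dk))^\vee$ with $-dk \geq d$ and $1 \leq n-i \leq n-1$, so Serre vanishing for $\omega_X$ kills it once $d$ is large. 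Choosing a single $d$ beyond the finitely many thresholds involved (surjectivity of $r_{X,d}$, and Serre vanishing past degree $d$ for $\cO_X$ and for $\omega_X$) then yields $H^i_{\ideal{m}}(C,\cO_C) = 0$ for all $i < n+1$, hence $C$ satisfies $S_{n+1}$.

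I expect the only genuine difficulty to be the range $k < 0$, where the twist $\cO_X(dk)$ is anti-ample: there one must pass through Serre duality and use the bound $i \leq n-1$ (equivalently $n-i \geq 1$) to land in cohomological degrees for which $\omega_X \otimes \cO_X(-dk)$ is again governed by Serre vanishing. The remaining steps amount to routine bookkeeping with finitely many vanishing bounds. A minor point to check along the way is that passing to the $d$-th Veronese alters neither the regularity of $U$ nor the dimension count, which is immediate since $\cO_X(1)$ is already very ample and the affine cone is unchanged up to taking a Veronese subring.
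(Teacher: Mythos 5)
Your proof is correct and follows the same route the paper intends: the paper's own proof is a one-line appeal to the local cohomology formula of \cref{cor:cone_normality} together with Serre vanishing. You simply fill in the details the paper leaves implicit — the reduction to Cohen--Macaulayness at the vertex, the $k=0$ summand handled by the hypothesis, and the negative-degree summands handled via Serre duality plus Serre vanishing for $\omega_X$.
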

\begin{proof}
This follows directly from the local cohomology part of \cref{cor:cone_normality} and Serre vanishing.
\end{proof}


\subsection{Deformation theory and cones}

We now proceed to the summary of deformation theory of cones.  Again, $X$ denotes a closed subscheme of $\PP^n$ and we assume that $C \mydef \Cone_{X,\PP^n}$ is normal at its vertex (cf. \cref{cor:cone_normality}).  For a thorough exposition (in fact equicharacteristic) expressed in classical terms of tangent sheaves one may take a look at \cite[Section 11 and 12]{artin_lectures}.

We begin with a proposition expressing the results of \cite[Theorem 12.1, p. 48; Lemma 12.1, p. 53]{artin_lectures}.

\begin{prop}\label{prop:hilb_def_cone_comparison}
There exists a morphism of deformation functors $\phi : \Hilb_{X,\PP^{n}} \xrightarrow{\phi} \Def_C$ defined by performing cone construction relatively. 
The tangent and obstruction mappings of $\phi$ satisfy the properties:
\begin{enumerate}[i)]
\item The tangent mapping $\Tan_\phi : \Tan_{\Hilb_{X,\PP^n}} \to \Tan_{\Def_C}$ can be identified with the canonical homomorphism
\[
H^0(X,\cN_{X/\PP^n}) \to \Coker\left(\bigoplus_{k \in \ZZ} H^0(X,\cT_{\PP^n|X}(k)) \to \bigoplus_{k \in \ZZ} H^0(X,\cN_{X/\PP^n}(k))\right),
\]\label{step1:hilb_def}
coming from the long exact cohomology sequence associated to
\[
0 \ra \bigoplus_{k \in \ZZ} \cT_X(k) \ra \bigoplus_{k \in \ZZ} \cT_{\PP^n|X}(k) \ra \bigoplus_{k \in \ZZ} \cN_{X/\PP^n}(k) \ra 0 \label{eq:long_exact_normal},
\]
\item The tangent mapping $\Tan_\phi$ is surjective if $\bigoplus_{k \neq 0} H^1(X,\cT_{X}(k)) = 0$. \label{step2:hilb_def}
\item The obstruction mapping $\Ob_\phi$ is injective. \label{step3:hilb_def}
\end{enumerate}
\end{prop}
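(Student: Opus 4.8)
The plan is to define $\phi$ by performing the cone construction in families, and then to read the tangent and obstruction statements off the $\ZZ$-grading carried by the deformation theory of $C$, following the classical computations of Schlessinger and \cite[Section 11 and 12]{artin_lectures}. For an Artinian local $k$-algebra $A$ with residue field $k$ and a flat embedded deformation $\wt{X} \hookrightarrow \PP^n_A$ of $X \subset \PP^n$, I would put $R_A \mydef R \otimes_k A$, form the homogeneous ideal $I_{\wt X} \mydef \bigoplus_{d \geq 0} H^0(\PP^n_A, \cI_{\wt X}(d)) \lhd R_A$, and set $\phi(\wt X) \mydef \Spec(R_A/I_{\wt X})$ with its residual $\mathbb{G}_m$-action. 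The point requiring care is flatness of $R_A/I_{\wt X}$ over $A$; as everything is graded and $A$ is Artinian one argues degree by degree. In high degrees Serre vanishing together with cohomology and base change for the flat family $\wt X/A$ gives $(R_A/I_{\wt X})_d \isom H^0(\wt X, \cO_{\wt X}(d))$, which is $A$-flat; in the remaining finitely many degrees one uses the standing hypothesis that $C$ is normal at its vertex, equivalently (by \cref{cor:cone_normality}) that $H^1(\PP^n, \cI_X(d)) = 0$ for all $d \geq 0$. An induction on $\mathrm{length}(A)$ along small extensions $0 \to J \to A \to A/J \to 0$ (so $J$ is a $k$-vector space), using $\cI_{\wt X} \otimes_A J \isom \cI_X \otimes_k J$, propagates this to $H^1(\PP^n_A, \cI_{\wt X}(d)) = 0$ for all $d \geq 0$, so $(R_A/I_{\wt X})_d \isom H^0(\wt X, \cO_{\wt X}(d))$ is $A$-flat in every nonnegative degree. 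Reducing modulo $\ideal{m}_A$ recovers $C$; hence $\phi$ is a well-defined morphism of deformation functors, and it is $\mathbb{G}_m$-equivariant, so every map it induces on tangent and obstruction spaces is graded.

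For the tangent map (i), take $A = k[\eps]$ and compare $C$ with the punctured cone $U = C \setminus \{\ideal{m}\}$ and the $\mathbb{G}_m$-bundle $p \colon U \to X$. Normality at the vertex gives that $\cO_C$ has depth $\geq 2$ at $\ideal{m}$, whence $\cO_C = j_* \cO_U$, and vanishing of the relevant local cohomology makes the restriction $\Tan_{\Def_C} \to \Tan_{\Def_U}$ an isomorphism onto the graded pieces that occur. Pulling back $0 \to \cT_X \to \cT_{\PP^n}|_X \to \cN_{X/\PP^n} \to 0$ along the flat affine $p$ and using $p_* \cO_U = \bigoplus_{k \in \ZZ} \cO_X(k)$ (as in \cref{cor:cone_normality}), one gets $p_* p^* \cN_{X/\PP^n} = \bigoplus_k \cN_{X/\PP^n}(k)$ and similarly for the other two terms, and then the standard identification (\cite[Theorem 12.1]{artin_lectures}) of $\Tan_{\Def_U}$ with $\Coker\big(\bigoplus_k H^0(X,\cT_{\PP^n}|_X(k)) \to \bigoplus_k H^0(X,\cN_{X/\PP^n}(k))\big)$. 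Since $\Tan_{\Hilb_{X,\PP^n}} = H^0(X, \cN_{X/\PP^n})$ and the cone over a first-order embedded deformation is visibly the corresponding first-order deformation of $C$, a cocycle chase on the standard affine cover of $\PP^n$ identifies $\Tan_\phi$ with the inclusion of the degree-zero summand followed by the projection to the cokernel, i.e.\ the map asserted in (i).

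For surjectivity (ii): the connecting map $\delta \colon \bigoplus_k H^0(X,\cN_{X/\PP^n}(k)) \to \bigoplus_k H^1(X,\cT_X(k))$ of the long exact sequence of the twisted, summed tangent--normal sequence is graded, so $\Coker = \bigoplus_k \Coker_k$ with $\Coker_k = \Coker\big(H^0(\cT_{\PP^n}|_X(k)) \to H^0(\cN_{X/\PP^n}(k))\big)$ injecting into $H^1(X,\cT_X(k))$. The hypothesis $\bigoplus_{k \neq 0} H^1(X,\cT_X(k)) = 0$ forces $\Coker_k = 0$ for $k \neq 0$, so the target of $\Tan_\phi$ collapses to $\Coker_0 = \Coker\big(H^0(\cT_{\PP^n}|_X) \to H^0(\cN_{X/\PP^n})\big)$, onto which the map of (i) is the defining quotient of $H^0(\cN_{X/\PP^n})$; hence $\Tan_\phi$ is surjective.

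For the obstruction statement (iii) I would take $H^1(X, \cN_{X/\PP^n})$ as the obstruction space of $\Hilb_{X,\PP^n}$ and $\Ext^2(\Cot_{C/k},\cO_C)$ as that of $\Def_C$, run the same $\mathbb{G}_m$-equivariant comparison through $U$, and identify $\Ob_\phi$ with the natural map of $H^1(X,\cN_{X/\PP^n})$ into the suitable graded summand of $\Ext^2(\Cot_{U/k},\cO_U)$; injectivity then follows because $\Ob_\phi$ is a section of the projection onto that summand --- the content of \cite[Lemma 12.1]{artin_lectures}. The main obstacle throughout is exactly this identification of $\Tan_{\Def_C}$ and of the obstruction space of $\Def_C$ with graded cohomology on $X$: one must justify the passage between $C$ and $U$ via the depth $\geq 2$ hypothesis and the vanishing of the relevant local cohomology, track the $\ZZ$-grading through the cotangent-complex computation of the $T^i$, and handle the low-degree terms where normality (surjectivity of $r_X$) is precisely what makes the relative cone flat. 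Rather than re-derive this I would cite \cite[Section 11 and 12]{artin_lectures} (and \cite{schlessinger_rigidity}) for the graded local computation and spell out only the cocycle matching in (i) and the grading bookkeeping in (ii).
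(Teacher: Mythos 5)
Your proposal is correct in substance and ends where the paper's proof ends: both defer the actual graded local computation to \cite[Theorems 12.1 and Lemma 12.1]{artin_lectures}. The difference is in what is made explicit and in the route to the tangent-space identification. The paper's proof is a bare citation plus one remark: it identifies $\Tan_{\Def_C}=\Ext^1(\Cot_{C/k},\cO_C)$ with the stated cokernel via the long exact sequence of $\Ext(-,\cO_C)$ groups for the distinguished triangle of cotangent complexes attached to the closed immersion $i\colon C\to\Affine^{n+1}$ (affineness of $C$ kills $\Ext^1(Li^*\Cot_{\Affine^{n+1}/k},\cO_C)$, and normality at the vertex lets one compute the relevant $\cHom$'s as sections over the punctured cone, whence the $\ZZ$-graded sums over $X$). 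You instead pass through the deformation functor of the punctured cone $U$ and the $\mathbb{G}_m$-bundle $p\colon U\to X$. Your construction of $\phi$ itself (flatness of the relative cone, checked in high degrees by Serre vanishing and base change and in low degrees by the vanishing $H^1(\PP^n,\cI_X(d))=0$ propagated along small extensions) is a genuine addition that the paper omits entirely, and your grading argument for (ii) is exactly the intended one.

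One caveat on your route to (i): the claim that normality makes $\Tan_{\Def_C}\to\Tan_{\Def_U}$ an isomorphism is stronger than what depth $2$ at the vertex gives. In the notation of \cref{lem:deformation_opens}, $S_2$ at the vertex yields injectivity of this restriction (vanishing of $\Ext^0(\Cot_{C/k},K_j)$) but not surjectivity, which would require control of $\Ext^1(\Cot_{C/k},K_j)$, i.e.\ roughly $S_3$; moreover $\Tan_{\Def_U}$ can pick up an extra contribution from $H^1(U,\cT_{\Affine^{n+1}}|_U)$ that is absent from $\Tan_{\Def_C}$. The cokernel in the statement is $T^1_C$, not $T^1_U$. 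The correct (and Artin's) use of $U$ is milder: one only needs $\cO_C=j_*\cO_U$ to compute $\Hom(I/I^2,\cO_C)$ and $\Hom(i^*\Omega_{\Affine^{n+1}},\cO_C)$ as sections over $U$ of $p^*\cN_{X/\PP^n}$ and $p^*\cT_{\PP^n}|_X$ plus the Euler direction, which is where the twists $\cN_{X/\PP^n}(k)$, $\cT_{\PP^n}|_X(k)$ come from. Since you ultimately cite Artin for this identification the conclusion stands, but the intermediate assertion as phrased should be weakened or replaced by the cotangent-triangle computation the paper sketches.
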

\begin{proof}
For the proof, we refer to \cite{artin_lectures}.  The identification of the tangent space $\Tan_{\Def_C} = \Ext^1(\Cot_{C/k},\cO_C)$ with the given cokernel follows from an explicit calculation of the long exact sequence of $\Ext(-,\cO_C)$ groups for a distinguished triangle:
\[
Li^*\Cot_{\Affine^{n+1}/k} \ra \Cot_{C/k} \ra \Cot_{C/\Affine^{n+1}} \ra Li^*\Cot_{\Affine^{n+1}/k}[1]
\]
associated with the inclusion of schemes $i : C \to \Affine^{n+1}$.
\end{proof}

As a corollary we obtain the following result comparing the deformation theory of a projective scheme $X$ and the cone over its sufficiently large ample embedding.

\begin{cor}\label{cor:veronese_cone}
Let $X$ be a projective scheme.  Then, there exists a sufficiently large Veronese embedding of $X \subset \PP^n$ into $\PP^{N_d}$ such that the morphism of functor $\phi_d : \Hilb_{X,\PP^{N_d}} \to \Def_{\Cone_{X,\PP^{N_d}}}$ is smooth. 
\end{cor}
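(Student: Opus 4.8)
The plan is to reduce the smoothness of $\phi_d$ to the cohomological hypothesis appearing in \cref{prop:hilb_def_cone_comparison}, and then to arrange that hypothesis by passing to a sufficiently high Veronese re-embedding. Recall first the standard criterion (see \cref{appendix:deformation_functors}): a morphism of deformation functors equipped with compatible obstruction theories is smooth provided its tangent map is surjective and its obstruction map is injective. By \cref{prop:hilb_def_cone_comparison} the obstruction map $\Ob_{\phi_d}$ of $\phi_d : \Hilb_{X,\PP^{N_d}} \to \Def_{\Cone_{X,\PP^{N_d}}}$ is injective whenever $\Cone_{X,\PP^{N_d}}$ is normal at its vertex, and by \cref{remark:cone_veronese_normal} this normality holds for all $d \gg 0$. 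Hence it suffices to produce a $d$ for which, in addition, the tangent map $\Tan_{\phi_d}$ is surjective.

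For the latter I would invoke \cref{prop:hilb_def_cone_comparison} once more: $\Tan_{\phi_d}$ is surjective as soon as $\bigoplus_{k \neq 0} H^1(X, \cT_X(dk)) = 0$, where the twists are now taken with respect to the ample sheaf $\cO_X(d)$ defining the $d$-th Veronese embedding. Such a $d$ exists because $H^1(X, \cT_X(m))$ vanishes for $|m|$ large. Indeed, Serre vanishing gives an integer $m_0$ with $H^1(X, \cT_X(m)) = 0$ for all $m \geq m_0$; and, using Serre--Grothendieck duality on $X$ (this is where one uses, e.g., that $X$ is smooth, or more generally Cohen--Macaulay with the relevant $\cExt$-sheaves vanishing, together with $\dim X \geq 2$), one identifies $H^1(X, \cT_X(-m))$ with the dual of a positive twist of a fixed $H^{\dim X - 1}$ of a coherent sheaf, which, since $\dim X - 1 \geq 1$, vanishes for $m$ large by Serre vanishing again; call $m_1$ a corresponding bound.

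Finally, I would take $d$ larger than $m_0$, than $m_1$, and than the normality threshold of \cref{remark:cone_veronese_normal}. Then $\Cone_{X,\PP^{N_d}}$ is normal at its vertex, so $\Ob_{\phi_d}$ is injective; and for each $k \neq 0$ one has $|dk| \geq d > \max(m_0,m_1)$, whence $H^1(X, \cT_X(dk)) = 0$ and $\Tan_{\phi_d}$ is surjective. Therefore $\phi_d$ is smooth. The main obstacle is the vanishing of the negative twists $H^1(X, \cT_X(-m))$: unlike the positive twists, it is not a direct Serre-vanishing statement, and it is precisely at this point that the hypothesis $\dim X \geq 2$ and a duality input on $X$ are needed; the Veronese re-embedding then serves only to sweep the finitely many remaining intermediate degrees $|m| \leq \max(m_0, m_1)$ out of the set of degrees $\{\, dk : k \neq 0 \,\}$ that actually occur.
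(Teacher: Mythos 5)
Your proposal is correct and follows essentially the same route as the paper: arrange normality of the cone by a high Veronese re-embedding, invoke parts (ii) and (iii) of \cref{prop:hilb_def_cone_comparison} for surjectivity of the tangent map and injectivity of the obstruction map, and dispose of $\bigoplus_{k\neq 0} H^1(X,\cT_X(dk))$ via Serre vanishing for positive twists and Serre duality for negative ones. You are in fact more explicit than the paper (which only says ``Serre's vanishing and Serre duality'') in flagging the regularity and dimension hypotheses needed for the negative-twist vanishing.
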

\begin{proof}
Firstly, by consideration following \cref{cor:cone_normality} we see that by taking sufficiently large Veronese embedding we may assume that the cone is in normal.  This allows us to apply \cref{prop:hilb_def_cone_comparison}.  By part \ref{step2:hilb_def}), we need to show that for sufficiently large $d$ we have $\bigoplus_{k \neq 0} H^1(X,\cT_{X}(kd)) = 0$.  This in turn follows from Serre's vanishing and Serre duality.
\end{proof}

By combining \cref{lem:comparing_hilb_def} with \cref{cor:veronese_cone} we obtain:
\begin{prop}\label{prop:deformations_cones}
For any smooth projective scheme $X$ satisfying $H^2(X,\cO_X) = 0$ and a sufficiently large $d$, the morphisms of deformation functors $\phi_d$ and $\psi_d$ given in a diagram:
\[
\Def_{\Cone_{X,\PP^{N_d}}} \xleftarrow{\phi_d} \Hilb_{X,\PP^{N_d}} \xrightarrow{\psi_d} \Def_X,
\]
are smooth.
\end{prop}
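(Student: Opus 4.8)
The plan is to simply assemble the two smoothness statements that have already been isolated in the text. The morphism $\psi_d$ will be handled by \cref{lem:comparing_hilb_def}, the morphism $\phi_d$ by \cref{cor:veronese_cone}, and the only real work is to check that the various ``for $d$ sufficiently large'' hypotheses are all implied, for $d\gg 0$, by the single assumption $H^2(X,\cO_X)=0$ together with Serre vanishing and Serre duality. So I would (1) record that $\phi_d$ is smooth by \cref{cor:veronese_cone}; (2) record that $\psi_d$ is smooth by \cref{lem:comparing_hilb_def} after verifying its cohomological hypotheses; (3) take $d$ larger than the finitely many bounds involved.

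For $\phi_d$ there is nothing to do beyond citing \cref{cor:veronese_cone}: its proof relies on part \ref{step2:hilb_def}) of \cref{prop:hilb_def_cone_comparison} together with the vanishing $\bigoplus_{k\neq 0}H^1(X,\cT_X(kd))=0$, which holds for $d\gg 0$ by Serre vanishing for $k>0$ and by Serre duality combined with Serre vanishing for $k<0$ (here smoothness of $X$ is used), and on part \ref{step3:hilb_def}) for injectivity of the obstruction map; it also uses, via the discussion following \cref{cor:cone_normality}, that for $d\gg 0$ the cone $\Cone_{X,\PP^{N_d}}$ is normal, so that \cref{prop:hilb_def_cone_comparison} applies in the first place.

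For $\psi_d$, the content of \cref{lem:comparing_hilb_def} is that $\psi_d$ is smooth once $H^1(X,\cT_{\PP^{N_d}}|_X)=0$. Indeed, the exact sequence $0\to\cT_X\to\cT_{\PP^{N_d}}|_X\to\cN_{X/\PP^{N_d}}\to 0$ makes the connecting map $H^0(X,\cN_{X/\PP^{N_d}})\to H^1(X,\cT_X)$ surjective (this is the tangent map of $\psi_d$) and the connecting map $H^1(X,\cN_{X/\PP^{N_d}})\to H^2(X,\cT_X)$ injective (this is the obstruction map of $\psi_d$), since $H^1(X,\cT_{\PP^{N_d}}|_X)=0$ kills the relevant kernel and cokernel; here smoothness of $X$ is used so that $\Cot_{X/k}\simeq\Omega^1_{X/k}$, whence $\Tan_{\Def_X}=H^1(X,\cT_X)$ and $\Ob_{\Def_X}=H^2(X,\cT_X)$, and one applies the standard criterion that a morphism of deformation functors which is surjective on tangent spaces and injective on obstruction spaces is smooth. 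The vanishing $H^1(X,\cT_{\PP^{N_d}}|_X)=0$ itself follows from the restricted Euler sequence $0\to\cO_X\to\cO_X(1)^{\oplus(N_d+1)}\to\cT_{\PP^{N_d}}|_X\to 0$: for $d\gg 0$ the very ample line bundle $\cO_X(1)$ coming from the $d$-uple embedding satisfies $H^1(X,\cO_X(1))=0$ by Serre vanishing, while $H^2(X,\cO_X)=0$ by hypothesis, and the long exact sequence gives $H^1(X,\cT_{\PP^{N_d}}|_X)=0$.

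Combining the two, for $d$ larger than all the bounds above both $\phi_d$ and $\psi_d$ are smooth, which is the claim. I expect no serious obstacle here: the single essential input is the hypothesis $H^2(X,\cO_X)=0$, which is precisely the one cohomology group appearing in the exact sequences above that cannot be cleared by passing to a higher Veronese embedding (twisting up does not affect $H^i(X,\cO_X)$); everything else is routine Serre vanishing and duality. The main, and entirely modest, difficulty is bookkeeping — making sure the smoothness criterion for a morphism of deformation functors is invoked consistently for $\psi_d$, and that the ``sufficiently large $d$'' of \cref{cor:veronese_cone}, of \cref{lem:comparing_hilb_def}, and of the present statement can be taken to be one and the same.
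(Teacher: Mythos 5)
Your proposal is correct and matches the paper's argument exactly: the paper deduces the proposition by combining \cref{lem:comparing_hilb_def} (smoothness of $\psi_d$, using $H^2(X,\cO_X)=0$ and Serre vanishing via the restricted Euler sequence) with \cref{cor:veronese_cone} (smoothness of $\phi_d$ for a sufficiently large Veronese embedding), taking $d$ large enough for both. Your additional unpacking of the hypotheses of the two cited results is consistent with their proofs in the paper.
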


As a simple corollary, we see that a characteristic $p$ projective scheme $X$ admits a $W_2(k)$-lifting if and only if an appropriate cone does.

\subsubsection{Frobenius liftability of cones over sufficiently ample embeddings}\label{sec:cones_frobenius}

We shall now relate the Frobenius liftability of a normal projective scheme and a cone over its sufficiently ample embedding. 

\begin{prop}
Let $X$ be a normal projective scheme.  Then, for a sufficiently ample embedding $X \subset \PP^n$ the Frobenius liftability of $C = \Cone_{X,\PP^n}$ implies the Frobenius liftability of $X$.  Moreover, for any smooth scheme $X$ satisfying $H^1(X,\cO_X) = H^2(X,\cO_X) = 0$ Frobenius liftability of $X$ is equivalent to the Frobenius liftability of a cone over a sufficiently ample embedding.
\end{prop}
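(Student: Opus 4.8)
The plan is to transfer Frobenius liftability between $C=\Cone_{X,\PP^n}$, its punctured cone $U := C\setminus\{v\}$ (where $v$ is the vertex), and $X$ itself, using the $\mathbb{G}_m$-bundle $p\colon U\isom\sSpec_X\big(\bigoplus_{k\in\ZZ}\cO_X(k)\big)\to X$. The two structural facts I would lean on are that $p$ is affine with $p^{\#}\colon\cO_X\to p_*\cO_U$ split by the projection onto the degree-zero summand (so \cref{lem:func_frobenius} moves Frobenius liftings between $U$ and $X$), and that one is free to replace the embedding by a large Veronese re-embedding, which is precisely what "sufficiently ample" refers to. Note also that restriction to an open subscheme always preserves Frobenius liftings, since the absolute Frobenius is the identity on underlying topological spaces.

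For the first assertion I would start from a Frobenius lifting $(\wt C,\wt F)$ of $C$. As $\wt F$ fixes the topological space it preserves the open subscheme $\wt U:=\wt C\setminus\{v\}$, so $\wt F|_{\wt U}$ is a Frobenius lifting of $U$. To descend along $p$ via \cref{lem:func_frobenius} I need a $W_2(k)$-lifting $\wt X$ of $X$ together with a morphism $\wt U\to\wt X$ lifting $p$; I would produce these by invoking the smoothness of $\phi\colon\Hilb_{X,\PP^n}\to\Def_C$ for a sufficiently ample embedding (\cref{cor:veronese_cone}), which forces $\wt C$ to be the affine cone over a $W_2(k)$-deformation of the embedded $X$. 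Such a $\wt C$ is $\mathbb{G}_m$-equivariant, so removing the lifted vertex and passing to the quotient yields the desired $\wt X$ and $\wt p\colon\wt U\to\wt X$ (still affine, still with split structure map), and \cref{lem:func_frobenius} then gives that $X$ is Frobenius liftable.

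For the converse in the "moreover" part I would argue in the opposite direction. After disposing of $\dim X\le 1$ directly (the relevant cones being toric, \cref{lem:toric}), assume $n:=\dim X\ge 2$ and take a Frobenius lifting $(\wt X,\wt G)$ of $X$. Using $H^2(X,\cO_X)=0$ I lift $\cO_X(1)$ to a line bundle $\wt{\cL}$ on $\wt X$, and using $H^1(X,\cO_X)=0$ I choose an isomorphism $\wt G^{*}\wt{\cL}^{(1)}\isom\wt{\cL}^{\otimes p}$ between the two $W_2(k)$-liftings of $F_X^{*}\cO_X(1)\isom\cO_X(1)^{\otimes p}$; together these make $\wt U:=\sSpec_{\wt X}\big(\bigoplus_{k\in\ZZ}\wt{\cL}^{\otimes k}\big)$ a $W_2(k)$-lifting of $U$ carrying a lift of the relative Frobenius, so $U$ is Frobenius liftable. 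To climb back up to $C$ I would apply \cref{cor:frobenius_opens}: for a sufficiently ample embedding the vertex has codimension $n+1\ge 3$, and by \cref{cor:cone_normality} one has $H^2_{\ideal{m}}(C,\cO_C)=\bigoplus_{k\in\ZZ}H^1(X,\cO_X(k))$, which vanishes (the $k=0$ summand is $H^1(X,\cO_X)=0$; the rest die after a large Veronese twist by Serre vanishing and Serre duality, using $n\ge 2$), so $C$ is $S_3$ at the vertex and the Frobenius liftability of $U$ forces that of $C$.

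I expect the main obstacle to be the step in the first assertion where $\wt C$ has to be put in cone (i.e.\ $\mathbb{G}_m$-equivariant) form: an arbitrary $W_2(k)$-deformation of a cone need not be graded, and it is exactly the smoothness of $\phi$ from \cref{cor:veronese_cone} — available only after a large enough Veronese re-embedding — that removes this difficulty. In the normal but singular case one also has to confirm that the vanishing $\bigoplus_{k\neq 0}H^1(X,\cT_X(kd))=0$ used there persists for the reflexive tangent sheaf, which it does by Serre vanishing and Serre duality with the dualizing complex. A lesser technical point is the compatibility of the two liftings of $F_X^{*}\cO_X(1)$ in the converse direction, which is genuinely governed by $H^1(X,\cO_X)$ — together with $H^2(X,\cO_X)$ for the existence of $\wt{\cL}$ — so that the two cohomological hypotheses of the "moreover" statement are precisely what the argument consumes.
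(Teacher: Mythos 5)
Your proof of the first assertion is essentially the paper's: restrict the Frobenius lifting of $\wt{C}$ to the punctured cone $\wt{U}$, use the smoothness of $\phi \colon \Hilb_{X,\PP^n} \to \Def_C$ (after a large Veronese re-embedding) to put $\wt{C}$ in cone form and thereby produce $\wt{p}\colon \wt{U}\to\wt{X}$, and then descend along the affine map $p$ via \cref{lem:func_frobenius}, using the degree-zero splitting of $p^{\#}$. For the converse, however, you take a genuinely different route. The paper stays entirely obstruction-theoretic: it realizes $\wt{X}$ as an embedded deformation (smoothness of $\psi$, using $H^2(X,\cO_X)=0$), pulls the vanishing class $\sigma^F_X=0$ back along $p$, and then kills the remaining ambiguity in $\sigma^F_U$ by computing the long exact sequence of $\Ext(-,\cO_U)$ for the triangle $F_{U/k}^*Lp^{(1)*}\Cot_{X^{(1)}/k} \to F_{U/k}^*\Cot_{U^{(1)}/k} \to \cO_U$, which reduces everything to $\Ext^1(\cO_U,\cO_U)=H^1(X,\bigoplus_k\cO_X(k))=0$. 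You instead build the Frobenius lifting of $U$ by hand: lift $\cO_X(1)$ to $\wt{\cL}$ (obstruction in $H^2(X,\cO_X)$), rigidify by an isomorphism $\wt{G}^*\wt{\cL}\isom\wt{\cL}^{\otimes p}$ (the two liftings differ by a class in $H^1(X,\cO_X)$), and take $\sSpec_{\wt{X}}(\bigoplus_k\wt{\cL}^{\otimes k})$ with the induced endomorphism. Both arguments then pass from $U$ to $C$ via \cref{cor:frobenius_opens}, and both ultimately consume the same cohomological inputs ($H^1(X,\cO_X(k))=H^2(X,\cO_X(k))=0$ for the relevant twists). Your version is more constructive and makes transparent exactly which geometric data the two vanishing hypotheses provide; the paper's version avoids any discussion of line bundles and equivariant structures by working purely with obstruction classes. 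You are also slightly more careful than the paper on two points it leaves implicit: the low-dimensional cases $\dim X\le 1$ (where the codimension hypothesis of \cref{cor:frobenius_opens} fails and one falls back on toricness) and the explicit verification of the $S_3$ condition at the vertex via $H^2_{\ideal{m}}(C,\cO_C)=\bigoplus_k H^1(X,\cO_X(k))$.
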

\begin{proof}
The first part we proceed as follows.  Let $C = \Cone_{X,\PP^n}$ be the cone over an embedding of $X$ such that the functor $\phi: \Hilb_{X,\PP^n} \to \Def_C$ is smooth (cf. \cref{cor:veronese_cone}).  Moreover, let $U$ be the cone without the vertex $\ideal{m}$.  Assume $C$ is Frobenius liftable, i.e, there exists a lifting $\wt{C} \in \Def_C(W_2(k))$ admitting a lifting of Frobenius.  By the smoothness of $\phi$ we obtain a (potentially non-unique) diagram:  
\[
\xymatrix{
C \ar[r] & \wt{C} & \\
	& U \ar[ul]\ar[r]\ar[d]^{p} & \wt{U} \ar[ul]\ar[d]^{\wt{p}} \\
	& X \ar[r] & \wt{X}.
}
\]
By \cref{lem:func_frobenius} we see that the obstruction classes to existence of a Frobenius lifting for $X$ and $U$ fit into a commutative diagram in $D(X)$:
\begin{displaymath}
    \xymatrix{
        F_{X/k}^*\Cot_{X^{(1)}/k} \ar[r]^{\sigma^F_X}\ar[d]^{d_p^{(1)}} & \cc{O}_X[1] \ar[d]_{p^{\#}[1]} \\
        Rp_*F_{U/k}^*\Cot_{U^{(1)}/k} \ar[r]^-{Rp_*\sigma^F_U} & Rp_*\cc{O}_U[1] \ar@/^-1.0pc/[u],}
\end{displaymath}
where the right most arrow is a splitting of $p^{\#}$ induced by the $\mathbb{G}_m$-bundle structure of $p$.  We assumed that $\wt{C}$ admits a lifting a therefore $\sigma^F_U = 0$.  By the existence of splitting $\sigma^F_X = 0$ ad therefore $X$ is Frobenius liftable. 

For the second, we assume that $X$ is Frobenius liftable, i.e., there exists a lifting $\wt{X} \in \Def_X(W_2(k))$ such that $\sigma^F_X = 0$.  By \cref{lem:comparing_hilb_def} and Serre vanishing we obtain an embedding of $X$ into $\PP^n$ such that $\psi : \Hilb_{X,\PP^n} \to \Def_X$ is smooth and $H^1(X,\cO_X(k)) = H^2(X,\cO_X(k)) = 0$ for any $k \neq 0$.  By smoothness of $\psi$ we see that $\wt{X}$ arises as an embedded deformation and therefore induces a compatible deformation $\wt{C} \in \Def_C(W_2(k))$ of the cone $C$.  By \cref{cor:frobenius_opens} in order to prove that $\wt{C}$ admits a Frobenius lift it suffices to infer it for $\wt{U}$.  By functoriality of obstructions to lifting Frobenius we obtain a diagram:
\begin{displaymath}
    \xymatrix{
        F_{U/k}^*Lp^{(1)*}\Cot_{X^{(1)}/k} \isom Lp^*F_{X/k}^*\Cot_{X^{(1)}/k} \ar[rr]^-(0.50){Lp^*\sigma^F_X}\ar[d]^{F_{U/k}^*dp^{(1)}} & & Lp^*\cO_X[1] \ar[d]^{\isom} \\
        F_{U/k}^*\Cot_{U^{(1)}/k} \ar[rr]^-{\sigma^F_U} & & \cc{O}_U[1].}
\end{displaymath}

We consider a Frobenius pullback of a distinguished triangle of cotangent complexes associated to the $\mathbb{G}_m$-bundle $p^{(1)} : U^{(1)} \to X^{(1)}$:
\[
F_{U/k}^*Lp^{(1)*}\Cot_{X^{(1)}/k} \ra F_{U/k}^*\Cot_{U^{(1)}/k} \ra F_{U/k}^*\Cot_{U^{(1)}/X^{(1)}} \isom \cO_U \ra F_{U/k}^*Lp^{(1)*}\Cot_{X^{(1)}/k}[1],
\]
where the isomorphism in the middle follows from the standard property of Zariski locally trivial $\mathbb{G}_m$-bundles.  By the long exact sequence of $\Ext(-,\cO_U)$ groups we obtain an exact sequence:
\[
\ldots \ra \Ext^1(\cO_U,\cO_U) \ra  \Ext(F_{U/k}^*\Cot_{U^{(1)}/k},\cO_U) \ra \Ext^1(F_{U/k}^*Lp^{(1)*}\Cot_{X^{(1)}/k},\cO_U) \ra \cdots,
\]
and therefore it suffices to prove that $\Ext^1(\cO_U,\cO_U) = 0$.  This follows from the isomorphisms $\Ext^1(\cO_U,\cO_U) \isom H^1(U,\cO_U) \isom H^1(X,\bigoplus_{k \in \ZZ} \cO_X(k))$ and the assumptions.

\end{proof}


\appendix

\section{Deformation theory}\label{appendix:deformation}

Oftentimes we resign from giving proofs and refer to the standard works, e.g, \cite{cotangent,schlessinger_functors,fantechi_manetti}.  In the part concerning cotangent complex and obstruction classes, the reader may freely ignore the technical proofs and focus on the statements concerning functoriality.


\subsection{Deformation theory in terms of cotangent complex}\label{appendix:deformation_cotangent}

We begin with the relevant topics in deformation theory (see \cite[Part I, Chapitre III]{cotangent}) based on the notion of cotangent complex $\Cot_{X/S}$.  We begin with the basic properties of Kodaira-Spencer class $K_{X/Y/S}$ and consequently we express the deformation obstruction classes in terms of $K_{X/Y/S}$ and other natural extensions.

\begin{defin}[Kodaira-Spencer class]
For any $f : X \to Y$ an $S$-morphism of schemes there exists a mapping $K_{X/Y/S} : \Cot_{X/Y} \to Lf^*\Cot_{Y/S}[1]$ in $D(X)$ called a \emph{Kodaira-Spencer class}.  It is defined as the connecting homomorphism in a natural distinguished triangle of cotangent complexes associated to $f$:
\begin{displaymath}
\xymatrix{
	Lf^*\Cot_{Y/S} \ar[r]^-{df} & \Cot_{X/S} \ar[r] & \Cot_{X/Y} \ar[rr]^-{K_{X/Y/S}} & & Lf^*\Cot_{Y/S}[1].}
\end{displaymath}
\end{defin}
The Kodaira-Spencer class satisfies the following functoriality property.  For any commutative square:
\begin{displaymath}
\xymatrix{
	X \ar[d]^{f}\ar[r]^g & X'\ar[d]^{f'} \\
	Y \ar[r]^i & Y'}
\end{displaymath}  
defined over a fixed scheme $S$, the natural diagram:
\begin{displaymath}
\xymatrix{
Lg^*\Cot_{X'/Y'} \ar[d]^{dg}\ar[rr]^-{g^*K_{X'/Y'/S}} & & Lg^*Lf'^*\Cot_{Y'/S}[1] \isom Lf^*Li^*\Cot_{Y'/S}[1] \ar[d]^{Lf^*di} \\
	\Cot_{X/Y} \ar[rr]^-{K_{X/Y/S}} & &  Lf^*\Cot_{Y/S}[1] 
	 }
\end{displaymath}
is commutative, where the right most vertical arrow is the canonical morphism $Lf^*di$ induced by the equality $if' = fj$.

\subsubsection{Lifting schemes - functoriality of obstruction classes}

We now recall the obstruction theory expressed in terms of cotangent complex.  We consider $i : Y \to \wt{Y}$, a square-zero $S$-extension of $Y$ by an $\cc{O}_Y$-module $\cc{J}$. Apart from functoriality all the results are given in \cite{cotangent}.  We begin with a classification of $S$-extensions.

\begin{thm}
There exists a bijection between the group $\mathrm{Exal}_S(Y,\cJ)$ of isomorphism classes of $S$-extension of $Y$ by an $\cO_Y$-module $\cJ$ and the group $\Ext^1(\Cot_{Y/S},\cJ)$. 
\end{thm}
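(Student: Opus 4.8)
The strategy is to build an explicit map $e\colon \mathrm{Exal}_S(Y,\cJ)\to\Ext^1(\Cot_{Y/S},\cJ)$ out of the transitivity triangle, and then to prove it is a group isomorphism by first treating the affine case via a polynomial presentation and afterwards globalizing. For the forward map, given an $S$-extension $i\colon Y\to\wt Y$ by $\cJ$, the transitivity triangle attached to $Y\xrightarrow{i}\wt Y\to S$ supplies a canonical morphism $\Cot_{Y/S}\to\Cot_{Y/\wt Y}$ in $D(\cO_Y)$. Since $i$ is a square-zero immersion with ideal $\cJ$, the complex $\Cot_{Y/\wt Y}$ has $H_0=\Omega_{Y/\wt Y}=0$ and $H_1=\cJ/\cJ^2=\cJ$, so its canonical truncation yields a morphism $\Cot_{Y/\wt Y}\to\cJ[1]$, and I define $e(\wt Y)$ to be the composite $\Cot_{Y/S}\to\Cot_{Y/\wt Y}\to\cJ[1]$, an element of $\Hom_{D(\cO_Y)}(\Cot_{Y/S},\cJ[1])=\Ext^1(\Cot_{Y/S},\cJ)$. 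Functoriality of the cotangent complex shows immediately that $e(\wt Y)$ depends only on the isomorphism class of the extension.

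Next I would establish bijectivity, and the homomorphism property, in the affine case $Y=\Spec A$, $S=\Spec R$, by choosing a presentation $A=P/I$ with $P$ a polynomial $R$-algebra. On the deformation-theoretic side, the classical description of extensions (Lichtenbaum--Schlessinger; see \cite{cotangent}) identifies $\mathrm{Exal}_R(A,\cJ)$, as a group, with $\Coker\big(\mathrm{Der}_R(P,\cJ)\to\Hom_A(I/I^2,\cJ)\big)$, the map being restriction along $I\twoheadrightarrow I/I^2$. On the homological side, $P$ is smooth over $R$, so $\Cot_{P/R}=\Omega_{P/R}$, while $\Cot_{A/P}$ has $H_0=0$ and $H_1=I/I^2$; applying $\RHom(-,\cJ)$ to the transitivity triangle $\Omega_{P/R}\otimes_P A\to\Cot_{A/R}\to\Cot_{A/P}$ and taking cohomology gives the exact sequence
\[
0\to\mathrm{Der}_R(A,\cJ)\to\mathrm{Der}_R(P,\cJ)\xrightarrow{\ d^{*}\ }\Hom_A(I/I^2,\cJ)\to\Ext^1(\Cot_{A/R},\cJ)\to 0,
\]
where $\Ext^1(\Omega_{P/R}\otimes_P A,\cJ)=0$ because $\Omega_{P/R}\otimes_P A$ is $A$-free, and $d^{*}$ is dual to the conormal map $I/I^2\to\Omega_{P/R}\otimes_P A$. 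Comparing the two cokernels produces an isomorphism $\mathrm{Exal}_R(A,\cJ)\isom\Ext^1(\Cot_{A/R},\cJ)$ which one checks is inverse to $e$; independence of the chosen presentation follows from the standard comparison of two presentations, giving naturality in $A$ and in $\cJ$. Since the isomorphism is realized as a cokernel of a map of abelian groups and matches the Baer-sum structure on $\mathrm{Exal}$, it (hence $e$, locally) is a group homomorphism.

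To pass from affine opens to $Y$, I would compare the two customary ``local-to-global'' exact sequences. The affine-local statement applied over a basis of affine opens shows that the Zariski sheaf of local $S$-extensions of $\cO_Y$ by $\cJ$ is isomorphic to $\cExt^1(\Cot_{Y/S},\cJ)$, compatibly with the evident isomorphism of derivation sheaves $\mathrm{Der}_S(\cO_Y,\cJ)\isom\cHom(\Omega_{Y/S},\cJ)=\mathcal{H}^0\RcHom(\Cot_{Y/S},\cJ)$. Both $\mathrm{Exal}_S(Y,\cJ)$ and $\Ext^1(\Cot_{Y/S},\cJ)$ then sit in five-term exact sequences whose outer terms are $H^1$ and $H^2$ of the derivation sheaf and whose middle term is $H^0$ of the corresponding $\mathcal{E}xt^1$-sheaf: the first by Čech-glueing of local trivial extensions along a derivation-valued cocycle together with a choice of local extension class, the second by the local-to-global spectral sequence for $\RcHom(\Cot_{Y/S},\cJ)$. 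Matching these sequences and applying the five lemma produces the global bijection, which is a group isomorphism since all constituent maps are.

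The step I expect to be the main obstacle is precisely this globalization: identifying the Čech-theoretic description of $\mathrm{Exal}_S(Y,\cJ)$ with the spectral-sequence filtration on $\Ext^1(\Cot_{Y/S},\cJ)$, and in particular verifying that the two connecting homomorphisms into $H^2$ of the derivation sheaf agree. An alternative that avoids comparing two distinct exact sequences is to perform the presentation computation uniformly using a global simplicial resolution of $\cO_Y$ by polynomial $f^{-1}\cO_S$-algebras, but then the technical weight shifts onto the functoriality and homotopy-invariance of such resolutions. Either way, all the substantive work is concentrated here; everything earlier is formal manipulation of distinguished triangles.
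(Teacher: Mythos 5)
The paper offers no argument for this statement: it simply cites Illusie, \emph{Complexe cotangent et d\'eformations}, Part I, Chapitre III, Th\'eor\`eme 1.2.3, so there is no internal proof to compare yours against. Your sketch follows the classical route that the cited reference formalizes. The map $e$ built from the transitivity triangle of $Y \to \wt{Y} \to S$ together with the truncation $\Cot_{Y/\wt{Y}} \to \cJ[1]$ is exactly the standard construction, and your affine computation is correct: the four-term exact sequence you display is the right one (the key input being $\Ext^1(\Omega_{P/R}\otimes_P A,\cJ)=0$ by freeness of $\Omega_{P/R}$), and its cokernel description of $\Ext^1(\Cot_{A/R},\cJ)$ does match the Lichtenbaum--Schlessinger presentation of $\mathrm{Exal}_R(A,\cJ)$, Baer sum included. (One small slip of wording: the cokernel comparison produces a map in the \emph{same} direction as $e$, so what must be checked is that it \emph{equals} $e$, not that it is inverse to it.)

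The one genuine gap is the one you name yourself: the globalization. As written, ``match the two five-term sequences and apply the five lemma'' presupposes (i) that $\mathrm{Exal}_S(Y,\cJ)$ actually fits into such a sequence with $H^1$ and $H^2$ of the derivation sheaf at the ends, (ii) that $e$ induces a morphism of sequences, and (iii) that the two connecting homomorphisms into $H^2$ agree; none of these is formal, and (iii) in particular requires an explicit cocycle computation that you have not performed. The alternative you mention in passing is in fact how Illusie proceeds: resolving $\cO_Y$ by a functorial simplicial polynomial $f^{-1}\cO_S$-algebra lets one run your affine argument globally in a single stroke, with the comparison of connecting maps absorbed into the functoriality and homotopy-invariance of the standard resolution. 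So your proposal is the right proof in outline and is correct through the affine case, but the global bijection is not yet established as written.
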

\begin{proof}
See \cite[Part I, Chapitre III, Th\'{e}or\`{e}me 1.2.3]{cotangent}.
\end{proof}

\begin{defin}[Deformation tuple]
A \emph{deformation tuple over $S$} consists of:
\begin{enumerate}[a)]
\item a diagram of schemes
\begin{displaymath}
\xymatrix{
	X \ar[d]^{f} & &  \\
	Y \ar[rr]^i\ar[d] & & \wt{Y}\ar[dll] \\
	S,}
\end{displaymath}  
where $\wt{Y}$ is a square-zero $S$-extension of $Y$ by a module $\cc{J}$.
\item an $\cc{O}_X$-module $\cc{I}$ and a module homomorphism $w : f^*\cc{J} \to \cc{I}$.
\end{enumerate}
\end{defin}

\begin{thm}\label{thm:obstruction_schemes}
For any deformation tuple as above, there exists an obstruction $\sigma_X \in \Ext^2(\Cot_{X/Y},\cc{I})$ whose vanishing is sufficient and necessary for the existence of an extension $\wt{X}$ as in the diagram:
\begin{displaymath}
\xymatrix{
	X \ar[d]^{f}\ar@{-->}[rr] & & \wt{X}\ar@{-->}[d] \\
	Y \ar[rr]^i\ar[d] & & \wt{Y}\ar[dll] \\
	S,}
\end{displaymath}
inducing $w : f^*\cJ \to \cI$ on the kernels of thickenings.  The obstruction $\sigma_X$ is given by the composition:
\begin{displaymath}
\xymatrix{
	\Cot_{X/Y} \ar[rr]^-{K_{X/Y/S}} & & Lf^*\Cot_{Y/S}[1] \ar[rr]^{Lf^*\delta[1]} & & Lf^*\cc{J}[2] \ar[r]^-{H^0} & f^*\cc{J}[2]\ar[r]^-{w[2]} & \cc{I}[2],}
\end{displaymath}
where $\delta \in \Ext^1(\Cot_{Y/S},\cc{J})$ denotes the cohomology class associated to the extension $\wt{Y}$.  If the obstruction vanishes, the liftings constitute a torsor under $\Ext^1(\Cot_{X/S},\cI)$.
In particular, the obstruction class for the existence of a flat lifting $\wt{X} \to \wt{Y}$ is an element of $\Ext^2(\Cot_{X/Y},f^*\cJ)$.
\end{thm}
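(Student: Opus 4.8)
The plan is to obtain the statement from Illusie's obstruction theory \cite[Part I, Ch. III]{cotangent}, organising the construction of $\wt X$ around the transitivity triangle
\[
Lf^*\Cot_{Y/S} \xrightarrow{\ df\ } \Cot_{X/S} \ra \Cot_{X/Y} \xrightarrow{\ K_{X/Y/S}\ } Lf^*\Cot_{Y/S}[1].
\]
First I would use the bijection $\mathrm{Exal}_S(Y,\cJ)\isom\Ext^1(\Cot_{Y/S},\cJ)$ recalled above to replace the scheme $\wt Y$ by its class $\delta\colon\Cot_{Y/S}\to\cJ[1]$ in $D(Y)$, then pull back along $f$, compose with the canonical map $Lf^*\cJ\to H^0(Lf^*\cJ)=f^*\cJ$ and with $w$, obtaining an auxiliary class $\ol{\delta}\in\Ext^1(Lf^*\Cot_{Y/S},\cI)$.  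The role of $\ol{\delta}$ is to encode exactly the part of the extension $\wt Y$ that has to be transported along $f$ when one attempts to lift $X$.

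Next I would carry out the lifting locally: taking $X$, $Y$, $S$ affine and embedding $X$ into a smooth $\wt Y$-scheme, a local lift always exists, and the local lifts inducing the prescribed $w$ on the ideals of the thickenings form a nonempty torsor under a suitable $\cHom(-,\cI)$-sheaf.  Fixing a simplicial polynomial resolution of $\cO_X$ computing $\Cot_{X/Y}$, the obstruction to gluing local lifts into a global one becomes a $2$-cocycle, and the crux of the argument is the identification of its class with the image of $\ol{\delta}$ under the connecting map $\Ext^1(Lf^*\Cot_{Y/S},\cI)\to\Ext^2(\Cot_{X/Y},\cI)$ of the long exact $\Ext^\bullet(-,\cI)$-sequence attached to the triangle above.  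Since that connecting map is precomposition with $K_{X/Y/S}$, unwinding yields precisely $\sigma_X=w[2]\comp H^0\comp(Lf^*\delta[1])\comp K_{X/Y/S}$, which gives at once the necessity and sufficiency of $\sigma_X=0$ and the displayed formula; exactness of the same sequence one step lower shows that, when $\sigma_X=0$, the difference of two global lifts is read off the resolution as a class in $\Ext^1(\Cot_{X/S},\cI)$ and conversely every such class alters a fixed lift, which is the torsor statement.

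For the final assertion I would invoke the local flatness criterion along a square-zero ideal: a lift $\wt X\to\wt Y$ is flat precisely when the induced map $f^*\cJ\to\cI$ on the ideals of the thickenings is an isomorphism, so flat lifts correspond to the choice $\cI=f^*\cJ$, $w=\id$, and the obstruction to a flat lift is the composite $H^0\comp(Lf^*\delta[1])\comp K_{X/Y/S}\in\Ext^2(\Cot_{X/Y},f^*\cJ)$.  I expect the genuine work to be the identification of the \v{C}ech/simplicial gluing cocycle with the derived-category composite and the check that it is independent of the chosen resolution; this is exactly what \cite{cotangent} establishes, so in the write-up it suffices to cite it and to record the explicit formula for $\sigma_X$, which is what we use in the sequel.
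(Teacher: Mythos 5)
Your proposal is correct and matches the paper's treatment: the paper gives no argument of its own for this theorem but simply cites \cite[Part I, Chapitre III, Th\'eor\`eme 2.1.7]{cotangent}, and your sketch is an accurate outline of exactly that argument (local existence of lifts, gluing cocycle, identification with the composite through $K_{X/Y/S}$ via the connecting map of the transitivity triangle), ending with the same deferral to Illusie for the technical identification. Nothing further is needed.
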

\begin{proof}
For the proof see \cite[Part I, Chapitre III, Th\'{e}or\`{e}me 2.1.7]{cotangent}.  
\end{proof}

Liftability obstructions satisfy the following functoriality property (for the sake of convenience we assume the ideals of thickenings are flat).

\begin{defin}[Morphism of deformation tuples]
A \emph{morphism of deformation tuples} over $S$ is a piece of data denoted diagramatically by:
\[
\begin{bmatrix}
    \xymatrix{
	X \ar[d]^{f} & & \\
	Y \ar[rr]^i\ar[d] & & \wt{Y}\ar[dll] \\
	S,} \\ \\ 
	(\cI,\cJ,w : f^*\cc{J} \to \cc{I})
\end{bmatrix} \xrightarrow[\text{deformation tuples}]{\text{a morphism of}}
\begin{bmatrix}
    \xymatrix{
	X' \ar[d]^{f'}  & & \\
	Y' \ar[rr]^{i'}\ar[d] & & \wt{Y}'\ar[dll] \\
	S} \\ \\  
	(\cI',\cJ',w' : f'^*\cc{J}' \to \cc{I}'),
\end{bmatrix}
\]
and consisting of:
\begin{enumerate}[a)]
\item a diagram of schemes over $S$:
\begin{displaymath}
    \xymatrix{
         & X \ar[dl]^-{g}\ar[d]^-{f} &  \\
        X'\ar[d]^-{f'} & Y \ar[rr]^i\ar[ld]^-{h} & & \wt{Y}\ar[dl]^-{\wt{h}} \\
        Y' \ar[rr]^{i'} &  &\wt{Y}', &
        }  
\end{displaymath} 
where $\wt{h}$ is a morphism inducing an $\cc{O}_Y$-module homomorphism $u : h^*\cc{J}' \to \cc{J}$,
\item a homomorphism $v : g^*\cc{I}' \to \cc{I}$ fitting into a commutative diagram:
\begin{displaymath}
    \xymatrix{
g^*f'^*\cc{J}' \isom f^*h^*\cc{J}' \ar[d]^{f^*u}\ar[rr]^-{g^*w'} & & g^*\cc{I}' \ar[d]^{v} \\
f^*\cc{J}\ar[rr]^-{w} & & \cc{I}. }
\end{displaymath}
\end{enumerate}
\end{defin}

\begin{lemma}[Functoriality of obstructions to lifting schemes]\label{lem:obstruction_schemes_func}
For any morphism of deformation tuples as above, the obstruction classes to existence of liftings fit in the commutative diagrams:
\begin{displaymath}
    \xymatrix{
         Lg^*\Cot_{X'/Y'} \ar[r]^{g^*\sigma_{X'}}\ar[d]^{dg}	& Lg^*\cc{I}' \ar[d]^{v} 	& &\Cot_{X'/Y'} \ar[r]^{\sigma_{X'}}\ar[d]^{dg} & \cc{I}'[2] \ar[d] \\
        \Cot_{X/Y} \ar[r]^-{\sigma_X} 	& \cc{I}, 								& &Rg_*\Cot_{X/Y} \ar[r]^-{Rg_*\sigma_X} & Rg_*\cc{I}[2].}  
\end{displaymath}
\end{lemma}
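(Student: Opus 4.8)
The plan is to prove commutativity by decomposing the obstruction class into the four elementary arrows of \cref{thm:obstruction_schemes} and checking functoriality one factor at a time. Recall that $\sigma_X$ is the composite
\[
\Cot_{X/Y}\xrightarrow{K_{X/Y/S}}Lf^*\Cot_{Y/S}[1]\xrightarrow{Lf^*\delta[1]}Lf^*\cJ[2]\xrightarrow{H^0}f^*\cJ[2]\xrightarrow{w[2]}\cI[2],
\]
with $\delta\in\Ext^1(\Cot_{Y/S},\cJ)$ the class of the extension $\wt Y$; writing the analogous composite for $\sigma_{X'}$ and applying $Lg^*$, the left-hand diagram of the lemma will follow by pasting together four commuting squares, one for each arrow. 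The right-hand diagram is then obtained formally by passing to the $(Lg^*,Rg_*)$-adjoint, exactly as in the proof of \cref{lem:func_w2k}, so I would concentrate on the left-hand one.

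For the Kodaira--Spencer factor, I would invoke the functoriality of $K_{-/-/S}$ recorded right after its definition, applied to the square with horizontal arrows $g\colon X\to X'$ and $h\colon Y\to Y'$ over $S$; since $f'g=hf$ one has $Lf^*Lh^*\isom Lg^*Lf'^*$, and this produces the commuting square relating $g^*K_{X'/Y'/S}$ and $K_{X/Y/S}$ through $dg$ and $Lf^*dh$. For the $H^0$-truncation factor, the square commutes by naturality of truncation (and, under the standing flatness hypothesis on the ideals, the truncation maps are isomorphisms, so this step is vacuous). For the $w$-factor, the required square is literally the commutative diagram in part b) of the definition of a morphism of deformation tuples, which says $v\circ g^*w'=w\circ f^*u$.

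The remaining, and main, point is the $\delta$-factor: I must show that the extension class $\delta$ of $\wt Y$ equals the $h$-pullback, transported along $u\colon h^*\cJ'\to\cJ$, of the extension class $\delta'$ of $\wt Y'$ --- that is, that $\delta\circ dh$ agrees with $u\circ(Lh^*\delta')$ in $D(Y)$. This is the naturality of the classifying bijection $\mathrm{Exal}_S(-,-)\isom\Ext^1(\Cot_{-/S},-)$ from \cite[Part I, Chapitre III]{cotangent} for a morphism of $S$-extensions that need not be cartesian but merely carries a compatible module map; it is contained in the cited reference, though somewhat implicitly, and unwinding Illusie's construction of the fundamental class is where the real work lies. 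Granting it, one pulls the resulting square back along $Lf^*$, uses $Lf^*Lh^*\isom Lg^*Lf'^*$ once more, and concatenates the four squares to obtain the left-hand diagram; the right-hand diagram follows by adjunction.
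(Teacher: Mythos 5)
Your proposal is correct and follows essentially the same route as the paper: the paper also pastes together the four squares coming from the factorization of $\sigma_X$ in \cref{thm:obstruction_schemes} (functoriality of the Kodaira--Spencer class, compatibility of the extension classes of $\wt{Y}$ and $\wt{Y}'$ via $\wt{h}$ and $u$, the truncation step under the flatness hypothesis, and the $w$-compatibility square from the definition of a morphism of deformation tuples), and then deduces the second diagram by the $(Lg^*,Rg_*)$-adjunction. Your identification of the $\delta$-factor as the step carrying the real content --- the naturality of the bijection $\mathrm{Exal}_S(-,-)\isom\Ext^1(\Cot_{-/S},-)$ from \cite{cotangent} --- is exactly what the paper's appeal to ``commutativity of the lower part of the diagram in the definition of morphism of deformation tuples'' relies on.
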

\begin{proof}
We use scommutativity of the diagram:
{\scriptsize
\begin{displaymath}
\xymatrix{
	& & & & & \\
	Lg^*\Cot_{X'/Y'} \ar[d]^{dg}\ar@/_-2.4pc/[rrrr]+<0ex,2ex>^{Lg^*\sigma_{X'}}\ar[rr]^-{g^*K_{X'/Y'/S}} 	& & Lg^*Lf'^*\Cot_{Y'/S}[1] \isom Lf^*Lh^*\Cot_{Y'/S}[1] \ar[r]\ar[d]^{Lf^*di} & Lg^*Lf'^*\cc{J'}[2]\ar[d]^{f^*u}\ar[r]^-{Lg^*H^0} & Lg^*f'^{*}\cc{J'}[2] \ar[r]^-{Lg^*w'} & Lg^*\cc{I}' \ar[d]^-{v \circ H^0} \\ 
	\Cot_{X/Y} \ar@/_1.6pc/[rrrrr]^{\sigma_X}\ar[rr]^-{K_{X/Y/S}} 	& & Lf^*\Cot_{Y/S}[1] \ar[r] 	& Lf^*\cc{J}[2]\ar[r]^{H^0}	& f^*\cc{J}[2]\ar[r]^-{w} & \cc{I},
	}
\end{displaymath}
}
which follows from: 
\begin{enumerate}[a)]
\item the description of obstructions classes,
\item functoriality of Kodaira-Spencer class (left-most square),
\item commutativity of the lower part of the diagram in the definition of morphism of deformation tuples (middle square),
\item the assumption on the homomorphisms of flat ideals (right-most square).
\end{enumerate}
We consequently derive the following diagram which is constituted by boundary arrows.
\begin{displaymath}
    \xymatrix{
            Lg^*\Cot_{X'/Y'} \ar[r]^{g^*\sigma_{X'}}\ar[d]^{dg}	& Lg^*\cc{I}' \ar[d]^{v} \\
        \Cot_{X/Y} \ar[r]^-{\sigma_X} 	& \cc{I}.}  
\end{displaymath}
After application of $Rg_*$ and the natural adjunction this yields the commutativity of:
\begin{displaymath}
    \xymatrix{
    \Cot_{X'/Y'} \ar[rr]^{\sigma_{X'}}\ar[d]^{\eta_{\Cot_{X'/Y'}}} & & \cc{I}'[2]\ar[d]^{\eta_{\cc{I}'[2]}} \\
            Rg_*Lg^*\Cot_{X'/Y'} \ar[rr]^{Rg_*Lg^*\sigma_{X'}}\ar[d]^{Rg_*dg}	& & Rg_*Lg^*\cc{I}'[2] \ar[d]^{Rg_*v} \\
        Rg_*\Cot_{X/Y} \ar[rr]^-{Rg_*\sigma_X} & & Rg_*\cc{I}[2],}  
\end{displaymath}
where $\eta_\cc{A}$ is the counit of the adjunction for an object $\cc{A}$.  This gives the desired result.
\end{proof}

\subsubsection{Lifting morphisms}

Here, we proceed with some facts concerning liftability of morphisms, in particular, we analyse the functoriality properties of obstruction classes.
Firstly, we recall the following theorem describing obstructions to lifting of morphisms.

\begin{defin}[Deformation of morphism tuple]
A \emph{deformation of morphism tuple} over a square-zero extension $Z \to \wt{Z}$ by a flat $\cO_Z$-module $\cc{K}$ consists of:
\begin{enumerate}[a)]
\item a diagram of schemes:
\begin{displaymath}
    \xymatrix{ 
       	X \ar[rr]^i\ar[d]_{f}\ar@/^-1.25pc/[dd]_{h} & & \wt{X} \ar@{-->}[d]\ar@/^+1.25pc/[dd]^{\wt{h}} \\ 
		X' \ar[rr]^j\ar[d]_{g} & &  \wt{X}'\ar[d]_{\wt{g}}	\\
		Z \ar[rr] & & \wt{Z},
		}
\end{displaymath}
where $i$ (resp. $j$) is an $\wt{Z}$-extension of $X$ (resp. $X'$) by an $\cc{O}_X$-module $\cc{I}$ (resp. $\cc{O}_{X'}$-module $\cc{J}$).
\item a homomorphism $w : f^*\cc{J} \to \cc{I}$ satisfying the equality $w_h = w \comp f^*w_g $ where $w_g : g^*\cK \to \cJ$ and $w_h : h^*\cK \to \cI$ are the natural maps induced by $\wt{g}$ and $\wt{h}$, respectively.
\end{enumerate}
\end{defin}

\begin{thm}
For every deformation of morphism tuple as above, there exists an obstruction $\sigma_f \in \Ext^1(f^*\Cot_{X'/Z},\cc{I})$ whose vanishing is sufficient and necessary for the existence of a lifting $\wt{f} : \wt{X} \to \wt{X}'$ inducing $w$ on the level of ideals of $X$ and $X'$.  The obstruction is given by the unique preimage under the natural mapping $\Ext^1(f^*\Cot_{X'/Z},\cc{I}) \ra \Ext^1(f^*\Cot_{X'/\wt{Z}},\cc{I})$ of the difference $w \comp f^*e_j - e_i \comp df$ of arrows in the diagram:
\begin{displaymath}
\xymatrix{
	f^*\Cot_{X'/\wt{S}} \ar[r]^{f^*e_j}\ar[d]^{df} & f^*\cc{J} \ar[d]^w \\
	\Cot_{X/\wt{S}} \ar[r]^{e_i} & \cc{I},}
\end{displaymath}
where $e_i \in \Ext^1(\Cot_{X/\wt{Z}},\cc{I})$, resp. $e_j \in \Ext^1(\Cot_{X'/\wt{Z}},\cc{J})$ are classes of an $\wt{Z}$-extension $\wt{X}$, resp. $\wt{X}'$.  
\end{thm}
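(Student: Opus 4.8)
The plan is to deduce this from the obstruction theory for morphisms in \cite[Part~I, Chapitre~III, \S 2.2]{cotangent} in two stages: first obtain the obstruction over the base $\wt{Z}$ as the difference $d\mydef w\comp f^{*}e_{j}-e_{i}\comp df$ living in $\Ext^{1}(Lf^{*}\Cot_{X'/\wt{Z}},\cI)$, which is essentially Illusie's statement, and then, using the compatibility hypotheses built into a deformation of morphism tuple, descend $d$ canonically along the square-zero base change $Z\ra\wt{Z}$ to the asserted class $\sigma_{f}\in\Ext^{1}(Lf^{*}\Cot_{X'/Z},\cI)$.

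For the first stage I would recall the morphism analogue of \cref{thm:obstruction_schemes}: a lifting $\wt{f}\colon\wt{X}\ra\wt{X}'$ over $\wt{Z}$ inducing $w$ on the ideals of $X$ and $X'$ exists if and only if $d=0$, where $e_{i}\colon\Cot_{X/\wt{Z}}\ra\cI[1]$ and $e_{j}\colon\Cot_{X'/\wt{Z}}\ra\cJ[1]$ are the classes of the extensions $\wt{X}$ and $\wt{X}'$, $df\colon Lf^{*}\Cot_{X'/\wt{Z}}\ra\Cot_{X/\wt{Z}}$ is the differential, and $w\comp f^{*}e_{j}$ denotes the composite $Lf^{*}\Cot_{X'/\wt{Z}}\ra Lf^{*}\cJ[1]\ra f^{*}\cJ[1]\ra\cI[1]$ built from $Lf^{*}e_{j}$, the canonical truncation, and $w$. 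The proof, which I would reproduce in outline, compares the given extension $\wt{X}$ of $X$ with the extension obtained by pulling the extension $\wt{X}'$ back along $f$ and pushing it out along $w$; a compatible $\wt{f}$ is precisely an isomorphism of these two $\wt{Z}$-extensions of $X$ by $\cI$, and unwinding their classes through the Kodaira--Spencer triangle of $f$ over $\wt{Z}$ and the functoriality of $df$ turns this into the vanishing of $d$. When $d=0$ the liftings form a torsor under $\Ext^{0}(Lf^{*}\Cot_{X'/\wt{Z}},\cI)$.

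For the second stage, write $g\colon X'\ra Z$ for the structure morphism and apply $Lf^{*}$ to the transitivity triangle of $X'\ra Z\ra\wt{Z}$:
\[
Lf^{*}Lg^{*}\Cot_{Z/\wt{Z}}\ra Lf^{*}\Cot_{X'/\wt{Z}}\ra Lf^{*}\Cot_{X'/Z}\ra Lf^{*}Lg^{*}\Cot_{Z/\wt{Z}}[1].
\]
Applying $\RHom_{\cO_{X}}(-,\cI)$, the long exact sequence shows that $d$ lies in the image of $\Ext^{1}(Lf^{*}\Cot_{X'/Z},\cI)\ra\Ext^{1}(Lf^{*}\Cot_{X'/\wt{Z}},\cI)$ precisely when its restriction along $Lf^{*}Lg^{*}\Cot_{Z/\wt{Z}}\ra Lf^{*}\Cot_{X'/\wt{Z}}$ vanishes. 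Since $Z\subset\wt{Z}$ is square-zero with ideal $\cK$ one has $H^{0}(\Cot_{Z/\wt{Z}})=\cK$, so this restriction is a homomorphism $h^{*}\cK\ra\cI$; tracing the two summands of $d$ it equals $w_{h}-w\comp f^{*}w_{g}$, where $w_{g}\colon g^{*}\cK\ra\cJ$ and $w_{h}\colon h^{*}\cK\ra\cI$ are the maps on ideals induced by $\wt{g}$ and $\wt{h}$. The compatibility $w_{h}=w\comp f^{*}w_{g}$ in the definition of a deformation of morphism tuple makes this vanish, so a preimage of $d$ exists; the flatness of $\cK$ makes the connecting map out of $\Ext^{0}(Lf^{*}Lg^{*}\Cot_{Z/\wt{Z}},\cI)$ vanish, so the preimage $\sigma_{f}$ is unique. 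Reading $\sigma_{f}$ off the long exact sequence gives exactly the stated description, and the torsor of liftings is correspondingly refined to one under $\Ext^{0}(Lf^{*}\Cot_{X'/Z},\cI)=\Hom_{\cO_{X}}(f^{*}\Omega_{X'/Z},\cI)$.

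The point I expect to require the most care is the identification, in the second stage, of the ``$\cK$-component'' of an extension class --- the restriction of $e_{i}$, respectively of $Lf^{*}e_{j}$, along the map $Lg^{*}\Cot_{Z/\wt{Z}}\ra\Cot_{X'/\wt{Z}}$ (pulled back by $f$) --- with the concrete maps on ideals $w_{h}$, respectively $f^{*}w_{g}$; this is where the hypotheses on the morphism tuple genuinely enter, and it requires some bookkeeping with derived base change along the (possibly non-flat) morphisms $f$ and $g$ and with the truncation of $Lf^{*}$ onto its zeroth cohomology. The homological core of the first stage and all of the underlying machinery is in \cite[Part~I, Chapitre~III]{cotangent}; the new content here, the descent carried out in stage two, is then formal once this matching of $\cK$-components is in place.
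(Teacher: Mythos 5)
The paper does not actually prove this theorem: its proof is the citation to \cite[Part I, Chapitre III, Proposition 2.2.4]{cotangent}, which is precisely this statement, so your reconstruction necessarily does more than the paper. Your second stage --- descending $d = w \comp f^*e_j - e_i \comp df$ from $\Ext^1(Lf^*\Cot_{X'/\wt{Z}},\cI)$ to $\Ext^1(Lf^*\Cot_{X'/Z},\cI)$ along the transitivity triangle of $X' \ra Z \ra \wt{Z}$ --- is correct and is exactly where the compatibility $w_h = w \comp f^*w_g$ built into a deformation of morphism tuple enters; the uniqueness also comes out right, since $\Cot_{Z/\wt{Z}}$ is concentrated in degrees $\leq -1$ (note that $\cK$ is $H^{-1}(\Cot_{Z/\wt{Z}})$, not $H^0$), so $\Ext^0(Lf^*Lg^*\Cot_{Z/\wt{Z}},\cI)=0$ and the comparison map on $\Ext^1$ is injective. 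The identification of the restriction of $e_i$ (resp.\ of $w\comp f^*e_j$) along $Lf^*Lg^*\Cot_{Z/\wt{Z}}$ with $w_h$ (resp.\ $w\comp f^*w_g$) is the standard compatibility of extension classes with the conormal map, and you rightly flag it as the point needing care.

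The one step of your outline that would fail as written is in the first stage: for a general $f$ there is no ``$\wt{Z}$-extension of $X$ by $\cI$ obtained by pulling $\wt{X}'$ back along $f$ and pushing out along $w$''. The class $w\comp f^*e_j$ lives in $\Ext^1(Lf^*\Cot_{X'/\wt{Z}},\cI)$, which does not classify extensions of $X$ by $\cI$ (that group is $\Ext^1(\Cot_{X/\wt{Z}},\cI)=\mathrm{Exal}_{\wt{Z}}(X,\cI)$, and one cannot transport $w\comp f^*e_j$ there without inverting $df$). So a compatible $\wt{f}$ cannot be characterised as an isomorphism of two $\wt{Z}$-extensions of $X$ by $\cI$; Illusie's argument instead works in $\Ext^1(Lf^*\Cot_{X'/\wt{Z}},\cI)$ from the outset, via the exact sequence $\mathrm{Exal}_{X'}(X,\cI)\ra\mathrm{Exal}_{\wt{Z}}(X,\cI)\xrightarrow{-\comp df}\Ext^1(Lf^*\Cot_{X'/\wt{Z}},\cI)$ coming from the transitivity triangle of $f$, suitably refined to keep track of $\wt{X}'$ and $w$. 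Since you in any case defer the homological core of this stage to \cite{cotangent} --- exactly as the paper does --- this is a defect of the sketch rather than of the architecture, but the sketch is not repairable without that change of viewpoint.
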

\begin{proof}
For the proof see \cite[Part I, Chapitre III, Proposition 2.2.4]{cotangent}.  
\end{proof}

Morphism liftability obstructions satisfy the following functoriality property. 

\begin{defin}[Morphism of deformation of morphism tuples]
A \emph{morphism of deformation of morphism tuples} over a square-zero extension $S \to \wt{S}$ by an $\cO_S$-module is a piece of data denoted diagramatically by:
\[
\begin{bmatrix}
    \xymatrix{ 
       	X' \ar[rr]^{i'}\ar[d]_{f'}\ar@/^-1.25pc/[dd]_{h'} & & \wt{X}' \ar@{-->}[d]\ar@/^+1.25pc/[dd]^{\wt{h}'} \\ 
		Y' \ar[rr]^{j'}\ar[d]_{g'} & &  \wt{Y}' \ar[d]_{\wt{g}'}	\\
		Z \ar[rr] & & \wt{Z}
		} \\ \\  
	(\cI',\cJ',w' : f'^*\cc{J}' \to \cc{I}')
\end{bmatrix}
\xrightarrow[\text{of morphism tuples}]{\text{a morphism of deformation}}
\begin{bmatrix}
    \xymatrix{ 
       	X \ar[rr]^i\ar[d]_{f}\ar@/^-1.25pc/[dd]_{h} & & \wt{X} \ar@{-->}[d]\ar@/^+1.25pc/[dd]^{\wt{h}} \\ 
		Y \ar[rr]^j\ar[d]_{g} & &  \wt{Y}\ar[d]_{\wt{g}}	\\
		Z \ar[rr] & & \wt{Z}
		} \\ \\ 
	(\cI,\cJ,w : f^*\cc{J} \to \cc{I})
\end{bmatrix}
\]
and given by the diagram of schemes
\begin{displaymath}
    \xymatrix{ 
    		& X'\ar[rr]\ar[ld]_k\ar[dd]^(.30){f'} & & \wt{X}'\ar[ld]_{\wt{k}} \ar@{-->}[dd] 	\\
       	X \ar[rr]\ar[dd]_{f} & & \wt{X} \ar@{-->}[dd] & \\ 
		& Y'\ar[ld]_l\ar[rr]\ar[ldd] & & \wt{Y'} \ar[ld]_{\wt{l}}\ar[ldd] \\
		Y \ar[rr]\ar[d]& & \wt{Y}\ar[d]	\\
		S \ar[rr] & & \wt{S} &  
		}
\end{displaymath}
such that the mapping $q_k : k^*\cI \to \cI'$ and $q_l : l^*\cJ \to \cJ'$ fit into a commutative diagram:
\begin{displaymath}
    \xymatrix{
k^*f^*\cJ \isom f'^*l^*\cJ  \ar[d]^{k^*w}\ar[rr]^-{f'^*q_l} & & f'^*\cJ' \ar[d]^{w'} \\
k^*\cI \ar[rr]^-{q_k} & & \cI'. }
\end{displaymath}
\end{defin}

We need the result in limited generality and therefore for the notational convenience we only state the case of flat extensions over a given $\wt{Z}$.

\begin{lemma}[Functoriality of obstructions to lifting morphisms]\label{lem:obstruction_morphisms_func}
For any morphism of deformation of morphism tuples as above where the associated ideals are flat the obstructions $\sigma_f$ and $\sigma_{f'}$ satisfy the relation:
\[
q_k \comp k^*\sigma_f = \sigma_{f'} \comp f'^*dl \comp u^{-1}
,\] where $u : f'^*l^*\Cot_{X'/\wt{Z}} \to k^*f^*\Cot_{X'/\wt{Z}}$ is the canonical isomorphism.
\end{lemma}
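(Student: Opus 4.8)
\end{lemma}

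\begin{proof}[Proof sketch]
The plan is to follow the pattern of the proof of \cref{lem:obstruction_schemes_func}, now describing the obstruction to lifting a morphism as a difference of two arrows in place of the Kodaira--Spencer description used for schemes.

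First I would reduce the statement to an identity between the canonical representatives of the two obstruction classes at the $\wt{Z}$-level. By the obstruction theorem, $\sigma_f$ is the unique preimage, under the natural restriction $\Ext^1(f^*\Cot_{Y/Z},\cI) \to \Ext^1(f^*\Cot_{Y/\wt{Z}},\cI)$, of the class $d_f \mydef w \comp f^*e_j - e_i \comp df \in \Ext^1(f^*\Cot_{Y/\wt{Z}},\cI)$, and similarly $\sigma_{f'}$ is determined by $d_{f'} \mydef w' \comp f'^*e_{j'} - e_{i'} \comp df' \in \Ext^1(f'^*\Cot_{Y'/\wt{Z}},\cI')$. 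Passing freely between the obstruction classes and these representatives, it suffices to prove
\[
q_k \comp k^*d_f = d_{f'} \comp f'^*dl \comp u^{-1} \quad\text{in } \Ext^1(k^*f^*\Cot_{Y/\wt{Z}},\cI').
\]

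Next I would expand the left-hand side as $(q_k \comp k^*w) \comp k^*f^*e_j - (q_k \comp k^*e_i) \comp k^*df$ and substitute three inputs, all read modulo the canonical pullback identifications coming from the equality $f k = l f'$ of morphisms (of which $u$ is the incarnation for cotangent complexes). (a) The compatibility square relating $q_k,q_l,w,w'$ in the definition of a morphism of deformation-of-morphism tuples gives $q_k \comp k^*w = w' \comp f'^*q_l$. (b) Functoriality of the extension classes along the thickened maps $\wt{k}$ and $\wt{l}$ --- the analogue for square-zero extensions of the functoriality square of the Kodaira--Spencer class recorded earlier --- gives $q_k \comp k^*e_i = e_{i'} \comp dk$ and $q_l \comp l^*e_j = e_{j'} \comp dl$. (c) The chain rule for the cotangent differentials together with the commutativity of the cube gives $dk \comp k^*df = df' \comp f'^*dl \comp u^{-1}$. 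Plugging these in, the first summand becomes $w' \comp f'^*(q_l \comp l^*e_j) \comp u^{-1} = w' \comp f'^*e_{j'} \comp f'^*dl \comp u^{-1}$, while the second becomes $e_{i'} \comp dk \comp k^*df = e_{i'} \comp df' \comp f'^*dl \comp u^{-1}$; their difference is exactly $d_{f'} \comp f'^*dl \comp u^{-1}$, as wanted. As in \cref{lem:obstruction_schemes_func}, the cleanest presentation assembles the whole computation into a single large commutative diagram whose outer boundary is the asserted square and whose interior cells are the inputs (a)--(c) and the definitions of $d_f$ and $d_{f'}$.

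The step I expect to be the main obstacle is the bookkeeping of the canonical identifications: one must verify that the isomorphism $u$ identifying $k^*f^*\Cot_{Y/\wt{Z}}$ with $f'^*l^*\Cot_{Y/\wt{Z}}$, together with its analogues for the ideal modules $\cI$ and $\cJ$, is used consistently throughout, and that the functoriality of extension classes in (b) is naturally compatible with it. This is precisely the point at which the scheme-lifting argument invoked the naturality of the Kodaira--Spencer class; the genuine content here is to transcribe that naturality into the present, somewhat more elaborate, cube, after which the remainder is a formal diagram chase.
\end{proof}
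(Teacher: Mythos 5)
Your proposal is correct and follows essentially the same route as the paper: both reduce to the identity between the defining difference representatives $w\comp f^*e_j - e_i\comp df$ at the $\wt{Z}$-level and then assemble it from (a) the compatibility square for $q_k,q_l,w,w'$ in the definition of a morphism of tuples, (b) the commuting squares expressing that the liftings $\wt{k},\wt{l}$ exist (equivalently, functoriality of the extension classes), and (c) transitivity of differentials for $fk=lf'$, with flatness used to suppress derived pullbacks. The paper simply packages this computation as one large commutative diagram, exactly as you anticipate in your closing remark.
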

\begin{proof}
We observe that the obstructions fit into the following diagram:
\begin{displaymath}
    \xymatrix{
    	f'^*l^*\Cot_{Y/\wt{Z}}\ar@/_-1.4pc/[rrrr]^{f'^*l^*e_j}\ar[r]^{u}_{\isom}\ar[d]^{f'^*dl} & k^*f^*\Cot_{Y/\wt{Z}}\ar @{} [drr] |{=} \ar[d]^(.40){k^*df}\ar[rr]^{k^*f^*e_j} & & k^*f^*\cJ \ar[d]^{k^*w} \ar[r]_{\isom} & f'^*l^*\cJ\ar[d]^{f'^*q_l} \\
		f'^*\Cot_{Y'/\wt{Z}}\ar[dr]^{df'}\ar@/^1.4pc/[rrrr]   & k^*\Cot_{X/\wt{Z}}\ar[d] \ar[rr]^{k^*e_i} 			 & & k^*\cc{I}\ar[d]^{q_k} & f'^*\cc{J}'\ar[ld]^{w'} \\
		& \Cot_{X'/\wt{Z}}\ar[rr]^{e_{i'}} & & \cI', & 
		}
\end{displaymath}

where the middle square is the pullback of $\sigma_f$ along $k$ and the bottom boundary mappings give rise to the obstruction $\sigma_{f'}$. The functoriality is a consequence of
\begin{enumerate}[a)]
\item the description of obstructions classes,
\item commutativity of the left square coming from transitivity of differentials for $fk = lf'$,
\item commutativity of the right square coming from the definition of morphism of deformations of morphism tuples,
\item commutativity of the lower and boundary upper square which in fact constitute the obstructions for existence of $\wt{k}$ and $\wt{l}$, respectively.
\item flatness of the ideals, which allows us to omit derived pull-backs.
\end{enumerate}
\end{proof}


\subsection{Deformations of affine schemes}\label{appendix:affine_deformations}

In this section we present a few results concerning deformation theory of affine schemes.  In particular, we prove that one can check the existence of a $W_2(k)$-lifting \'{e}tale locally.

\begin{lemma}\label{lem:affine_deformation_etale_local}
Let $X \isom \Spec(A)$ be an affine scheme essentially of finite type over $k$.  Assume there exists an \'{e}tale surjective covering $p : U \to X$ such that $U$ is $W_2(k)$-liftable (respectively $F$-liftable).  Then, $X$ is $W_2(k)$-liftable (respectively $F$-liftable).
\end{lemma}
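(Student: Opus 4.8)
The plan is to route everything through the functorial obstruction classes of \cref{lem:func_w2k} and \cref{lem:func_frobenius} and to exploit that an \'etale surjection is faithfully flat. First I would reduce to the case of an affine cover: since $X$ is affine, pick finitely many affine opens $V_1,\dots,V_r\subseteq U$ whose images cover $X$ and replace $U$ by $\coprod_i V_i$; this is again an affine \'etale surjection onto $X$, and it remains $W_2(k)$-liftable (resp.\ $F$-liftable) because a lifting, resp.\ a lifting of Frobenius, restricts to open subschemes and is built componentwise on a finite disjoint union. So assume $p\colon U=\Spec B\to X=\Spec A$ with $A\to B$ faithfully flat and \'etale.

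The technical heart is a base-change statement: for any $\cF^\bullet\in D^-_{\Coh}(X)$ — e.g.\ $\Cot_{X/k}$ or $F_{X/k}^*\Cot_{\Frob{X}/k}$ — and any $i$, the natural map $\Ext^i(\cF^\bullet,\cO_X)\ra\Ext^i(Lp^*\cF^\bullet,\cO_U)$ is injective, and moreover an element of $\Ext^i(\cF^\bullet,\cO_X)$ lies in a given submodule $N$ as soon as its image lies in $N\otimes_AB$. Both follow by representing $\cF^\bullet$ by a bounded-above complex of finite free $A$-modules, noting $\Ext^i_A(\cF^\bullet,A)\otimes_AB\isom\Ext^i_B(Lp^*\cF^\bullet,B)$ by flatness, and then using faithful flatness of $A\to B$ applied to $\Ext^i_A(\cF^\bullet,A)$ and to $\Ext^i_A(\cF^\bullet,A)/N$. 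Granting this: since $p$ is \'etale, $\Cot_{U/X}=0$, so $dp\colon Lp^*\Cot_{X/k}\isom\Cot_{U/k}$, and the functoriality square of \cref{lem:func_w2k} (together with $Lp^*\cO_X\isom\cO_U$) identifies $\sigma_U$ with the image of $\sigma_X$ under the map above, with $\cF^\bullet=\Cot_{X/k}$ and $i=2$. As $\sigma_U=0$ by hypothesis, $\sigma_X=0$, so $X$ lifts to $W_2(k)$.

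For $F$-liftability, assume $U$ is $F$-liftable, fix any $W_2(k)$-lifting $\widetilde X$ of $X$ (one exists by the previous paragraph) and set $\widetilde U:=\widetilde X\times_XU$, a lifting of $U$ \'etale over $\widetilde X$. Since $p^{(1)}$ is \'etale, $F_{U/k}^*dp^{(1)}$ is an isomorphism and the Cartesian square relating the relative Frobenii of $U$ and $X$ gives $F_{U/k}^*\Cot_{\Frob{U}/k}\isom Lp^*F_{X/k}^*\Cot_{\Frob{X}/k}$; so \cref{lem:func_frobenius} applied to $\widetilde U\to\widetilde X$ over $p$ identifies $\sigma^F_{\widetilde U}$ with the image of $\sigma^F_{\widetilde X}$ under the injective map above ($\cF^\bullet=F_{X/k}^*\Cot_{\Frob{X}/k}$, $i=1$). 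Hence $\widetilde X$ admits a lift of Frobenius iff $\widetilde U$ does. To finish I would use the torsor structure of \cref{thm:obstruction_schemes}: liftings of $X$ (resp.\ $U$) form a torsor under $E_X:=\Ext^1(\Cot_{X/k},\cO_X)$ (resp.\ $E_U$), the assignment $\widetilde Z\mapsto\sigma^F_{\widetilde Z}$ is affine over an additive map $\lambda_Z\colon E_Z\to\Ext^1(F_{Z/k}^*\Cot_{\Frob{Z}/k},\cO_Z)$, and these are compatible with $p$ and flat base change, so $\operatorname{im}\lambda_U$ is the base change of $\operatorname{im}\lambda_X$ and $\sigma^F_{\widetilde X\times_XU}=Lp^*\sigma^F_{\widetilde X}$. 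Now $F$-liftability of $U$ forces $\sigma^F_{\widetilde U}\in\operatorname{im}\lambda_U$ for \emph{every} lifting $\widetilde U$ of $U$, in particular for $\widetilde U=\widetilde X\times_XU$; thus $Lp^*\sigma^F_{\widetilde X}$ lands in the base change of $\operatorname{im}\lambda_X$, and by the last clause of the base-change statement $\sigma^F_{\widetilde X}\in\operatorname{im}\lambda_X$, i.e.\ $F_X$ lifts to $\widetilde X$ and $X$ is $F$-liftable. Alternatively one can assemble $\sigma_X$ and $\sigma^F_X$ into a single obstruction in the hyper-$\Ext^2$ of the relevant $\Cone$ of cotangent complexes and run only the injectivity argument.

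The clean part is the $W_2(k)$ case; the main obstacle is the $F$-liftable case, where the given lifting of $U$ witnessing $F$-liftability need \emph{not} descend along the \'etale cover, so one cannot simply push a lift of Frobenius down. Turning the torsor argument into a proof requires care with two points that both express the absence of higher cohomology for affine $X$: the affine/semilinear bookkeeping for how $\sigma^F_{\widetilde Z}$ depends on the chosen lifting $\widetilde Z$ (the target deformation $\widetilde Z\times_{W_2(k),\sigma}W_2(k)$ varies along with $\widetilde Z$), and the verification that $\operatorname{im}\lambda_X$ is an $A$-submodule whose formation commutes with the faithfully flat base change $A\to B$.
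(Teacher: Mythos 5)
Your first two paragraphs reproduce the paper's argument for the $W_2(k)$ case essentially verbatim: reduce to an affine \'etale surjection $\Spec B\to\Spec A$, use that $\Cot_{B/A}=0$ to get $\Ext^2(\Cot_{B/k},B)\isom\Ext^2(\Cot_{A/k},A)\otimes_AB$ with $\sigma_B=\sigma_A\otimes 1$ via \cref{lem:obstruction_schemes_func}, and conclude by faithful flatness. That half is correct and is exactly the paper's proof.

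For the $F$-liftability half the paper says only that the argument is ``analogous'', and you have correctly put your finger on why it is not literally so: the lifting of $U$ witnessing Frobenius liftability need not be of the form $\wt{X}\times_X U$, so functoriality alone does not transport the hypothesis. However, your torsor argument does not close this gap, and the point you flag as ``requiring care'' is precisely where it breaks. The map $\lambda_X(\epsilon)=F_{X/k}^*\epsilon^{(1)}-\epsilon\comp dF_{X/k}$ is additive but its first term is $p$-semilinear ($\lambda_X(a\epsilon)$ involves $a^p$, coming from the Frobenius twist $\epsilon\mapsto\epsilon^{(1)}$ and from $F_{X/k}^*$), so $\operatorname{im}\lambda_X$ is in general only an additive subgroup of $\Ext^1(F_{X/k}^*\Cot_{X^{(1)}/k},\cO_X)$, not an $A$-submodule; the faithfully flat principle ``image in $N\otimes_AB$ implies membership in $N$'' is therefore not available. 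Worse, membership in the image of such a map genuinely fails to descend along \'etale covers: the model case is $\lambda(x)=x^p-x$ on $A$ and the Artin--Schreier cover $B=A[y]/(y^p-y-a)$, where $a=\lambda_B(y)\in\operatorname{im}\lambda_B$ but $a\notin\operatorname{im}\lambda_A$ whenever the cover is nontrivial. So the discrepancy between $\sigma^F_{\wt{X}}\otimes 1\in\operatorname{im}\lambda_U$ and $\sigma^F_{\wt{X}}\in\operatorname{im}\lambda_X$ is an honest torsor class under $\ker\lambda$ that the given \'etale cover could a priori kill, and faithful flatness alone cannot rule this out. To complete the argument one needs a genuinely linear object that base-changes — e.g.\ the single obstruction class in $\Ext^2$ of the cone of $dF_{X/k}$ controlling the pair $(\wt{X},\wt{F})$, which you mention only in passing and which also requires care to set up because of the same twist — or some other input beyond what is written. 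As it stands, the $F$-liftable half of your proposal is not a proof.
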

\begin{proof}
We treat the case of $W_2(k)$-liftability as the proof in the case of $F$-liftability is analogous.  By taking an affine Zariski covering of $U$ we may assume that $U \isom \Spec(B)$.  Let $\sigma_A \in \Ext^2(\Cot_{A/k},A)$ and $\sigma_B \in \Ext^2(\Cot_{B/k},B)$ be the obstruction classes to the existence of a $W_2(k)$-lifting of $\Spec(A)$ and $\Spec(B)$, respectively.  By \cref{lem:obstruction_schemes_func} and the properties of the morphism $p$ we see that $\Ext^2(\Cot_{B/k},B) \isom \Ext^2(\Cot_{A/k},A) \otimes_A B$ and $\sigma_B = \sigma_A \otimes 1$ under this isomorphism.  Therefore, the claim of the lemma follows from the fact that $p$ is \'{e}tale and surjective and hence faithfully flat.
\end{proof}

Along the same lines we can prove the following.

\begin{lemma}\label{lem:affine_deformation_Zariski_local}
Let $X$ be an affine scheme locally of finite type over $k$.  Assume that for any closed point $x \in X$ the scheme $\Spec(\cO_{X,x})$ is $W_2(k)$-liftable (respectively $F$-liftable).  Then, $X$ is $W_2(k)$-liftable (respectively $F$-liftable).
\end{lemma}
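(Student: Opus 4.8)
The plan is to run the argument of \cref{lem:affine_deformation_etale_local} with the \'{e}tale cover replaced by the family of all localizations $\Spec(\cO_{X,x})$ at closed points $x\in X$. Write $X=\Spec(A)$ with $A$ of finite type over $k$, so that closed points correspond to maximal ideals $\ideal{m}\subset A$ and $\cO_{X,x}=A_{\ideal{m}}$. Since $A$ is noetherian, $\Cot_{A/k}\in D^-_{\Coh}$ and $A\to A_{\ideal{m}}$ is flat, base change for the cotangent complex together with \cref{lem:derived_dual} (applied to the flat morphism $\Spec(A_{\ideal m})\to\Spec(A)$) yields $\Ext^2_{A_{\ideal{m}}}(\Cot_{A_{\ideal{m}}/k},A_{\ideal{m}})\isom\Ext^2_A(\Cot_{A/k},A)\otimes_A A_{\ideal{m}}$, and by \cref{lem:obstruction_schemes_func} this isomorphism carries the obstruction class $\sigma_{\cO_{X,x}}$ of \cref{lem:func_w2k} to the localization $(\sigma_X)_{\ideal{m}}$ of $\sigma_X\in\Ext^2_A(\Cot_{A/k},A)$. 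The $W_2(k)$-statement then follows at once: by hypothesis $\sigma_{\cO_{X,x}}=0$, so $(\sigma_X)_{\ideal{m}}=0$ for every maximal ideal $\ideal{m}$, and since an element of an $A$-module whose image in every localization at a maximal ideal vanishes is itself zero (its annihilator lies in no maximal ideal), we conclude $\sigma_X=0$ and $X$ is $W_2(k)$-liftable.

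For the $F$-liftable case the genuinely delicate point — and what I expect to be the main obstacle — is that a chosen global $W_2(k)$-lifting $\wt{X}$ of $X$ need not restrict over $\cO_{X,x}$ to a lifting admitting a Frobenius lift, since $F$-liftability depends on the chosen $W_2(k)$-lifting (the remark after \cref{cor:criterion}). I would circumvent this using the affine-linear dependence of the Frobenius obstruction on the lifting. Fix a $W_2(k)$-lifting $\wt{X}$ (possible by the previous step), put $N:=\Ext^1(F_{X/k}^*\Cot_{X^{(1)}/k},\cO_X)$, and let $\sigma^F_{\wt{X}}\in N$ be the obstruction of \cref{lem:func_frobenius}. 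As $\wt{X}$ ranges over the $\Ext^1(\Cot_{X/k},\cO_X)$-torsor of liftings, the assignment $\wt{X}\mapsto\sigma^F_{\wt{X}}$ is affine-linear over a group homomorphism $d\phi\colon\Ext^1(\Cot_{X/k},\cO_X)\to N$ — this is read off the description of $\sigma^F$ as a difference of extension classes in the theorem on obstructions to lifting morphisms in \cref{appendix:deformation}. Hence $X$ is $F$-liftable if and only if $\sigma^F_{\wt{X}}$ lies in $P:=\mathrm{Im}(d\phi)$, i.e.\ if and only if the image of $\sigma^F_{\wt{X}}$ in $N/P$ is zero.

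Finally I would localize this last condition. All the data $N$, $P$, $d\phi$ and $\sigma^F_{\wt{X}}$ are built from cotangent complexes and behave well under the flat base change $A\to A_{\ideal{m}}$ (by \cref{lem:func_frobenius}, \cref{lem:obstruction_schemes_func} and exactness of localization), so $(N/P)_{\ideal{m}}=N_{\ideal{m}}/P_{\ideal{m}}$ and the image of $\sigma^F_{\wt{X}}$ there equals $(\sigma^F_{\wt X})_{\ideal m}$ modulo $P_{\ideal m}$, which is precisely the obstruction governing $F$-liftability of $\cO_{X,x}$ and hence vanishes by hypothesis. Thus the image of $\sigma^F_{\wt{X}}$ in $N/P$ dies in every maximal localization, so it is zero, $\sigma^F_{\wt{X}}\in P$, and $X$ is $F$-liftable. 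The remaining ingredients — flat base change for $\Ext$ of bounded-above coherent complexes and naturality of $d\phi$ under localization — are routine given \cref{appendix:deformation}, so the real content of the $F$-liftable half is the passage to the cokernel $N/P$ just described.
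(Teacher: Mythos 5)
Your $W_2(k)$ half is, up to replacing the \'{e}tale cover by the family of localizations $A\to A_{\ideal{m}}$, exactly the argument the paper gives for \cref{lem:affine_deformation_etale_local} and then invokes with ``along the same lines'': localize the obstruction class of \cref{lem:func_w2k} using flat base change for $\Ext^2(\Cot_{A/k},A)$ and functoriality, and conclude from the fact that an element of an $A$-module vanishing in all $N_{\ideal{m}}$ is zero. For the $F$-liftable half you have correctly isolated the point the paper passes over in silence: the hypothesis only supplies a Frobenius lift for \emph{some} $W_2(k)$-lifting of each $\cO_{X,x}$, not for the restriction of a chosen global one, so one cannot simply localize a single obstruction class. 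Your torsor/cokernel strategy is the right repair, and in this respect your proof is more careful than the paper's.

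There is, however, one step that is not justified as written. The map $d\phi$ is not $A$-linear: reading off the formula $w\comp f^*e_j-e_i\comp df$, its ``linear part'' $t\mapsto t\comp dF_{X/k}$ vanishes (the paper uses $dF_{X/k}=0$ in the proof of \cref{thm:lift_split}), and the surviving part $t\mapsto w\comp F_{X/k}^*(t^{(1)})$ satisfies $d\phi(at)=a^p\,d\phi(t)$ because $F_{X/k}^{\#}$ sends $a\otimes 1$ to $a^p$. Consequently $P=\mathrm{Im}(d\phi)$ is only an additive subgroup of $N$ stable under multiplication by $p$-th powers; $N/P$ is not an $A$-module, and for a bare ``group homomorphism'' the inference ``the class dies in every $N_{\ideal{m}}/P_{\ideal{m}}$, hence it is zero'' is simply false. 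The argument is rescued by noting that $d\phi$ is linear for the Frobenius-twisted module structure on $N$ (equivalently, it is induced by an $\cO_{X^{(1)}}$-linear map, and $\Spec(A^{(1)})\to\Spec(A)$ is a homeomorphism): localizing at the multiplicative set $\{a^p: a\notin\ideal{m}\}$ agrees with localizing at $A\setminus\ideal{m}$, localization of the twisted structure is exact, so $\mathrm{Im}(d\phi_{\ideal{m}})$ is the localization of $P$ and the local-to-global principle applies over $A^{(1)}$. With that bookkeeping added (and a word, via \cref{lem:obstruction_morphisms_func}, on why the affine dependence of $\sigma^F$ on the lifting is itself compatible with localization), your proof is complete.
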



\subsection{Deformation functors and their morphisms}\label{appendix:deformation_functors}

Here, we recall the formalism of functors of Artin rings.  The classical reference for the topic is \cite{schlessinger_functors}.  The results concerning obstruction theories are well-explained in \cite{fantechi_manetti}.

Suppose $k$ be a field, $\Lambda$ a complete local ring and let $\Art(\Lambda,k)$ denote the category of local Artinian $\Lambda$-algebras with residue field $k$. We consider covariant functors $F : \Art(\Lambda,k) \to \Sets$, which naturally represent infinitesimal deformations of algebraic and geometric objects. The case we are mostly interested in is $\Lambda = W_2(k)$ where $k$ is a perfect field of positive characteristic $p$.

\begin{defin}[Functor of Artin rings]
A \emph{functor of Artin rings} is a covariant functor $F : \Art(\Lambda,k) \to \Sets$ such that $F(k)$ consists of a single element.  All such functors form a category with natural transformations as morphisms.
\end{defin}

Any surjective morphism in $\Art(\Lambda,k)$ can be decomposed into a sequence of \emph{small extensions}, i.e., ring surjections $(B,\ideal{m}_B) \ra (A,\ideal{m}_A)$ with kernel $I$ satisfying $\ideal{m}_B I = 0$.  Small extensions naturally form a category $\sex(\Lambda,k)$ whose objects are small extensions and morphisms are diagrams:
\begin{displaymath}
    \xymatrix{
        0  \ar[r] & I \ar[r]\ar[d]^\phi & B \ar[r]\ar[d]^{f_2} & A \ar[r]\ar[d]^{f_1} & 0 \\
        0  \ar[r] & I' \ar[r] & B' \ar[r] & A' \ar[r] & 0, }  
\end{displaymath}
where $f_1, f_2$ are rings homomorphisms and $\phi$ is an induced homomorphism of $k$-vector spaces.

\begin{defin}[Smooth morphism of functors]
A natural transformation $\xi : F \to G$ of functors of Artin rings is \emph{smooth} if for every surjective morphism $B \to A$ in $ \Art(\Lambda,k)$ the natural mapping:
\[
F(B) \to F(A) \times_{G(A)} G(B)
\]
is surjective.
\end{defin}

From the special case $A = k$, we see that a smooth morphism is surjective.

\begin{defin}[Tangent space and morphism]
Suppose $\cF : \Art(\Lambda,k) \to \Sets$ is a functor of Artin rings.  The set $\Tan_\cF \mydef \cF(k[\varepsilon])$ is called the \emph{tangent space} of a functor $\cF$.  For any morphism of deformation functors $\psi : \cF \to \cG$ the mapping $\Tan_\psi \mydef \psi_{k[\eps]} : \Tan_\cF = \cF(k[\eps]) \to \cG(k[\eps]) = \Tan_\cG$ is called the tangent map of $\psi$.
\end{defin}

Under certain condition (see \cite{fantechi_manetti}), satisfied in any of our applications, the tangent space admits a natural structure of a $k$-vector space such that the tangent morphism becomes $k$-linear.

\begin{defin}[Obstruction theory]
Suppose $\cF : \Art(\Lambda,k) \to \Sets$ is a functor of Artin rings.  An \emph{obstruction theory} for $\cF$ is a pair $(V,\{\nu_e\}_{e \in \sex(\Lambda)})$ of a vector space $V$ and a family of mappings $\nu_e : \cF(A) \to V \otimes_k I$ parametrised by infinitesimal extensions $e: 0 \ra I \ra B \ra A \ra 0$ and satisfying the following properties:
\begin{enumerate}[i)]
\item (functoriality) for any morphism of small extensions
\begin{displaymath}
    \xymatrix{
        e: 0  \ar[r] & I \ar[r]\ar[d]^\phi & B \ar[r]\ar[d]^{f_2} & A \ar[r]\ar[d]^{f_1} & 0 \\
        e': 0  \ar[r] & I' \ar[r] & B' \ar[r] & A' \ar[r] & 0, }  
\end{displaymath}
we have $(\id_V \otimes \phi) \comp \nu_e = \nu_{e'} \comp \cF(f_1)$.  
\item (completeness) for any infinitesimal extension $e: 0 \ra I \ra B \ra A \ra 0$ and an element $a \in \cF(A)$ the condition $\nu_e(a) = 0$ is equivalent to existence of $b \in \cF(B)$ lifting $a$.
\end{enumerate}  
\end{defin}

The choice of obstruction theory is not unique, e.g., any proper inclusion $i : V \to V'$ gives rise to a different obstruction theory $(V',i \comp \nu_e)$.

We now present a criterion for smoothness of morphism of functors in terms of their tangent and obstruction spaces.  Let $(\cF,(\Ob_\cF,\nu^\cF_e))$ and $(\cG,(\Ob_\cG,\nu^\cG_e))$ be deformation functors together with associated obstruction theories and $\psi : \cF \to \cG$ be a morphism.  We say that a linear mapping $c : \Ob_\cF \to \Ob_\cG$ is an obstruction map of $\psi$ if for every infinitesimal extension $e: 0 \ra I \ra B \ra A \ra 0$ we have $\nu^\cG_e \comp \psi_A = (c \otimes_k \id_I) \comp \nu^\cF_e$.  We emphasize that the notion of obstruction map of $\psi$ does not depend solely on $\psi$ but also on the choice of obstruction theories for $\cF$ and $\cG$.

We have the following criterion.

\begin{lemma}[Smoothness criterion for morphism of functors]\label{lem:smoothness_criterion}
Suppose $(\cF,(\Ob_\cF,\nu^\cF_e))$ and $(\cG,(\Ob_\cG,\nu^\cG_e))$ are deformation functors together with associated obstruction theories.  Let $\psi : \cF \to \cG$ be a morphism of functors admitting an obstruction map $\Ob_\psi : \Ob_\cF \to \Ob_\cG$. Then, $\psi$ is smooth if the following conditions hold:
\begin{enumerate}[i)]
\item $\Tan_\psi : \Tan_\cF \to \Tan_\cG$ is surjective,
\item $\Ob_\psi : \Ob_\cF \to \Ob_\cG$ is injective.
\end{enumerate}
\end{lemma}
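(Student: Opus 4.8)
The plan is to run the standard reduction to small extensions, lifting set-theoretically by comparing obstruction maps and then correcting the lift by comparing the natural actions of the tangent spaces on fibres. First I would note that the relative lifting condition is stable under composition: if $\cF(B)\to\cF(A)\times_{\cG(A)}\cG(B)$ and $\cF(A)\to\cF(A_0)\times_{\cG(A_0)}\cG(A)$ are both surjective, then given a compatible pair $(a_0,\beta)\in\cF(A_0)\times_{\cG(A_0)}\cG(B)$ one first lifts $(a_0,\alpha)$ (with $\alpha$ the image of $\beta$ in $\cG(A)$) to some $a\in\cF(A)$ and then lifts $(a,\beta)$ to $\cF(B)$, so the composite $\cF(B)\to\cF(A_0)\times_{\cG(A_0)}\cG(B)$ is again surjective. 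Since every surjection in $\Art(\Lambda,k)$ factors into a finite chain of small extensions, it suffices to prove surjectivity of $\cF(B)\to\cF(A)\times_{\cG(A)}\cG(B)$ for a single small extension $e\colon 0\ra I\ra B\ra A\ra 0$. Fix such an $e$ together with $a\in\cF(A)$ and $\beta\in\cG(B)$ having common image $\alpha\mydef\psi_A(a)\in\cG(A)$.

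Next I would lift $a$ set-theoretically. The obstruction to lifting $a$ to $\cF(B)$ is $\nu^\cF_e(a)\in\Ob_\cF\otimes_k I$, and the defining relation of the obstruction map gives $(\Ob_\psi\otimes\id_I)\bigl(\nu^\cF_e(a)\bigr)=\nu^\cG_e(\psi_A(a))=\nu^\cG_e(\alpha)$, which vanishes because $\alpha$ already lifts to $\beta\in\cG(B)$. As $I$ is a finite-dimensional $k$-vector space, $-\otimes_k I$ is exact, so $\Ob_\psi\otimes\id_I$ is injective; hence $\nu^\cF_e(a)=0$ and there exists $b_0\in\cF(B)$ with $\cF(B\to A)(b_0)=a$.

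Then I would correct $b_0$ so that it also maps to $\beta$. Here I invoke the deformation-functor hypotheses recalled above (see \cite{fantechi_manetti}, \cite{schlessinger_functors}): for the small extension $e$ the $k$-vector space $\Tan_\cF\otimes_k I$ acts on $\cF(B)$ with orbits the nonempty fibres of $\cF(B)\to\cF(A)$, likewise $\Tan_\cG\otimes_k I$ acts on $\cG(B)$, and the two actions are compatible with $\psi_B$ along $\Tan_\psi\otimes\id_I$. Now $\psi_B(b_0)$ and $\beta$ both lie over $\alpha$ in $\cG(B)$, so by transitivity there is $v\in\Tan_\cG\otimes_k I$ with $v\cdot\psi_B(b_0)=\beta$; since $\Tan_\psi$ is surjective so is $\Tan_\psi\otimes\id_I$, and choosing $u\in\Tan_\cF\otimes_k I$ with $(\Tan_\psi\otimes\id_I)(u)=v$ and setting $b\mydef u\cdot b_0$ gives $\cF(B\to A)(b)=a$ and $\psi_B(b)=(\Tan_\psi\otimes\id_I)(u)\cdot\psi_B(b_0)=v\cdot\psi_B(b_0)=\beta$. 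This proves surjectivity for $e$, and with the composition step the lemma follows.

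The only non-formal ingredient, and the step I expect to require the most care, is the existence, transitivity on fibres, and $\psi$-compatibility of the $\Tan\otimes_k I$-actions attached to a small extension: in general these fibres are only pseudo-torsors, and it is precisely the Schlessinger-type conditions on $\cF$ and $\cG$ that make the actions well-behaved and compatible. Everything else — stability of the relative lifting property under composition, the factorization of a surjection into small extensions, and the exactness of $-\otimes_k I$ over a field — is routine and will be treated briefly.
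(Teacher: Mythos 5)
Your proof is correct and is precisely the standard argument behind \cite[Lemma 6.1]{fantechi_manetti}, which is all the paper offers by way of proof (it simply cites that reference). The reduction to small extensions, the vanishing of $\nu^\cF_e(a)$ via injectivity of $\Ob_\psi\otimes\id_I$, and the correction of the lift by the compatible transitive actions of $\Tan_\cF\otimes_k I$ and $\Tan_\cG\otimes_k I$ on the fibres are exactly the intended steps, and you are right to flag that the transitivity and compatibility of these actions is the one place where the ``deformation functor'' hypotheses (in the sense of Fantechi--Manetti, alluded to in the paper just before the lemma) are genuinely used.
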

\begin{proof}
See \cite[Lemma 6.1]{fantechi_manetti}.
\end{proof}

We now give two examples relevant in what follows.

\begin{example}[Abstract deformations of a scheme]
Suppose $X$ is a scheme over a field $k$.  The functor of \emph{abstract deformations} $\Def_X : \Art(\Lambda,k) \to \Sets$ is defined by the assignment:
\[
\Art(\Lambda,k) \ni A \mapsto \left\{ \text{ isomorphism classes of flat deformations of $X$ over $\Spec(A)$ }\right\},
\]
where a deformation consists of a flat scheme $\cc{X}$ over $\Spec(A)$ together with an isomorphism $\phi_{\cc{X}} : \cc{X} \times_{\Spec(A)} \Spec(k) \to X$.  Morphism of deformations $(\cc{X},\phi_{\cc{X}}) \to (\cc{X}',\phi_{\cc{X}'})$ is a morphism of schemes $\cc{X} \to \cc{X}'$ such that its restriction to the special fibre commutes with isomorphisms $\phi$.

By the results of \cref{thm:obstruction_schemes} the tangent space of the abstract deformation functors $\Def_X$ is equal to $\Ext^1(\Cot_{X/k},\cO_X)$.  Moreover, it admits an obstruction theory with the obstruction space equal to $\Ext^2(\Cot_{X/k},\cO_X)$.
\end{example}

\begin{example}[Embedded deformations of a projective scheme]
Suppose $X \subset \PP^n_k$ is a closed subscheme.  The functor of \emph{embedded deformations} $\Hilb_{X,\PP^n} : \Art(\Lambda,k) \to \Sets$ is defined by the assignment:
\[
\Art(\Lambda,k) \ni A \mapsto \left\{ 
\begin{gathered}
\text{ isomorphism classes of closed subschemes of $\PP^n_A$ } \\
\text{ flat over $\Spec(A)$ and restricting to $X$ over $\Spec(k)$ }
\end{gathered}
\right\}.
\]
By \cite[Chapter I]{hartshorne_deformation} the tangent space of $\Hilb_{X,\PP^n}$ is naturally isomorphic to $H^0(X,\cN_{X/\PP^n})$.  Furthermore, it admits an obstruction theory with the obstruction space equal to $H^1(X,\cN_{X/\PP^n})$.
\end{example}

We now present an exemplary application of \cref{lem:smoothness_criterion}.

\begin{example}
For any closed embedding $X \subset \PP^n$ of a smooth scheme $X/k$, we obtain a natural morphism of functors $\psi : \Hilb_{X,\PP^n} \to \Def_X$ admitting an obstruction morphism defined by taking the underlying abstract deformation of an embedded deformation.  Its tangent and obstruction mappings:
\begin{align*}
\Tan_\psi : \Tan_{\Hilb_{X,\PP^n}} = H^0(X,\cN_{X/\PP^n}) \to H^1(X,\cT_X) \isom \Ext^1(\Cot_{X/k},\cO_X) = \Tan_{\Def_X} \\
\Ob_\psi : \Ob_{\Hilb_{X,\PP^n}} = H^1(X,\cN_{X/\PP^n}) \to H^2(X,\cT_X) \isom \Ext^2(\Cot_{X/k},\cO_X) = \Ob_{\Def_X} 
\end{align*}
are given by the canonical morphisms coming from the long exact sequence of cohomology associated to:
\[
0 \ra \cT_X \ra \cT_{\PP^n |X} \ra \cN_{X/\PP^n} \ra 0.
\]
As a simple corollary of \cref{lem:smoothness_criterion} we have the following lemma applicable for example to projective Calabi-Yau varieties of $\dim X \geqslant 3$.

\begin{lemma}\label{lem:comparing_hilb_def}
Suppose $X$ is a projective scheme satisfying $H^2(X,\cO_X) = 0$.  For sufficiently positive embedding of $X \subset \PP^n$ the morphism $\psi : \Hilb_{X,\PP^n} \to \Def_X$ is smooth.  In particular, every abstract deformation arises as an embedded one.
\end{lemma}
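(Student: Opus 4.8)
The plan is to apply the smoothness criterion \cref{lem:smoothness_criterion} to the morphism $\psi : \Hilb_{X,\PP^n} \to \Def_X$ once a sufficiently positive embedding $X \subset \PP^n$ has been chosen, so the proof reduces to verifying the surjectivity of $\Tan_\psi$ and the injectivity of $\Ob_\psi$ for $n \gg 0$. Both maps fit into the long exact cohomology sequence attached to the conormal/Euler-type short exact sequence
\[
0 \ra \cT_X \ra \cT_{\PP^n|X} \ra \cN_{X/\PP^n} \ra 0,
\]
so the relevant segment reads
\[
\cdots \ra H^0(X,\cN_{X/\PP^n}) \xrightarrow{\Tan_\psi} H^1(X,\cT_X) \ra H^1(X,\cT_{\PP^n|X}) \ra H^1(X,\cN_{X/\PP^n}) \xrightarrow{\Ob_\psi} H^2(X,\cT_X) \ra \cdots.
\]
Hence $\Tan_\psi$ is surjective and $\Ob_\psi$ is injective as soon as $H^1(X,\cT_{\PP^n|X}) = 0$.

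First I would unwind $\cT_{\PP^n|X}$ using the restriction to $X$ of the Euler sequence on $\PP^n$, namely $0 \ra \cO_X \ra \cO_X(1)^{\oplus(n+1)} \ra \cT_{\PP^n|X} \ra 0$; here $\cO_X(1)$ denotes the restriction of the chosen very ample line bundle. The associated long exact sequence gives an exact piece
\[
H^1(X,\cO_X(1))^{\oplus(n+1)} \ra H^1(X,\cT_{\PP^n|X}) \ra H^2(X,\cO_X),
\]
so $H^1(X,\cT_{\PP^n|X})$ vanishes provided both $H^1(X,\cO_X(1)) = 0$ and $H^2(X,\cO_X) = 0$. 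The second vanishing is exactly the standing hypothesis on $X$; the first is achieved by Serre vanishing after replacing the embedding by a sufficiently high Veronese re-embedding (equivalently, twisting the very ample bundle by a large power), which does not disturb $H^2(X,\cO_X)=0$ since that group is intrinsic to $X$. I should be slightly careful that the Euler sequence I use is the one for the specific $\PP^n$ into which $X$ is embedded, and that twisting by the very ample bundle used for the embedding is what makes the higher cohomology of $\cO_X(1)$ die; this is routine.

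With $H^1(X,\cT_{\PP^n|X}) = 0$ in hand, \cref{lem:smoothness_criterion} applies directly — $\psi$ admits the obstruction map coming from passing to the underlying abstract deformation, its tangent map is surjective and its obstruction map injective — so $\psi$ is smooth. The final sentence, that every abstract deformation then arises as an embedded one, is immediate from the definition of a smooth morphism of functors specialized to the surjection $A \to k$: surjectivity of $\Def_X(A) \times_{?} \Hilb$ onto the relevant fiber product forces every element of $\Def_X(A)$ to lift to $\Hilb_{X,\PP^n}(A)$. I do not anticipate a genuine obstacle here; the one point requiring mild attention is bookkeeping the Serre-vanishing estimates so that a single embedding simultaneously kills $H^1(X,\cO_X(1))$ and keeps the intrinsic hypothesis $H^2(X,\cO_X)=0$ available — but since the latter is embedding-independent, there is nothing to reconcile.
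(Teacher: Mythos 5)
Your proposal is correct and follows essentially the same route as the paper's proof: reduce via the smoothness criterion to the vanishing of $H^1(X,\cT_{\PP^n|X})$ through the normal bundle sequence, then kill that group using the restricted Euler sequence together with the hypothesis $H^2(X,\cO_X)=0$ and Serre vanishing for $H^1(X,\cO_X(1))$. No gaps.
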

\begin{proof}
By \cref{lem:smoothness_criterion} we need to show that $H^0(X,\cN_{X/\PP^n}) \to H^1(X,\cT_X)$ is surjective and $H^1(X,\cN_{X/\PP^n}) \to H^2(X,\cT_X)$ is injective for sufficiently ample embedding $X \to \PP^n$.  By the long exact sequence of cohomology it suffices to show that $H^1(X,\cT_{\PP^n|X}) = 0$.  By restricting the Euler sequence to $X$ we obtain:
\[
0 \ra \cO_{X} \ra \cO_{X}(1)^{\oplus n+1} \ra \cT_{\PP^n|X} \ra 0.
\]
By another long exact sequence of cohomology we see that: 
\[
\cdots \ra H^1(X,\cO_{X}(1)^{\oplus n+1}) \ra H^1(X, \cT_{\PP^n|X}) \ra H^2(X,\cO_X) \ra \cdots,
\]
which gives the claim by the Serre's vanishing, i.e., for any coherent sheaf $\cF$ on $X$ we have $H^1(X,\cF(1)) = 0$ if the embedding $X \to \PP^n$ is sufficiently ample.
\end{proof}
\end{example}

\begin{remark}
In particular, for sufficiently positive Veronese embedding of a smooth scheme $X$ satisfying $H^2(X,\cO_X) = 0$, the mapping of functors $\psi_d : \Hilb_{X,\PP^{N_d}} \to \Def_X$ is smooth.
\end{remark}

We finish with a lemma comparing abstract deformation functors of a scheme $X$ and its open subset $U$.  We give a proof based on the cotangent complex.  For the classical proof the reader is encouraged to see \cite[Proposition 9.2]{artin_lectures}. 

\begin{lemma}\label{lem:deformation_opens}
Let $j : U \to X$ be the inclusion of an open subset.  Assume, that $X$ satisfies property $S_3$ at any point $\ideal{p}$ of the complement $Z = X \setminus U$.  Then, the natural morphism of deformation functors $\chi_j : \Def_X \to \Def_U$ coming from the restriction is smooth.
\end{lemma}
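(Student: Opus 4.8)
The plan is to verify the two hypotheses of the smoothness criterion \cref{lem:smoothness_criterion}. By \cref{thm:obstruction_schemes} the functors $\Def_X$ and $\Def_U$ carry obstruction theories with tangent spaces $\Ext^1(\Cot_{X/k},\cO_X)$, $\Ext^1(\Cot_{U/k},\cO_U)$ and obstruction spaces $\Ext^2(\Cot_{X/k},\cO_X)$, $\Ext^2(\Cot_{U/k},\cO_U)$. Since $j$ is an open immersion, the canonical map $dj\colon Lj^*\Cot_{X/k}\to\Cot_{U/k}$ is an isomorphism, so \cref{lem:obstruction_schemes_func} applied to the restriction along $j$ shows that $\chi_j$ admits, as its tangent and obstruction maps, the restriction homomorphisms
\[
\Ext^i(\Cot_{X/k},\cO_X)\ra \Ext^i(\Cot_{U/k},\cO_U),\qquad i=1,2.
\]
It therefore suffices to prove that this map is surjective for $i=1$ and injective for $i=2$.

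Next I would rewrite the target. By \cref{lem:derived_dual} together with the base change $Lj^*\Cot_{X/k}\isom\Cot_{U/k}$ and the standard identity $Rj_*\RcHom_{\cO_U}(Lj^*\cF,\cG)\isom\RcHom_{\cO_X}(\cF,Rj_*\cG)$, one obtains a canonical identification $\Ext^i(\Cot_{U/k},\cO_U)\isom \Ext^i_{\cO_X}(\Cot_{X/k},Rj_*\cO_U)$ under which the restriction map is the one induced by the unit $\cO_X\ra Rj_*\cO_U$. Completing this to a distinguished triangle $\cO_X\ra Rj_*\cO_U\ra K_j\ra\cO_X[1]$ (as in \cref{cor:frobenius_opens}) and applying $\RHom(\Cot_{X/k},-)$ yields the long exact sequence
\[
\cdots\ra \Ext^i(\Cot_{X/k},\cO_X)\ra \Ext^i(\Cot_{X/k},Rj_*\cO_U)\ra \Ext^i(\Cot_{X/k},K_j)\ra \Ext^{i+1}(\Cot_{X/k},\cO_X)\ra\cdots,
\]
from which both required statements reduce to the single vanishing $\Ext^1(\Cot_{X/k},K_j)=0$.

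To prove this vanishing I would identify the cohomology sheaves of $K_j$ with local cohomology along $Z=X\setminus U$: since $K_j$ is the cofibre of $\cO_X\ra Rj_*\cO_U$, one has $\cH^q(K_j)\isom\underline H^{q+1}_Z(\cO_X)$ for all $q$. The hypothesis that $X$ is $S_3$ at every point of $Z$ gives, by the standard depth criterion for local cohomology, that $\underline H^i_Z(\cO_X)=0$ for $i\le 2$; hence $K_j$ has cohomology sheaves concentrated in degrees $\ge 2$, i.e. $K_j\in D^{\ge 2}_{\QCoh}(X)$. As $\Cot_{X/k}\in D^-_{\Coh}(X)$ has cohomology in non-positive degrees and is pseudo-coherent, the amplitude estimate for $\RcHom$ gives $\RcHom_{\cO_X}(\Cot_{X/k},K_j)\in D^{\ge 2}_{\QCoh}(X)$, so that $\Ext^1(\Cot_{X/k},K_j)$, being the first (hyper)cohomology of this complex, vanishes. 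This finishes the argument, and by \cref{lem:smoothness_criterion} the morphism $\chi_j$ is smooth.

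The step I expect to be the main obstacle is the local-cohomology input, which is routine but needs care on two points: first, the clean identification of $\RcHom_{\cO_X}(\Cot_{X/k},Rj_*\cO_U)$ with $Rj_*\RcHom_{\cO_U}(\Cot_{U/k},\cO_U)$, which relies on $\Cot_{X/k}$ being pseudo-coherent so that the relevant internal $\RcHom$ is compatible with the perfect-complex model of $R\Gamma_Z$; and second, extracting from the Serre-type condition "$S_3$ along $Z$" exactly the vanishing $\underline H^{\le 2}_Z(\cO_X)=0$ — the depth hypothesis on $\cO_X$ at the removed points that is also the one flagged in the proof of \cref{cor:frobenius_opens}. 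Everything else is formal manipulation of distinguished triangles and long exact sequences of $\Ext$-groups.
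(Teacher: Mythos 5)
Your proposal is correct and follows essentially the same route as the paper: reduce to the smoothness criterion, identify the tangent and obstruction maps of $\chi_j$ with the maps in the long exact sequence of $\Ext(\Cot_{X/k},-)$ attached to the triangle $\cO_X \ra Rj_*\cO_U \ra K_j \ra \cO_X[1]$, and reduce everything to the vanishing $\Ext^1(\Cot_{X/k},K_j)=0$, obtained from the identification of $\cH^q(K_j)$ with local cohomology along $Z$ and the $S_3$ hypothesis. The only cosmetic difference is that you deduce the final vanishing from an amplitude estimate for $\RcHom$ with $K_j\in D^{\geq 2}$, whereas the paper invokes the spectral sequence $\Ext^p(\Cot_{X/k},\cH^q(K_j))\Rightarrow\Ext^{p+q}(\Cot_{X/k},K_j)$ — these are interchangeable.
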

\begin{proof}
By the long exact sequence of $\Ext(\Cot_{X/k},-)$ groups associated to a distinguished triangle:
\[
\cO_X \ra Rj_*\cO_U \ra K_j \ra \cO_X[1]
\]
we obtain a sequence:
\begin{align*}
\ldots & \ra \Ext^0(\Cot_{X/k},K_j) \ra \Ext^1(\Cot_{X/k},\cO_X) \xrightarrow{t_j} \Ext^1(\Cot_{X/k},Rj_*\cO_U) \isom \Ext^1(\Cot_{U/k},\cO_U) \ra \\
& \ra \Ext^1(\Cot_{X/k},K_j) \ra \Ext^2(\Cot_{X/k},\cO_X) \xrightarrow{o_j} \Ext^2(\Cot_{X/k},Rj_*\cO_U) \isom \Ext^2(\Cot_{U/k},\cO_U) \ra \ldots,
\end{align*}
where the mappings $t_j$ and $o_j$ can be identified with $\Tan_{\chi_j}$ and $\Ob_{\chi_j}$.  Therefore, by \cref{lem:smoothness_criterion}, it suffices to prove that $\Ext^1(\Cot_{X/k},K_j) = 0$.
This is a direct consequence of the spectral sequence 
\[
\Ext^p(\Cot_{X/k},\cH^q(K_j)) \Rightarrow \Ext^{p+q}(\Cot_{X/k},K_j)
\]
and the following lemma:
\begin{lemma}[Local cohomology in terms of $K_j$]
For any $i \in \ZZ$ the sheaf $\cH^i(K_j)$ is isomorphic to the local cohomology sheaves $\wt{U \mapsto H^{i+1}_{Z}(U,\cO_U)}$.  Consequently, if $X$ satisfies property $S_{n+1}$ at any point of $Z = X \setminus U$ of codimension at least $n+1$ then $\cH^i(K_j) = 0$ for $i \leq n$. 
\end{lemma}
\begin{proof}
For the proof see \cite[\href{http://stacks.math.columbia.edu/tag/0A39}{Tag 0A39}]{stacks-project} and \cite[Proposition 3.3]{hassett_kovacs}.
\end{proof}
This finishes the proof of \cref{lem:deformation_opens}.
\end{proof}


\section*{Acknowledgement}

I would like to thank my advisor Professor Adrian Langer for the choice of the research topic and numerous guiding suggestions.  Moreover, I am grateful to Agnieszka Bodzenta, Joachim Jelisiejew, \L{}ukasz Sienkiewicz and especially Piotr Achinger for their support and many useful discussions.  I was supported by Polish National Science Centre (NCN) contract number 2014/13/N/ST1/02673.



\end{document}